\newtheorem{theorem}{Theorem}[section]
\newtheorem{lemma}[theorem]{Lemma}
\newtheorem{proposition}[theorem]{Proposition}
\theoremstyle{definition}
\newtheorem{definition}[theorem]{Definition}
\newtheorem{example}[theorem]{Example}
\theoremstyle{remark}
\newtheorem{remark}[theorem]{Remark}
\theoremstyle{keyobservation}
\newtheorem{keyobservation}[theorem]{Key Observation}
\newtheorem{shypothesis}[theorem]{Standing Hypothesis}
\numberwithin{equation}{section}
\numberwithin{equation}{subsection}
\newcommand{\be}%
  {\protect\setcounter{equation}{\value{subsubsection}}}  
  \newcommand{\ee}%
   {\protect\setcounter{subsubsection}{\value{equation}}}
\numberwithin{equation}{subsection}
\newcommand{\GL}{\operatorname{GL}}
\newcommand{\Hom}{\operatorname{Hom}}
\newcommand{\Spec}{\operatorname{Spec}}
\newcommand{\Perf}{\operatorname{Perf}}
\newcommand{\Pscoh}{\operatorname{Pscoh}}
\newcommand{\bG}{\mathbf G}
\newcommand{\C}{\rm C}
\newcommand{\codim}{{\rm codim}}
\newcommand{\bK}{\mathbf K}
\renewcommand{\L}{\mathcal L}
\newcommand{\Z}{\mathcal Z}
\newcommand{\GG}{\mathbb G}
\renewcommand{\setminus}{\smallsetminus}
\renewcommand{\O}{\mathcal{O}}
\def\displaytimes_#1{\mathrel{\mathop{\times}\limits_{#1}}}
\def\displayotimes_#1{\mathrel{\mathop{\bigotimes}\limits_{#1}}}
 \def\ari[#1]{\ar@{^(->}[#1]}
 \def\are[#1]{\ar[#1]^{\txt{\'et}}}
 \def\areh[#1]{\ar[#1]|{\txt{$H$-eq}}^{\txt{\'et}}}
 \def\ars[#1]{\ar@{->>}[#1]}
 \newcommand{\dplus}{\ar@{}[d]|{\mbox{$\oplus$}}}
 \newcommand{\dtimes}{\ar@{}[d]|{\mbox{$\times$}}}
\def \rmA{\rm A}
\def \rmB{\rm B}
\def \rmBG{\rm {BG}}
\def \C{\mathcal C}
\def \Cl{\mathbb C}
\def \colimm{\underset {m \rightarrow \infty}  {\hbox {lim}}}
\def \colimn{\underset {n \rightarrow \infty}  {\hbox {lim}}}
\def \colimalpha{\underset {\alpha}  {\hbox {colim}}}
\def \colimK.{\underset {\underset K^.  \rightarrow}  {\hbox {lim}}}
\def \colimU.{\underset {\underset U_.  \rightarrow}  {\hbox {lim}}}
\def \cosimp1{\stackrel{\rightrightarrows}{ \leftarrow}}
\def \compl{\, \, {\widehat {}}}
\def \rmE{\rm E}
\def \rmEG{\rm {EG}}
\def \rmEH{\rm {EH}}
\def \EG1{E{(G \times {\mathbb C}^*)}{\underset {G\times {\mathbb C}^*} \to \times}}
\def \EZ(s)1{E{(Z(s) \times {\mathbb C}^*)}{\underset {(Z(s)\times {\mathbb C}^*)} \to \times}}
\newcommand{\eps}{\boldsymbol\varepsilon}
\def \eps{\ \epsilon \ }
\def \EM(u){EM(u){\underset {M(u)} \to \times}}
\def \EM(us){EM(u,s){\underset {M(u, s)} \to \times}}
\def \bG{{\mathbf G}}
\def \rmG{\rm G}
\def\holimD{\mathop{\textrm{holim}}\limits_{\Delta }}
\def\holimDm{\mathop{\textrm{holim}}\limits_{\Delta_{\le m} }}
\def\hlimDone{\mathop{\textrm{holim}}\limits_{\Delta_{\le m_1} }}
\def\hlimD2{\mathop{\textrm{holim}}\limits_{\Delta_{\le m_2} }}
\def\hlimDn{\mathop{\textrm{holim}}\limits_{\Delta_{\le m_n} }}
\def \holimt {\underset {\infty \leftarrow t}  {\hbox {holim}}}
\def \holimr {\underset {\infty \leftarrow r}  {\hbox {holim}}}
\def \hocolimD{\underset \Delta  {\hbox {hocolim}}}
\def \holimn {\underset {\infty \leftarrow n}  {\hbox {holim}}}
\def \holimq {\underset {\infty \leftarrow q} {\hbox {holim}}}
\def \holimk{\underset {\infty \leftarrow k}  {\hbox {holim}}}
\def\holim{\mathop{\textrm{holim}}}
\def \H{\mathbb H}
\def \rmH{\rm H}
\def \Hom{\underline {Hom}}
\def \Hom{{\mathcal H}om}
\def \holimm {\underset {\infty \leftarrow m}  {\hbox {holim}}}
\def \invlim1{\underset {\infty \leftarrow q} \to {\hbox {lim}}^1}
\def \rmI{\rm I}
\def \rmK{\rm K}
\def \L3{\Lambda \times \Lambda \times \Lambda}
\def \L2{\Lambda \times \Lambda}
\def \limr{\underset {\infty \leftarrow r}  {\hbox {lim}}}
\def \limbeta{\underset { \beta }  {\hbox {lim}}}
\def \limm{\underset {\infty \leftarrow m}  {\hbox {lim}}}
\def \limn{\underset {\infty \leftarrow n}  {\hbox {lim}}}
\def \longright2arrow{{\overset \longrightarrow \to {\overset {} \to \longrightarrow}}}
\def \L{L\times \Cl ^*}
\def \M{\mathcal M}
\def \rmMap{\rm {Map}}
\def \Map{\underline {Map}}
\def \Map{{\mathcal M}ap}
\def \N{\mathbb N}
\def \O{{\mathcal O}}
\def \P{\mathbb P}
\def \ra{\rightarrow}
\def \Ra{\Rightarrow}
\def \RG^{R(G)^{\hat {}}\ }
\def \res{respectively}
\def \R{{\mathcal R}}
\def \rmR{\rm R}
\def \rmS{\rm S}
\def \S{\mathcal S}
\def\Spt{\rm {\bf Spt}}
\def \topGcoh*{^{top, *} _{G}}
\def \topGho*{ _{top,*} ^{G}}
\def \rmT{\rm T}
\def \rmU{\rm U}
\def \rmV{\rm V}
\def \wedgeK{\overset L {\underset {\bK (\rmS, G)} \wedge}}
\def \wedgeKH{\overset L {\underset {\bK (\rmS, H)} \wedge}}
\def \rmW{\rm W}
\def \rmX{\rm X}
\def \rmY{\rm Y}
\def \Z(s){Z(s) \times {\mathbb C}^*}
\def \Z{\mathbb Z}
\def \rmZ{\rm Z}
\begin{document}

\title{Atiyah-Segal Derived Completions for Equivariant Algebraic G-Theory and K-Theory}
\author{Gunnar Carlsson}
\address{Department of Mathematics, Stanford University, Building 380, Stanford,
California 94305}
\email{gunnar@math.stanford.edu}
\thanks{  }  
\author{Roy Joshua}
\address{Department of Mathematics, Ohio State University, Columbus, Ohio,
43210, USA.}
\email{joshua.1@math.osu.edu}
\thanks{AMS Subject classification: 19E08, 14C35, 14L30. The second author was supported by a grant
from the NSF}

\maketitle

\begin{abstract}
In the mid 1980s, while working on establishing completion theorems for equivariant Algebraic K-Theory similar to the
well-known Atiyah-Segal completion theorem for equivariant topological K-theory, 
the late Robert Thomason found the
strong finiteness conditions that are required in such theorems to be too restrictive. Then he made a  conjecture on the existence of a completion theorem in the sense of
 Atiyah and Segal for
equivariant Algebraic G-theory, for actions of linear algebraic groups on schemes that holds without any of the
strong finiteness conditions that are required in such theorems proven by him, and also appearing in the
original Atiyah-Segal theorem.  {\it The main goal of the present paper is to provide a proof of this conjecture
in as broad a context as possible, making use of the technique of derived completion, and to consider several of the applications.} 
\vskip .2cm
Our solution is
broad enough to allow actions by all linear algebraic groups, irrespective of whether they are connected or not, and
acting on any quasi-projective scheme of finite type over a field, irrespective of whether they are regular or projective.
This allows us therefore to consider the Equivariant Algebraic G-Theory of large classes of varieties like all Toric varieties 
(for the action of a torus) and all Spherical varieties (for the action of a reductive group). Restricting to actions by split tori,
we are also able to consider actions on Algebraic Spaces. Moreover, the restriction that the base scheme be a field is
also not required often, but is put in mainly to simplify some of our exposition. These enable us to obtain a wide range of applications,
some of which are briefly sketched and which we plan to explore in detail in the future. A comparison of our results
with previously known results, none of which made use of derived completions, shows that without the use of derived completions, one can only obtain results which are indeed very restrictive.
\end{abstract} 
\setcounter{tocdepth}{1}
\tableofcontents\setcounter{tocdepth}{1}

\section{\bf Introduction}
\vskip .2cm
An important  problem in algebraic K-theory is to understand the behavior of equivariant K-theory.   
This is important not only for manifestly  equivariant problems, but  also for descent problems arising in 
non-equivariant problems over a non-algebraically closed base field, or more generally a scheme. 
An approach is suggested by powerful results in equivariant homotopy theory \cite{AS69}, \cite{C84}.  
For both of the referenced results, one finds that an 
equivariant theory (equivariant topological K-theory in the case of \cite{AS69}, 
equivariant stable homotopy theory in the case of \cite{C84}) satisfies an approximation result 
that asserts that the equivariant theory is equivalent to a homotopy fixed point set, after application 
of a suitable notion of completion. Homotopy fixed point sets can be considered homotopy theoretic information, 
since there are standard spectral sequences for computing them.  Informally, we will say that 
equivariant topological K-theory and equivariant stable homotopy theory are {\em homotopy computable}.  
The approach to analyzing equivariant algebraic K-theory is now to ask to what extent it too is homotopy computable.  The answer we provide
 is that Equivariant algebraic G-theory is also homotopy computable, but with a somewhat different notion of homotopy fixed
  point sets more suited to motivic frameworks, such as those in \cite{tot} or \cite{MV}, thereby also solving affirmatively the conjecture posed in \cite{Th86}.
 \vskip .2cm
Apart from \cite{C11}, derived completions have never been used in the context of the Atiyah-Segal framework for equivariant Algebraic K-theory. The focus in \cite{C11} is on actions
of profinite groups, such as the absolute Galois groups of fields, and therefore, is essentially disjoint from the context of actions by linear algebraic
 groups considered in this paper. However, work in progress by the authors suggest that the results of the present paper may
 also find application even when the primary focus is actions by profinite groups and also may have a bearing on the Norm-Residue
 Theorem: see \cite{HW}, \cite{V11}. We expect our results to have a number of applications, several of which are sketched later on (see section 6), such as
  equivariant forms of higher Riemann-Roch theorems, computation of the homotopy groups of the derived completion using the motivic Atiyah-Hirzebruch spectral sequences
   as well as to equivariant forms of the Weibel conjecture: see \cite{CHSW}. 
\vskip .2cm
But first, it seems worthwhile revisiting the results of Atiyah and Segal in their landmark paper in the 1960s: see \cite{AS69}.
If $\rmX$ is a compact topological space provided with the action of a compact Lie group,
we let ${\rm K}_0^{\rmG}(\rmX)$
(${\rm K}_0(\rmE \rmG\times _{\rmG} \rmX)$) denote the Grothendieck group of isomorphism classes of 
$\rmG$-equivariant topological vector bundles (the corresponding topological K-theory in degree $0$ 
 of the Borel construction ${\rm EG} \times_{\rm G} \rmX$, \res). 
The relationship between ${\rm K}_0^{\rmG}(\rmX)$ and ${\rm K}_0({\rm EG}\times _{\rmG} \rmX)$ was studied in this setting in \cite{AS69}.
 Their main result  was that, under strong assumptions on $\rmX$ which would imply
that ${\rm K}_0^{\rmG}(\rmX)$ is finite as a module over the representation ring ${\rm R}(\rmG)$, ${\rm K}_0({\rm EG}\times _{\rmG} \rmX)$ is the completion of ${\rm K}_0^{\rmG}(\rmX)$ with respect to the
augmentation ideal in ${\rm R}(\rmG)$. \footnote{The need for completion in this setting seems to be because of the failure of compactness of
${\rm EG}\times _{\rmG} \rmX$ in general. But this observation is of secondary importance for us, since the main goal of the paper
is to be able to relax the strong finiteness assumptions in the Atiyah-Segal theorem by making use of what is called derived completions.}
\vskip .2cm
The need for such strong assumptions may be understood by recalling completions for commutative rings. Let ${\rm A}$ denote a commutative ring with 
a multiplicative unit and let ${\rm I}$ denote an ideal in ${\rm A}$.
If ${\rm M}$ is an ${\rm A}$-module,
recall that the ${\rm I}$-adic completion of ${\rm M}$ along ${\rm I}$ is  ${\rm M}\, \widehat{} \,  =\limn {\rm M}/{\rm {I^nM}}$. Unfortunately, the functor 
${\rm M} \mapsto {\rm M}\, \widehat{} \, $ is
neither right-exact nor left-exact, in general : see \cite[Chapter 10, Proposition  10.12 ]{AM} for basic results on completion in the 
Noetherian 
case. Therefore,  one needs the higher derived functors of the above completion functor, which are
nontrivial in general, i.e. when ${\rm A}$ is no longer required to be Noetherian and/or ${\rm M}$ is no longer required to be finitely generated over ${\rm A}$.
\vskip .2cm
For the case considered in the second paragraph, the ring in question is ${\rm R}(\rmG)$ which is Noetherian, so that strong assumptions on $\rmX$ 
ensure that the ${\rm R}(\rmG)$-module, ${\rm K}_0^{\rmG}(\rmX)$ is finite, which then ensure that the completion is well-behaved. 
\vskip .2cm
The corresponding relationship 
for the action of algebraic groups on schemes was studied by Thomason in a series of papers in the
1980s: see \cite{Th83}, \cite{Th86} and \cite{Th88}. In his seminal paper \cite{Th86}, Thomason proves that with 
certain strong restrictions on an algebraic group $\rmG$ and scheme $\rmX$ with $\rmG$-action, a Bott periodic form of algebraic 
$G$-theory is indeed homotopy computable. The result is clearly very powerful, but it suffers from two deficiencies. 

\begin{enumerate}
\item{It requires that we deal with the Bott-element inverted algebraic $G$-theory with finite coefficients prime to the characteristic.
While it is true that there is a strong relationship between algebraic $G$-theory and its Bott-element inverted version, this relationship 
is not strong enough to permit exact calculations.  In fact the Bott-element inverted version of G-theory with finite coefficients
is essentially topological (or \'etale) G-theory, so that Thomason's results are only for this form of G-theory and
not for Algebraic G-theory itself. 
\vskip .2cm \noindent
We will in fact present an example in ~\ref{counter.eg} that shows that the map from equivariant algebraic G-theory with finite coefficients
to the corresponding equivariant algebraic G-theory with the Bott-element inverted is {\it not} an isomorphism for many varieties,
so that Thomason's results do {\it not} provide an Atiyah-Segal type completion theorem for equivariant Algebraic G-theory,
but only for its topological variant, namely the Bott-element inverted form of equivariant algebraic G-theory.
Therefore, one needs a completion theorem for equivariant algebraic G-theory itself. }
\item{There are certain awkward restrictions to the result, in particular the requirement that the scheme or algebraic space be over a 
separably closed base field or possibly a scheme with finitely generated K-groups, such as rings of integers in number 
fields.  
Such restrictions are mainly to ensure that the form of equivariant algebraic
K-groups that are considered are finite over ${\rm R}(\rmG)$. }
\end{enumerate}
\vskip .2cm   
The homotopy fixed point scheme that comes up in Thomason's results is based on a simplicial model of the classifying space for 
an algebraic group and this seems to be enough primarily because Thomason is considering Algebraic G-theory with the
Bott-element inverted. To be able to consider Algebraic G-theory itself, one needs a model of classifying spaces that 
is more suited to motivic contexts, such as those in \cite{tot} or \cite{MV}, which is what we consider in this paper. More details on Thomason's theorem and his strategy are discussed in section ~\ref{sec.egs}
\vskip .2cm
A recent attempt to prove an Atiyah-Segal type theorem for equivariant Algebraic K-theory, (i.e. without 
using finite coefficients or inverting the Bott element) 
is \cite[Theorem 1.2]{K}. (See also \cite{KN}.) This makes use of the usual completion of the homotopy groups of the K-theory spectrum at the augmentation ideal of the representation ring.
Though \cite[Theorem 1.2]{K} avoids Thomason's strong hypotheses, and could very well be the strongest result that could be obtained
 by the traditional completion, {\it  
the following strong list of alternate hypotheses, (i) through (iii), are required. Dropping any one of them means that the results only hold for the Grothendieck
   groups as \cite[Theorem 1.3]{K} shows.} See section 6 for more details.
\vskip .2cm
\begin{enumerate}[\rm (i)]
 \item  The groups considered for the action have to be all {\it connected split reductive groups defined over a field}.
 In particular, this means {\it no finite groups, however simple, (for example, elementary abelian groups) 
  are allowed, though finite group actions are very important in studying descent problems. }
 \item The objects considered have to be {\it schemes of finite type over a field.} In particular, this means natural objects 
 like {\it algebraic spaces cannot} be considered. 
  \item The schemes  have to be {\it both projective as well as smooth}. In particular, varieties that are non-smooth, like
  {\it normal varieties} are not allowed in general. Recall that {\it toric varieties and spherical varieties} are defined to be
  {\it normal varieties} satisfying certain further hypotheses. This shows that large classes of commonly occurring varieties like
  {\it toric varieties that are singular or non-projective (when the group is a torus) are not allowed}. Similarly,
  the large class of varieties called {\it spherical varieties (when the group is a reductive group, 
  even as simple as ${\rm GL}_n$ or
  ${\rm SL}_n$) that are singular or non-projective cannot be considered}. 
  A variant of this is the counter-example discussed in \cite[Theorem 1.5]{K} where the scheme
   is the homogeneous space ${\mathbb G}_m/\mu_2$ for the action of ${\mathbb G}_m$, all over ${\mathbb C}$.
   \item The assumption of projective smoothness over a field and the restriction to actions by connected
   split reductive groups is to be able  to reduce to the case of schemes stratified by strata that are affine spaces
   over a smooth projective scheme with trivial action, by making strong use of Bialynicki-Birula decomposition. All toric and spherical varieties have
   only finitely many fixed points for the action of a maximal torus. {\it This means the only toric and spherical varieties
   that are allowed by \cite[Theorem 1.2]{K} are those schemes stratified by strata that are affine spaces, which 
   forms an extremely narrow class of varieties. Moreover for such varieties over separably closed fields, the
   map from equivariant algebraic K-theory to equivariant algebraic K-theory with the Bott element inverted, and both with coefficients
    prime to the characteristics, is an isomorphism in non-negative degrees, so that \cite[Theorem 1.2]{K} in this case is implied by Thomason's Theorem
    (i.e. \cite[Theorem 3.2]{Th86}.)}
   
\end{enumerate}

\vskip .2cm

In fact, Thomason clearly understood  the source of the difficulties and foresaw the key ingredients necessary to resolve them.  We have the following quote 
  from one
of Thomason's papers (see [Th86], pp. 795-796) (slightly edited to remove references to the context of his paper):
\vskip .1cm
{\it ``This is because the isomorphisms (involving the comparison between the two types of equivariant theories)
is expressed at the superficial level of the corresponding homotopy groups, rather than the deeper level
of spectra, and so depend on the $I_G$-adic completion process being exact on the level of
the homotopy groups. This necessitates strong finiteness assumptions which can be met only in special cases.
If the proper homotopy theoretic construction on the spectrum of equivariant K-theory could be found, that would
induce $I_G$-adic completion on the homotopy groups in nice cases, and do something more complicated involving
the derived functors of the inverse limit functor  in general, then it could be used to formulate an extension
of the Atiyah-Segal completion theorem to more general settings. It would also extend the Atiyah-Segal theorem  without their 
finiteness hypotheses.''}
\vskip .2cm
The first author has constructed exactly such a homotopy theoretic construction (called {\it derived completion}) in \cite{C08} and the results of the present paper
make use of this derived completion following the approach conjectured by Thomason. 
A comparison of our main theorems, Theorems ~\ref{main.thm.1}, ~\ref{comparison} below 
 show that, they do not suffer from any of the above restrictions that showed up in \cite{Th86} or \cite{K}.
\begin{enumerate}[\rm(i)]
 \item As shown below, actions of any linear algebraic group on any quasi-projective scheme (all defined over a field) are allowed, and the group need {\it not} be connected. This includes all finite groups
 and diagonalizable groups when they are imbedded as subgroups of some ${\rm {GL}}_n$. (See Remark ~\ref{remark.validity} below.)
In this context, the base scheme need not be a field, and any general Noetherian regular scheme of finite type over field
 is allowed, though for the sake of simplicity we only discuss the case where the base scheme is a field.
\item Algebraic spaces are allowed as long as the groups acting on them are split tori (or diagonalizable subgroups of them).
  \item Any normal quasi-projective $\rmG$-scheme, for the action of a linear algebraic group $\rmG$ is allowed as shown below.
 This allows both singular schemes as well as schemes that are not projective, thereby allowing large classes of schemes
 like {\it all} toric varieties, {\it all} spherical varieties and {\it all} linear varieties. 
 In fact, the counter-example discussed in ~\ref{counter.eg} shows
 that Thomason's theorem
 does not imply a completion theorem in mod-$\ell^{\nu}$ equivariant algebraic K-theory for large classes of toric and spherical varieties. 
 (See the discussion in ~\ref{counter.eg} for more details.)
 Clearly,
 neither does \cite[Theorem 1.2]{K} unless these varieties are stratified by affine spaces (in which case the result is implied
 by Thomason's theorem, at least over separably closed fields and with finite coefficients prime to the characteristic, as pointed out above).
 \item Our Theorem ~\ref{comp.2} shows that when applied to projective smooth varieties over a field, the homotopy groups of the derived
 completion identify with  the usual completion of the homotopy groups, so that we recover all the results of \cite{K} 
 as a corollary to our results.
 \item Making use of our results, and combining with the usual spectral sequence relating motivic cohomology
 and algebraic K-theory, we obtain a computation of the equivariant K-groups of smooth schemes after applying derived completions, in terms
 of their equivariant motivic cohomology groups. We also obtain as corollaries, strong forms of equivariant Riemann-Roch theorems valid for all normal quasi-projective $\rmG$-schemes
  for the action of any linear algebraic group $\rmG$, and
 involving higher equivariant G-theory and equivariant motivic (and other forms of) cohomology. (These applications are 
  discussed in section 6.)
 \item More details on all of these are discussed in section 6.
\end{enumerate}
\vskip .2cm
Before we proceed further, a few brief comments on the subtle nature of
derived completion seem to be also in order. 
\begin{enumerate}[\rm(i)]
\item First, observe that {\it a key problem} with completion with respect to an ideal in a ring
is that, in general,  it is {\it neither left exact nor right exact}. Therefore, a key role of derived completion is to rectify this problem: i.e. one has to rectify the failure of both left-exactness and right exactness. This means that a simple-minded approach 
by taking either a projective resolution (i.e. a cofibrant replacement) or an injective resolution (i.e. a fibrant replacement)
will {\it not suffice}. 
\item
Instead, one has to combine suitable cofibrant and fibrant replacements, and also at the
same time, the resulting derived completion for spectra has to be computable in terms of the derived completion of
their homotopy groups. 
\item
As Thomason states in his conjecture, the derived completion also has to have the derived functor
of the inverse limit functor built into it. 
\item What the above constraints mean is that the derived completion functor is rather delicate and cannot be obtained by starting with the usual completion functor at the level of homotopy groups and performing improvements to it.  \footnote{The derived completion also has little to do with the methods of derived algebraic geometry.} 
Finally, the technology to resolve the above issues needed important improvements in the theory of spectra and
stable homotopy theory (see for example, \cite{Ship07}, \cite{Ship04}) that became available only during the last 10 years or so.
\end{enumerate}
These are among the reasons, why the existence of a derived completion with desirable 
properties remained a conjecture for many years and the issues were all successfully resolved only in the 2008 paper
\cite{C08} by the first author. Moreover, in order to apply those results to the context of this paper, we had to 
amplify some of those results: these are discussed in section 3 as well as the appendices A and C. {\it The fact that
we are able to resolve successfully all the problems discussed above with the Atiyah-Segal completion theorem, shows
the effectiveness of the derived completion.}
\vskip .2cm
Apart from \cite{C11} (see also \cite{C13}), derived completions have never been used in the context of the Atiyah-Segal framework for equivariant Algebraic K-theory. The focus in \cite{C11} (and also in \cite{C13}) is on actions
of profinite groups, such as the absolute Galois groups of fields, and therefore, is essentially disjoint from the context of actions by linear algebraic
 groups considered in this paper. However, work in progress by the authors suggest that the results of the present paper may
 also find application even when the primary focus is actions by profinite groups and also may have a bearing on the Norm-Residue
 Theorem: see \cite{HW}, \cite{V11}.
 \vskip .2cm
Though we state our main results only when the base scheme $\rmS$ is the spectrum of a field, as pointed out above, several of our results
extend to a more general setting. Therefore, 
in order to extend the validity of our results to as general a context as possible, we will assume the following 
framework for the paper.
The base scheme $\rmS$ will be a separated Noetherian scheme which is regular. For the most part, this means $\rmS$ will be the spectrum of a 
finitely 
generated algebra over a field $k$, which is regular. 
Our approach is general enough that some of our results  also extend to actions of group schemes (especially tori) on algebraic spaces. The  algebraic spaces 
and schemes we consider will always be assumed to be {\it Noetherian and separated over the base scheme} $\rmS$. 
The group schemes we consider will be
affine group schemes which are finitely presented, separated and faithfully flat over $\rmS$. When we say a group scheme $\rmG$ is affine, we mean that
it admits a closed immersion into some $\GL_{n, \rmS}$, for some integer $n >0$. 
Observe that any finite group $\rmG$ may be viewed as an affine group-scheme over $\rmS$ in the obvious manner, for example, 
by imbedding it as a subgroup of the monomial matrices in some $\GL _n$.
\vskip .2cm
{\it Our main results will be stated only under the following basic assumptions.} We will assume the base scheme $\rmS$ 
is a field $k$ of
arbitrary characteristic $p$ and {\it though we will allow actions by any linear algebraic group (connected or not), the derived completion we use will be
with respect to an ambient bigger group} which satisfies certain stronger conditions as specficied below.\footnote{This is in view of Theorem ~\ref{comparison} (below), which shows that the
derived completion with respect to a closed subgroup and an ambient bigger group are not the same in general.} The {\it ambient group} will be denoted
by $\rmG$ while {\it the closed subgroup} will be denoted $\rmH$ throughout. 
\vskip .1cm
\begin{shypothesis}
 \label{stand.hyp.1} Henceforth the ambient group $\rmG$ will denote a connected split reductive group over a field $k$ so that if $\rmT$ denotes a
maximal torus in $\rmG$, then $\rmR(\rmT)$ is free of finite rank over $\rmR(\rmG)$ and $\rmR(\rmG)$ is Noetherian. We will also restrict to $\rmG$-schemes $\rmX$ of finite type over $k$
for which there exists a closed $\rmG$-equivariant immersion  into a regular  $\rmG$-scheme $\tilde \rmX$ of finite type over $k$. {\it It is important
to point out that restriction to such affine group schemes does not restrict the range of applications at all: we discuss this in the following key observations
 ~\ref{remark.validity} below.}
 \vskip .1cm
 This hypothesis is satisfied by 
$\rmG=GL_n$ or ${\rm {SL}}_n$, for any $n$, or any finite product of these groups. (A basic hypothesis that guarantees this condition is that the algebraic fundamental group 
$\pi_1(\rmG)$ is 
torsion-free: see section ~\ref{rep.ring}.) 
\end{shypothesis}
\vskip .2cm
\begin{keyobservation}
 \label{remark.validity}
 \begin{enumerate}[\rm(i)]
\item Observe that any linear algebraic group $\rmH$ can be imbedded into $\rmG$ (as a closed sub-group-scheme), where $\rmG$ is a general linear group (i.e. a $\GL_n$) or a finite product of such groups.
 It is important to observe that, then the $\rmH$-equivariant G-theory (K-theory) of a scheme $\rmX$ is weakly equivalent to
the $\rmG$-equivariant $G$-theory ($K$-theory) of $\rmG{\underset {\rmH} \times}X$. i.e.
\be \begin{equation}
     \label{key.obs.1}
     \bG(\rmX, \rmH) \simeq \bG(\rmG{\underset {\rmH} \times}\rmX, \rmG) \mbox{ and } \bK(\rmX, \rmH) \simeq \bK(\rmG{\underset {\rmH} \times} \rmX, \rmG)
\end{equation} \ee
Moreover, if $\rmX$ is a quasi-projective scheme, then so is $\rmG{\underset {\rmH} \times}\rmX$. This enables us to just consider the equivariant $\bG$-theory and $\bK$-theory
with respect to the action of the ambient group $\rmG$.
\item If $\rmH$ is a connected split reductive group,
 then $\rmR(\rmH) \cong \rmR(\rmT)^W$, where $W$ denotes the Weyl group of $\rmH$. Therefore $\rmR(\rmH)$ is Noetherian and the same holds
  for any linear algebraic group. Therefore, one may find a closed imbedding $\rmH \ra \GL_{n_1} \times \cdots \GL_{n_m}$ for some $n_1, \cdots, n_m$ such that the
restriction $\rmR(\GL_{n_1} \times \cdots \GL_{n_m}) \ra \rmR(\rmH)$ is surjective, so that Theorem ~\ref{comparison} below applies. 
\item
In view of these observations, assuming the ambient group $\rmG$ is a finite product of $\GL_n$s is not a serious restriction at all.
\end{enumerate}
\end{keyobservation}

\vskip .2cm 

Recall from \cite[Theorem 2.5]{Sum} that if $\rmG$ is a connected linear algebraic group and $\rmX$ is any $\rmG$-scheme which is normal and quasi-projective over the base scheme $\rmS= Spec \, {\it k}$, one may
find a $\rmG$-equivariant locally closed immersion of $\rmX$ into a projective space ${\mathbb P}^n$ on which $\rmG$  acts linearly. Therefore, one may find an
open $\rmG$-stable subscheme $\tilde X$ of this ${\mathbb P}^n$  into which $\rmX$ admits a $\rmG$-equivariant closed immersion. (To see this, observe first that
one may find an open subscheme $\rmU$ of the above projective space into which $\rmX$ admits a closed immersion. If we let $\rmV$ denote the image
of $\rmG \times \rmU$ under the action map $\rmG \times {\mathbb P}^n \ra {\mathbb P}^n$, then $\rmV$ is open and $\rmG$-stable and contains $\rmX$ as a locally closed subscheme.
It suffices to show that $\rmX$ is in fact closed in $\rmV$, which should follow from the fact that any point of $\rmV$ is in the orbit of a point of $\rmU$
and because $\rmX$ is closed in $\rmU$.) Clearly such an $\tilde X$ is regular. (It is shown in \cite[p. 629]{Th88} how to
remove the restriction that $\rmG$ be connected in the above discussion. See also ~\ref{G.quasiproj}.)
\vskip .2cm
Let $\rmX$ denote such a scheme provided with an action by
 $\rmG$. For the purposes of this introduction, we will let $\bK({\rm X}, \rmG)$ ($\bG({\rm X}, \rmG)$) denote the
spectrum obtained from the category of $\rmG$-equivariant vector bundles (coherent sheaves) on $\rmX$, though a more precise
definition is given in section 2.
\vskip .2cm
We will let  $\rmE{\rmG}^{gm}$  denote the geometric classifying space
for $\rmG$ which is constructed in ~\ref{geom.class.space} as an ind-object of schemes. We let 
$\rho_{\rmG}: \bK (\rmS, \rmG) \ra \bK( \rmS)$ denote the map of commutative
ring spectra defined by restriction to the trivial subgroup-scheme. (See section ~\ref{der.completion} for further details).
 For a prime $\ell \ne p$, let 
$\rho_{\ell}: \Sigma \ra \H(\Z/\ell)$ denote the mod$-\ell$ reduction map and let
$\rho_{\ell} \circ \rho_{\rmG}: \bK(\rmS, \rmG) \ra \bK(\rmS) {\underset {\Sigma} \wedge} \H(\Z/\ell)$ 
denote the composition of $\rho_{\rmG}$ and the mod$-\ell$ 
reduction map $id_{\bK (\rmS)} \wedge \rho_{\ell}:\bK(\rmS) \ra \bK(\rmS){\underset {\Sigma} \wedge} \H(\Z/\ell)$.  The K-theory and G-theory of these objects are defined in Definition ~\ref{Borel.equiv.th}.
\vskip .2cm
We would also like to point out that Remark ~\ref{remark.validity}(i) shows that the hypotheses
of the theorem below do not put any major restrictions on the group actions that are allowed.
\begin{theorem} 
\label{main.thm.1}
Assume that the base scheme $\rmS= Spec \,{\it k}$ for a field $k$ and that
$\rmX$ denotes any scheme of finite type over $S$ satisfying the hypotheses as in ~\ref{stand.hyp.1} and provided with an action by $\rmG$. 
Let $\rmH$ denote a closed sub-group-scheme of $\rmG$ and let $\rmE{\rmG}^{gm}{\underset {\rmH}  \times}X$ denote the ind-scheme defined by the Borel construction as in
section ~\ref{geom.class.space}.
\begin{enumerate}[\rm(i)]
\item Then the map $\bG({\rm X}, \rmH) \simeq \bK(\rmS, \rmH) \wedgeKH \bG({\rm X}, \rmH) \ra \bG(E{\rmG}^{gm}, \rmH) \wedgeKH \bG({\rm X}, \rmH) \ra 
\bG(E{\rmG}^{gm}\times {\rm X}, \rmH) \simeq \bG(E{\rmG}^{gm}{\underset {\rmH}  \times}X)$ 
factors through the derived completion of $\bG({\rm X}, \rmH)$ at $\rho_{\rmG}$  and induces a weak-equivalence 
\vskip .2cm
$\bG({\rm X}, \rmH) \compl_{\rho_{\rmG}} {\overset {\simeq} \ra} \bG(E{\rmG}^{gm}{\underset {\rmH}  \times}X).$
\vskip .2cm \noindent
The spectrum on the left-hand-side is the derived completion of $\bG({\rm X}, \rmH)$ along the map $\rho_{\rmG}$. (See section 3 for further details.) 
The above map is contravariantly functorial for flat $\rmG$-equivariant maps. 
\item Let $\ell$ denote a prime different from the characteristic of $k$.
 Then one also obtains a weak-equivalence
\vskip .2cm
$\bG({\rm X}, \rmH)_{\ell} \compl_{\rho_{\rmG}} {\overset {\simeq} \ra} \bG(E{\rmG}^{gm}{\underset {\rmH}  \times}X)_{\ell}$
\vskip .2cm \noindent
where the subscript $\ell$ denotes mod$-\ell$-variants of the appropriate spectra. (See ~\eqref{KGl} for their precise definitions.). Therefore, one also obtains the weak-equivalence:
\vskip .2cm \noindent
$\bG({\rm X}, \rmH) \compl_{\rho_{\ell} \circ \rho_{\rmG}} {\overset {\simeq} \ra} \bG(E{\rmG}^{gm}{\underset {\rmH}  \times}X) \compl_{\rho_{\ell}}$.
\vskip .2cm \noindent
The spectrum on the left-side (right-side) denotes the derived completion of $\bG({\rm X}, \rmH)$ ($\bG(\rmE{\rmG}^{gm}{\underset {\rmH}  \times}\rmX)$) with respect to the composite map $\rho_{\ell} \circ \rho_{\rmG}$ 
(the map $\rho_{\ell}$, \res). The above map is also contravariantly functorial for flat $\rmG$-equivariant maps. 
\item
If the split reductive group $\rmG$ is replaced by a split torus, and $\rmH$ denotes the same torus, then all of the above results extend to the case where $\rmX$ is a separated algebraic space 
of finite type over the base field $k$.
\end{enumerate} 
\end{theorem}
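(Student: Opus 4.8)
First, by Key Observation~\ref{remark.validity}(i) the weak equivalence $\bG(\rmX,\rmH)\simeq\bG(\rmG{\underset{\rmH}\times}\rmX,\rmG)$ of \eqref{key.obs.1} is compatible with the module structures over $\bK(\rmS,\rmH)$ and $\bK(\rmS,\rmG)$ and with the Borel constructions, $\rmE\rmG^{gm}{\underset{\rmH}\times}\rmX=\rmE\rmG^{gm}{\underset{\rmG}\times}(\rmG{\underset{\rmH}\times}\rmX)$, while $\rmG{\underset{\rmH}\times}\rmX$ still satisfies Standing Hypothesis~\ref{stand.hyp.1} (it embeds $\rmG$-equivariantly into the regular $\rmG{\underset{\rmH}\times}\tilde\rmX$); so it suffices to treat $\rmH=\rmG$: for every $\rmG$-scheme $\rmY$ as in~\ref{stand.hyp.1} the canonical map $\bG(\rmY,\rmG)\to\bG(\rmE\rmG^{gm}{\underset{\rmG}\times}\rmY)$ factors through the $\rho_{\rmG}$-derived completion and induces a weak equivalence $\bG(\rmY,\rmG)\compl_{\rho_{\rmG}}{\overset{\simeq}{\ra}}\bG(\rmE\rmG^{gm}{\underset{\rmG}\times}\rmY)$. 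Then I reduce the ambient group to a split maximal torus $\rmT$: since $\bG(\rmG/\rmB\times\rmY,\rmG)\simeq\bG(\rmY,\rmB)\simeq\bG(\rmY,\rmT)$ (the first by the flag-bundle/induction isomorphism, the second by homotopy invariance along the unipotent radical of a Borel $\rmB$) is free of rank $|W|$ over $\bG(\rmY,\rmG)$, and since $\rmR(\rmT)$ is free of finite rank over $\rmR(\rmG)$ by~\ref{stand.hyp.1} (so that $I_{\rmG}\rmR(\rmT)$ and $I_{\rmT}$ are cofinal and the derived completions in sight are compatible with this flag-bundle base change), the assertion for $\rmG$ follows from the one for $\rmT$.

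\textbf{The torus case.} For $\rmT\cong\GG_m^r$ I take the geometric approximations $\rmU_i=(\AA^{N_i}\setminus\{0\})^r$, so that $\bG(\rmE\rmT^{gm}{\underset{\rmT}\times}\rmY)=\holim_i\bG(\rmU_i{\underset{\rmT}\times}\rmY)$. For a single factor $\GG_m$ scaling $\AA^N$, the open immersion $(\AA^N\setminus\{0\})\times\rmY\hookrightarrow\AA^N\times\rmY$ with closed complement $\{0\}\times\rmY$, passed to quotient stacks, gives a localization cofiber sequence $\bG(\rmY,\GG_m)\xrightarrow{(1-t)^{N}}\bG(\rmY,\GG_m)\to\bG((\AA^N\setminus\{0\}){\underset{\GG_m}\times}\rmY)$, where $t$ is the standard character and homotopy invariance identifies $\bG$ of the affine-bundle quotient with $\bG(\rmY,\GG_m)$. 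Iterating over the $r$ factors, $\bG(\rmU_i{\underset{\rmT}\times}\rmY)$ is computed by the Koszul complex on $((1-t_1)^{N_i},\dots,(1-t_r)^{N_i})$ over $\bG(\rmY,\rmT)$, and $\holim_i$ of these is the derived completion of $\bG(\rmY,\rmT)$ at the augmentation ideal $I_{\rmT}=(1-t_1,\dots,1-t_r)$ of $\rmR(\rmT)$. One then checks that this Koszul completion coincides with the derived completion of~\cite{C08} along $\rho_{\rmT}$ --- a statement about a single ring map of commutative ring spectra and its module category, supplied by the amplifications of Section~3. Taking $\rmY=\rmS$ gives $\holim_i\bK(\rmU_i/\rmT)\simeq\bK(\rmS,\rmT)\compl_{\rho_{\rmT}}$, so $\bG(\rmE\rmT^{gm}{\underset{\rmT}\times}\rmY)$ is a module over the $\rho_{\rmT}$-completed ring spectrum and is therefore already derived $\rho_{\rmT}$-complete; hence the canonical map factors through the derived completion, and the computation just made shows the factored map is a weak equivalence. (Equivalently, the Amitsur/cobar resolution of $\bG(\rmY,\rmG)$ along $\rho_{\rmG}$ has $n$-th term $\bG(\rmY\times\rmG^{\times n})$, so the derived completion is the $\bG$-theory of the bar model of the Borel construction, and the theorem asserts that the bar and geometric models of $\rmB\rmG$ give the same $\bG$-theory --- which the torus computation verifies.)

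\textbf{Parts (ii) and (iii).} Part (ii) follows formally from (i) by smashing the equivalence over the sphere spectrum $\Sigma$ with $\H(\Z/\ell)$, since derived completion commutes with this operation up to the standard bookkeeping and the $\rho_{\ell}\circ\rho_{\rmG}$-completion of $\bG(\rmX,\rmH)$ is the $\rho_{\ell}$-completion of $\bG(\rmX,\rmH)\compl_{\rho_{\rmG}}$. For part (iii), when $\rmG=\rmH=\rmT$ is a split torus every step above survives with $\rmX$ a separated algebraic space of finite type: $\rmT$-equivariant $\bG$-theory of such algebraic spaces still satisfies localization and homotopy invariance, the quotients $\rmU_i/\rmT=(\PP^{N_i-1})^r$ are schemes so each $\rmU_i{\underset{\rmT}\times}\rmX$ is again a finite-type algebraic space, and the regular-embedding hypothesis of~\ref{stand.hyp.1} is replaced by the corresponding input for torus actions on algebraic spaces recalled in the body.

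\textbf{Main obstacle.} The delicate point is the identification, \emph{at the level of spectra rather than homotopy groups}, of the geometric tower $\{\bG(\rmU_i{\underset{\rmT}\times}\rmY)\}_i$ (equivalently, the Koszul completion) with the derived completion functor of~\cite{C08} along $\rho_{\rmT}$, together with precise control of the $\lim^{1}$ and higher derived-limit contributions to $\holim_i$; this is exactly what forces the amplifications of Section~3 and Appendices~A and~C. A subsidiary technical point is the equivariant projection/base-change isomorphism along $\rmS\to\rmB\rmT$ (and $\rmS\to\rmB\rmG$) used both in the localization-sequence computation and in the cosimplicial description of the derived completion.
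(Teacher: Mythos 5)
Your reduction scheme (to $\rmH=\rmG$, then to a maximal torus via the Borel subgroup, then localization/Koszul for the torus case, then formal passage for (ii) and (iii)) is the paper's, and you correctly flag the spectrum-level identification of the geometric Koszul tower with the derived completion of \cite{C08} as the main obstacle --- this is exactly what Theorems~\ref{key.part.thm}, \ref{key.thm.1} and Proposition~\ref{key.prop} supply. Two substantive corrections are needed. (a) The flag-bundle reduction does not rest on $\bG(\rmG/\rmB\times\rmY,\rmG)$ being free of rank $|W|$ over $\bG(\rmY,\rmG)$, and this is not established (nor needed): the paper proves only that $\pi^*$ is a split monomorphism via $R\pi_*\pi^*\simeq\mathrm{id}$ for the proper map $\rmG/\rmB\to \mathrm{pt}$ (items (v)--(vi) of~\ref{KG.props}), and deduces the $\rmG$-case from the $\rmT$-case through the retract diagram~\eqref{splittings}. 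Freeness is available only for $R(\rmT)$ over $R(\rmG)$, and even so, the comparison of $\rho_{\rmG}$- and $\rho_{\rmT}$-derived completions at the level of spectra --- which you summarize as "cofinality of $I_{\rmG}R(\rmT)$ and $I_{\rmT}$ plus compatibility with base change" --- is the content of Theorem~\ref{T.comp.vs.G.comp}, a non-formal input proved via a Postnikov-tower reduction to representation rings and a pro-isomorphism of ideal powers in $\H(R(\rmT))$-modules. (b) The parenthetical identifying the $n$-th cobar term of the derived completion with $\bG(\rmY\times\rmG^{\times n})$ is not valid: it would require an Eilenberg--Moore / K\"unneth descent formula for $\rmS\to \rmB\rmG$, which is precisely the statement that fails for algebraic $\bG$-theory without inverting the Bott element and which motivated derived completion in the first place. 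The K\"unneth formula the paper does establish (Proposition~\ref{key.prop}) holds only for the geometric approximations $(\AA^{m+1}\setminus 0)^r$ against an arbitrary $\rmY$, proved by localization and induction on the factors; the argument never passes through a comparison of the simplicial and geometric models of $\rmB\rmG$, but identifies the derived completion with the geometric tower directly.
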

In fact, if one restricts to actions of split tori, one may also consider more general base schemes than a field: but we choose not to discuss
this extension in detail, mainly for keeping the discussion simpler.
The strategy we adopt to proving the above theorem  has several similarities as well as some key differences  with  the 
proof by Atiyah and Segal (see \cite{AS69}) of their theorem. Both proofs 
proceed by reducing first 
to the case where the split reductive group is a finite product of $\GL_n$s and then to the case it is a split torus. In this case, an equivariant form of the Kunneth-formula  
as in \cite{AJ} and \cite{JK} together with some properties of the derived completion provides a proof. 
The key difference between our proof and the proof of the classical Atiyah-Segal theorem is in the use of the derived completion, which
is essential for our proof. Moreover, unlike in the classical case, for a closed sub-group-scheme $\rmH$ in $\rmG$, the derived completion of a $\bK(\rmS, \rmH)$-module 
spectrum with
respect to the maps $\rho_H:\bK(\rmS, \rmH) \ra \H(\Z)$ and $\rho_{\rmG}: \bK(\rmS, \rmG) \ra \H(\Z)$ will be different in general. (This explains
the use of the derived completion with respect to $\rho_{\rmG}$ (and not $\rho_H$) in the above theorem.) The main exception to this is
discussed in the following theorem. The reduction to the case where the split reductive group is a finite product of $\GL_n$s is handled by the
following theorem, while the case when the group is a product of $\GL_n$s is handled by Theorem ~\ref{main.thm.3} and the case when
it is a split torus is handled by Theorem ~\ref{key.thm.1}.
\vskip .2cm
\begin{theorem} 
 \label{comparison}
Let $\rmH \subseteq \rmG$ denote a closed algebraic subgroup of the linear algebraic group $\rmG$ so that the restriction map 
$\rmR(\rmG) \ra \rmR(\rmH)$ of 
representation rings is surjective. Let $\rmX$ denote a scheme  of finite type over $k$, provided with an action by $\rmG$ and satisfying the
hypothesis ~\ref{stand.hyp.1}.
Then, for any module spectrum $M$  over 
$\bK({\rm X}, \rmH)$, which is $s$-connected for some integer $s$, the derived completions of $M$ along the augmentations $\rho_{\rmH}:\bK(Spec \, k, \rmH) \ra \H(\Z)$ 
and $\rho_{\rmG}: \bK(Spec \, k, \rmG) \ra \H(\Z)$ are weakly-equivalent.
\end{theorem}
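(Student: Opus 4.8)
The plan is to compare the two derived completions directly, through their cobar (Amitsur) constructions, reducing everything to the elementary fact that the surjectivity hypothesis forces the augmentation ideals $I_{\rmG}=\ker(\rmR(\rmG)\to\Z)$ and $I_{\rmH}=\ker(\rmR(\rmH)\to\Z)$ to induce the same adic filtration on every $\rmR(\rmH)$-module, combined with the spectral sequence for derived completion established in section~\ref{der.completion}.

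Write $A=\bK(\Speck,\rmG)$, $A'=\bK(\Speck,\rmH)$ and $B=\H(\Z)$; restriction to $\rmH$ gives a map of commutative ring spectra $\phi\colon A\to A'$ with $\rho_{\rmH}\circ\phi\simeq\rho_{\rmG}$. Since $M$ is a module over $\bK(\rmX,\rmH)$, which is an $A'$-algebra via pullback along $\rmX\to\Speck$, it is an $A'$-module, hence an $A$-module via $\phi$; correspondingly each $\pi_tM$ is a module over $\pi_0\bK(\Speck,\rmH)=\rmR(\rmH)$. By construction, $M\compl_{\rho_{\rmG}}$ is the homotopy limit over $\Delta$ of the cobar cosimplicial spectrum $C^\bullet(B;A;M)$ with $n$-th term the iterated relative smash product $B\wedge_A\dots\wedge_A B\wedge_A M$, and likewise $M\compl_{\rho_{\rmH}}=\holim_{\Delta}C^\bullet(B;A';M)$. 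The identification $B\wedge_A N=(B\wedge_A A')\wedge_{A'}N$ for $A'$-modules $N$, together with the canonical map $B\wedge_A A'\to B$, produces natural maps $B\wedge_A(\cdots)\to B\wedge_{A'}(\cdots)$ which assemble into a map of cosimplicial spectra and, on homotopy limits, into a natural comparison map $\Psi\colon M\compl_{\rho_{\rmG}}\to M\compl_{\rho_{\rmH}}$. The goal is to show $\Psi$ is a weak equivalence; note this cannot be done levelwise, since the cosimplicial terms are not equivalent in general (this is exactly why a derived completion, and not a naive one, is needed), so the homotopy limit must be genuinely used.

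To evaluate $\Psi$ on homotopy groups I would invoke the strongly convergent spectral sequence for derived completion from section~\ref{der.completion}: since $\rmR(\rmG)$ and $\rmR(\rmH)$ are Noetherian (Standing Hypothesis~\ref{stand.hyp.1} and Key Observation~\ref{remark.validity}) and $M$ is $s$-connected, this spectral sequence has $E_2$-term $L_i\Lambda_{I_{\rmG}}(\pi_tM)$, the $i$-th left derived functor of $I_{\rmG}$-adic completion of $\pi_tM$ regarded as an $\rmR(\rmG)$-module, converging to $\pi_{t-i}\bigl(M\compl_{\rho_{\rmG}}\bigr)$, and similarly with $(\rmG,I_{\rmG})$ replaced by $(\rmH,I_{\rmH})$; moreover $\Psi$ induces a map between these spectral sequences. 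Now the surjection $\rmR(\rmG)\to\rmR(\rmH)$, being compatible with the rank augmentations to $\Z$, restricts to a surjection $I_{\rmG}\twoheadrightarrow I_{\rmH}$, so $I_{\rmH}=I_{\rmG}\rmR(\rmH)$ and hence $I_{\rmH}^{\,n}N=I_{\rmG}^{\,n}N$ for every $\rmR(\rmH)$-module $N$ and every $n$. Computing the derived functors of adic completion by the stable Koszul (\v Cech) complex on a finite generating set of $I_{\rmG}$, whose base change along $\rmR(\rmG)\to\rmR(\rmH)$ is the corresponding complex for $I_{\rmH}$ (Greenlees--May local homology), one obtains a natural identification $L_i\Lambda_{I_{\rmG}}(\pi_tM)\cong L_i\Lambda_{I_{\rmH}}(\pi_tM)$, which one checks is precisely the map on $E_2$ induced by $\Psi$. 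Hence $\Psi$ is an isomorphism on $E_2$-pages, and by convergence a weak equivalence, which is the assertion of the theorem.

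The main obstacle is the middle step: one needs the derived-completion spectral sequence of section~\ref{der.completion} in exactly the stated form, namely with $E_2$-page the honest (Greenlees--May-style, not merely classical-Noetherian) left derived functors of adic completion and strongly convergent under the single hypothesis that $M$ be bounded below, and one must check that the cosimplicial comparison map induces the expected Koszul base-change isomorphism on $E_2$. Granting that input, the remaining steps are formal: constructing $\Psi$ is routine relative-smash-product bookkeeping, and the equality of filtrations $I_{\rmH}^{\,n}N=I_{\rmG}^{\,n}N$ is immediate. (A cleaner but less self-contained route would be to characterize derived completion along $\rho\colon A\to\H(\Z)$ as a localization depending only on the augmentation ideal $I=\ker(\pi_0\rho)$ when $M$ is bounded below and $\pi_0A$ is Noetherian, and then observe that $I_{\rmG}$ and $I_{\rmH}$ define the same such localization on $\rmR(\rmH)$-modules.)
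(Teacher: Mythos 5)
The paper's proof of this theorem is a one--line citation: ``the hypotheses of \cite[Corollary 7.11]{C08} are satisfied with $A=\bK(\rmS,\rmG)$, $B=\bK(\rmS,\rmH)$, $C=\bK(\rmS)$.'' That corollary is designed exactly for the situation where $\pi_0(A)\to\pi_0(B)$ is surjective, and the key algebraic fact driving it is the same one you isolate: $I_{\rmH}=I_{\rmG}\cdot\rmR(\rmH)$, so the $I_{\rmG}$-adic and $I_{\rmH}$-adic filtrations on any $\rmR(\rmH)$-module agree on the nose. So your central observation is correct and is indeed the crux of the result.

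The gap is in the tool you choose to exploit it. You invoke the spectral sequence of \eqref{alg.geom.ss} and assert it is strongly convergent with $E_2$-page the Greenlees--May derived completion functors, so that an $E_2$-isomorphism forces an equivalence of abutments. But the paper explicitly cautions, immediately after \eqref{alg.geom.ss}, that ``the convergence of the above spectral sequences may not be strong,'' and for exactly that reason it abandons the spectral sequence in favor of the Postnikov-tower induction of \ref{der.compl.props} (from \cite[Corollary~7.8]{C08}): one passes to the tower $\{M\langle t\rangle\}_t$, reducing to the derived completion of the Eilenberg--Maclane spectra $\H(\pi_t M)$, and then Proposition~\ref{derived.comp.for.rings} identifies $\H(N)\compl_{\H(\rho)}\simeq\holim_k\H(N\otimes^L_R R/I^k)$ for an $R$-module $N$, after which one compares pro-objects of rings directly. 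This is exactly how the paper handles the analogous (and harder, since only pro-isomorphic) comparison in the proof of Theorem~\ref{T.comp.vs.G.comp}; in your surjective setting the two pro-objects $\{\rmR(\rmH)/I_{\rmG}^k\rmR(\rmH)\}_k$ and $\{\rmR(\rmH)/I_{\rmH}^k\}_k$ are not merely pro-isomorphic but levelwise equal. This route eliminates both the strong-convergence question and the need to verify that your comparison map $\Psi$ realizes the expected Koszul base-change map on $E_2$. Your parenthetical ``cleaner route'' at the end -- that derived completion of bounded-below modules over a Noetherian ring depends only on the (radical of the) augmentation ideal -- is essentially what Proposition~\ref{key.compare.compl} in Appendix~C supplies, and swapping your spectral-sequence step for that argument closes the proof.
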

 Here are several examples where the last theorem applies. (i) A good example for the above situation is when $\rmH$ is a diagonalizable subgroup of a split torus $\rmG$. For example, the above conclusions hold if $\rmH$ is a finite abelian group imbedded as a closed subgroup of a split torus. (ii) Another is 
 when $\rmG$ denotes a Borel subgroup of a split reductive group,  and $\rmH$ is a diagonalizable subgroup of the maximal torus contained in $\rmG$. 
  (iii) Next assume that $\rmH$ is
a finite group. Since the representation ring of $\rmH$ (of representations over $k$) is Noetherian, one may imbed $\rmH$ into a finite
 product of general linear groups, ${\rm {GL}}_{n_1}, \cdots, {\rm {GL}}_{n_m}$ so that the induced map $\rmR({\rm GL}_{n_1} \times \cdots \times {\rm GL}_{n_m}) \ra \rmR(\rmH)$
is surjective. Therefore, the above theorem applies in this case as well, where $\rmG= {\rm {GL}}_{n_1} \times \cdots \times {\rm {GL}}_{n_m}$. One may observe 
as a direct corollary to the above remarks and the last statement in Theorem ~\ref{main.thm.1}, that our results extend to actions of all 
smooth diagonalizable group schemes on schemes and algebraic spaces of finite type over $k$.
\vskip .2cm
We also provide the following theorem, which shows when the derived completions reduce to the usual completions at the augmentation ideal.
\begin{theorem}
\label{comp.2}
(i) Let $\rmG$ denote a connected split reductive group satisfying our standing hypothesis ~\ref{stand.hyp.1} acting on  a projective smooth
scheme $\rmX$ over a field $k$. Then one obtains the isomorphism: $\pi_*(\bK({\rm X}, \rmG) \compl_{\rho_{\rmG}}) \cong \pi_*(\bK({\rm X}, \rmG))\compl_{I_{\rmG}}$, where
the term on the right denotes the completion of $\pi_*((\bK(\rm{\rm X}, \rmG))$ at 
the augmentation ideal $I_{\rmG} \subseteq R(\rmG)$. 
\vskip .2cm
(ii) If $\rmG$ is any connected split reductive group  acting on
 the projective smooth scheme $\rmX$ over the field $k$
and ${\rm GL}_n$ denotes a general linear group containing $\rmG$ as a closed subgroup-scheme, then
$\pi_*(\bK(\rmX, \rmG) \compl_{\rho_{{\rm GL}_n}}) \cong \pi_*(\bK(\rmX, \rmG))\compl_{I_{\rmG}}$.
\end{theorem}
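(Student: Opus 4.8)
The plan is to deduce both statements from the general behaviour of the derived completion recalled in Section 3 (amplifying \cite{C08}), together with the classical fact that $I$-adic completion is exact on finitely generated modules over a Noetherian ring. First recall that, for a module spectrum $M$ over $\bK(\rmS,\rmG)$ which is bounded below, the homotopy of the derived completion $M\compl_{\rho_{\rmG}}$ is the abutment of a conditionally convergent spectral sequence
\[
E_2^{p,q}\;=\;L_p\,\widehat{(\,\cdot\,)}_{I_{\rmG}}\bigl(\pi_q M\bigr)\ \Longrightarrow\ \pi_{q-p}\bigl(M\compl_{\rho_{\rmG}}\bigr),
\]
where $L_p\,\widehat{(\,\cdot\,)}_{I_{\rmG}}$ is the $p$-th left derived functor of $I_{\rmG}$-adic completion of $R(\rmG)$-modules, $I_{\rmG}=\ker(R(\rmG)\to\Z)$, and the derived functor of the inverse limit is built into the construction exactly as in Thomason's conjecture. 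Consequently, whenever $R(\rmG)$ is Noetherian and every $\pi_q M$ is a finitely generated $R(\rmG)$-module, the Artin--Rees lemma forces $L_p\,\widehat{(\,\cdot\,)}_{I_{\rmG}}=0$ for $p\ge1$ and makes each tower $\{\pi_q M/I_{\rmG}^{\,n}\pi_q M\}_n$ have surjective transition maps; the spectral sequence then collapses onto the line $p=0$ with vanishing $\lim^1$-terms, giving $\pi_n(M\compl_{\rho_{\rmG}})\cong(\pi_n M)\compl_{I_{\rmG}}$.

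For part (i) one takes $M=\bK(\rmX,\rmG)$, which equals $\bG(\rmX,\rmG)$ since $\rmX$ is regular. Because $\rmX$ is \emph{projective}, the finiteness theorem for equivariant $G$-theory of proper $\rmG$-schemes (going back to Thomason, \cite{Th86}, \cite{Th88}, and underlying the hypotheses of \cite{K}) shows that each $\pi_i\bG(\rmX,\rmG)$ is finitely generated over $R(\rmG)$, while $R(\rmG)$ is Noetherian by the standing hypothesis ~\ref{stand.hyp.1}. The observation of the first paragraph then gives $\pi_*(\bK(\rmX,\rmG)\compl_{\rho_{\rmG}})\cong\pi_*(\bK(\rmX,\rmG))\compl_{I_{\rmG}}$.

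For part (ii) the derived completion is formed by viewing $\bK(\rmX,\rmG)$ as a module over $\bK(\rmS,\GL_n)$ through the restriction $\bK(\rmS,\GL_n)\to\bK(\rmS,\rmG)$, so this module structure factors through $\bK(\rmS,\rmG)$ and the augmentation ideal $I_{\GL_n}\subseteq R(\GL_n)$ acts on $\pi_*\bK(\rmX,\rmG)$ through its image $J:=I_{\GL_n}R(\rmG)\subseteq R(\rmG)$. Arguing as in the proof of Theorem ~\ref{comparison}, this identifies $\bK(\rmX,\rmG)\compl_{\rho_{\GL_n}}$ with the derived completion of $\bK(\rmX,\rmG)$, now as a $\bK(\rmS,\rmG)$-module, along the ideal $J$. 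Next one checks $\sqrt J=I_{\rmG}$: the inclusion $J\subseteq I_{\rmG}$ holds because augmentations are compatible with restriction, and the reverse inclusion $I_{\rmG}\subseteq\sqrt J$ is equivalent to $V(J)=V(I_{\rmG})$, which holds since the fibre of $\Spec R(\rmG)\to\Spec R(\GL_n)$ over the augmentation point is the single augmentation point of $R(\rmG)$ — an element of $\rmG$ that is $\GL_n$-conjugate to the identity is the identity. As $R(\rmG)$ is Noetherian (Key Observation ~\ref{remark.validity}(ii)) and derived completion depends only on the radical of the defining ideal, $\bK(\rmX,\rmG)\compl_{\rho_{\GL_n}}\simeq\bK(\rmX,\rmG)\compl_{\rho_{\rmG}}$; the argument of part (i) — which used only that $R(\rmG)$ is Noetherian and that $\pi_*\bK(\rmX,\rmG)$ is finitely generated over $R(\rmG)$, both of which hold here without the standing hypothesis on $\rmG$ — completes the proof.

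The heart of the matter, and the main obstacle, is the collapse in the first paragraph, which hinges on the properness of $\rmX$: it is exactly the finite generation of $\pi_*\bG(\rmX,\rmG)$ over $R(\rmG)$ that kills the higher derived functors of completion and the $\lim^1$-contributions absorbed into the derived completion — the very phenomenon the paper is built around, and one that fails for non-proper $\rmX$. A secondary delicate point is the reduction step in part (ii): one cannot quote Theorem ~\ref{comparison} directly, since $R(\GL_n)\to R(\rmG)$ need not be surjective, so the passage from $\bK(\rmX,\rmG)\compl_{\rho_{\GL_n}}$ to $\bK(\rmX,\rmG)\compl_{\rho_{\rmG}}$ must go through the radical of $J$ together with the invariance of derived completion under replacing an ideal by its radical.
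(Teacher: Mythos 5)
Your proposal for part~(i) rests on a finiteness claim that is false in the generality the theorem requires: for $\rmX$ projective smooth over an arbitrary field $k$, the groups $\pi_i\bG(\rmX, \rmG)$ are \emph{not} finitely generated over $R(\rmG)$. Already $K_1(\bbP^1_{\bbQ})\cong\bbQ^*\oplus\bbQ^*$ is not finitely generated as an abelian group, so after the Bialynicki--Birula decomposition $\pi_*\bK(\rmX,\rmT)\cong\bigoplus_j \pi_*\bK(\rmZ_j)\otimes_{\bbZ}R(\rmT)$ is not finite over $R(\rmT)$; the Thomason-era finiteness you cite is a $K_0$-level statement or holds only over special bases (separably closed fields, rings of integers), and it is precisely the kind of hypothesis this paper is organized around \emph{removing}. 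Consequently the Artin--Rees collapse of your spectral sequence is unjustified. The paper's proof sidesteps this entirely: after using the retraction via the proper smooth map $\rmG\underset{\rmB}{\times}\rmX\to\rmX$ together with Theorem~\ref{T.comp.vs.G.comp} to reduce to a maximal torus, and then the Bialynicki--Birula filtration to reduce to the trivial $\rmT$-action on the fixed-point components, the crucial input is Proposition~\ref{triv.action}. There the module $\pi_*\bK(\rmX)\otimes_{\bbZ}R(\rmT)$ is treated not as finitely generated (it usually isn't) but as \emph{extended} from $\bbZ$: because $R(\rmT)$ and each $R(\rmT)/I_{\rmT}^n$ are $\bbZ$-flat, the derived tensor over $R(\rmT)$ with $R(\rmT)/I_{\rmT}^n$ degenerates to the ordinary one, the transition maps are surjective so $\lim^1$ vanishes, and Proposition~\ref{derived.comp.for.rings} then identifies the derived completion with the classical one. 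That flatness-of-extended-modules mechanism, not finite generation, is what makes the spectral sequence collapse.

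For part~(ii), your route is genuinely different from the paper's (which simply applies Theorem~\ref{main.thm.1} with $\rmH=\rmG$ inside $\rmG'=\GL_n$ and then invokes \cite[Theorem 1.2]{K} to identify $\pi_*\bK(\rmE\rmG^{gm}\underset{\rmG}{\times}\rmX)$ with the classical completion). Your idea of replacing $I_{\GL_n}R(\rmG)$ by its radical $I_{\rmG}$ and quoting the radical-invariance theorem from Appendix~C is a plausible alternative in spirit, but two things remain unaddressed: first, the passage from the spectrum-level $\rho_{\GL_n}$-completion of $\bK(\rmX,\rmG)$, which is defined via the cosimplicial triple over $\bK(\Spec k,\GL_n)$, to a completion of the same spectrum as a $\bK(\Spec k,\rmG)$-module along the ideal $J=I_{\GL_n}R(\rmG)$ needs an actual argument — Theorem~\ref{comparison} does not apply because, as you note, $R(\GL_n)\to R(\rmG)$ is not assumed surjective, and ``arguing as in'' its proof is exactly what fails; second, your part~(ii) ultimately relies on the (broken) finite-generation argument for part~(i). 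If the collapse argument of (i) is replaced by the paper's extended-module argument, your radical-invariance route to (ii) could potentially be salvaged, but as written it is not a proof.
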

\vskip .2cm
This paper originated in our efforts to provide a proof of the descent conjecture for the K-theory of fields due to the
first author: see \cite{C11}.  It is also motivated by a similar derived completion theorem proven for actions of profinite groups,
making use of another model for the classifying spaces: see \cite{C13}. In that framework, one needs to consider 
 representation rings of profinite groups, which are in general, non-Noetherian. In the present framework, since the representation
rings of linear algebraic groups are Noetherian,  the need for derived completion is only so as not to
put any strong restrictions on the schemes whose equivariant K-theory and G-theory we consider.
\vskip .2cm
Here is an outline of the paper. We review the basic properties of Equivariant K-theory, G-theory and
the geometric classifying spaces of linear algebraic groups in section 2. Section 3 is devoted to establishing
several basic results on derived completion that we use in later sections of the paper. This supplements
the results of \cite{C08} on derived completion. Sections 4 and 5 are devoted to a detailed proof of Theorems
~\ref{main.thm.1} and ~\ref{comparison}, with section 4 discussing the reduction to the case where the group is a split torus.
In this section, we also re-interpret Theorem ~\ref{main.thm.1} in terms of
pro-spectra. Section 5 discusses
 the proof of Theorem ~\ref{main.thm.1} for the action of a split torus.  We point out that several results in this section 
hold more generally for actions on algebraic spaces, though still over a base field.  Section 6 discusses several examples and compares our results with earlier results in the literature. We also provide a proof of
Theorem ~\ref{comp.2}. It also discusses various applications such as computing the completed equivariant G-theory in terms of
equivariant motivic cohomology as well as equivariant Riemann-Roch theorems.
These should convincingly show the power and utility of our techniques.
The appendices  A though C
discuss various results of a technical nature: Appendix A discusses the passage between Eilenberg-Maclane spectra and chain complexes, Appendix B discusses the role of motivic slices in establishing  key
properties of Borel-style equivariant K-theories and Appendix C 
contains supplementary 
results on derived completions in terms of ideals (in ring spectra). 
\section{\bf Equivariant K-theory and G-theory: basic terminology and properties}
Throughout the paper we will let $\rmG$ denote  a split reductive group satisfying the standing hypothesis in ~\ref{stand.hyp.1}, quite often 
this being a $\GL_n$ or a finite product of $\GL_n$s.
Let $\rmX$ denote a separated algebraic space or scheme over $\rmS$ provided with the action of an affine group scheme $\rmG$ as above. Let 
$\Pscoh (\rm{\rm X}, \rmG)$ ($\Perf (\rm{\rm X}, \rmG)$)
denote the category of pseudo-coherent complexes of $\rmG$-equivariant $\O_X$-modules with bounded coherent cohomology sheaves (the category
of perfect complexes of $\rmG$-equivariant $\O_X$-modules, \res). 
Recall that a $\rmG$-equivariant complex of $\O_X$-modules is pseudo-coherent (perfect) if it is quasi-isomorphic locally on
the appropriate topology on $\rmX$ (i.e. if $\rmX$ is a scheme, we use the Zariski topology, and if $\rmX$ is an algebraic space, and not a scheme, we use the \'etale topology) to a bounded above complex (a bounded complex, \res) of locally free $\O_{\rmX}$-modules with bounded coherent
cohomology sheaves.
We provide these categories with the structure of bi-Waldhausen categories
with cofibrations, fibrations and weak-equivalences by letting the cofibrations be the maps of complexes that are degree-wise split monomorphisms
(fibrations be the maps of complexes that are degree-wise split epimorphisms, weak-equivalences be the maps that are quasi-isomorphisms, \res).
$\bG(\rm{\rm X}, \rmG)$ ($\bK(\rm{\rm X}, \rmG)$) will denote the K-theory spectrum obtained from $\Pscoh (\rm{\rm X}, \rmG)$ ($\Perf (\rm{\rm X}, \rmG)$, \res). 
\vskip .2cm
One may also consider the category ${\rm Vect}(\rm{\rm X}, \rmG)$ of $\rmG$-equivariant vector bundles on $\rmX$. This is an exact category and one may apply Quillen's construction to it to
produce another variant of the equivariant K-theory spectrum of $\rmX$. If we assume that every $\rmG$-equivariant coherent sheaf on $\rmX$ is the $\rmG$-equivariant quotient of a 
$\rmG$-equivariant vector bundle on $\rmX$, then one may observe that this produces a spectrum weakly-equivalent to $\bK(\rm{\rm X}, \rmG)$: see 
\cite[2.3.1 Proposition]{ThTr} or 
\cite[Proposition 2.8]{J10}. It follows from \cite[Theorem 5.7 and Corollary 5.8]{Th83} that this holds in many well-known examples. It is shown in 
\cite[section 2]{J02} that, in general, the map
from the K-theory spectrum to the G-theory spectrum (sending a perfect complex to itself, but viewed as a pseudo-coherent complex) is a weak-equivalence
\be \begin{equation}
     \label{PD}
\bK(\rm{\rm X}, \rmG) \simeq \bG(\rm{\rm X}, \rmG)
    \end{equation} \ee
\vskip .2cm \noindent
provided $\rmX$ is regular. In general, such a result fails to be true for the Quillen K-theory of $\rmG$-equivariant vector bundles, which is the reason for
our preference to the  Waldhausen style K-theory and G-theory considered above. 
\vskip .2cm
We will nevertheless make the assumption that every $\rmG$-equivariant coherent sheaf on the base scheme $\rmS$ is the quotient of a 
$\rmG$-equivariant vector bundle. (Recall that, for some parts of the paper, we do not require the base scheme to be the spectrum of a field and
that, nevertheless, we require that it be always a Noetherian regular scheme.)
 This hypothesis is clearly satisfied if the base scheme is the spectrum of a field 
and more generally if the base-scheme is  the spectrum of a Noetherian regular ring and the algebraic group $\rmG$ is  split reductive: see \cite[Corollary 5.2]{Th83}.
It now follows from the hypotheses that $\bK(\rmS, \rmG) $ is weakly-equivalent to the Quillen K-theory of
$\rmG$-equivariant vector bundles on $\rmS$ and  that one obtains the weak-equivalence $\bG(\rmS, \rmG) \simeq \bK(\rmS, \rmG)$.
\vskip .2cm
If $\ell$ is a prime different from the residue characteristics, $\rho_{\ell}: \Sigma \ra \H(Z/\ell)$ will denote the obvious map from the sphere spectrum to the mod$-\ell$ Eilenberg-Maclane spectrum.
\vskip .2cm
\subsection{The geometric classifying space}
\label{geom.class.space}
 We begin by recalling
briefly the construction of the {\it geometric classifying space of a linear algebraic group}: see for example, \cite[section 1]{tot}, \cite[section 4]{MV}. Let 
$\rmG$ denote a linear algebraic group over $\rmS =Spec \quad {\it k}$, i.e. a closed subgroup-scheme in $\GL_n$ over $\rmS$ for some n. For a  (closed) embedding 
$i : \rmG \ra \GL_n$ {\it the geometric classifying space} $\rmB_{gm}(\rmG; i)$ of $\rmG$ with respect to $i$ is defined as follows. For $m \ge   1$, let 
$\rmE\rmG^{gm,m}=U_m(G)=U({\mathbb A}^{nm})$ be the open sub-scheme of ${\mathbb A}^{nm}$ where the diagonal action of 
$\rmG$ determined by $i$ is free. By choosing $m$ large enough, one can always ensure that 
$\rmU({\mathbb A}^{nm})$ is non-empty and the quotient $\rmU({\mathbb A}^{nm})/G$ is a quasi-projective scheme:
see the discussion following Definition ~\ref{adm.gadg},  where a more detailed discussion  of the 
geometric classifying spaces appears.
\vskip .2cm
Let $\rmB\rmG^{gm,m}=\rmV_m(\rmG)=U_m(\rmG)/\rmG$ denote the quotient 
$\rmS$-scheme (which will be a quasi-projective variety ) for the 
action of $\rmG$ on $\rmU_m(\rmG)$ induced by this (diagonal) action of $\rmG$ on ${\mathbb A}^{nm}$; the projection $ \rmU_m(\rmG) \ra \rmV_m(\rmG)$ defines $\rmV_m(\rmG)$ as the 
quotient  of $\rmU_m(\rmG)$ by the free action of $\rmG$ and $\rmV_m(\rmG)$ is thus smooth. We have closed embeddings 
$\rmU_m(\rmG) \ra \rmU_{m+1}(\rmG)$ and $\rmV_m(\rmG) \ra \rmV_{m+1}(\rmG)$ corresponding to the embeddings 
$Id \times  \{\rm0\} : {\mathbb A}^{nm} \ra {\mathbb A}^{nm } \times {\mathbb A}^n$. We set $\rmE\rmG^{gm} = \{U_m(\rmG)|m\} = \{ \rmE\rmG^{gm,m}|m\}$ and 
$ \rmB\rmG^{gm} = \{ \rmV_m(\rmG)|m\}$ which are ind-objects in the category of schemes. (If one prefers, one may view each $\rmE\rmG^{gm,m}$ ($\rmB\rmG^{gm,m}$)
as a sheaf on the big Nisnevich (\'etale) site of smooth schemes over $k$ and then view $\rmE\rmG^{gm}$ ($\rmB\rmG^{gm}$) as the 
 the corresponding colimit taken in the category of sheaves on $({\rm {Sm/k}})_{Nis}$ or on $({\rm {Sm/k}})_{et}$.)
\vskip .2cm
Given a scheme $\rmX$  of finite type over $\rmS$ with a $\rmG$-action satisfying the standing hypotheses ~\ref{stand.hyp.1} , we let $\rmU_m(\rmG){\underset {\rmG} \times} \rmX$ denote the {\it balanced product}, 
where $(u, x)$ and $(ug^{-1}, gx)$ are identified for all $(u, x) \eps \rmU_m \times \rmX$ and $g \eps \rmG$.  Since the $\rmG$-action on $\rmU_m(\rmG)$ is free, $\rmU_m(\rmG){\underset {\rmG} \times} \rmX$
exists as a geometric quotient which is also a quasi-projective scheme in this setting, in case $\rmX$ is assumed to be quasi-projective: see \cite[Proposition 7.1]{MFK}. (In case $\rmX$ is an algebraic space of
finite over $\rmS$, the above quotient also exists, but as an algebraic space of finite type over $\rmS$.)
\vskip .2cm
It needs to be pointed out that the construction of the geometric classifying space is not unique in general. 
When $\rmX$ is an algebraic space or a scheme in general, we will let $\{\rmU_m(\rmG){\underset {\rmG} \times}\rmX|m\}$ denote the
ind-object constructed as above for a chosen ind-scheme $\{\rmU_m(\rmG)|m \ge 0\}$. When $\rmX$ is restricted to the category of
smooth schemes over $\Spec k$, one can apply the result below in ~\eqref{indep.geom.class.sp} to show that the
choice of the ind-scheme $\{\rmU_m(\rmG)|m \ge 0\}$  is irrelevant.
\begin{definition} (Borel style equivariant K-theory and G-theory)
\label{Borel.equiv.th}
Assume first that $\rmG$ is a linear algebraic group or a finite group viewed as an algebraic group by imbedding it in some $\GL_n$. 
We define the Borel style equivariant K-theory of $\rmX$ to be $\bK(\rmE\rmG^{gm}{\underset {\rmG} \times}\rmX) = \holimm \bK(\rmU_m(\rmG){\underset {\rmG} \times}\rmX)$.
$\bG(\rmE\rmG^{gm}{\underset {\rmG} \times}\rmX)$ is defined similarly by observing that the closed imbeddings
$\rmU_m(\rmG){\underset {\rmG} \times}\rmX \ra \rmU_{m+1}(\rmG){\underset {\rmG} \times}\rmX$ are all regular closed imbeddings, and that, therefore the corresponding
pull-backs are defined at the level of G-theory. If $\ell$ is a prime different from the characteristic of the field $k$,
then $\bK(\rmE\rmG^{gm}{\underset {\rmG} \times}\rmX) \compl_{\rho_{\ell}} =  \holimm \bK(\rmU_m(G){\underset {\rmG} \times}\rmX)\compl_{\rho_{\ell}}$.
$\bG(\rmE\rmG^{gm}{\underset {\rmG} \times}\rmX) \compl_{\rho_{\ell}}$ is defined similarly.
\end{definition}
\vskip .2cm
Since the $\rmG$-action on $\rmE\rmG^{gm}$ is free, one obtains the weak-equivalences:
\vskip .2cm
$\bK(\rmE\rmG^{gm} \times \rm{\rm X}, \rmG) \simeq \bK(\rmE\rmG^{gm}{\underset {\rmG} \times}\rmX)$, 
$\bG(\rmE\rmG^{gm} \times \rm{\rm X}, \rmG) \simeq \bG(\rmE\rmG^{gm}{\underset {\rmG} \times}\rmX)$ 
\vskip .2cm \noindent
and similarly for the $\rho_{\ell}$-completed version.
\begin{example}
 Assume $\rmG$ is a subgroup of $\Sigma_n$, which is the symmetric group on $n$-letters. Then one may choose 
$\rmU_m(\Sigma_n) = \{(x_i, \cdots, x_m) \eps ({\mathbb A}^m)^n| x_i \ne x_j, i \ne j\}$, since the action on this $\rmU_m(\Sigma_n)$ by the symmetric group
 $\Sigma _n$ is free.
\end{example}
Next we make the following observations.
\begin{itemize}
 \item Let $\{\rmE\rmG^{gm,m}|m \}$ denote an ind-scheme defined above associated to the algebraic group $\rmG$.
Then if $\rmX$ is any scheme or algebraic space over $k$, then viewing everything as simplicial presheaves on
the Nisnevich site, we obtain $\colimm \rmE\rmG^{gm,m}{\underset {\rmG} \times }X \cong
(\colimm \rmE\rmG^{gm,m}){\underset {\rmG} \times}X = \rmE\rmG^{gm}{\underset {\rmG} \times}X$. (This follows readily from the
observation that the $\rmG$ action on  $\rmE\rmG^{gm,m}$ is free and that filtered colimits commute with the
balanced product construction above.)
\item It follows therefore, that if $\rmE$ is any ${\mathbb A}^1$-local spectrum and $\rmX$ is a smooth scheme of finite type over $k$, then one obtains a weak-equivalence:
\[ \rmMap(\rmE\rmG^{gm}{\underset {\rmG} \times}{\rm X}, E) = \holimm \{Map(\rmE\rmG^{gm,m}{\underset {\rmG} \times}{\rm X}, E)|m\}.\]
\end{itemize}
where $\rmMap(\quad, E)$ denotes the simplicial mapping spectrum. 
It is shown in Appendix B, making use of the properties of motivic slices that for any two different models of geometric classifying spaces
given by $\{\rmE\rmG^{gm,m}|m \}$ and $\{\widetilde {\rmE\rmG}^{gm,m}|m\}$, one obtains a weak-equivalence for any ${\mathbb A}^1$-local spectrum $E$ and any smooth scheme
$\rmX$:
\be \begin{equation}
\label{indep.geom.class.sp}
 \holimm \{\rmMap({\widetilde {\rmE\rmG}}^{gm,m}{\underset {\rmG} \times}{\rm X}, E)|m\} =  \holimm \{\rmMap ({\rmE\rmG}^{gm, m}{\underset {\rmG} \times}{\rm X}, E)|m\}.
\end{equation} \ee
\begin{remark}
\label{mainthm.comments}
 It may be worth pointing out that the only need to restrict to schemes satisfying the Standing Hypotheses as in ~\ref{stand.hyp.1} is
to be able to reduce G-theory to K-theory for regular schemes, where one may make use of the above arguments to show the Borel style
equivariant K-theory (and hence Borel-style equivariant G-theory) is independent of the choice of the geometric classifying spaces. 
This is essential, since in the proof of Theorem ~\ref{main.thm.1} one needs to use two different models of geometric classifying spaces 
for the action of a maximal torus $\rmT$ in a split reductive group $\rmG$, one of these being $\{\rmE\rmG^{gm,m}/\rmT|m \}$ and the other being $\{\rmE\rmT^{gm,m}/\rmT|m\}$. 
 It is only in this step that we need to invoke motivic arguments. If one restricts to actions of split tori, then one may use a single model of the geometric classifying space, namely $\{\rmE\rmT^{gm,m}/\rmT|m\}$, and 
therefore, Theorem ~\ref{main.thm.1} holds for actions of split tori 
on all algebraic spaces of finite type over the base field.
\end{remark}

\subsection{Key properties of equivariant K- and G-theories}
\label{KG.props}
\vskip .2cm
All of the properties below extend to the $\rho_{\ell}$-completed versions though we do not state them explicitly. Throughout $\rmX$ will denote
an algebraic space of finite type over $\rmS$, except in (ii) and (iv) where it is required to be a scheme satisfying the hypotheses in ~\ref{stand.hyp.1}.
\begin{enumerate}[\rm(i)]
 \item One may also consider the Quillen style K-theory of the abelian category of $\rmG$-equivariant coherent sheaves on $\rmX$. This will always produce
a spectrum weakly-equivalent to $\bG(\rm{\rm X}, \rmG)$: see, for example, \cite[Proposition 2.7]{J10}.
\item Let $\rmH$ denote a closed subgroup scheme of $\rmG$. Then one obtains the weak-equivalences: $\bG(\rm{\rm X}, \rmH) \simeq \bG(\rmG{\underset {\rmH} \times}\rm{\rm X}, \rmG)$ and
$\bK({\rm X}, \rmH) \simeq \bK(\rmG{\underset {\rmH} \times}{\rm X}, \rmG)$. Here $\rmG{\underset {\rmH} \times }{\rmX}$ denotes the quotient of $\rmG \times {\rmX}$ by the action of $\rmH$ given by 
$h(g, x) = (gh^{-1}, hx)$. Under the same hypotheses, one also obtains the weak-equivalences of inverse systems (i.e. a weak-equivalence on taking their homotopy inverse limits): 
$\{\bK(\rmE\rmH^{gm, m}{\underset \rmH \times}X)|m\} \simeq \{\bK(\rmE\rmG^{gm, m}{\underset {\rmG} \times}G{\underset {\rmH} \times}X)|m\}$
and $\{\bG(\rmE\rmH^{gm, m}{\underset {\rmH}  \times}X)|m\} \simeq \{\bG(\rmE\rmG^{gm, m}{\underset {\rmG} \times}G{\underset {\rmH}  \times}X)|m\}$. (These 
follow readily in view of the results in Appendix B, and the property (viii) below, which shows how to reduce to the case $\rmX$ is a regular scheme.)
\item
The $\rmG$-equivariant flat  map $\pi:\rmG{\underset {\rmH}  \times}\rmX \ra {\rmX}$, $(gh^{-1}, hx) \mapsto gh^{-1}hx = gx$
induces  maps $\pi^*: \bG(\rm{\rm X}, \rmG) \ra \bG(\rmG{\underset {\rmH}  \times}\rm{\rm X}, G) \simeq \bG(\rm{\rm X}, \rmH)$, $\pi^*: \bG(\rmE\rmG^{gm, m}{\underset {\rmG} \times}\rmX) \ra \bG(\rmE\rmG^{gm, m }{\underset {\rmG} \times} (\rmG {\underset {\rmH}  \times}\rmX))$
 which identify with the corresponding maps obtained by restricting the group action from $\rmG$
to $\rmH$. 
\item
In view of the above properties, given any linear algebraic group $\rmG$, we will fix a closed imbedding $\rmG \ra GL_n$ for some $n$ and
identify $\bG(\rm{\rm X}, \rmG)$ with $\bG(\GL_n{\underset {\rmG} \times}\rm{\rm X}, \GL_n)$, $\bK(\rm{\rm X}, \rmG)$ with $\bK(\GL_n{\underset {\rmG} \times}\rm{\rm X}, \GL_n)$,
\[\bG(\rmE\rmG^{gm, m}{\underset {\rmG} \times}\rmX) \mbox{ with } \bG(\rmE{\GL_n}^{gm, m }{\underset {\GL_n} \times}(\GL_n{\underset {\rmG} \times}\rmX)) \mbox{ and } \]
\[\bK(\rmE\rmG^{gm, m}{\underset {\rmG} \times}\rmX) \mbox{ with } \bK(\rmE{\GL_n}^{gm, m}{\underset {\GL_n} \times}(\GL_n{\underset {\rmG} \times}\rmX)).\]
\item
Next assume that ${\rmG}$ is a split reductive group over $\rmS$ and $\rmH=\rmB$ i.e. $\rmH$ is a Borel subgroup  
of $\rmG$. Then using 
the observation that $\rmG/\rmB$ is
proper over ${\rm S}$ and $R^n \pi_* =0$ for $n$ large enough, one sees that the
  map $\pi$ also induces  push-forwards $\pi_*: \bG(\rmG{\underset {\rmB} \times}\rm{\rm X}, \rmG) \ra \bG(\rm{\rm X}, \rmG)$ and
$\pi_*:  \bG(\rmE\rmG^{gm, m}{\underset {\rmG} \times} G{\underset {\rmB} \times}\rmX) \ra \bG(\rmE\rmG^{gm, m}{\underset {\rmG} \times}\rmX)$ induced by the derived direct image
functor $R\pi_*$. (Such a derived direct image functor may be made functorial at the level of complexes by considering pseudo-coherent complexes which
are injective $\O_{\rmX}$-modules in each degree.)
\item
Assume the above situation. Then the projection formula applied to $R\pi_*$ shows that the composition $R\pi_*\pi^*(F) = F \otimes R\pi_*(\O_{\rmG{\underset {\rmB} \times}\rmX}) \cong 
F $, since 
\be \begin{align}
     \label{derived.direct}
R^n \pi_*(\O_{\rmG{\underset {\rmB} \times}X}) &= \O_{\rmX}, \mbox{ if n=0 and}\\
&=0, \mbox{ if $n>0$}. \notag
    \end{align} \ee
\vskip .2cm \noindent
It follows $\pi^*$ is a split monomorphism in this case, with the splitting provided by $\pi_*$.
\item
Assume the situation as above with $\rmT=$ the maximal torus  in ${\rmG}$. Then $\rmB/\rmT = R_u(\rmB)$= an affine space. Now
one obtains the weak-equivalences: 
\[\bG(\rm{\rm X}, \rmB) \simeq \bG(\rmB{\underset {\rmT} \times}\rm{\rm X}, \rmB) \simeq \bG(\rm{\rm X}, \rmT) \mbox{  and } \bG(\rmE\rmB^{gm, m }{\underset {\rmB} \times}\rmX) \simeq \bG(E\rmB^{gm, m}{\underset {\rmB} \times}\rmB{\underset {\rmT} \times}\rmX) \simeq \bG(\rmE\rmB^{gm, m}{\underset {\rmT} \times}\rmX)\]
 where the first weak-equivalences
are from the homotopy property of $G$-theory and the observation that $R_u(\rmB)$ is an affine space over $\rmS$.
\item Finally assume that $\rmX$ is a  scheme satisfying the standing hypotheses as in ~\ref{stand.hyp.1}. 
Let $i: \rmX \ra \tilde \rmX$ denote a $\rmG$-equivariant closed immersion into a regular $\rmG$-scheme. Then one obtains the weak-equivalence: 
$\bG({\rm X}, \rmG) \simeq 
(hofib (i^*: \bK(\tilde {\rm X}, \rmG) \ra \bK(\tilde \rmX - {\rm X}, \rmG)))$ where ${\rm hofib}$ denotes the canonical homotopy fiber. Similarly, for each non-negative integer $m$,
 $\bG(\rmE\rmG^{gm, m}{\underset {\rmG} \times} \rmX)$ 
is weakly-equivalent to the canonical homotopy fiber of the restriction 
$i^*: \bK(\rmE\rmG^{gm, m}{\underset {\rmG} \times} \tilde \rmX) \ra \bK(\rmE\rmG^{gm, m}{\underset {\rmG} \times} (\tilde \rmX -\rmX))$. These identifications
enable us to readily extend the main results of this paper to schemes that are not regular, but satisfy the standing hypotheses ~\ref{stand.hyp.1}.
\end{enumerate}
\subsection{Properties of the representation ring}
\label{rep.ring}
Assume throughout this subsection that the base scheme is the spectrum of a field $k$.
First recall that the algebraic fundamental group associated to a split reductive group $\rmG$ over $k$ may be defined as $\Lambda/\rmX( \rmT)$, where $\Lambda$
($\rmX( \rmT)$) denotes the weight lattice (the lattice of characters of the  maximal torus in $\rmG$, \res): see for example, \cite[1.1]{Merk}. Then it is
observed in \cite[Proposition 1.22]{Merk}, making use of \cite[Theorem 1.3]{St}, that if this fundamental group  is torsion-free, then
$\rmR(\rmT)$ is a  free module over $\rmR(\rmG)$. Here $\rmT$ denotes a maximal torus in $\rmG$ and $\rmR(\rmG)$ ($\rmR(\rmT)$) 
denotes the representation ring of $\rmG$ ($\rmT$, \res).
Making use of the observation that ${\rm SL}_n$ is simply-connected (i.e. the above fundamental group is trivial), for any $n$, one may conclude that 
$\pi_1(\GL_n) \cong \pi_1({\mathbb G}_m) \cong {\mathbb Z}$ where ${\mathbb G}_m$ denotes the central torus in $\GL_n$. Therefore $\rmR(\rmT)$ is free
over $\rmR(\GL_n)$, where $\rmT$ denotes a maximal torus in $\GL_n$.
\subsection{\bf Basic conventions and terminology}
\label{basic.terminology}
\begin{itemize}
 \item{First we clarify that all spectra used in
this paper are  symmetric $S^1$-spectra which, when restricted to the category of smooth schemes over ${\rm S}$ 
 are presheaves on the big Nisnevich site of smooth schemes (or a suitable subcategory) over the given base $\rmS$ and are ${\mathbb A}^1$-homotopy invariant
when restricted to smooth schemes over $\rmS$. This category will be denoted
$\Spt_{S^1}(\rmS)$. An example to keep in mind is the equivariant K-theory spectrum (with respect to the action of a fixed linear algebraic group $\rmG$) 
which is defined only on schemes or algebraic spaces provided with actions by $\rmG$).  It is important to observe  that
some such spectra admit extensions to schemes that are not necessarily smooth over $\rmS$, as well as to algebraic spaces, the main examples of which are the equivariant $\rmG$-theory and $\rmK$-theory spectra. Ring and module spectra 
will have the usual familiar meaning but viewed as objects in $\Spt_{S^1}(\rmS )$. An appropriate context for much our work would be that of model categories for ring and module spectra as worked
out in \cite[Theorem 4.1]{SS} as well as \cite{Ship04}.}
\vskip .2cm
\item{We will make extensive use of the model structures defined in \cite{Ship04} to produce cofibrant replacements
for commutative algebra spectra over a given commutative ring spectrum. The commutative ring spectra that
show up in the paper are largely the K-theory spectra  and the 
equivariant K-theory spectra associated to  the actions of a 
linear algebraic group.}
\vskip .2cm
\item{In addition, we will also need to consider the usual Eilenberg-Maclane spectra associated to
commutative rings. Let $\R$ denote such a commutative ring and let $Alg(\R)$ denote the category of
commutative algebras over $\R$. In this case we will consider the free-commutative algebra functor to find a
simplicial resolution of a commutative $\R$-algebra $\S$, and use it define a commutative algebra spectrum
$\H(\tilde {\S})$ which will be a cofibrant replacement of $\H(\S)$ in the model category of module spectra over $\H(\R)$. See the proof of Theorem ~\ref{T.comp.vs.G.comp} and
~\ref{res.free.comm.alg} for more details on this technique.}
\vskip .2cm
\item{Let $\Spt_{S^1}(\rmS)$ denote the category of spectra and let $\rmI$ denote a small category. 
Then we provide the category of diagrams of spectra of type $I$, $\Spt_{S^1}(\rmS) ^{\rmI}$, with the projective model structure as 
in \cite[Chapter XI]{BK}. i.e. The fibrations (weak-equivalences) are maps $\{f^i:K^i \ra L^i|i \eps \rmI\}$
so that each $f^i$ is a fibration (weak-equivalence, \res) with the cofibrations defined by the lifting property with respect to
 trivial fibrations. Since the homotopy inverse limit functor
is not well-behaved unless one restricts to fibrant objects in this model structure, we will always
implicitly replace a given diagram of spectra functorially by a fibrant one before applying the homotopy inverse limit.
i.e. If $\{K^i|i \eps \rmI \}$ is a diagram of spectra, $\holim \{K^i|i \eps \rmI\}$ actually denotes $\holim \{R(K^i)|i \eps \rmI\}$
where $\{R(K^i)| i \eps \rmI\}$ is a fibrant replacement of $\{K^i| i \eps \rmI\}$ in $\Spt_{S^1}(\rmS) ^{\rmI}$.}
\vskip .2cm   
\end{itemize}
\section{\bf Derived completions}
\label{der.completion}
In this section, we begin by reviewing some of the basic results from \cite{C08} and then go onto establish several supplemental results
on the derived completion which will play a key role in later sections of the paper. 
The essential use of  derived smash products
in Proposition ~\ref{key.prop} and Theorem ~\ref{key.thm.1}, which play a key-role in our results, 
make the framework of the derived completions as worked out in \cite{C08} the appropriate framework, for this paper.
\vskip .2cm
Let $f: B \ra C$ denote a map of commutative ring spectra in $\Spt_{S^1}(\rmS )$ which are both $-1$-connected and let $\tilde C$ denote a cofibrant replacement of $C$ in the category of commutative algebra spectra over $B$.  Let $Mod(B)$ ($Mod(\tilde C)$)
denote the category of module spectra over $B$ ($\tilde C$, \res). Then one
defines a triple $Mod(B) \ra Mod(B)$ by sending a $B$-module spectrum $M$ to $M{\underset B \wedge}\tilde C$, which belongs to $Mod(\tilde C)$,
 and then by sending it to the corresponding $B$-module spectrum using restriction of scalars from $\tilde C$ to $B$. Iterating this defines
a cosimplicial object ${\mathcal T}_B^{\bullet}(M, \tilde C)$ in $Mod(B)$ whose $n$-th degree term is the iterated smash product:
\[{\overset {n+1} {\overbrace {M{\underset B \wedge} \tilde C \cdots {{\underset B \wedge } \tilde C}}}}.\]
We will often denote this object as $M{\overset {n+1} {\overbrace {{\overset L {\underset B \wedge}} C \cdots {{\overset L {\underset B \wedge }}  C}}}}$,
meaning that an implicit choice of a  cofibrant replacement of $C$ has been made. Similarly ${\mathcal T}^{\bullet}_B(M, \tilde C)$ will  often be denoted
${\mathcal T}_B^{\bullet}(M, C)$, meaning an implicit choice of a cofibrant replacement of $C$ has been made. 
\begin{definition}
\label{def.der.compl}
In this set-up, {\it the 
derived completion of $M$ 
along $f$} is defined to be the homotopy inverse limit: $\holimD {\mathcal T}_B^{\bullet}(M, C)$ and often denoted
$M \compl_{f}$. 
\end{definition}
\vskip .1cm \noindent
The derived completion is therefore a completion with respect to the above triple in the sense of \cite{Rad}.
\begin{remark}
 \label{alternate.form} Let $Alg(B)$ denote the category of commutative algebra spectra over $B$. Observe
that in the model structure on $Alg(B)$ defined in \cite{Ship04}, the weak-equivalences are maps in $Alg(B)$
which are stable equivalences of symmetric spectra and that any cofibrant object  in $Alg(B)$ is also cofibrant
in $Mod(B)$. Observe also that the weak-equivalences in $Mod(B)$ are
maps in $Mod(B)$ which are stable equivalences of symmetric spectra. Therefore first observe that if
$\tilde C$ is chosen as above, then it is also a cofibrant replacement of $C$ in $Mod(B)$.
Therefore, if $\hat C \eps Alg(B)$ is such that it is a cofibrant replacement of $C$ in $Mod(B)$, then for any
$M \eps Mod (B)$, 
one obtains a weak-equivalence between $M{\underset B \wedge}\tilde C$ and $M{\underset B \wedge}\hat C$ as well
as between the corresponding iterated smash products with $\tilde C$ and $\hat C $ over $B$. Therefore, one
may also define the cosimplicial object ${\mathcal T}^{\bullet}_B(M, \tilde C)$ using $\hat C$ in the place of 
$\tilde C$. This observation will be used in the proof of Theorem ~\ref{T.comp.vs.G.comp}.
\end{remark}

\vskip .2cm
\subsubsection{Partial derived completions} 
\label{def.part.der.compl}
The partial derived completion of $M$ along $f$ up to order $m$ will be defined
to be the $\holimDm \tau_{\le m}{\mathcal T}^{\bullet}_B(M, C)$, where $\tau_{\le m} {\mathcal T}^{\bullet}_B(M, C)
= \{{\mathcal T}^{i}_B(M, C)|i \le m\}$. This will be denoted $M \compl_{f, m}$.
\vskip .2cm
Let $I_B$ denote the homotopy fiber of $f$. This is a $B$-module spectrum and it is observed in \cite[Corollary 6.7]{C08} that
one obtains the stable cofiber sequence:
\be \begin{equation}
\label{key.cof.seq}
({\overset L {\underset B \wedge}} _{i=1}^{m+1} I_B ){\overset L {\underset B \wedge}} M \ra M \ra \holimDm \tau_{\le m}{\mathcal T}^{\bullet}_B(M, C) = M \compl_{f, m}
\end{equation} \ee
\vskip .2cm
Given a commutative ring $R$ (a module $M$ over $R$) $\H(R)$ ($\H(M)$, \res) will denote the 
Eilenberg-Maclane spectrum associated to $R$ ($M$, \res). (Recall that $\H(R)$ ($\H(M)$) has a single non-trivial stable homotopy group
in degree $0$, where it is $R$ ($M$, \res).) Then one observes readily that $\H(R)$ is a commutative
connected ring spectrum with $\H(M)$ a module-spectrum over $\H(R)$.
\begin{lemma} 
\label{flatness}
Suppose $R$ is a commutative Noetherian ring with $I$ an ideal in $R$ so that $I$ is flat as an $R$-module. Let $M$ be
 a flat $R$-module. Then one obtains the following weak-equivalences, where $f:\H(R) \ra \H(R/I)$ denotes the map induced by the map $R \ra R/I$:
\be \begin{align}
\label{part.compl.1}
({\overset L {\underset {\H(R)} \wedge}} _{i=1}^{m+1} \H(I)) {\overset L {\underset {\H(R)} \wedge}} \H(M) &\simeq
\H(({\overset L {\underset R \otimes}} _{i=1}^{m+1} I) {\overset L {\underset R \otimes}} M) \mbox{ and }\\
H(M) \compl_{f, m} &= \holimDm \tau_{\le m}{\mathcal T}^{\bullet}_{\H(R)}(\H(M), \H(R/I)) \simeq \H(\limm \{M/I^k| k \le m+1\}) \notag \end{align} \ee
\vskip .2cm \noindent
where one makes use of the free resolutions by the free commutative algebra functor discussed
in ~\ref{res.free.comm.alg} to define the truncated cosimplicial object $\tau_{\le m}{\mathcal T}^{\bullet}_{\H(R)}(\H(M), \H(R/I))$.
\end{lemma}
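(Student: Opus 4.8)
The plan is to reduce everything to a statement about the behavior of the Eilenberg--MacLane functor $\H(-)$ with respect to derived tensor products and homotopy limits, using the algebraic flatness hypotheses to make the derived tensor products collapse to ordinary ones.

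First I would establish the first weak-equivalence in~\eqref{part.compl.1}. The point is that $\H(-)$, as a lax symmetric monoidal functor from the derived category of $R$-modules (or rather the model category of chain complexes of $R$-modules) to $\H(R)$-module spectra, sends derived tensor products over $R$ to derived smash products over $\H(R)$; this is a standard comparison (to be spelled out via the Appendix~A identifications between Eilenberg--MacLane spectra and chain complexes, cf.\ the cross-reference to Appendix~A in the introduction). Concretely, since $I$ is a flat $R$-module and $M$ is a flat $R$-module, each of the $m+2$ factors in the iterated derived tensor product ${\overset L {\underset R \otimes}} _{i=1}^{m+1} I) {\overset L {\underset R \otimes}} M$ is a flat $R$-module, so the derived tensor product agrees (in the derived category) with the underived one, which sits in a single degree. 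Under $\H(-)$ this goes to the iterated derived smash product on the left-hand side. So the first line reduces to: $\H$ commutes with (derived) tensor products when applied to flat modules, together with the fact that a derived tensor product of flat modules is computed naively. I would phrase this carefully once and then apply it $m+1$ times.

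Next I would handle the second line. The truncated cosimplicial object $\tau_{\le m}{\mathcal T}^{\bullet}_{\H(R)}(\H(M), \H(R/I))$ is built using the free commutative algebra resolution of $\H(R/I)$ over $\H(R)$ as in~\ref{res.free.comm.alg}; by Remark~\ref{alternate.form} this computes the same homotopy inverse limit as any cofibrant-replacement model, so I may compute its terms using the flat-module comparison just established. By the first line, the $i$-th term ${\mathcal T}^{i}_{\H(R)}(\H(M),\H(R/I))$ is weakly equivalent to $\H$ applied to the chain complex $({\overset L {\underset R \otimes}}_{j=1}^{i} I){\overset L {\underset R \otimes}} M$ wait --- more precisely to $\H$ of $M{\overset L {\underset R \otimes}} (R/I)^{\otimes i}$, which since $I$ and $M$ are flat is concentrated in degree $0$ where it equals $M/I^{\,?}$; one must identify this degree-$0$ group. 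The standard computation is that the cosimplicial $R$-module $[i]\mapsto M\otimes_R (R/I)^{\otimes (i+1)}$ is the \v{C}ech--Amitsur-style complex whose totalization (cosimplicial $\lim$ along the truncation) is $M/I^{k}$ at truncation level $k$ --- this is exactly the classical identification of the $I$-adic completion tower with these cosimplicial objects, valid here because flatness kills all higher Tor. Applying $\H(-)$, which commutes with the relevant homotopy inverse limits over $\Delta_{\le m}$ because $\H$ is a right Quillen-type functor preserving homotopy limits of diagrams of Eilenberg--MacLane spectra (again via the Appendix~A dictionary, where homotopy limits of spectra correspond to the appropriate $\mathrm{holim}$ of chain complexes, and for complexes concentrated in degree $0$ with the surjectivity provided by flatness there is no $\lim^1$ obstruction), gives $\H(\limm\{M/I^k\mid k\le m+1\})$.

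I expect the main obstacle to be the bookkeeping in the second step: precisely matching the truncated-cosimplicial homotopy inverse limit of the $\H$-of-chain-complex terms with $\H$ of the ordinary inverse limit $\limm\{M/I^{k}\}$. This requires (a) knowing that each cosimplicial term is discrete (concentrated in homotopical degree $0$), which is where flatness of both $I$ and $M$ is used essentially --- it forces all derived tensor products to be underived and hence all higher homotopy groups to vanish; and (b) knowing that $\mathrm{holim}_{\Delta_{\le m}}$ of a cosimplicial diagram of discrete spectra whose coface maps are (levelwise) surjective has no derived-inverse-limit correction, so it agrees with the naive limit, which one then identifies combinatorially with $M/I^{m+1}$ via the standard resolution of $R/I^{m+1}$ by tensor powers of $R/I$. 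Everything else --- the monoidality of $\H$, the use of Remark~\ref{alternate.form} to ignore the choice of cofibrant replacement, and the reduction to flat modules --- is routine once this discreteness-plus-no-$\lim^1$ point is pinned down. I would therefore organize the proof around that: prove the flat-comparison lemma for a single derived tensor, bootstrap to the iterated case and conclude line one, then feed this into the cosimplicial computation and invoke the vanishing of higher $\lim$ to conclude line two.
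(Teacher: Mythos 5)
Your argument for the first weak-equivalence is sound and essentially the same as the paper's: flatness of $I$ and $M$ lets you replace each derived tensor product by the ordinary one, so the Tor spectral sequence degenerates and $\H$ of the (now discrete) iterated tensor product matches the iterated derived smash product.

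For the second line, however, there is a genuine gap. You claim that each cosimplicial term ${\mathcal T}^{i}_{\H(R)}(\H(M),\H(R/I))$, namely $\H$ applied to $M\otimes_R^L (R/I)^{\otimes^L i}$, is concentrated in homotopical degree $0$ because $I$ and $M$ are flat, and you then lean on this discreteness to identify the truncated homotopy limit combinatorially with $M/I^{m+1}$. This is false: flatness of $I$ gives a two-term flat resolution $0\to I\to R\to R/I\to 0$, which shows $\operatorname{Tor}^R_j(R/I,-)=0$ only for $j\ge 2$; it does \emph{not} kill $\operatorname{Tor}^R_1$. Already $R/I\otimes_R^L R/I$ has $H_1\cong I/I^2$, which is typically nonzero (e.g.\ $R=k[x]$, $I=(x)$ gives $I/I^2\cong k$). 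So the cosimplicial terms are not discrete for $i\ge 2$, and the ``\v{C}ech--Amitsur totalization'' step as you describe it does not go through.

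The paper avoids this precisely by not computing the cosimplicial terms over $R/I$ at all. It instead invokes the cofiber sequence ~\eqref{key.cof.seq} from \cite[Corollary 6.7]{C08}, which expresses $M\compl_{f,m}$ as the cofiber of $\bigl(\wedge^L_{i=1}^{m+1}I_B\bigr)\wedge^L_B M \to M$. With $I$ and $M$ flat, the source is $\H(M\otimes_R I^{m+1})$ by the first line; this is discrete and the map to $\H(M)$ is $\H$ of the \emph{injection} $M\otimes I^{m+1}\hookrightarrow M$ (again by flatness), so the cofiber is $\H(M/I^{m+1}M)$. The surjectivity of the transition maps in $\{M/I^kM\mid k\le m+1\}$ then identifies this with $\H(\limm\{M/I^k\})$. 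In short: work with powers of $I$, which stay flat and discrete, rather than tensor powers of $R/I$, which do not. If you want to salvage your line of argument, you would have to carry the nontrivial higher homotopy of the cosimplicial terms through the $\holim$ computation, which is considerably harder than the cofiber-sequence route.
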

\begin{proof} In view of the flatness assumptions, the spectral sequence computing the derived smash products
of the left-hand-side in ~\eqref{part.compl.1} degenerates. This proves the first weak-equivalence: observe that
the flatness assumptions imply the left-derived functors of the tensor product there may be replaced by the
tensor product. Next observe that the iterated derived tensor product 
$M{\overset {m+1} {\overbrace {{\overset L {\underset R \otimes}} I \cdots {\overset L {\underset R \otimes }} I}}}$ 
identifies with
$M {\overset {m+1} {\overbrace {{\underset R \otimes} I \cdots {{\underset R \otimes } I}}}} = M \otimes I^{m+1}$
for similar reasons, and that therefore, this maps injectively to 
$ M {\overset {m+1} {\overbrace {{\underset R \otimes} R \cdots {{\underset R \otimes } R}}}} \cong M$. Then the resulting inverse system $\{M \otimes R/I^k =M/(I^kM)|k \}$ has surjective 
transition maps so that there are no $lim^1$-terms. This provides the second weak-equivalence.
\end{proof}
\begin{remark} A corresponding result for the completion $ M \mapsto M \compl_{f}$ is proven in \cite[Theorem 4.4]{C08}
which holds more generally without any of the flatness assumptions above. However, that proof does not apply to
partial derived completions: hence the need for the above result.
\end{remark}
\subsubsection{Basic spectral sequences and reduction to derived completions of modules over rings}
\label{der.compl.props}
If $M$ is an $s$-connected module spectrum over $B$, (for some integer $s$) there exist  convergent second quadrant spectral sequences:
\be \begin{align}
     \label{alg.geom.ss}
E_1^{p,q} &= \pi_{q+2p}(\H(\pi_{-p}(M) )\, \,  {\widehat {}}_{{\H(\pi_0(f))}}) \Ra \pi_{q+p}(M \, \, {\widehat {}}_{f}),\\
E_1^{p,q} &= \pi_{q+2p}(\H(\pi_{-p}(M) )\, \,  {\widehat {}}_{f,m}) \Ra \pi_{q+p}(M \, \, {\widehat {}}_{f,m}). \notag
    \end{align} \ee
\vskip .2cm \noindent
The first is discussed explicitly in \cite[Theorem 7.1]{C08} and the second is obtained in an entirely similar manner
using \cite[Proposition 7.7 and Corollary 7.8]{C08}. One may want to observe that one cannot, in general, identify the
$E_1^{p,q}$-terms for the partial derived completion with $\pi_{q+2p}(\H(\pi_{-p}(M)) \, \, {\widehat {}}_{\H(\pi_0(f)), m})$.
\vskip .2cm
However, since the convergence of the above spectral sequences may not be strong, what seems more useful are the arguments
in \cite[Corollary 7.8]{C08} that enable one to reduce questions on derived completions to derived completions of modules over rings.
Accordingly let $M \eps Mod(B)$ for some commutative ring spectrum $B$, let $f: B \ra C$ denote a map of commutative ring spectra and let
$\tilde C$ denote a cofibrant replacement of $C$ in the category of commutative algebra spectra over $B$. Let $\{\cdots \ra M<s> \ra M<s-1> \cdots|s \}$ denote the
canonical Postnikov tower for $M$. We will assume that $M$ is $s$-connected, for some integer $s$. Then the fiber of the fibration $M<s> \ra M<s-1>$ is an Eilenberg Maclane spectrum 
$K(\pi_s(M), s)$. We will denote this by $\H(\pi_s(M))[s]$. Observe that each $M<s>$ is a module spectrum over $\H(\pi_0(B), 0)$: using
the map $B \ra \H(\pi_0(B), 0)$ of ring spectra, one may view each $M<s>$ and the fiber of the map $M<s> \ra M<s-1>$ as module spectra over $B$.
It is shown in \cite[Corollary 7.8]{C08} that, then 
\begin{enumerate}
\item [(i)] $\{\cdots \ra (M<t>)\compl_{f} \ra (M<t-1>)\compl_{f} \cdots|t \} $
is also a tower of fibrations (up to homotopy),
\item [(ii)] that the homotopy fiber of the map  $(M<t>)\compl_{f} \ra (M<t-1>)\compl_{f}$ is 
$\H(\pi_t(M)) \compl_{\H(\pi_0(f))}[t]$, where $\H(\pi_t(M)) \compl_{\H(\pi_0(f))}$ denotes the derived completion of $\H(\pi_t(M))$ along 
the map $\H(\pi_0(f))$
\newline \noindent
 (with $\H(\pi_t(M)) \compl_{\H(\pi_0(f))}[t]$ denoting a suitable shift), and that
\item [(iii)] $M\compl_f \simeq \holimt (M<t> \compl_{f})$.
\end{enumerate}
Since $M$ is assumed to be $s$-connected, $M<s> = K(\pi_s(M), s)$, so that we may start an inductive argument at $s$ and step along the
terms in the tower in (i) and finally use (iii) to deduce properties of derived completion on $M$.
\subsubsection{Mod-$\ell$ completions}
\label{mod-l-completion}
Given a commutative ring $\rmR$ with a unit, let $\H(\rmR)$ denote the Eilenberg-Maclane spectrum. Let $\ell$ denote a fixed prime. Then the mod-$\ell$ completion for us will denote completion with respect to the 
mod-$\ell$ reduction map $\rho_{\ell}: \Sigma \ra \H(\Z/\ell)$, where $\Sigma$ denotes the usual ($S^1$)-sphere spectrum. {\it We will assume throughout
that $\H(\Z/\ell)$ is a cofibrant (commutative) algebra-spectrum over the sphere spectrum $\Sigma$.}
\subsubsection{Derived completions for equivariant K-theory}
\label{der.compl.eqK}
For the rest of this discussion we will assume that $\rmS$ is the spectrum of a field.
We denote the restriction  map 
\be \begin{equation}
\label{IG}
\bK(\rmS, \rmG) \ra \bK(\rmS) \mbox{ by } \rho_G \mbox{ and the Postnikov-truncation map } \bK(\rmS) \ra \H(\pi_0(\bK(\rmS))) \mbox{ by } <0>.
\end{equation} \ee
\vskip .2cm \noindent
The composite map $\bK(\rmS, \rmG) \ra \H(\pi_0(\bK(\rmS)))$ will be denoted $\tilde \rho_{\rmG}$.  Observe that $\pi_0(\bK(\rmS, \rmG)) = R(\rmG)$= the representation ring of $\rmG$.
Clearly the restriction $\rho_{\rmG}$ induces a surjection on taking $\pi_0$: this is clear, since if $V$ is any finite dimensional representation of
$\rmG$ of dimension $d$, the $d$-th exterior power of $V$ will be a $1$-dimensional representation of $\rmG$. The  Postnikov-truncation map $<0>:  \bK(\rmS) \ra \H(\pi_0(\bK(\rmS)))$  clearly induces an 
isomorphism on $\pi_0$. Therefore, the hypotheses of \cite[Theorem 6.1]{C08} are satisfied and it shows that the 
derived completion functors with respect to the two maps $\rho_{\rmG}$ and $\tilde \rho_{\rmG}$ identify up to weak-equivalence.  Therefore, while we will
always make  use of the completion with respect to the map $\rho_{\rmG}$, we will often find it convenient to identify this completion with the completion with respect to
$\tilde \rho_{\rmG}$.
\vskip .2cm
Observe that the above completion $\rho_{\rmG}$ has the following explicit description.
Let $Mod(\bK(\rmS, \rmG))$ ($Mod(\bK(\rmS ))$) denote the category of module-spectra over $\bK(\rmS, \rmG)$ ($\bK(\rmS )$, \res). Then sending a module spectrum
$M \eps Mod(\bK(\rmS, \rmG))$ to $M \wedgeK \bK(\rmS )$ and then viewing it as a $\bK(\rmS, \rmG)$-module spectrum using the ring map $\bK(\rmS, \rmG) \ra K(\rmS )$ defines
a triple. The corresponding  cosimplicial object of spectra is given by
\vskip .2cm
${\mathcal T}^{\bullet}_{\bK(\rmS, \rmG)}(M, \bK(\rmS )): M \cosimp1 M \wedgeK \bK(\rmS )  \cosimp1 \cdots M \wedgeK \bK(\rmS ) \wedgeK \cdots \wedgeK \bK(\rmS ) \cdots $
\vskip .2cm \noindent
with the obvious structure maps. Then the derived completion of $M$ with respect to $\rho_{\rmG}$  is  
\be \begin{equation}
     \label{der.compl.1}
M\, \, \compl_{\rho_{\rmG}} = \holimD {\mathcal T}^{\bullet}_{\bK(\rmS, \rmG)}(M, \bK(\rmS ))
\end{equation} \ee
and the corresponding partial derived completion to degree $m$ will be denoted $\rho_{\rmG, m}$.
\vskip .2cm \noindent
We may also consider the following variant.
For each prime $\ell$, we let
\be \begin{equation}
\label{IGl}
\rho_{\ell} \circ \rho_{\rmG}:\bK(\rmS, \rmG) \ra \bK(\rmS )/\ell= \bK(\rmS ) {\underset {\Sigma} \wedge} \H({\mathbb Z}/\ell) 
\end{equation} \ee
\vskip .2cm \noindent
denote the composition of $\rho_{\rmG}$ and the mod-$\ell$ reduction map $id_{\bK(\rmS )} \wedge \rho_{\ell}$. 
The derived completion with respect to $\rho_{\ell} \circ \rho_{\rmG}$ clearly has a description similar to the one in ~\eqref{der.compl.1}.
\vskip .2cm
If $\rmX$ is a scheme or algebraic space provided with an action by the group $\rmG$, we let 
\be \begin{equation}
\begin{split}
     \label{KGl}
\bK({\rm X}, \rmG)_{\ell} = {\overset {n} {\overbrace {\bK({\rm X}, \rmG){\underset {\Sigma} \wedge} \H(\Z/\ell) \cdots 
{{\underset {\Sigma} \wedge } \H(\Z/\ell)}}}} \\
\bK(\rmE\rmG^{gm}{\underset \rmG \times}X)_{\ell} = {\overset {n} {\overbrace {\bK(\rmE\rmG^{gm}{\underset \rmG \times}X){\underset {\Sigma} \wedge} \H(\Z/\ell) \cdots 
{{\underset {\Sigma} \wedge } \H(\Z/\ell)}}}}
\end{split}
    \end{equation} \ee
\vskip .2cm \noindent 
for some positive integer $n$. If it becomes important to indicate the $n$ above, we will denote $\bK({\rm X}, \rmG)_{\ell} $ 
($\bK(\rmE\rmG^{gm}{\underset \rmG \times}X)_{\ell}$) by $\bK({\rm X}, \rmG)_{\ell,n}$ ($\bK(\rmE\rmG^{gm}{\underset \rmG \times}X)_{\ell,n}$, \res).
One defines $\bG({\rm X}, \rmG)_{\ell}$ and $\bG(\rmE\rmG^{gm}{\underset \rmG \times}X)_{\ell}$ similarly. (Recall from ~\ref{mod-l-completion}, the
smash products above are all derived smash products.)
\vskip .2cm
Next we proceed to compare the derived completions with respect to the map $\rho_{\rmG}:\bK(\rmS, \rmG) \ra \bK(\rmS )$ for a split reductive group $\rmG$
and the derived completion with respect to the map $\rho_{\rmT}:\bK(\rmS, \rmT) \ra \bK(\rmS )$, where $\rmT$ is a maximal torus in $\rmG$.  Given a module
spectrum $M$ over $\bK(\rmS, \rmT)$, $res_*(M)$ will denote $M$ with the induced $\bK(\rmS, \rmG)$-module structure, where
$res:\bK(\rmS, G) \ra \bK(\rmS, \rmT)$ denotes the restriction map. Clearly there is a map $res_*(M) \ra M$ (which is the identity map on
the underlying spectra) compatible with the restriction map $\bK(\rmS, \rmG) \ra \bK(\rmS, \rmT)$. Therefore one obtains induced maps
$res_*(M){\overset L {\underset {\bK(\rmS, \rmG)} \wedge}} \bK(\rmS ) \cdots {\overset L {\underset {\bK(\rmS, \rmG)} \wedge}} \bK(\rmS ) \ra
M {\overset L {\underset {\bK(\rmS, \rmT)} \wedge}} \bK(\rmS ) \cdots {\overset L {\underset {\bK(\rmS, \rmT)} \wedge}} \bK(\rmS )$, which 
induces a map from the derived completion of $res_*(M)$ with respect to $\rho_{\rmG}$ to the derived completion of $M$ with respect 
to $\rho_{\rmT}$.
(Similar conclusions hold for the maps $\rho_{\ell} \circ \rho_{\rmG}$ and $\rho_{\ell} \circ \rho_{\rmT}$.) These may be seen more readily from the following lemma.
\begin{lemma}
 \label{res.1} Let $A \ra B \ra C$ denote maps of commutative ring spectra that are $-1$-connected. Let $f: B \ra C$ and $res: A \ra B$ denote
the given maps. Assume that $C$ is a cofibrant object in the category of commutative algebra spectra over $B$ and that $f: B \ra C$ is the resulting
cofibration. Then the following hold.
\vskip .2cm \noindent
(i) One may find a cofibrant replacement $\tilde B \ra B$ of $B$ in the model category of commutative algebra spectra over $A$ and $\tilde C \ra C$
of $C$ in the model category of commutative algebra spectra over $\tilde B$ so that the induced maps $ A \ra \tilde B$ and $ \tilde B \ra \tilde C$
are cofibrations in the model category of commutative algebra spectra over $A$. 
\vskip .2cm \noindent
(ii) Let $\widetilde {res}: A \ra \tilde B$ and $\tilde f: \tilde B \ra \tilde C$ denote the induced maps of ring spectra. For any $B$-module spectrum $M$,
one obtains an induced map of inverse systems of truncated cosimplicial) objects of spectra: 
\[\{\tau_{\le m}{\mathcal T}^{\bullet}_A(res_*(M), \tilde C) \ra \tau_{\le m}{\mathcal T}^{\bullet}_{\tilde B}(M, \tilde C) {\overset {\simeq} \ra} \tau_{\le m}{\mathcal T}^{\bullet}_{ B}(M,  C)|m\}\]
where $res_*(M)$ denote $M$ viewed as an $A$-module spectrum by restriction of scalars.
\vskip .2cm \noindent
(iii) Assume that $A \ra B $ and $B \ra C$ are both cofibrations in the category of commutative $A$-algebra spectra. Let $I_B$ ($I_A$) denote
the canonical homotopy fiber of the map $f: B \ra C$ (the composite map $A \ra C$, \res).  Then there exist cofibrant replacements $\tilde I_B \ra I_B$ in
 the category of module spectra over $B$
($\tilde I_A \ra I_A$ in the category of module spectra over $A$) together with a map $\tilde I_A \ra \tilde I_B$ compatible with the 
maps $A \ra B$. It follows that for a $B$-module spectrum as in (ii), one obtains an inverse system of compatible maps of spectra:
\[\{\tilde I_{A}^{ m+1} = {\overset {m+1} {\overbrace { \tilde I_{A} { {\underset {A} \wedge}}  \tilde I_{A}  \cdots { {\underset {A} \wedge}}  \tilde I_{A}}}}{ {\underset {A} \wedge}} (res_*(M)) \ra \tilde I_{ B}^{ m+1} = {\overset {m+1} {\overbrace { \tilde I_{ B} { {\underset { B} \wedge}}  \tilde I_{ B}  \cdots { {\underset { B} \wedge}}  \tilde I_{ B}}}}{ {\underset { B} \wedge}} (M)|m\}.\]
\end{lemma}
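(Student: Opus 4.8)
The plan is to prove Lemma~\ref{res.1} in three stages, each corresponding to one part of the statement, and in each case to rely on the factorization/lifting properties of the model structure on commutative algebra spectra from \cite{Ship04} together with the basic homotopical algebra of homotopy fibers.

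\medskip

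\textbf{Part (i).} First I would use the model structure on $Alg(A)$ to factor the map $A \ra B$ as a cofibration $A \rightarrowtail \tilde B$ followed by a trivial fibration $\tilde B \xrightarrow{\sim} B$; this gives a cofibrant replacement of $B$ in $Alg(A)$. Then I would view $C$ as an object of $Alg(\tilde B)$ via the composite $\tilde B \ra B \xrightarrow{f} C$, and factor the structure map $\tilde B \ra C$ in $Alg(\tilde B)$ as a cofibration $\tilde B \rightarrowtail \tilde C$ followed by a trivial fibration $\tilde C \xrightarrow{\sim} C$. Since cofibrations in $Alg(\tilde B)$ over a cofibrant (in $Alg(A)$) base $\tilde B$ are again cofibrations in $Alg(A)$ (a standard consequence of the way the model structure is built, cf.\ the base-change results used implicitly in \cite{C08}, \cite{SS}), both $A \ra \tilde B$ and $\tilde B \ra \tilde C$ are cofibrations in $Alg(A)$. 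This also makes $\tilde C$ cofibrant over $\tilde B$ in the sense needed to define $\mathcal{T}^\bullet_{\tilde B}(-,\tilde C)$, and cofibrant over $A$ to define $\mathcal{T}^\bullet_A(-,\tilde C)$; by Remark~\ref{alternate.form}, $\tilde C$ is also a cofibrant replacement of $C$ in $Mod(B)$, so $\mathcal{T}^\bullet_B(-,\tilde C) \simeq \mathcal{T}^\bullet_B(-,C)$.

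\medskip

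\textbf{Part (ii).} Given a $B$-module spectrum $M$, the restriction of scalars along $A \ra \tilde B$ (composed with $\tilde B \xrightarrow{\sim} B$) produces the $A$-module $res_*(M)$. The point is that smashing over a subring maps to smashing over a larger ring: there are natural maps $res_*(M) {\overset L{\underset A\wedge}} \tilde C \ra M {\overset L{\underset{\tilde B}\wedge}} \tilde C$ (induced by $A \ra \tilde B$, using that $\tilde C$ is an $A$-algebra via $\tilde B$), and these assemble, level by level in the cosimplicial direction, into a map of cosimplicial spectra $\mathcal{T}^\bullet_A(res_*(M),\tilde C) \ra \mathcal{T}^\bullet_{\tilde B}(M,\tilde C)$. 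Truncating at level $m$ and letting $m$ vary gives the asserted map of inverse systems; the final weak equivalence $\tau_{\le m}\mathcal{T}^\bullet_{\tilde B}(M,\tilde C) \xrightarrow{\sim} \tau_{\le m}\mathcal{T}^\bullet_B(M,C)$ comes from the trivial fibrations $\tilde B \xrightarrow{\sim} B$, $\tilde C \xrightarrow{\sim} C$ together with Remark~\ref{alternate.form}, since the relevant smash products are derived and hence preserve these weak equivalences.

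\medskip

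\textbf{Part (iii).} Under the hypothesis that $A \ra B$ and $B \ra C$ are cofibrations of commutative $A$-algebra spectra, form the homotopy fibers $I_B = \mathrm{hofib}(B \ra C)$ (a $B$-module) and $I_A = \mathrm{hofib}(A \ra C)$ (an $A$-module), and take functorial cofibrant replacements $\tilde I_B \ra I_B$ in $Mod(B)$ and $\tilde I_A \ra I_A$ in $Mod(A)$. The commuting square relating $A \ra C$ and $B \ra C$ through $A \ra B$ induces a map $I_A \ra I_B$ compatible with $A \ra B$ (on homotopy fibers of a map of maps), and by the lifting property this lifts, up to homotopy, to a map $\tilde I_A \ra \tilde I_B$. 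Then, because the iterated smash $\tilde I_B^{\wedge_B(m+1)} \wedge_B M$ is built by applying functors ($-\wedge_B \tilde I_B$, restriction) that are compatible with the analogous ones over $A$ via the map $\tilde I_A \ra \tilde I_B$ and $A \ra B$, one gets the inverse system of compatible maps $\tilde I_A^{\wedge_A(m+1)} \wedge_A res_*(M) \ra \tilde I_B^{\wedge_B(m+1)} \wedge_B M$, as claimed. (The compatibility with~\eqref{key.cof.seq} is what makes these maps the relevant ones for comparing the partial derived completions.)

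\medskip

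\textbf{Expected main obstacle.} The delicate point is \emph{not} the construction of the individual comparison maps but verifying that the model-categorical manipulations can all be carried out \emph{functorially and compatibly}: one needs the cofibrant replacements $\tilde B$, $\tilde C$, $\tilde I_A$, $\tilde I_B$ to be chosen so that the various squares commute strictly (or at least coherently up to the homotopies needed to form the inverse systems), and one must know that cofibrations of commutative algebra spectra over a cofibrant base remain cofibrations after forgetting down to the smaller base ring and after forgetting to module spectra. These are precisely the structural features of the model structures in \cite{Ship04} and \cite{SS} (and the analogous ones used in \cite{C08}); the work is in citing them in the right form rather than in any genuinely new argument. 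A secondary but routine point is checking that derived smash products over $A$ versus over $B$ interact correctly with restriction of scalars, which is where Remark~\ref{alternate.form} is used to replace $\tilde C$-based constructions by $C$-based ones without changing the homotopy type.
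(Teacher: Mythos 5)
Your argument is essentially the same as the paper's, with one small divergence worth flagging. For (i), the paper makes exactly your factorization and justifies the key step by citing \cite[(3.10) Remark]{DS}: the category of commutative $\tilde B$-algebra spectra is the under category $\tilde B \downarrow (\text{commutative }A\text{-algebras})$ with the induced model structure, so a cofibration $\tilde B \ra \tilde C$ in $Alg(\tilde B)$ is automatically a cofibration in $Alg(A)$. That is the precise reference behind the ``standard consequence'' you invoke. For (ii) the constructions agree. For (iii) the route differs slightly: the paper does not use a lifting argument but instead forms the pullback $\hat I_A = I_A \times_{I_B} \tilde I_B$ (using that $\tilde I_B \ra I_B$ is a trivial fibration of underlying spectra, hence a trivial fibration in $Mod(A)$), observes that $\hat I_A \ra I_A$ is again a trivial fibration in $Mod(A)$, and then takes a cofibrant replacement $\tilde I_A \ra \hat I_A$; the other pullback projection gives the strict map $\tilde I_A \ra \tilde I_B$. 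Your lifting argument against the trivial fibration $\tilde I_B \ra I_B$ also works, but your phrase ``lifts, up to homotopy'' should be deleted: the model-categorical lifting property produces a genuine, strictly commuting lift $\tilde I_A \ra \tilde I_B$, and that strictness is needed so the resulting maps assemble into actual (not merely homotopy-coherent) maps of inverse systems, which is the whole point of the lemma. With that wording corrected, the two proofs are interchangeable.
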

\begin{proof} (i) This follows readily from \cite[(3.10) Remark]{DS}, since the category of commutative $\tilde B$-algebra spectra may be viewed as the
 {\it under category} 
$\tilde B$ \textdownarrow $( \mbox {commutative A-algebra spectra})$ with the model structure induced from the model structure on the 
category of commutative $A$-algebra spectra, so that the cofibration $\tilde B \ra \tilde C$
will be a cofibration in the category of commutative $A$-algebra spectra. This proves (i). 
Then the existence of the  map $\{\tau_{\le m}{\mathcal T}^{\bullet}_A(res_*(M), \tilde C) \ra \tau_{\le m}{\mathcal T}^{\bullet}_{\tilde B}(M, \tilde C)$ in (ii) is immediate, making use of the map $A \ra \tilde B$ and observing that the composite map $A \ra \tilde B {\overset f \ra} \tilde C$
is a cofibration in the category of commutative $A$-algebra spectra. The existence of the map $\tau_{\le m}{\mathcal T}^{\bullet}_{\tilde B}(M, \tilde C) {\overset {\simeq} \ra} \tau_{\le m}{\mathcal T}^{\bullet}_{ B}(M,  C)|m\}$ is clear. That it is a weak-equivalence follows
from the fact that the maps $\tilde B \ra B$ and $\tilde C \ra C$ are weak-equivalences as well as the assumption that $C$ is a cofibrant algebra over $B$.
\vskip .2cm
(iii) Let $\tilde I_B \ra I_B$ denote a cofibrant replacement in the category of $B$-module spectra. Observe that this is a trivial fibration in the
same model category and hence a trivial fibration of the underlying spectra. Therefore, let $\hat I_A = I_A{\underset {I_B} \times} \tilde I_B$.
Then $\hat I_A$ is a module spectrum over $A$ and the induced map $\hat I_A \ra I_A$ is a map of $A$-module spectra which is a trivial fibration of
$A$-module spectra. Therefore a cofibrant replacement $\tilde I_A$ of $\hat I_A$ in the category of $A$-module spectra is a cofibrant replacement of $I_A$ in the category of $A$-module spectra
and provided with a map $\tilde I_A \ra \tilde I_B$ compatible with the map $A \ra B$. The last statement is clear.
\end{proof}
We will invoke the above lemma in the following theorem with $A= \bK(\rmS, \rmG)$, $B= \bK(\rmS, \rmT)$ and
 $C= \bK(\rmS )$ with $res: A \ra B$ and $f: B \ra C$ denoting the obvious maps induced by restriction
of groups.
\begin{theorem}
\label{T.comp.vs.G.comp}
Assume that $\rmS = Spec \, k$ for a field $k$ and that the split reductive group $\rmG$ has $\pi_1(\rmG)$ torsion-free. 
 Let $M$ be an $s$-connected module spectrum over $\bK(\rmS, \rmT)$ for some integer $s$. Then the inverse system of maps defined above, 
\[\{res_*(M)  \, {\widehat {}}_{\rho _{\rmG},m} \ra M \, {\widehat {}}_{\rho_{\rmT},m}|m\}\]
induces a  weak-equivalence on taking the homotopy inverse limit as $m \ra \infty$. A corresponding result holds for the completions with respect to $\rho_{\ell} \circ \rho_{\rmG}$ and $\rho_{\ell} \circ \rho_{\rmT}$.
\end{theorem}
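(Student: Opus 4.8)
The plan is to first replace the statement about inverse systems by a statement about the full derived completions. Since ${\holim}_{\Delta}={\holim}_m\,{\holim}_{\Delta_{\le m}}$ and ${\holim}_{\Delta_{\le m}}$ sees only the $m$-truncation, one has $N\compl_f\simeq {\holim}_m N\compl_{f,m}$ functorially in the $B$-module $N$; hence $\holim_m$ of the given inverse system of maps is precisely the map $res_*(M)\compl_{\rho_{\rmG}}\to M\compl_{\rho_{\rmT}}$, and it suffices to prove \emph{this} map is a weak-equivalence. By the discussion in ~\ref{der.compl.eqK} I may and do replace $\rho_{\rmG}$ (resp. $\rho_{\rmT}$) throughout by the composite into $\H(\pi_0(\bK(\rmS)))=\H(\Z)$, whose effect on $\pi_0$ is the augmentation $R(\rmG)\to\Z$ (resp. $R(\rmT)\to\Z$).

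Next I would invoke the Postnikov-tower reduction of ~\ref{der.compl.props} (i.e. \cite[Corollary 7.8]{C08}). As $M$, and hence $res_*(M)$, is $s$-connected, the towers $\{(M\langle t\rangle)\compl_{\rho_{\rmT}}\}$ and $\{(res_*M\langle t\rangle)\compl_{\rho_{\rmG}}\}$ are towers of fibrations with homotopy inverse limits $M\compl_{\rho_{\rmT}}$ and $res_*(M)\compl_{\rho_{\rmG}}$, and the map of Lemma ~\ref{res.1} is compatible with them. So it is enough to check that on $t$-th homotopy fibres this map induces a weak-equivalence
\[\H(\pi_t M)\compl_{\H(R(\rmG)\to\Z)}\ {\overset{\simeq}{\to}}\ \H(\pi_t M)\compl_{\H(R(\rmT)\to\Z)}\qquad (t\ge s),\]
where on the left $\pi_t M$ carries the $R(\rmG)$-module structure obtained by restriction along $R(\rmG)\to R(\rmT)$. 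That the comparison map of Lemma ~\ref{res.1} induces exactly this on associated graded is a diagram-chase from functoriality of the Postnikov tower and of the cobar construction, together with the fact that restriction of scalars commutes with Postnikov truncation.

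The remaining point is the algebraic comparison for a single $R(\rmT)$-module $N=\pi_t M$, and here the standing hypothesis that $\pi_1(\rmG)$ is torsion-free enters: by ~\ref{rep.ring} (via \cite[Proposition 1.22]{Merk}, \cite[Theorem 1.3]{St}), $R(\rmT)$ is a \emph{finite free} module over $R(\rmG)$. Put $A=\H(R(\rmG))$, $B=\H(R(\rmT))$, $C=\H(\Z)$. Using the cofibrant-replacement bookkeeping of Lemma ~\ref{res.1}, the change-of-rings equivalence $X{\overset L {\underset A \wedge}}Y\simeq X{\overset L {\underset B \wedge}}(B{\overset L {\underset A \wedge}}Y)$ for a $B$-module $X$, and the telescoping that rewrites an $n$-fold $A$-relative smash of copies of $C$, base-changed to $B$, as an $n$-fold $B$-relative smash of copies of $B{\overset L {\underset A \wedge}}C$, one identifies the cosimplicial object ${\mathcal T}^{\bullet}_{A}(res_*\H(N),C)$ with ${\mathcal T}^{\bullet}_{B}(\H(N),\,B{\overset L {\underset A \wedge}}C)$, so that $res_*(\H(N))\compl_{\tilde\rho_{\rmG}}\simeq \H(N)\compl_{B\to B{\overset L {\underset A \wedge}}C}$. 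Freeness of $R(\rmT)$ over $R(\rmG)$ makes all higher $\mathrm{Tor}$ vanish, giving $B{\overset L {\underset A \wedge}}C\simeq \H(R(\rmT)\otimes_{R(\rmG)}\Z)=\H(R(\rmT)/I_{\rmG}R(\rmT))$; thus the right-hand side is the derived completion of $\H(N)$, as an $\H(R(\rmT))$-module, along the ideal $I_{\rmG}R(\rmT)$. Finally I would use the classical Atiyah--Segal fact that $I_{\rmG}R(\rmT)$ and the augmentation ideal $I_{\rmT}$ of $R(\rmT)$ have the same radical --- the trivial character is the unique point of $\Spec R(\rmT)$ lying over the point $I_{\rmG}$ of $\Spec R(\rmG)=\Spec R(\rmT)/W$ --- together with the invariance of derived completion under replacing the defining ideal by its radical (Appendix C), to conclude $\H(N)\compl_{B\to\H(R(\rmT)/I_{\rmG}R(\rmT))}\simeq \H(N)\compl_{\tilde\rho_{\rmT}}$, compatibly with the comparison map. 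The mod-$\ell$ statement follows by the same scheme, either by smashing the entire argument with $\H(\Z/\ell)$ over $\Sigma$ (harmless, since the relevant smash products are already derived and neither the flatness nor the radical input is affected) or by re-running the Postnikov reduction with mod-$\ell$ homotopy groups.

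I expect the main obstacle to be this last step: verifying carefully, in the spectral setting and for a possibly infinitely generated $R(\rmT)$-module, that derived completion along $\H(R(\rmT))\to\H(R(\rmT)/I_{\rmG}R(\rmT))$ agrees with derived completion along $\tilde\rho_{\rmT}$ --- equivalently, that derived completion depends only on the radical of the defining ideal --- and matching this supplementary fact (precisely the sort of statement Appendix C is designed to supply) with the cobar description used above. A secondary technical nuisance is keeping the identification of cosimplicial objects in the previous paragraph strictly compatible with the chosen cofibrant replacements, which is exactly what Lemma ~\ref{res.1} is set up to handle.
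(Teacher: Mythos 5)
Your proof is correct and follows essentially the same route as the paper's: Postnikov-tower reduction to the case of an ordinary $R(\rmT)$-module, a change-of-rings identification of the two cobar constructions that exploits the freeness of $R(\rmT)$ over $R(\rmG)$ coming from the $\pi_1(\rmG)$-torsion-free hypothesis (giving $\H(R(\rmT))\wedge^L_{\H(R(\rmG))}\H(\Z)\simeq\H(R(\rmT)/I_{\rmG}R(\rmT))$), and a final comparison of the ideals $I_{\rmG}R(\rmT)$ and $I_{\rmT}$. The only (cosmetic) divergence is in that last step: you cite the Appendix-C invariance of derived completion under passage to the radical, whereas the paper instead invokes Proposition~\ref{derived.comp.for.rings} to rewrite each completion as $\holim_k$ over $I^k$-powers and then exhibits an explicit pro-isomorphism from $I_{\rmT}^{n_0}\subseteq I_{\rmG}R(\rmT)\subseteq I_{\rmT}$ — and these are equivalent, the Appendix-C theorem being proved by precisely that pro-isomorphism argument, so the ``main obstacle'' you flag is already resolved by the paper's supplementary results.
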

\begin{proof}
In view of the arguments in ~\ref{der.compl.props}, one reduces immediately to proving the corresponding statement when
$M$ is an $R(\rmT)$-module and the completions are with respect to the rank-maps $R(\rmT) \ra \Z$ and $R(\rmG) \ra \Z$. 
 We will denote the rank-map $R(\rmT) \ra \Z$ by $\rho_{\rmT}$ and the rank-map $R(\rmG) \ra \Z$ by $\rho_{\rmG}$.
i.e. With this notation, we now reduce to proving 
\[\H(res_*(M)) \compl_{\H(\rho_{\rmG})} \simeq \H(M) \compl_{\H(\rho_{\rmT})}\]
where $\H$ denotes the Eilenberg-Maclane spectrum functor discussed in detail in Appendix A.
Next one observes the isomorphisms:
\[R(\rmT){\underset {R(\rmG)} \otimes} R(\rmG)/ker (\rho_{\rmG}) \cong R(\rmT){\underset {R(\rmT)} \otimes} R(\rmT)/(R(\rmT).ker (\rho_{\rmG})) \cong R(\rmT)/(R(\rmT).ker (\rho_{\rmG})).\]
\vskip .2cm \noindent
Next we make use of Remark ~\ref{alternate.form} to find a more convenient definition of the derived completions.
Let 
\[{\widetilde {R(\rmG)/ker(\rho_{\rmG})}}_{\bullet} \ra R(\rmG)/ker(\rho_{\rmG})\]
 denote a simplicial resolution by the free commutative algebra functor
over $R(\rmG)$ as discussed in  ~\ref{res.free.comm.alg}. 
It follows that if $N$ denotes the functor sending a simplicial abelian group to the corresponding
normalized chain complex,
\be \begin{equation}
\label{flat.res.0}
N(R(\rmT) {\underset {R(\rmG)} \otimes}{\widetilde {R(\rmG)/ker(\rho_{\rmG})}}_{\bullet}) \ra R(\rmT){\underset {R(\rmG)} \otimes} R(\rmG)/ker(\rho_{\rmG}) \cong R(\rmT)/(R(\rmT).ker (\rho_{\rmG}))
 \end{equation} \ee
\vskip .2cm \noindent
is a flat resolution by a commutative dg-algebra over $R(\rmT)$. One may observe that this step strongly  uses the assumption that
$\pi_1(\rmG)$ is torsion-free, so that $\rmR(\rmT)$ is free over $\rmR(\rmG)$. In view of the discussion in Appendix A (see especially ~\eqref{tens.struct}), this implies the weak-equivalence:
\be \begin{equation}
     \label{cof.res.H.0}
\H(R(\rmT)){\underset  {\H(R(\rmG))} \wedge }\H({\widetilde {R(\rmG)/ker(\rho_{\rmG})}}) \simeq \H(R(\rmT)/R(\rmT)ker(\rho_{\rmG})).
    \end{equation} \ee
Recall from the lines following ~\eqref{tens.struct}, $\H({\widetilde {R(\rmG)/ker(\rho_{\rmG})}}) =\hocolimD \{\H({\widetilde {R(\rmG)/ker(\rho_{\rmG})}}_n)|n\}$ is a commutative algebra spectrum over $\H(R(\rmG))$ and is 
cofibrant as an object in $Mod(\H(R(\rmG)))$. Now ~\eqref{flat.res.0} implies the identifications:
\be \begin{align}
     \label{cof.res.0}
M{\overset L{\underset {R(\rmT)} \otimes}}R(\rmT)/(R(\rmT).ker (\rho_{\rmG})) &\cong M {\underset {R(\rmT)} \otimes}R(\rmT) {\underset {R(\rmG)} \otimes}N({\widetilde {R(\rmG)/ker(\rho_{\rmG})}}_{\bullet}) \\
res_*(M){\overset L{\underset {R(\rmG)} \otimes}}R(\rmG)/(ker (\rho_{\rmG})) &\cong res_*(M){\underset {R(\rmG)} \otimes}N({\widetilde {R(\rmG)/ker(\rho_{\rmG})}}_{\bullet}). \notag
\end{align} \ee
\vskip .2cm \noindent
Here $res_*(M)$ denotes $M$ viewed as an $R(\rmG)$-module using restriction of scalars. These, together with ~\eqref{monoidal.prop.H},  then also provide the identifications:
\be \begin{align}
     \label{cof.res.0}
\H(M){\overset L{\underset {\H(R(\rmT))} \wedge}}\H(R(\rmT)/(R(\rmT).ker (\rho_{\rmG}))) &\cong \H(M) {\underset {\H(R(\rmT))} \wedge}\H(R(\rmT)) {\underset {\H(R(\rmG))} \wedge}\H({\widetilde {R(\rmG)/ker(\rho_{\rmG})}}) \\
\H(res_*(M)){\overset L{\underset {\H(R(\rmG))} \wedge}}\H(R(\rmG)/(ker (\rho_{\rmG}))) &\cong \H(res_*(M)){\underset {\H(R(\rmG))} \wedge}\H({\widetilde {R(\rmG)/ker(\rho_{\rmG})}}).\notag
\end{align} \ee
\vskip .2cm \noindent
Recall again that $\H({\widetilde {R(\rmG)/ker(\rho_{\rmG})}}) =\hocolimD \{\H({\widetilde {R(\rmG)/ker(\rho_{\rmG})}}_n)|n\}$ is a commutative algebra spectrum over $\H(R(\rmG))$ and is 
cofibrant as an object in $Mod(\H(R(\rmG)))$. Therefore, one may identify the two right-hand-sides of ~\eqref{cof.res.0} and therefore
the corresponding left-hand-sides.
 \vskip .2cm
Next one repeats the same argument with $M$ replaced by the terms in each degree of  
$M {\underset {R(\rmT)} \otimes}R(\rmT) {\underset {R(\rmG)} \otimes}{\widetilde {R(\rmG)/ker(\rho_{\rmG})}}_{\bullet}$ so that one obtains 
an identification of the cosimplicial objects of spectra:
\be \begin{equation}
     \label{cof.res.1}
{\mathcal T}^{\bullet}_{\H(R(\rmG))}(\H(res_*(M)), \H(R(\rmG)/(ker(\rho_{\rmG})))) \cong {\mathcal T}^{\bullet}_{\H(R(\rmT))}(\H(M),  \H(R(\rmT)/(R(\rmT). ker (\rho_{\rmG}))))
\end{equation} \ee
\vskip .2cm \noindent
Observe that the proof will be complete, if we show that there is a weak-equivalence of the homotopy inverse limit of the term on the right
with the homotopy inverse limit of the cosimplicial object ${\mathcal T}^{\bullet}_{\H(R(\rmT))}(\H(M),  \H(R(\rmT)/(ker (\rho_{\rmT}))))$.
\vskip .2cm
Therefore we will let $I$ denote either one of the ideals $ker (\rho_{\rmT})$ or $R(\rmT). ker (\rho_{\rmG})$.
Now Proposition ~\ref{derived.comp.for.rings} below provides  the weak-equivalence:
\be \begin{equation}
     \label{key.weak.eq.0}
   \holim  {\mathcal T}^{\bullet}_{\H(R(\rmT))}(\H(M),  \H(R(\rmT)/I)) \simeq \holimk \{\H(M){\overset L {\underset {\H(R(\rmT))} \wedge}} (\H(R(\rmT)/I^k))|k\}
 \end{equation} \ee
\vskip .2cm \noindent
where the first homotopy inverse limit is of the cosimplicial object there and the second homotopy inverse limit is that of the
inverse system. (Again we make use of the simplicial resolution by the free commutative algebra functor
over $R(\rmT)$ as discussed in  ~\ref{res.free.comm.alg} to define   ${\mathcal T}^{\bullet}_{\H(R(\rmT))}(\H(M),  \H(R(\rmT)/I))$.)
\vskip .2cm
Next one observes that $R(\rmT)$ is a 
finite $R(\rmG)$-module and hence that 
 there exists some positive integer $n_0$ so that $ker(\rho_{\rmT})^{n_0} \subseteq R(\rmT).ker(\rho_{\rmG}) \subseteq ker (\rho_{\rmT})$. 
Therefore, we proceed to show
that the inverse systems 
\vskip .2cm 
$\{\H(M){\overset L {\underset {\H(R(\rmT))} \wedge}}\H(R(\rmT)/(R(\rmT). ker (\rho_{\rmG}))^k)|k \ge 1\}$ and
$\{\H(M){\overset L {\underset {\H(R(\rmT))} \wedge}} \H(R(\rmT)/( ker (\rho_{\rmT}))^k)|k \ge 1\}$ 
\vskip .2cm \noindent
define weakly-equivalent homotopy inverse limits. By taking a simplicial resolution $\widetilde M^{\bullet} \ra M$ of $M$ by free $R(\rmT)$-modules, one may replace $\H(M)$ by $\hocolimD \H(\widetilde M^{\bullet})$.
Let $\widetilde {\H( M )}= \hocolimD \H(\widetilde M^{\bullet})$. Now it 
suffices to show that, 
\be \begin{equation}
 \label{pro.isom}
\{{\widetilde {\H(M)}}{\underset {\H(R(\rmT))} \wedge} \H(R(\rmT)/(R(\rmT).ker(\rho_{\rmG}))^k)|k \ge 1\} \mbox{ and } 
\{{\widetilde {\H(M )}}{\underset {\H(R(\rmT))} \wedge}\H(R(\rmT)/( ker (\rho_{\rmT}))^k)|k \ge 1\} 
\end{equation} \ee
are isomorphic as pro-objects of $\H(R(\rmT))$-module spectra. Recall that if
$P=\{P_{\alpha}|\alpha \}$ and $Q=\{Q_{\beta}| \beta\}$ are pro-objects of $\H(R(\rmT))$-modules spectra, then, 
\[Hom(P, Q) = \limbeta \colimalpha Hom_{\H(R(\rmT))}(P_{\alpha}, Q_{\beta}).\]

\vskip .2cm 
Therefore, it suffices to show that for any $\H(R(\rmT))$-module spectrum $K$, one obtains an isomorphism:
\be \begin{equation}
\begin{split}
\colimn Hom_{\H(R(\rmT))}({\widetilde {\H(M)}} {\underset {\H(R(\rmT))} \wedge}\H(R(\rmT)/(R(\rmT).ker(\rho_{\rmG}))^n), K) \\
\cong \colimn 
Hom_{\H(R(\rmT))}({\widetilde {\H(M )}}{\underset {\H(R(\rmT))} \wedge} \H(R(\rmT)/(ker(\rho_{\rmT}))^n), K) \end{split} \notag
\end{equation} \ee
\vskip .2cm \noindent
But the left-hand-side identifies with $\colimn Hom_{\H(R(\rmT))}(\H(R(\rmT)/(R(\rmT).ker(\rho_{\rmG}))^n), \Hom_{\H(R(\rmT))}({\widetilde {\H(M)}}, K))$ and the
 right-hand-side identifies with $\colimn Hom_{\H(R(\rmT))}(\H(R(\rmT)/(ker(\rho_{\rmT}))^n), \Hom_{\H(R(\rmT))}({\widetilde {\H(M)}}, K))$. 
Here $\Hom_{\H(R(\rmT))}$ denotes the internal hom in the category of $\H(R(\rmT))$-spectra. Observe also that
since $\H(R(\rmT))$ is a commutative ring spectrum, the internal Hom in the category of modules over $\H(R(\rmT))$ belongs to the same category.
  Since the pro-objects $\{\H(R(\rmT)/(R(\rmT).ker(\rho_{\rmG}))^n)|n\}$ and $\{\H(R(\rmT)/Ker(\rho_{\rmT})^n)|n\}$
are isomorphic, it follows that one obtains an isomorphism of the pro-objects in ~\eqref{pro.isom}.
\vskip .2cm
i.e. In view of ~\eqref{key.weak.eq.0},  we have shown that the two cosimplicial objects
\[{\mathcal T}^{\bullet}_{\H(R(\rmT))}(\H(M),  \H(R(\rmT)/(R(\rmT). ker (\rho_{\rmG})))) \mbox{ and } {\mathcal T}^{\bullet}_{\H(R(\rmT))}(\H(M),  
\H(R(\rmT)/( ker (\rho_{\rmT}))))\]
\vskip .2cm \noindent
 define
weakly-equivalent objects on taking the homotopy inverse limits.
In view of the identification in ~\eqref{cof.res.1}, it therefore also follows that the two cosimplicial objects 
\[{\mathcal T}^{\bullet}_{\H(R(\rmG))}(\H(res_*(M)),  \H(R(\rmG)/(ker (\rho_{\rmG})))) \mbox{ and } {\mathcal T}^{\bullet}_{\H(R(\rmT))}(\H(M),  
\H(R(\rmT)/(ker (\rho_{\rmT}))))\] 
\vskip .2cm \noindent
define weakly-equivalent homotopy inverse limits, i.e. $\H(res_*(M)) \, \, {\widehat {}}_{\H(\rho_{\rmG})} \simeq \H(M )\, \, {\widehat {}}_{\H(\rho_{\rmT})}$.
 The proof of the corresponding statement for the derived completions with respect to $\rho_{\ell} \circ \rho_{\rmG}$ and $\rho_{\ell} \circ \rho_{\rmT}$ is similar.
\end{proof}

\begin{remark}
 For the applications in the rest of the paper, we will restrict mostly to the case where the group $\rmG=\GL_n$ for some $n$ or a finite product $\Pi_{i=1}^m \GL_{n_i}$, since the
hypotheses of the Theorem are satisfied in this case.
\end{remark}

\begin{proposition} 
 \label{derived.comp.for.rings} 
Let $R$ denote a Noetherian ring and let $I$ denote an ideal in $R$. For each $k \ge 1$, 
let $S_k=R/I^k$ and let $\tilde S_k$ denote a free resolution of $S_k$ as a commutative dg-algebra over $R$ as in ~\ref{res.free.comm.alg}. 
Let $\tilde S= \tilde S_1$. 
\vskip .2cm
Let $T_{\H(R)}(\M, \H(\tilde S))$ denote the triple sending an $\H(R)$-module spectrum $\M$ to the $\H(R)$-module spectrum $\M {\underset {\H(R)} \otimes} \H(\tilde S)$ and let
${\mathcal T}^{\bullet}_{\H(R)}(\M)$ denote the corresponding cosimplicial resolution of the $\H(R)$-module $\M$. 
If $M$ is any
$R$-module, not necessarily finitely generated, then one obtains the identification (up to weak-equivalence):
\vskip .2cm
$\holim{\mathcal T}^{\bullet}_{\H(R)}(\H(M), \H(\tilde S)) \simeq \holimk \{\H(M ){\underset {\H(R)} \wedge}\H(\tilde S_k)|k\} \simeq 
\holimk \{\H(M{\underset R \otimes} \tilde S_k)|k\} \simeq \holimk \{\H(M{\overset L {\underset R \otimes}}R/I^k)|k\}.$
\vskip .2cm \noindent
(Again we make use of the simplicial resolution by the free commutative algebra functor
over $R$ as discussed in  ~\ref{res.free.comm.alg} to define   ${\mathcal T}^{\bullet}_{\H(R)}(\H(M),  \H(\tilde S))$.)

\end{proposition}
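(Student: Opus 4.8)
The plan is to first strip off the two ``inner'' equivalences, which are formal, and then concentrate on the outer one, $\holim{\mathcal T}^{\bullet}_{\H(R)}(\H(M),\H(\tilde S))\simeq\holimk\{\H(M{\overset L {\underset R \otimes}}R/I^{k})\}$. For each fixed $k$, the monoidal dictionary between Eilenberg--Maclane spectra and dg-modules recorded in Appendix~A (see \eqref{monoidal.prop.H}) gives $\H(M){\overset L {\underset {\H(R)} \wedge}}\H(\tilde S_{k})\simeq\H(M{\overset L {\underset R \otimes}}\tilde S_{k})$; since $\tilde S_{k}$ is a bounded-below complex of free $R$-modules the derived tensor product coincides with the ordinary one, $M{\overset L {\underset R \otimes}}\tilde S_{k}\simeq M\otimes_{R}\tilde S_{k}$, and since $\tilde S_{k}\ra R/I^{k}$ is a quasi-isomorphism this computes $M{\overset L {\underset R \otimes}}R/I^{k}$. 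These being levelwise weak equivalences of towers indexed by $k$, they pass to homotopy inverse limits, which accounts for the last two equivalences in the statement.

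For the outer equivalence the quickest route is to invoke \cite{C08}. By Remark~\ref{alternate.form}, $\H(\tilde S)$ is an object of $Alg(\H(R))$ which is a cofibrant replacement of $\H(R/I)$ in $Mod(\H(R))$ (by the construction in \ref{res.free.comm.alg}), so ${\mathcal T}^{\bullet}_{\H(R)}(\H(M),\H(\tilde S))$ may be used to compute the derived completion of $\H(M)$ along $f\colon\H(R)\ra\H(R/I)$, and $\holim{\mathcal T}^{\bullet}_{\H(R)}(\H(M),\H(\tilde S))\simeq\H(M)\compl_{f}$. But \cite[Theorem~4.4]{C08}, which holds with no flatness hypothesis, identifies $\H(M)\compl_{f}$ with $\holimk\{\H(M{\overset L {\underset R \otimes}}R/I^{k})\}$. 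Combined with the inner equivalences this proves the proposition. I would, however, also sketch the direct argument, as it is the more instructive one and isolates exactly where the Noetherian hypothesis is used.

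For that direct argument, put $B=\H(R)$ and $C=\H(\tilde S)\simeq\H(R/I)$. Applying $\H$ to $0\ra I\ra R\ra R/I\ra0$ shows the homotopy fiber $I_{B}$ of $f$ is $\H(I)$, so by \eqref{key.cof.seq} and the monoidal properties of $\H$ the partial derived completion $\H(M)\compl_{f,m}$ is the homotopy cofiber of $\H(M{\overset L {\underset R \otimes}}I^{\otimes^{L}(m+1)})\ra\H(M)$, where $I^{\otimes^{L}(m+1)}$ is the $(m+1)$-fold derived tensor power of $I$ over $R$. On the other side, applying $\H$ to $0\ra I^{k}\ra R\ra R/I^{k}\ra0$ and smashing with $\H(M)$ exhibits $\H(M{\overset L {\underset R \otimes}}R/I^{k})$ as the homotopy cofiber of $\H(M{\overset L {\underset R \otimes}}I^{k})\ra\H(M)$. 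Since homotopy cofibers of maps of spectra commute with homotopy inverse limits, and since $\holim{\mathcal T}^{\bullet}$ is the $\holimm$ of the partial totalizations $\H(M)\compl_{f,m}$ while $\holimk\{R/I^{k}\}$ reindexes as $\holimm\{R/I^{m+1}\}$, the whole equivalence reduces to showing that the natural map of towers
\[\bigl\{\,M{\overset L {\underset R \otimes}}I^{\otimes^{L}(m+1)}\,\bigr\}_{m}\ \ra\ \bigl\{\,M{\overset L {\underset R \otimes}}I^{m+1}\,\bigr\}_{m},\]
induced by iterated multiplication $I^{\otimes(m+1)}\twoheadrightarrow I^{m+1}$ followed by connective truncation, induces a weak equivalence on $\holimm$. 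Here I would pass to a termwise-free resolution $\widetilde M\ra M$ of $R$-modules, exactly as in the proof of Theorem~\ref{T.comp.vs.G.comp}, so that it suffices to compare the two towers on homology in each degree; in positive degrees both towers of homology groups are pro-zero by the Artin--Rees lemma over the Noetherian ring $R$, while in degree zero both towers give the $I$-adic tower $\{M/I^{k}M\}_{k}$, again by Artin--Rees, so the map of towers is a pro-isomorphism and hence a weak equivalence after $\holimm$.

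\textbf{Main obstacle.} The only genuinely non-formal ingredient is the Artin--Rees/pro-vanishing input: over a Noetherian ring the towers $\{\operatorname{Tor}^{R}_{i}(R/I^{k},M)\}_{k}$ (and the analogous higher-homology towers of the derived tensor powers of $I$) are pro-zero for $i\ge1$, so that only the degree-zero part contributes to the homotopy inverse limit. This is precisely where the Noetherian hypothesis on $R$ enters, and it is already the heart of \cite[Theorem~4.4]{C08}; everything else is bookkeeping with the cofiber sequence \eqref{key.cof.seq} and with the monoidal comparison of Appendix~A.
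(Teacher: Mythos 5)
Your ``quickest route'' --- invoking \cite[Theorem 4.4]{C08} once the cosimplicial totalization is identified with the derived completion via Remark~\ref{alternate.form} --- is legitimate, and the paper itself hints at this (it points to \cite[Corollary 5.5]{C08} for ``a somewhat similar result'' and cites \cite[Theorem 4.4]{C08} in the remark after Lemma~\ref{flatness}). That said, the authors deliberately give a self-contained argument, and the reason shows up precisely where your ``direct argument'' runs into trouble.

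The direct argument's reduction --- passing via \eqref{key.cof.seq} to comparing the towers $\{M\otimes^{L}_{R}I^{\otimes_{L}(m+1)}\}_{m}$ and $\{M\otimes^{L}_{R}I^{m+1}\}_{m}$ --- is fine, but the step that is supposed to finish the proof is where the real work is hiding, and it is stated incorrectly. In degree zero the towers are $\{M\otimes_{R}I^{\otimes(m+1)}\}$ and $\{M\otimes_{R}I^{m+1}\}$, not the $I$-adic tower $\{M/I^{k}M\}$; that only appears after taking the cofiber along the map to $M$. More seriously, the claim that ``in positive degrees both towers of homology groups are pro-zero by the Artin--Rees lemma'' is not a single invocation of Artin--Rees: for $\{M\otimes^{L}I^{\otimes_{L}(m+1)}\}$ one must control both the internal higher Tor's of the iterated derived tensor power of $I$ with itself and the Tor's against $M$, and since $M$ is allowed to be an arbitrary (non-finitely-generated) $R$-module, Artin--Rees cannot be applied to $M$ directly. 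The paper sidesteps all of this by never forming $I^{\otimes_{L}(m+1)}$ at all: it works inside the cosimplicial object, replaces $R$ by the pro-ring $\{R/I^{k}\}_{k}$ using the cofiber sequence of pro-cosimplicial objects coming from $0\to I^{k}\to R\to R/I^{k}\to 0$, observes that $\{\mathrm{MultiTor}(I^{k},R/I,\ldots,R/I)\}_{k}\cong\{I^{k}\cdot\mathrm{MultiTor}(R,R/I,\ldots,R/I)\}_{k}$ is pro-zero because the target is an $R/I$-module and $I^{k}$ kills it, and then uses the d\'evissage Lemma~\ref{devissage} to collapse the cosimplicial totalization for each fixed $k$ to a single term before ever tensoring in $M$. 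This factoring --- all the pro-vanishing is done with the finitely generated modules $R$ and $I^{k}$, and the arbitrary module $M$ only enters as a flat-base-change along a free resolution --- is exactly what makes the argument correct for non-finitely-generated $M$, and it is the piece your sketch is missing.

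So the recommendation is: either cite \cite[Theorem 4.4]{C08} outright and omit the sketch, or replace the Artin--Rees handwave by the paper's actual argument (the pro-object $0\to\{I^{k}\}\to R\to\{R/I^{k}\}\to 0$ applied cosimplicially, the $I^{k}$-kills-$R/I$-modules observation, and Lemma~\ref{devissage}). As written, the direct argument is not a proof.
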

\begin{proof} See \cite[Corollary 5.5]{C08} for a somewhat similar result. 
First,  the definition of the cosimplicial object ${\mathcal T}^{\bullet}_{\H(R)}(\H(M),  \H(\tilde S))$ provides the identification:
\vskip .2cm
${\mathcal T}^{\bullet}_{\H(R)}(\H(M),  \H(\tilde S)) \cong 
\H(M){\overset L {\underset {\H(R)} \wedge}} {\mathcal T}^{\bullet}_{\H(R)}(\H(R),  \H(R/I))$.
\vskip .2cm \noindent
At this point, one observes that one obtains a distinguished triangle of pro-objects of cosimplicial objects (i.e. a diagram
of pro-cosimplicial spectra, which is a stable cofiber sequence for each $k$ and each cosimplicial degree):
\be \begin{equation}
\label{pro.map}
\begin{split}
\{\H(M){\overset L {\underset {\H(R)} \wedge}} {\mathcal T}^{\bullet}_{\H(R)}(\H(I^k),  H(R/I))|k\} \ra
\{\H(M){\overset L {\underset {\H(R)} \wedge}} {\mathcal T}^{\bullet}_{\H(R)}(\H(R),  \H(R/I))|k\} \\
\ra  \{\H(M){\overset L {\underset {\H(R)} \wedge}} {\mathcal T}^{\bullet}_{\H(R)}(\H(R/I^k),  \H(R/I))|k\} \end{split}
\end{equation} \ee
\vskip .2cm \noindent
The middle term $\{\H(M){\overset L {\underset {\H(R)} \wedge}} {\mathcal T}^{\bullet}_{\H(R)}(\H(R),  \H(R/I))|k\}$ is the constant pro-object
consisting of the same object $\H(M){\overset L {\underset {\H(R)} \wedge}} {\mathcal T}^{\bullet}_{\H(R)}(\H(R),  \H(R/I))$ for all $k$.
For each fixed $k$, and each fixed cosimplicial degree $m$, the homotopy groups of 
${\mathcal T}^{m}_{\H(R)}(\H(I^k),  \H(R/I))$ are computed by a strongly convergent spectral sequence
whose $E_2$-terms are  the cohomology of the iterated derived tensor product 
$I^k {\overset L {\underset {R} \otimes}} R/I \cdots  {\overset L {\underset {R} \otimes}}R/I$. 
\vskip .2cm 
For a finitely generated module
$K$ over $R$, 
we will denote the cohomology groups of the iterated derived tensor product
$K{\overset L {\underset {R} \otimes}} R/I \cdots  {\overset L {\underset {R} \otimes}}R/I$ as 
$MultiTor_{R,*}^m(K,R/I, \cdots, R/I)$.
One considers the functor $K \mapsto \{I^kK|k\}={\underline I}K$ sending a finitely generated module $K$ over the Noetherian ring $R$ to 
the pro-object
$\{I^kK|k \}$ in the category of $R$-modules. As 
observed in the proof of \cite[Theorem 4.4]{C08}, the pro-object 
\vskip .2cm
$\{MultiTor_{R,*}^m(I^k, R/I, \cdots, R/I)|k\}$ 
\vskip .2cm \noindent
identifies with the
pro-object 
\vskip .2cm
${\underline I}MultiTor_{R,*}^m(R, R/I, \cdots, R/I) = \{I^kMultiTor_{R, *}^m(R, R/I, \cdots, R/I)|k\}$.
\vskip .2cm \noindent
 Since the cohomology groups
of the iterated derived tensor product 
$R{\overset L {\underset {R} \otimes}} R/I \cdots  {\overset L {\underset {R} \otimes}}R/I$ are $R/I$-modules, it follows
that $I^k$ acts trivially on the above cohomology groups. These observations prove that
the pro-object $\{{\mathcal T}^{\bullet}_{R}(I^k,  R/I)|k\}$, and therefore the pro-objects
\[\{M{\overset L {\underset {R} \otimes}} {\mathcal T}^{\bullet}_{R}(I^k,  R/I)|k\} \mbox{  and } \{\pi_n(H(M){\overset L {\underset {\H(R)} \wedge}} {\mathcal T}^{\bullet}_{\H(R)}(\H(I^k), \H( R/I)))|k\}\]
for any fixed integer $n$ and each fixed cosimplicial degree 
are trivial. This shows that the last map in ~\eqref{pro.map} induces a  weak-equivalence
on taking the homotopy inverse limit  of the pro-objects (for each fixed cosimplicial degree).
\vskip .2cm
Next we fix an integer $k$ and consider the homotopy inverse limit of the cosimplicial object 
\newline \noindent
$\H(M){\overset L {\underset {\H(R)} \wedge}} {\mathcal T}^{\bullet}_{\H(R)}(\H(R/I^k),  \H(R/I))$.  The discussion in \cite{C08} 
immediately following Proposition 4.3,  as well as Lemma ~\ref{devissage} below now show that for any fixed $k$, the homotopy inverse limit of the
last cosimplicial object identifies with $\H(M){\overset L {\underset {\H(R)} \wedge}} (\H(R/I^k))$.   Finally we take the
homotopy inverse limits over $k \ra \infty$ followed by the homotopy inverse limits of the resulting cosimplicial objects for each of the
three terms in ~\eqref{pro.map}. The above discussion shows that this induces a weak-equivalence of the resulting last two terms.
However, one may clearly interchange the two homotopy inverse limits. Since the middle term in ~\eqref{pro.map} is a constant pro-object, the above 
 observations now
provide the weak-equivalences:
\be \begin{equation}
     \label{key.weak.eq}
     \begin{split}
   \holim  {\mathcal T}^{\bullet}_{\H(R)}(\H(M),  \H(R/I)) \simeq \holim (\H(M){\overset L {\underset {\H(R)} \wedge}} {\mathcal T}^{\bullet}_{\H(R)}(\H(R),  \H(R/I))) \simeq \holimk \{\H(M){\overset L {\underset {\H(R)} \wedge}} (\H(R/I^k))|k\} \\
=\holimk \{\H(M) {\underset {\H(R)} \wedge}\H(\tilde S_k)|k\} \simeq \holimk \{\H(M{\underset R \otimes}\tilde S_k)|k\} \end{split}
 \end{equation} \ee
\vskip .2cm \noindent
where the first two homotopy inverse limits are of the cosimplicial objects there and the remaining homotopy inverse limits are that of the
inverse system.
\end{proof}
\begin{lemma}
 \label{devissage}
Let $\rho:A \ra B$ denote a surjective map of commutative Noetherian rings with $I=ker(\rho)$. 
For any chain  complexes of 
$A$-modules $M$ and $N$ that are trivial in negative degrees,
the augmentation 
\[ \H(N){\overset L {\underset {\H(A)} \wedge}} \H(M/I^n) \ra \holimD \H(N){\overset L {\underset {\H(A )} \wedge}} 
{\mathcal T}_{\H(A)}^{\bullet}(\H(M/I^n), \H(B))  \] is a weak-equivalence for any fixed positive integer $n$.
(Again we make use of the simplicial resolution by the free commutative algebra functor
over $A$ as discussed in  ~\ref{res.free.comm.alg} to define   
${\mathcal T}_{\H(A)}^{\bullet}(\H(M/I^n), \H(B))$.)
\end{lemma}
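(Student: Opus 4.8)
The plan is to absorb the complex $N$ into the coefficients, reduce to the ordinary derived completion of a single module on which $I$ acts nilpotently, and then kill the resulting pro-system of iterated smash products. Using the monoidal property of $\H$ recalled in Appendix A (see ~\eqref{monoidal.prop.H}), and the fact that $\H(N){\overset L {\underset {\H(A)} \wedge}}(-)$ commutes with the triple $X\mapsto X{\overset L {\underset {\H(A)} \wedge}}\H(\tilde B)$ defining ${\mathcal T}^{\bullet}_{\H(A)}(-,\H(B))$, I would identify $\H(N){\overset L {\underset {\H(A)} \wedge}}{\mathcal T}^{\bullet}_{\H(A)}(\H(M/I^n),\H(B))$ with ${\mathcal T}^{\bullet}_{\H(A)}(\H(Q),\H(B))$, where $Q=N{\overset L {\underset A \otimes}}(M/I^n)$. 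Thus the map in the statement becomes the augmentation $\H(Q)\ra\H(Q)\compl_{f}$ with $f:\H(A)\ra\H(B)$, and it suffices to prove this is a weak equivalence. Representing $Q$ by $F_{\bullet}\otimes_{A}(M/I^{n})$ for a bounded-below complex $F_{\bullet}$ of free $A$-modules quasi-isomorphic to $N$ shows that $Q$ is trivial in negative degrees and quasi-isomorphic to a complex of $A/I^{n}$-modules, so $I^{n}H_{*}(Q)=0$.

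Next I would apply parts (i)--(iii) of \cite[Corollary 7.8]{C08}, recalled in ~\ref{der.compl.props}, to the bounded-below $\H(A)$-module $\H(Q)$: they express $\H(Q)\compl_{f}$ as $\holimt\bigl(\H(Q)<t>\compl_{f}\bigr)$ with successive homotopy fibres $\H(H_{t}(Q))\compl_{f}[t]$, so $\H(Q)\ra\H(Q)\compl_{f}$ is a weak equivalence provided $\H(V)\ra\H(V)\compl_{f}$ is one for every $A$-module $V$ with $I^{n}V=0$ (take $V=H_{t}(Q)$). Fixing such a $V$, the long exact homotopy sequence together with the surjectivity of $A\twoheadrightarrow A/I$ with kernel $I$ identifies $\mathrm{hofib}(f)$ with $\H(I)$ as an $\H(A)$-module. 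Combining the stable cofibre sequence ~\eqref{key.cof.seq} over all $m$ (using that the homotopy limit over $\Delta$ is the limit over $m$ of the homotopy limits over $\Delta_{\le m}$, so that $\H(V)\compl_{f}\simeq\holimm\H(V)\compl_{f,m}$), the homotopy fibre of $\H(V)\ra\H(V)\compl_{f}$ is $\holimm\bigl(({\overset L {\underset {\H(A)} \wedge}} _{i=1}^{m+1}\H(I)){\overset L {\underset {\H(A)} \wedge}}\H(V)\bigr)$, which by ~\eqref{monoidal.prop.H} is $\holimm\H(C_{m+1})$ with $C_{m}:=I{\overset L {\underset A \otimes}}\cdots{\overset L {\underset A \otimes}}I{\overset L {\underset A \otimes}}V$ ($m$ copies of $I$). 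By the Milnor exact sequence for the homotopy inverse limit of a tower it then suffices to show that the pro-abelian group $\{H_{j}(C_{m})\}_{m}$ is pro-zero for each $j$ (the shift $m\mapsto m+1$ being harmless).

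The pro-vanishing is the heart of the matter, and I expect it to be the main obstacle. The transition maps $C_{m+1}\ra C_{m}$ are induced, in one tensor factor, by the map $\H(I)\ra\H(A)$ coming from $I\hookrightarrow A$; hence the $p$-fold composite $C_{m+p}\ra C_{m}$ is the map $\bigl(I{\overset L {\underset A \otimes}}\cdots{\overset L {\underset A \otimes}}I{\overset L {\underset A \otimes}}W\bigr)\ra A{\overset L {\underset A \otimes}}W=W$ ($p$ copies of $I$) induced by the iterated multiplication $I{\overset L {\underset A \otimes}}\cdots{\overset L {\underset A \otimes}}I\ra A$, where $W:=C_{m}$. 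Since the $p$-fold derived tensor power of $I$ is connective, this map factors through its truncation $\H(I^{\otimes p})$, and $I^{\otimes p}\ra A$ has image $I^{p}$; meanwhile $W$ is represented by a bounded-below complex of flat $A$-modules tensored over $A$ with $V$, so $I^{n}H_{*}(W)=0$. Running the hyper-$\mathrm{Tor}$ spectral sequence $\mathrm{Tor}^{A}_{s}(I^{\otimes p},H_{t}(W))\Rightarrow H_{s+t}(I^{\otimes p}{\overset L {\underset A \otimes}}W)$ against its degenerate counterpart for $A{\overset L {\underset A \otimes}}W$, one checks that the induced map on $H_{j}$ has image contained in $I^{p}H_{j}(W)$, which is $0$ once $p\geq n$. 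Therefore $C_{m+n}\ra C_{m}$ is zero on homology for every $m$, so $\{H_{j}(C_{m})\}_{m}$ is pro-zero, $\holimm\H(C_{m})\simeq\ast$, and $\H(V)\ra\H(V)\compl_{f}$ is a weak equivalence. The book-keeping in this last step --- identifying the transition maps precisely and running the spectral-sequence comparison --- would be carried out in the same spirit as the proof of \cite[Theorem 4.4]{C08}.
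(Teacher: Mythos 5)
Your proof is genuinely different from the paper's. The paper argues by ascending induction on $n$: the base case $n=1$ is handled by exhibiting an extra codegeneracy on the cosimplicial object $\H(N)\wedge^L_{\H(A)}{\mathcal T}^{\bullet}_{\H(A)}(\H(M/I),\H(B))$ (possible since $M/I$ is a complex of $B$-modules), giving a contracting homotopy; the inductive step then uses the short exact sequence $0\to M\otimes_A I^{k-1}/I^k\to M/I^k\to M/I^{k-1}\to 0$ (after replacing $M$ and $N$ by free complexes) and the fact that $M\otimes_A I^{k-1}/I^k$ is again a complex of $B$-modules. Your route instead absorbs $N$ into a single complex $Q$, invokes the Postnikov-tower reduction of \cite[Corollary 7.8]{C08} to pass to a single $A$-module $V$ killed by $I^n$, and then attacks the pro-vanishing of $\{H_j(C_m)\}_m$ in the spirit of \cite[Theorem 4.4]{C08} rather than by dévissage. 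That is a perfectly viable alternative decomposition; the paper's induction is more elementary (it stays at the level of cosimplicial extra-degeneracies and short exact sequences), while yours trades the induction on $n$ for a direct pro-nilpotence argument. One also notes that your argument does not appear to use the Noetherian hypothesis on $A$ at all, whereas it is at least implicitly present in the paper's set-up.

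The one step I would flag is the final spectral-sequence comparison. You write that the map of hyper-$\mathrm{Tor}$ spectral sequences shows the induced map on $H_j$ has image in $I^pH_j(W)$. Vanishing of the map on $E_2$-pages only controls the bottom filtration step of the abutment: if $x\in F_sH_j(I^{\otimes p}\otimes^L_A W)$ with $s\geq 1$, the relation $gr_s(f)=0$ only says $f(x)\in F_{s-1}H_j(W)$, and since the target filtration is degenerate ($F_0=H_j(W)$, $F_{-1}=0$), this gives nothing for $s\geq 1$. Fortunately the conclusion you want follows from a cleaner chain-level observation that sidesteps the spectral sequence entirely: model $W=C_m$ as a bounded-below complex each of whose terms is killed by $I^n$ (e.g.\ $G_\bullet\otimes_A V$ for $G_\bullet$ a flat complex computing $I^{\otimes^L m}$), and model $I^{\otimes^L p}$ by a bounded-below complex $P_\bullet$ of flat $A$-modules. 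Then $C_{m+p}\to C_m$ is represented by $\mathrm{Tot}(P_\bullet\otimes_A W_\bullet)\to W_\bullet$ induced by $P_\bullet\to A$; the latter is zero in positive degrees and has image $I^p$ in degree zero, so in total degree $j$ the map lands in $I^pW_j$, which is $0$ as soon as $p\geq n$. The chain map is therefore literally null, giving the pro-vanishing you need without any appeal to the filtration on the abutment. With that replacement your proposal is correct.
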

\begin{proof} The proof will be by ascending induction on $n$. The assertion for $n=1$, is 
essentially \cite[Proposition 3.2, 6]{C08}. i.e. In this case, it suffices to observe that the cosimplicial
object $\H(N){\overset L {\underset {\H(A)} \wedge}} {\mathcal T}_{\H(A)}^{\bullet}(\H(M/I^n), \H(B))  $ has an 
extra co-degeneracy which provides a contracting homotopy. Next we consider the case of a general $n$. By taking a free resolution, we may assume 
$M$ and $N$ consist of free $A$-modules in each degree.
 In view of the above flatness assumptions on $M$, one has the short-exact sequence (of complexes):
\vskip .2cm
$0 \ra M {\underset {A} \otimes}I^{k-1}/I^k \ra M/I^{k} \ra M/I^{k-1} \ra 0$. 
\vskip .2cm \noindent
This provides a commutative diagram of cosimplicial objects:
\vskip .2cm
\xymatrix{{\H(N){\overset L {\underset {\H(A)} \wedge}} (\H(M {\underset A  \otimes} I^{k-1}/I^k))} \ar@<1ex>[r] \ar@<-1ex>[d] & {\H(N){\overset L {\underset {\H(A)} \wedge}} (\H(M/I^k))} \ar@<1ex>[r] \ar@<-1ex>[d] & {\H(N){\overset L {\underset {\H(A)} \wedge}} (\H(M/I^{k-1}))} \ar@<-1ex>[d]\\
  {\H(N){\overset L {\underset {\H(A)} \wedge}} {\mathcal T}^{\bullet}_{\H(A)}(\H(M{\underset A \otimes}I^{k-1}/I^k), \H(B))}  \ar@<1ex>[r] &{\H(N){\overset L {\underset {\H(A)} \wedge}} {\mathcal T}^{\bullet}_{\H(A)}(\H(M/I^{k}), \H(B))} \ar@<1ex>[r] &  P }
\vskip .2cm \noindent
where $P = {\H(N){\overset L {\underset {\H(A)} \wedge}} {\mathcal T}^{\bullet}_{\H(A)}(\H(M/I^{k-1}), \H(B))}$. The two rows are clearly stable 
cofiber sequences and remain so on taking the homotopy inverse limit of the cosimplicial objects in the last row. Observe that $M {\underset {A} \otimes}I^{k-1}/I^k$ is an $A/I=B$-module. Therefore
the case $n=1$ applies to show the first vertical map is a weak-equivalence on taking the homotopy inverse limit of the cosimplicial objects.
By the inductive assumption, the last vertical map induces a weak-equivalence on taking the homotopy inverse limit. Therefore, the 
middle map also 
induces a weak-equivalence on taking the homotopy inverse limit.
\end{proof}
\section{\bf Reduction to the case of a torus in ${\rmG}$ and an outline of the proof of the main theorem}
\label{reduct.torus}
Recall that following the hypotheses of Theorem ~\ref{main.thm.1}: $\rmG$ denotes a split reductive group acting on  a scheme $\rmX$, where both $\rmG$ and $\rmX$ satisfy the standing hypothesis ~\ref{stand.hyp.1} and $\rmH$ denotes a closed subgroup scheme. 
Making use of
the identifications $\bG(\rm{\rm X}, \rmH) \simeq \bG({\rmG}{\underset {\rmH}  \times}\rm{\rm X}, {\rmG})$, $\bG(\rmE\rmH^{gm}{\underset {\rmH}  \times}\rmX) \simeq 
\bG(\rmE{\rmG}^{gm}{\underset {\rmH}  \times}({\rmG}{\underset {\rmH}  \times}\rmX))$, we reduce to considering only actions of the group-scheme $\rmG$. 
i.e. We may assume without loss of generality, in the rest of this section that $\rmG= GL_n$ for some $n \ge 1$.
\vskip .2cm
Next we recall a particularly nice way to construct geometric classifying spaces through what are called {\it admissible gadgets}. 
\vskip .2cm
\begin{definition} (Admissible gadgets) (See \cite[4.2]{MV} and also \cite[3.1]{K}.)
\label{adm.gadg}
The first step in constructing an admissible gadget is to start  with a good pair $(\rmW, \rmU)$ for $\rmG$: this is a pair 
$(\rmW, \rmU)$ of smooth schemes over $k$
where $\rmW$ is a $k$-rational representation of $\rmG$ and
$\rmU \subsetneq \rmW$ is a $\rmG$-invariant non-empty open subset which is a smooth scheme with a free
action by $\rmG$, so that the quotient $\rmU/\rmG$ is a scheme. It is known ({\sl cf.} \cite[Remark~1.4]{tot}) that a 
good pair for $\rmG$ always exists.
The following choice of a good pair is often convenient. Choose a faithful $k$-rational representation $\rmR$ of $\rmG$ of dimension $n$. i.e. $\rmG$ admits a closed
immersion into $\GL(\rmR)$. Then 
$\rmG$ acts freely on an open subset $\rmU$ of $\rmW = End_k(\rmR)$. (Observe that  if $\rmR=k^{\oplus n}$, $\rmW= \rmR^{\oplus n}$. Now one may take $\rmU=\GL(\rmR)$.)
Let $Z = \rmW \setminus \rmU$. A sequence of pairs $ \{ (\rmW_i, \rmU_i)|i \ge 1\}$ of smooth schemes
over $k$ is called an {\sl admissible gadget} for $\rmG$, if there exists a
good pair $(\rmW, \rmU)$ for $\rmG$ such that $\rmW_i = \rmW^{\times ^{ i}}$ and $\rmU_i \subsetneq \rmW_i$
is a $\rmG$-invariant open subset such that the following conditions hold for each $i \ge 1$. 
\vskip .2cm
\begin{enumerate}
\item
$\left(\rmU_i \times \rmW\right) \cup \left(\rmW \times \rmU_i\right)
\subseteq \rmU_{i+1}$ as $\rmG$-invariant open subsets.
\item $\{\codim_{\rmU_{i+1}}\left(\rmU_{i+1} \setminus \left(\rmU_{i} \times \rmW\right)\right)|i\}$ is 
a strictly increasing sequence,  
\newline \noindent
i.e. $\codim_{\rmU_{i+2}}\left(\rmU_{i+2} \setminus 
\left(\rmU_{i+1} \times \rmW\right)\right) > 
\codim_{\rmU_{i+1}}\left(\rmU_{i+1} \setminus \left(\rmU_{i} \times \rmW\right)\right)$.
\item $\{\codim_{\rmW _i}\left(\rmW_i \setminus {\rmU _i}\right)|i\}$ is a strictly increasing sequence, 
\newline \noindent
i.e. $\codim_{\rmW_{i+1}}\left(\rmW_{i+1} \setminus \rmU_{i+1}\right) > \codim_{\rmW_i}\left(\rmW_i \setminus \rmU_i\right)$.
\item
$\rmU_i$ is a smooth quasi-projective scheme over $k$ with a free $\rmG$-action, so that the quotient $\rmU_i/G$ is also a smooth quasi-projective
scheme over $k$.
\end{enumerate}
\end{definition}
A {\it particularly nice example} of an admissible gadget for $\rmG$ can be constructed as follows. 
Given a good pair $(\rmW, \rmU)$, we now let
\be \begin{equation}
\label{adm.gadget.1}
 \rmW_i = \rmW^{\times ^{ i}},  \rmU_1 = \rmU \mbox{ and } E\rmG^{gm,i}= \rmU_{i+1} = \left(\rmU_i \times \rmW \right) \cup
\left(\rmW \times \rmU_i \right) \mbox{ for }i \ge 1.
\end{equation} \ee
where $\rmU_{i+1}$ is  viewed as a subscheme of 
$\rmW^{\times ^{i+1}} $. Observe that 
\be \begin{equation}
     \label{EG.fin}
\rmE\rmG^{gm,i}=U_{i+1} = \rmU \times \rmW^{\times i} \cup \rmW \times \rmU \times \rmW^{\times i-1} \cup \cdots \cup \rmW^{\times i} \times \rmU
    \end{equation} \ee
\vskip .2cm \noindent
Setting 
$Z_1 = Z$ and $Z_{i+1} = \rmU_{i+1} \setminus \left(\rmU_i \times \rmW\right)$ for 
$i \ge 1$, one checks that $\rmW_i \setminus \rmU_i = Z^{\times ^{i}}$ and
$Z_{i+1} = Z^{\times ^{i}} \oplus \rmU$.
In particular, $\codim_{\rmW_i}\left(\rmW_i \setminus \rmU_i\right) =
i (\codim_{\rmW}(Z))$ and
$\codim_{\rmU_{i+1}}\left(Z_{i+1}\right) = (i+1)d - i(\dim(Z))- d =  i (\codim_{\rmW}(Z))$,
where $d = \dim(\rmW)$. Moreover, $\rmU_i \to {\rmU_i}/\rmG$ is a principal $\rmG$-bundle and that the quotient $\rmV_i= \rmU_i/\rmG$ exists 
 as a smooth quasi-projective scheme (since the $\rmG$-action on $\rmU_i$ is free and $\rmU/\rmG$ is a scheme). We will 
often use $\rmE\rmG^{gm,i}$ to denote the $i+1$-th term of an admissible gadget $\{\rmU_i|i \}$.
\vskip .2cm
In particular if $\rmG=GL_n$, one starts with a faithful representation on the affine space $\rmV={\mathbb A}^n$. Let
$\rmW= End(\rmV)$ and $\rmU= GL(\rmV) = GL_n$.  
\vskip .2cm
In case $\rmG=\rmT = {\mathbb G}_m^n$, $\rmE\rmT^{gm,i}$ will be defined
 as follows. Assume first that $\rmT={\mathbb G}_m$. Then one may let $\rmE\rmT^{gm,i}= {\mathbb A}^{i+1}-0$ with the
diagonal action of $\rmT={\mathbb G}_m$ on ${\mathbb A}^i$. Now $\rmB\rmT^{gm,i+1} = {\mathbb P}^i$, which clearly has a Zariski open covering by $i+1$ affine spaces.
If $\rmT= {\mathbb G}_m^n$, then 
\be \begin{equation}
     \label{toric.case}
 \rmE\rmT^{gm,i}= ({\mathbb A}^{i+1}-0)^{\times n} 
\end{equation} \ee
with the $j$-th copy of ${\mathbb G}_m$ acting on the $j$-th factor ${\mathbb A}^{i+1}-0$. 
Now $\rmB\rmT^{gm,i} = ({\mathbb P}^i)^{\times n}$.

Next we consider the following preliminary result.
\begin{proposition}
 \label{coveringlemma} If $\rmG=\GL_n$, a finite product of $\GL_n$s or a split torus, then there exists a finite Zariski open covering, $\{V_{\alpha}|\alpha\}$ of each $\rmB\rmG^{gm,i}$ with each
$\rmV_{\alpha}$ being ${\mathbb A}^1$-acyclic, i.e. each $\rmV_{\alpha}$ has a $k$-rational point $p_{\alpha}$ so that $\rmV_{\alpha}$ and $p_{\alpha}$ are equivalent in the ${\mathbb A}^1$-homotopy category.
In fact one may choose a common $k$-rational point $p$ in the intersection of all the open $\rmU_{\alpha}$. Moreover, in all three cases, one may choose
a Zariski open covering of $\rmB\rmG^{gm,i}$ of the above form,  to consist of $i+1$ or more open sets.
\end{proposition}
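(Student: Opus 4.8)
The plan is to reduce to a single general linear group, settle that case by an explicit covering by affine spaces coming from the pieces in \eqref{EG.fin}, and then pass to products. First I would note that a split torus $\mathbb{G}_m^n$ is $\GL_1^{\times n}$ and that a finite product of general linear groups is manifestly a product, and that for products one takes the product models $E(\prod_j \rmG_j)^{gm,i} = \prod_j E\rmG_j^{gm,i}$ and $B(\prod_j \rmG_j)^{gm,i} = \prod_j B\rmG_j^{gm,i}$, exactly as in \eqref{toric.case}. Hence, granting the statement for each factor $\rmG_j$, with covering $\{V^{(j)}_\alpha\}$ by $\mathbb{A}^1$-contractible opens through a common $k$-rational point $p_j$, the products $\{\prod_j V^{(j)}_{\alpha_j}\}$ form such a covering of $\prod_j B\rmG_j^{gm,i}$: a finite product of $\mathbb{A}^1$-contractible schemes is $\mathbb{A}^1$-contractible (form the product of the contracting homotopies), the common point is $(p_1,\dots,p_m)$, and the number of members is the product of the numbers for the factors, so it is $\ge i+1$ once it is so for one of them. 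This reduces everything to $\rmG = \GL_n$.

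For $\rmG = \GL_n$ I would use the model of \eqref{adm.gadget.1}--\eqref{EG.fin} attached to the good pair $\rmW = \mathrm{End}_k(k^{\oplus n}) = \mathrm{Mat}_n$, $\rmU = \GL_n$. By \eqref{EG.fin}, $E\GL_n^{gm,i}$ is the union of the $i+1$ open subschemes $\Omega_j$ ($0 \le j \le i$) of $\rmW^{\times(i+1)}$ on which the $j$-th matrix is invertible; each $\Omega_j$ is $\GL_n$-stable for the diagonal left-translation action $g\cdot(A_0,\dots,A_i) = (gA_0,\dots,gA_i)$, and $(A_0,\dots,A_i) \mapsto (A_j;\,A_0,\dots,\widehat{A_j},\dots,A_i)$ is a $\GL_n$-equivariant isomorphism $\Omega_j \cong \GL_n \times \rmW^{\times i}$ onto the target with the diagonal left-translation action. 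The quotients $V_j := \Omega_j/\GL_n$ thus form a Zariski open covering of $B\GL_n^{gm,i}$ with exactly $i+1$ members, and the key point is that each is an affine space: since the $\GL_n$-action on $\GL_n \times \rmW^{\times i}$ is free with the first factor providing a global section, $(h;B_1,\dots,B_i) \mapsto (h^{-1}B_1,\dots,h^{-1}B_i)$ realizes the geometric quotient as $\rmW^{\times i} \cong {\mathbb A}^{n^2 i}$, which is $\mathbb{A}^1$-contractible to any $k$-rational point via $(x,t)\mapsto p+t(x-p)$. Finally, the image $p$ of $(I,\dots,I) \in E\GL_n^{gm,i}$ in $B\GL_n^{gm,i}$ is a $k$-rational point lying in every $\Omega_j$, hence in every $V_j$; taking $p_\alpha = p$ throughout finishes the $\GL_n$ case, and with it, by the previous paragraph, the proposition.

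The step I expect to be the main obstacle is the identification $V_j \cong {\mathbb A}^{n^2 i}$: one must check that $\Omega_j \to V_j$ is a (globally trivial) $\GL_n$-torsor, so that the geometric quotient exists as a smooth scheme and is computed by the trivialization written above --- and this is precisely where one uses that $\GL_n$ acts by translation on the distinguished coordinate. The existence of $\rmU_i/\GL_n$ as a smooth quasi-projective scheme is already built into the notion of admissible gadget (Definition~\ref{adm.gadg}(4)), so the genuinely new content is only the explicit description of the open pieces $V_j$ of $B\GL_n^{gm,i}$. The reduction to a single factor, the counting of members, and the $\mathbb{A}^1$-contractibility of affine space are all routine.
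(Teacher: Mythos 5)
Your proof is correct and follows essentially the same route as the paper's: cover $\rmE\GL_n^{gm,i}$ by the $i+1$ opens $\Omega_j$ on which the $j$-th coordinate is invertible, identify $V_j = \Omega_j/\GL_n \cong \rmW^{\times i}$ as affine space via the global section given by the distinguished $\GL_n$-coordinate, take the image of $(I,\dots,I)$ as the common $k$-rational point, and handle products and split tori by passing to products of the models. The only stylistic difference is that you fold the product and torus cases into a uniform reduction to a single $\GL_n$ (noting $\mathbb{G}_m = \GL_1$), whereas the paper treats the torus by directly observing $\rmB\rmT^{gm,i} = (\mathbb{P}^i)^{\times n}$ and invoking the standard affine cover of projective space; these are the same computation.
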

\begin{proof} We will recall the construction of the geometric classifying spaces $\rmB\rmG^{gm,i}$  given above.
 Observe that when $\rmG=\GL_n$, $\rmU=\GL_n$ so that
\[\rmE\rmG^{gm,i}= \rmU_{i+1} = \GL_n \times \rmW^{\times i} \cup \rmW \times \GL_n \times \rmW^{\times i-1} \cup \cdots \cup \rmW^{\times i} \times \GL_n \]
Therefore $\rmB\rmG^{gm,i}$ is covered by the Zariski open cover
$V_1= \GL_n{\underset {\GL_n} \times} \rmW ^{\times i},  V_2=\rmW {\underset { \GL_n } \times}\GL_n \times \rmW^{\times i-1}, \cdots, V_{i+1}= \rmW^{\times i} {\underset {\GL_n} \times}GL_n$ each of which is isomorphic to $\rmW^{\times i}$ and hence ${\mathbb A}^1$-acyclic.
Observe that there are exactly $i+1$-open sets in this cover.
Moreover, one may choose a $k$-rational point $q \eps \GL_n= \rmU$ and let $p' =( q, \cdots , q) \eps \rmW^{\times i+1}$
denote a $k$-rational point. Clearly this belongs to $\rmE\rmG^{gm,i}$ and provides a common $k$-rational point in 
the intersection of the above open cover of $\rmB\rmG^{gm, i}$. When $\rmG = \GL_{n_1} \times \cdots \times \GL_{n_m}$, one
may take for $\rmB\rmG^{g,m} = \rmB\GL_{n_1}^{gm,m} \times \cdots \times \rmB\GL_{n_m}^{gm,m}$, so that the conclusion is also clear in this
case.
\vskip .2cm
If $\rmT= {\mathbb G}_m^n$, then recall $\rmE\rmT^{gm,i}= ({\mathbb A}^{i}-0)^{\times n} $
with the $j$-th copy of ${\mathbb G}_m$ acting on the $j$-th factor ${\mathbb A}^{i}-0$. 
Now $\rmB\rmT^{gm,i} = ({\mathbb P}^i)^{\times n}$.  The conclusions are clear in this case.\end{proof}
\vskip .2cm
\begin{remark}
\label{two.class.spaces}
Let $\rmG=\rmT={\mathbb G}_m^n$. Now we have already two constructions for $\rmE\rmT^{gm, i}$.  The first, which we
denote by  $\rmE\rmT^{gm,i}(1)$ starts with $\rmE\rmT^{gm,1}(1) = \GL_n$ with the successive $\rmE\rmT^{gm,i}(1)$ obtained by applying the construction in 
~\eqref{adm.gadget.1} with $\rmE\rmT^{gm,i}(1) = \rmE\GL_n^{gm,i}$. Let $\rmE\rmT^{gm, i}(2)$ denote the second construction with $\rmE\rmT^{gm,i} = ({\mathbb A}^{i+1}-0)^n$. The explicit relationship between two such constructions
is often delicate. But as shown in Appendix B, the equivariant Borel-style generalized cohomology is independent of the choice of 
the ind-scheme $\{\rmE\rmG^{gm,m}|m\}$ used in defining a Borel-style generalized
cohomology, at least on considering smooth schemes of finite type over $k$.
\end{remark}
\begin{definition}
 \label{rel.KG}
Let the smooth scheme $\rmX$ be provided with an action by the linear algebraic group $\rmG$ and let
$\rmA$ denote a locally closed $\rmG$-stable smooth subscheme of $\rmX$. Then we define $\bK({\rm X}, \rmA, \rmG)$ as
the canonical homotopy fiber of the restriction $\bK({\rm X}, \rmG) \ra \bK(\rmA, \rmG)$. 
\end{definition}
Now we obtain the following result whose proof is straightforward and is therefore skipped.
\begin{lemma}
 \label{pairing.rel.KG}
 Let  $\rmX$ and $\rmY$ be smooth schemes provided with actions by the linear algebraic
group $\rmG$. Let $\rmA$ ($\rmB$) denote a locally closed smooth $\rmG$-stable subscheme of $\rmX$ ($\rmY$, \res).
Then one obtains a pairing:
\[\bK({\rm X}, \rmA, \rmG) {\overset L {\underset {\bK(Spec \, k, \rmG)} \wedge}} \bK(\rmY, \rmB, \rmG) \ra \bK(\rmX \times \rmY, \rmA \times \rmY \cup \rmX \times \rmB, \rmG)\]
that is contravariantly functorial in all the arguments.
\end{lemma}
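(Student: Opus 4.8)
The plan is to construct the pairing from the exterior tensor product of $\rmG$-equivariant perfect complexes and then pass to homotopy fibres, organising the bookkeeping as a map of $2\times 2$ squares so that the null-homotopies occurring in the argument glue together automatically.

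First I would recall the exterior product. Pulling $\rmG$-equivariant perfect complexes back along the two projections of $\rmX\times\rmY$ onto $\rmX$ and onto $\rmY$ and tensoring gives a biexact pairing $\Perf(\rmX,\rmG)\times\Perf(\rmY,\rmG)\to\Perf(\rmX\times\rmY,\rmG)$, hence, by the standard machinery of biexact pairings of bi-Waldhausen categories (see \cite{ThTr}), a pairing of spectra $\bK(\rmX,\rmG)\wedge\bK(\rmY,\rmG)\to\bK(\rmX\times\rmY,\rmG)$. Since the composites $\rmX\times\rmY\to\rmX\to Spec \, k$ and $\rmX\times\rmY\to\rmY\to Spec \, k$ both equal the structure map of $\rmX\times\rmY$ (with $\rmG$ acting trivially on $Spec \, k$), this pairing is bilinear over $\bK(Spec \, k, \rmG)$, which acts on $\bK(\rmX,\rmG)$ and on $\bK(\rmY,\rmG)$ by pullback along the respective structure maps, and therefore factors through a map
\[
\mu_{\rmX,\rmY}: \bK(\rmX,\rmG){\overset L {\underset {\bK(Spec \, k, \rmG)} \wedge}}\bK(\rmY,\rmG)\to\bK(\rmX\times\rmY,\rmG).
\]
This $\mu$ is contravariantly functorial in each variable, associative and commutative in the evident sense, and --- by the identity $j^*(\xi)\boxtimes\eta=(j\times\id)^*(\xi\boxtimes\eta)$ valid for restriction along a $\rmG$-stable locally closed immersion $j$ --- compatible with restriction in either variable.

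Next I would assemble these into a map of squares. Let $S$ denote the commutative square of spectra with vertices $\bK(\rmX\times\rmY,\rmG)$, $\bK(\rmA\times\rmY,\rmG)$, $\bK(\rmX\times\rmB,\rmG)$ and $\bK(\rmA\times\rmB,\rmG)$ and the evident restriction maps, and let $T$ denote the square obtained by applying the derived smash product over $\bK(Spec \, k, \rmG)$ to the pair of arrows $\bK(\rmX,\rmG)\to\bK(\rmA,\rmG)$ and $\bK(\rmY,\rmG)\to\bK(\rmB,\rmG)$. By the compatibility of $\mu$ with restriction, $\mu_{\rmX,\rmY},\mu_{\rmA,\rmY},\mu_{\rmX,\rmB},\mu_{\rmA,\rmB}$ together constitute a map of squares $T\to S$. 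Because smashing (over $\bK(Spec \, k, \rmG)$) with a fixed module spectrum preserves homotopy (co)fibre sequences, the iterated homotopy fibre of $T$ is $\bK(\rmX,\rmA,\rmG){\overset L {\underset {\bK(Spec \, k, \rmG)} \wedge}}\bK(\rmY,\rmB,\rmG)$ in view of Definition ~\ref{rel.KG}, while the iterated homotopy fibre of $S$ is the homotopy fibre of the map from $\bK(\rmX\times\rmY,\rmG)$ to the homotopy pullback of $\bK(\rmA\times\rmY,\rmG)\to\bK(\rmA\times\rmB,\rmG)\leftarrow\bK(\rmX\times\rmB,\rmG)$. Once that homotopy pullback is identified with $\bK(\rmA\times\rmY\cup\rmX\times\rmB,\rmG)$, the iterated homotopy fibre of $S$ becomes $\bK(\rmX\times\rmY,\rmA\times\rmY\cup\rmX\times\rmB,\rmG)$, and the map $T\to S$ induces on iterated homotopy fibres exactly the pairing asserted in the lemma. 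Its contravariant functoriality in the pairs $(\rmX,\rmA)$ and $(\rmY,\rmB)$ then follows from that of $\mu$ together with the functoriality of the iterated homotopy fibre.

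The one step that is genuinely geometric rather than diagram chasing, and hence the point I expect to require the most care, is the identification of that homotopy pullback with $\bK$ of the union. When $\rmA$ and $\rmB$ are open this is exactly Zariski descent for the $\rmK$-theory of perfect complexes: $\rmA\times\rmY$ and $\rmX\times\rmB$ form an open cover of $\rmA\times\rmY\cup\rmX\times\rmB$ with intersection $\rmA\times\rmB$, so the Mayer--Vietoris theorem of \cite{ThTr} applies; for a general $\rmG$-stable locally closed $\rmA$ and $\rmB$ one reduces to this case by a further application of the localization theorem. Incidentally, the reason I would route the argument through a map of squares, rather than directly exhibiting a null-homotopy of the composite $\bK(\rmX,\rmA,\rmG){\overset L {\underset {\bK(Spec \, k, \rmG)} \wedge}}\bK(\rmY,\rmB,\rmG)\to\bK(\rmA\times\rmY\cup\rmX\times\rmB,\rmG)$, is that the square formulation automatically supplies the compatibility over $\rmA\times\rmB$ of the two evident null-homotopies --- one killing the first smash factor via $\bK(\rmX,\rmA,\rmG)\to\bK(\rmA,\rmG)$, the other killing the second via $\bK(\rmY,\rmB,\rmG)\to\bK(\rmB,\rmG)$ --- which is the only coherence needing genuine attention.
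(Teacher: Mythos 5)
The paper declares the proof of this lemma ``straightforward'' and omits it, so there is no argument in the paper to compare against. Your construction --- build the external product $\mu_{\rmX,\rmY}$ as a map of $\bK(Spec\, k, \rmG)$-module spectra, promote it to a map of $2\times 2$ squares, and pass to iterated homotopy fibres --- is a correct and sensibly organized way to supply it, and it isolates exactly the coherence issue (the compatibility over $\rmA\times\rmB$ of the two evident null-homotopies) that a looser argument would have to resolve by hand.

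Two caveats, though. First, the Mayer--Vietoris equivalence you invoke to identify the total fibre of the square of products with $\bK(\rmX\times\rmY,\rmA\times\rmY\cup\rmX\times\rmB,\rmG)$ is a statement about $\rmG$-equivariant K-theory, so the correct reference is to equivariant localization (e.g.\ \cite{Th83}), not to the non-equivariant \cite{ThTr}; under the smoothness hypotheses this is routine via $\bK\simeq\bG$ as in ~\eqref{PD}, but the citation should be adjusted. Second and more substantively, that descent step requires $\rmA\subseteq\rmX$ and $\rmB\subseteq\rmY$ to be \emph{open}. For genuinely locally closed $\rmA$ and $\rmB$ the union $\rmA\times\rmY\cup\rmX\times\rmB$ need not be a locally closed subscheme of $\rmX\times\rmY$ at all, so the right-hand side of the lemma is not obviously defined by Definition ~\ref{rel.KG}, and your one-line gesture at ``a further application of the localization theorem'' does not repair that. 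In the only place the lemma is used, the proof of Proposition ~\ref{factoring.through.part.compl}, the subschemes in question are the Zariski opens produced by Proposition ~\ref{coveringlemma}, so your argument is complete there. If one wants the lemma literally for locally closed $\rmA$, $\rmB$, the cleanest fix is to \emph{define} the target as the total homotopy fibre of the square with vertices $\bK(\rmX\times\rmY,\rmG)$, $\bK(\rmA\times\rmY,\rmG)$, $\bK(\rmX\times\rmB,\rmG)$, $\bK(\rmA\times\rmB,\rmG)$, which makes the descent step unnecessary and your map of squares immediately produces the pairing.
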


\begin{proposition} 
\label{factoring.through.part.compl}
Let $\rmG$ denote $\GL_n$ for some $n$ or a split torus. Let $\rmX$ denote a scheme or algebraic space of finite 
type over $k$. Then, with the choice of the classifying spaces as in ~\eqref{EG.fin} and ~\eqref{toric.case}, the following hold.
\vskip .1cm
(i) If $I_G$ denotes 
the homotopy fiber of the restriction map
 $\bK(Spec \, k, \rmG) \ra \bK(Spec \, k)$, then the (obvious) map 
\[\bG(\rm{\rm X}, \rmG) \ra \bG(\rmE\rmG^{gm, m} \times \rm{\rm X}, \rmG) 
\simeq \bG(\rmE\rmG^{gm,m} \times _{\rmG} \rmX)\]
 factors
through $\bG(\rm{\rm X}, \rmG)/I_{\rmG}^{Lm'+1}\bG(\rm{\rm X}, \rmG)$ for some positive integer $m' \ge m$. Here
\[I_{\rmG}^{L m'+1}\bG(\rm{\rm X}, \rmG) = {\overset {m'+1} {\overbrace { I_{\rmG} {\overset L {\underset {\bK(Spec \, k, \rmG)} \wedge}}  I_{\rmG}  \cdots {\overset L {\underset {\bK(Spec \, k, \rmG)} \wedge}}  I_{\rmG}}}}{\overset L {\underset {\bK(Spec \, k, \rmG)} \wedge}} \bG(\rm{\rm X}, \rmG),\]
$I_{\rmG}$ is the homotopy fiber of the restriction $\bK(Spec \, k, \rmG) \ra \bK(Spec \, k)$ and 
$\bG(\rm{\rm X}, \rmG)/I_{\rmG}^{Lm'+1}\bG(\rm{\rm X}, \rmG)$ is the homotopy cofiber of the map $I_{\rmG}^{Lm'+1}\bG(\rm{\rm X}, \rmG) {\overset L {\underset {\bK(Spec \, k, \rmG)} \wedge}} \bG(\rm{\rm X}, \rmG) \ra \bG(\rm{\rm X}, \rmG)$.
\vskip .1cm
(ii) It follows that the map $\bG(\rm{\rm X}, \rmG) \ra  
\bG(\rmE\rmG^{gm,m} \times \rm{\rm X}, \rmG)$ factors through the partial
derived completion $\bG(\rm{\rm X}, \rmG)\compl_{\rho_{\rmG}, m'}$ for some positive integer $m' \ge m+1$. If $\pi: \rmX \ra \rmY$ is a flat map (proper map with finite cohomological dimension), then
the above factorization is compatible with the induced map $\pi^*:\bG(\rmY, \rmG) \ra \bG(\rm{\rm X}, \rmG)$ 
(the induced map $\pi_*:\bG(\rm{\rm X}, \rmG) \ra \bG(\rmY, \rmG)$, \res.)
\vskip .2cm
(iii) More generally, if $\rmG_i$, $i=1, \cdots, q$ are either general linear groups or split tori, the map
\[\bG(\rm{\rm X}, \rmG_1 \times \cdots \times \rmG_q) \ra \bG(\rmE\rmG_1^{gm, m} \times \cdots \times \rmE\rmG_q^{gm, m} \times \rm{\rm X}, \rmG_1 \times \cdots \times \rmG_q)\]
factors through 
\[\bG(\rm{\rm X}, \rmG_1 \times \cdots \times \rmG_q) \compl_{\rho_{\rmG_1 \times \cdots \rmG_q}, m'}\] for 
some positive integer $m' \ge m+1$. Let $\pi: \rmX \ra \rmY$ denote an equivariant map with respect to the action of $\rmG= \rmG_1 \times \cdots \times \rmG_q$.
 If $\pi:\rmX \ra \rmY$ is also a flat  map (also a proper map with finite cohomological dimension), then
the above factorization is compatible with the induced map $\pi^*:\bG(\rmY, \rmG_1 \times \cdots \times \rmG_q) \ra \bG(\rm{\rm X}, \rmG_1 \times \cdots \times \rmG_q)$ (the induced map $\pi_*:\bG(\rm{\rm X}, \rmG_1 \times \cdots \times \rmG_q) \ra \bG(\rmY, \rmG_1 \times \cdots \times \rmG_q)$, \res.)
\end{proposition}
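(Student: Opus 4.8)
The plan is to reduce everything to the ``atomic'' cases ($\rmG=\GL_n$, and $\rmG$ a split torus) and there to exhibit the factorization as coming from a localization sequence together with the pairing of Lemma~\ref{pairing.rel.KG}. First I would use the identifications of \S\ref{KG.props}(ii)--(iv) exactly as in the opening of this section to reduce a product $\rmG_1\times\cdots\times\rmG_q$ to an iteration over its factors, so that it suffices to treat $\rmG=\GL_n$ (with the good pair $\rmW=\operatorname{End}_k(k^n)$, $\rmU=\GL_n$ and the model $\rmE\rmG^{gm,m}=\rmU_{m+1}\subset\rmW^{\times(m+1)}$ of \eqref{EG.fin}) and $\rmG=\mathbb{G}_m^{\,n}$ (with \eqref{toric.case}). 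Since $\rmW^{\times(m+1)}$ is an affine space carrying a linear $\rmG$-action, homotopy invariance of equivariant $\bG$-theory gives $\bG(\rmW^{\times(m+1)}\times\rmX,\rmG)\simeq\bG(\rmX,\rmG)$, and modulo this identification the map $\phi\colon\bG(\rmX,\rmG)\to\bG(\rmE\rmG^{gm,m}\times\rmX,\rmG)\simeq\bG(\rmE\rmG^{gm,m}{\underset{\rmG}\times}\rmX)$ is just the restriction to the open subscheme $\rmU_{m+1}\times\rmX$. Hence by localization and dévissage its homotopy fibre $\mathrm{fib}(\phi)$ is the relative $\bG$-theory $\bG(\rmW^{\times(m+1)}\times\rmX,\ \rmU_{m+1}\times\rmX,\ \rmG)$; for $\GL_n$ the complement of $\rmU_{m+1}$ in $\rmW^{\times(m+1)}$ is $\rmZ^{\times(m+1)}$ with $\rmZ=\rmW\setminus\rmU$.

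The key computation is that for $\rmG=\GL_n$ the stratum $\rmU=\GL_n$ carries a free transitive $\rmG$-action, so equivariant descent for the torsor $\rmU\to\mathrm{pt}$ yields $\bK(\rmU,\rmG)\simeq\bK(Spec\,k)$, and under this identification the restriction $\bK(\rmW,\rmG)\simeq\bK(Spec\,k,\rmG)\to\bK(\rmU,\rmG)\simeq\bK(Spec\,k)$ is precisely the augmentation $\rho_{\rmG}$. Consequently
\[
\bK(\rmW,\rmU,\rmG)\ \simeq\ \mathrm{fib}\!\big(\rho_{\rmG}\colon\bK(Spec\,k,\rmG)\to\bK(Spec\,k)\big)\ =\ I_{\rmG},
\]
canonically as $\bK(Spec\,k,\rmG)$-module spectra, with the forget-the-relative-structure map $\bK(\rmW,\rmU,\rmG)\to\bK(\rmW,\rmG)=\bK(Spec\,k,\rmG)$ corresponding to the canonical map $I_{\rmG}\to\bK(Spec\,k,\rmG)$. (For $\rmG=\mathbb{G}_m$ the same holds with $\rmW=\mathbb{A}^1$, $\rmU=\mathbb{G}_m$; for $\mathbb{G}_m^{\,n}$ with the model \eqref{toric.case} one instead notes that $(\mathbb{A}^{m+1}\setminus 0)^{\times n}$ sits in $\mathbb{A}^{(m+1)n}$ as the complement of the $n$ coordinate subspaces $\{\text{block }i=0\}$, each of codimension $m+1$, and that the class of the origin in $\mathbb{G}_m^{\,n}$-equivariant $\bG_0$ of $\mathbb{A}^{(m+1)n}$ is $\prod_{i=1}^n(1-t_i^{-1})^{m+1}\in I_{\mathbb{G}_m^{\,n}}^{(m+1)n}$, which is what makes the torus argument run with $m'=(m+1)n-1$.)

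Now I would iterate the pairing of Lemma~\ref{pairing.rel.KG} (in the mild extension where the last factor is relative $\bG$-theory of the possibly singular $\rmX$, using that $\bG$-theory is a module over $\bK$-theory) $m+1$ times with all factors equal to $(\rmW,\rmU)$ and a last factor $\bG(\rmX,\rmG)=\bG(\rmX,\emptyset,\rmG)$. Since $\bigcup_{j=0}^{m}\rmW^{\times j}\times\rmU\times\rmW^{\times(m-j)}=\rmU_{m+1}$, this yields a map
\[
\mathcal P\colon\ I_{\rmG}^{L(m+1)}\bG(\rmX,\rmG)\ \simeq\ \bK(\rmW,\rmU,\rmG)^{\overset{L}{\underset{\bK(Spec\,k,\rmG)}{\wedge}}(m+1)}{\overset{L}{\underset{\bK(Spec\,k,\rmG)}{\wedge}}}\bG(\rmX,\rmG)\ \longrightarrow\ \bG(\rmW^{\times(m+1)}\times\rmX,\ \rmU_{m+1}\times\rmX,\ \rmG)=\mathrm{fib}(\phi),
\]
and by functoriality of the pairing for forgetting the relative structure the composite of $\mathcal P$ with $\mathrm{fib}(\phi)\to\bG(\rmX,\rmG)$ is obtained by applying $\bK(\rmW,\rmU,\rmG)\to\bK(\rmW,\rmG)=\bK(Spec\,k,\rmG)$, i.e.\ $I_{\rmG}\to\bK(Spec\,k,\rmG)$, in each of the $m+1$ slots; by the previous paragraph this composite is exactly the canonical map $I_{\rmG}^{L(m+1)}\bG(\rmX,\rmG)\to\bG(\rmX,\rmG)$ of \eqref{key.cof.seq}. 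Thus the canonical map factors through $\mathrm{fib}(\phi)$, so $\phi\circ\big(I_{\rmG}^{L(m+1)}\bG(\rmX,\rmG)\to\bG(\rmX,\rmG)\big)$ is null and $\phi$ factors through the cofibre $\bG(\rmX,\rmG)/I_{\rmG}^{Lm+1}\bG(\rmX,\rmG)$; since $I_{\rmG}^{L(m'+1)}\bG(\rmX,\rmG)\to\bG(\rmX,\rmG)$ factors through $I_{\rmG}^{L(m+1)}\bG(\rmX,\rmG)\to\bG(\rmX,\rmG)$ for every $m'\ge m$, this gives~(i) for all $m'\ge m$ (for the torus $m'=(m+1)n-1\ge m$). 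Part~(ii) is then immediate, as \eqref{key.cof.seq} identifies this cofibre with $\bG(\rmX,\rmG)\compl_{\rho_{\rmG},m'}$; and part~(iii) follows by applying the same argument to $\prod_i\rmE\rmG_i^{gm,m}\subset\prod_i\rmW_i^{\times(m+1)}$, iterating the pairing over all factors and all slots (the resulting $m'$ being a sum of the per-factor exponents, hence $\ge m$). Every ingredient — localization and dévissage, homotopy invariance, torsor descent, and the pairing — is natural for flat $\rmG$-equivariant pull-back, and by the projection and base-change formulae recalled in \S\ref{KG.props}(v)--(vi) also for proper $\rmG$-equivariant push-forward of finite cohomological dimension, so the constructed factorizations are compatible with $\pi^*$ (resp.\ $\pi_*$) as asserted.

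The main obstacle is the compatibility used in the last step: one must verify that the iterated Lemma~\ref{pairing.rel.KG} pairing, combined with the identification $\bK(\rmW,\rmU,\rmG)\simeq I_{\rmG}$, is compatible \emph{slot by slot} with pushing relative classes forward to the ambient theory, so that $\mathcal P$ genuinely refines the canonical map of \eqref{key.cof.seq} and not some other map into $\bG(\rmX,\rmG)$. This, together with pinning down the mixed $\bK$/$\bG$ form of the pairing for non-smooth $\rmX$ and the separate bookkeeping for the torus model \eqref{toric.case} (whose complement is a union of coordinate subspaces rather than a single product stratum), is where the real work lies; the remaining steps are formal consequences of localization, dévissage, homotopy invariance and \eqref{key.cof.seq}.
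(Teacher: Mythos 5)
Your approach is a genuinely different route from the paper's. The paper's proof follows Atiyah--Segal almost literally: it uses Proposition~\ref{coveringlemma} to produce a Zariski open cover $\{\rmV_i\}$ of $\rmB\rmG^{gm,m}$ by $\mathbb{A}^1$-acyclic opens, observes that each $\bK(\rmY, \rmV_i) \ra \tilde\bK(\rmY)$ is an equivalence, and then deduces the required null-homotopy of $I_{\rmG}^{L\wedge n} \ra \tilde\bK(\rmE\rmG^{gm,m},\rmG)$ by iterating the external pairing of Lemma~\ref{pairing.rel.KG} over the $n$ open sets and pulling back along the diagonal $\Delta: \rmY \ra \rmY^{\times n}$, which lands inside $\mathcal{V}$. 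The homotopy fiber $I_{\rmG}$ only enters through the map $I_{\rmG} \ra \tilde\bK(\rmE\rmG^{gm,m},\rmG)$; it is never directly identified with a relative K-theory spectrum. You instead work upstairs in the ambient affine space $\rmW^{\times(m+1)}$, identify the homotopy fiber $\bK(\rmW,\rmU,\rmG) \simeq I_{\rmG}$ using torsor descent and homotopy invariance (which is correct: $\bK(\rmU,\rmG) = \bK(\GL_n,\GL_n) \simeq \bK(Spec\,k)$ and the restriction becomes $\rho_{\rmG}$), and then iterate the pairing to get a genuine lift $I_{\rmG}^{L(m+1)} \ra \bK(\rmW^{\times(m+1)},\rmU_{m+1},\rmG)$, noting that $\rmU_{m+1}=\bigcup_j \rmW^{\times j}\times\rmU\times\rmW^{\times(m-j)}$ is exactly what the iterated pairing produces. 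This is more concrete and arguably more illuminating about why the fiber of $\rho_{\rmG}$ controls the power of annihilation, and it recovers the same numerical bound $m'+1 = m+1$.

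That said, two places where your route costs more than the paper's. First, Lemma~\ref{pairing.rel.KG} as stated is a pairing on \emph{K-theory} of pairs of \emph{smooth} schemes; you flag the mixed $\bK$/$\bG$ extension for singular $\rmX$ as mild, but you do not actually need it: you can run your argument entirely with $\rmX = Spec\,k$ (so everything is smooth K-theory), conclude $I_{\rmG}^{L(m+1)} \ra \bK(Spec\,k,\rmG) \ra \bK(\rmE\rmG^{gm,m},\rmG)$ is null, and then smash with $\widetilde{\bG(\rmX,\rmG)}$ over $\bK(Spec\,k,\rmG)$ at the end exactly as the paper does in ~\eqref{null.homotopic}. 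This reorganization removes the need for the mixed pairing entirely. Second, your stratification template requires $\rmE\rmG^{gm,m}$ to be the admissible gadget $\rmU_{m+1}\subset\rmW^{\times(m+1)}$ of ~\eqref{adm.gadget.1}, which matches ~\eqref{EG.fin} for $\GL_n$ and matches ~\eqref{toric.case} only for the \emph{rank one} torus (where $\rmW=\mathbb{A}^1$, $\rmU=\mathbb{G}_m$); for $\mathbb{G}_m^n$ with $n\ge 2$ the model $(\mathbb{A}^{m+1}\setminus 0)^n$ is not an admissible gadget of this form. The clean fix, as you half-suggest, is not the Koszul--Thom product but simply to treat the rank one case by your method and then invoke your own part (iii) with $\rmG_i = \mathbb{G}_m$, $q=n$, since $(\mathbb{A}^{m+1}\setminus 0)^n = \prod_i \rmE(\mathbb{G}_m)_i^{gm,m}$; the paper's open-cover argument sidesteps this by working directly on $(\mathbb{P}^m)^n$ with its product affine cover. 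Finally, the slot-by-slot compatibility of the pairing with forgetting the relative structure, which you correctly single out as the real content, reduces to functoriality of the external product for the inclusions $(\rmW,\rmU)\hookrightarrow(\rmW,\emptyset)$ and then to the module action of $\bK(\rmW,\rmG)\simeq\bK(Spec\,k,\rmG)$ being compatible with homotopy invariance; this is a routine but necessary verification absent from the paper because the paper's diagonal argument does not need it.
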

\begin{proof} This parallels the original proof in \cite{AS69} for the usual completion for equivariant topological K-theory. For a scheme $\rmY$, with a
chosen $k$-rational point $p$, let 
$\tilde \bK(\rmY)$ 
 denote the homotopy fiber of the obvious restriction map $ \bK(\rmY) \ra \bK(p)$
corresponding to the immersion of the given $k$-rational point $p \ra \rmX$. (Observe that $\tilde \bK(\rmY)$ identifies up to 
weak-equivalence with the homotopy cofiber of the map $\pi^*: \bK(p) \ra \bK(\rmY)$, where $\pi: \rmY \ra Spec \, k=p$ is
the structure map of $\rmY$. Therefore, $\tilde \bK(\rmY)$ is independent of the choice of the $k$-rational point $p$ in $\rmY$.)
Let $\rmY=BG^{gm,m}$ for some fixed $m$ and let $\{\rmV_i|i\}$ denote
a finite Zariski open cover of $\rmY$ with chosen $k$-rational point $p \eps \cap _i \rmV_i$, so that $p \simeq \rmV_i$ in 
the ${\mathbb A}^1$-homotopy category.
We let $\bK(\rmV_i, p) = $ the homotopy fiber of the restriction $\bK(\rmV_i) \ra \bK(p)$. Clearly each 
$\bK(\rmV_i, p)$ is weakly-contractible. This follows from the homotopy property for equivariant $\rmG$-theory and the property ~\eqref{PD},
since the $\rmV_i$ are regular. The cofiber-sequence $\rmV_i/p \ra Y/p\ra Y/V_i$ shows 
that one obtains the stable fiber sequence: $\bK(\rmY, V_i) \ra \bK(\rmY, p) \ra \bK(V_i, p)$.
Since $\bK(\rmV_i, p)$ is weakly-contractible, it follows that the map $\bK(\rmY, V_i) \ra \bK(\rmY, p)$ is a weak-equivalence for each $i$. 
\vskip .2cm
Next observe the weak-equivalences (since the action of $\rmG$ is free on $\rmEG^{gm,m}$) and where $\pi_m: \rmEG^{gm,m} \ra \rmBG^{gm,m}$ is the given projection:
\be \begin{multline}
     \label{key.ids}
\bK(\rmBG^{gm,m}) \simeq \bK(\rmEG^{gm,m},\rmG), \, \bK(\rmBG^{gm,m}, V_i) \simeq \bK(\rmEG^{gm,m}, \pi_m^{-1}(V_i), \rmG), \\
 \bK(\rmBG^{gm,m}, p) \simeq \bK(\rmEG^{gm,m}, \pi_m^{-1}(p), \rmG) \mbox{ and } \bK(V_i, p) \simeq \bK(\pi_m^{-1}(V_i), \pi_m^{-1}(p), \rmG).\end{multline} \ee
\vskip .2cm \noindent
In particular, it follows that all terms on the  right sides above  are module spectra over $\bK(Spec \, k, \rmG)$.
Next recall from Proposition ~\ref{coveringlemma}, that the cardinality of the given open cover $\{\rmV_i|i\}$ of $\rmB\rmG^{gm,m}$ is $m'$ for some  $m' \ge m+1$, which we will denote by $n$, for 
convenience in the following argument. With $\rmY= BG^{gm,m}$ again, let ${\mathcal V} = \rmV_1 \times \rmY^{n-1} \bigcup \rmY \times \rmV_2 \times \rmY^{n-2} \bigcup
\rmY \times \rmY \times \rmV_3 \times \rmY^{n-3} \bigcup \cdots \bigcup \rmY^{n-1} \times \rmV_n$. We let $\tilde \rmY = \rmEG^{gm,m}$, $\tilde \rmV_i = \pi_m^{-1}(\rmV_i)$,
 $\tilde {\mathcal V}  = {\tilde \rmV}_1 \times {\tilde \rmY}^{n-1} \bigcup \tilde \rmY \times {\tilde \rmV}_2 \times {\tilde \rmY}^{n-2} \bigcup
{\tilde \rmY} \times {\tilde \rmY } \times {\tilde \rmV}_3 \times {\tilde \rmY}^{n-3} \bigcup \cdots \bigcup {\tilde \rmY}^{n-1} \times {\tilde \rmV}_n$.
\vskip .2cm
For schemes $\rmW$ and $\rmW'$ with actions by the group $\rmG$, with $\rmW' $ a subscheme of $\rmW$, we let $\bK(\rmW, \rmW', \rmG)$ = the homotopy fiber of
the restriction $\bK(\rmW, \rmG) \ra \bK(\rmW', \rmG)$. 
We let $\tilde {\bK}(\tilde \rmY, \rmG) $= the homotopy fiber of  the map $\bK(\tilde \rmY, \rmG) \ra \bK(\pi_m^{-1}(p), \rmG)$, and
$ {\bK}(\tilde \rmY, \tilde \rmV_i, \rmG)$ = the homotopy fiber of the map 
$\tilde \bK( \tilde \rmY, \rmG) = \bK(\tilde \rmY, \pi_m^{-1}(p), \rmG) \ra \bK(\pi_m^{-1}(\rmV_i), \pi_m^{-1}(p), \rmG)$. (Since the  last term is
weakly contractible, it follows that the map $\bK(\tilde \rmY, \tilde \rmV_i, \rmG) \ra \tilde \bK(\tilde \rmY, \rmG)$
 is a weak-equivalence.) ${{\tilde \bK}({\tilde \rmY}^{\times ^n}, \rmG)}$ is defined similarly. Now the following diagram
\vskip .2cm
\be \begin{equation}
     \label{K.diagm}
\xymatrix{{\bK(\tilde \rmY, {\tilde \rmV}_1, \rmG) \wedge^L \bK(\tilde \rmY, {\tilde \rmV}_2, \rmG) \wedge^L \cdots \wedge^L \bK(\tilde \rmY, {\tilde \rmV}_n, \rmG)} \ar@<1ex>[r]^(.56){\simeq} \ar@<-1ex>[d]^{\times} & {{\tilde \bK}(\tilde \rmY, \rmG) \wedge^L {\tilde \bK}(\tilde \rmY, \rmG) \wedge^L \cdots {\tilde \bK}(\tilde \rmY, \rmG)} \ar@<1ex>[d]\\
{\bK({\tilde \rmY}^{\times n}, \tilde {\mathcal V}, \rmG)} \ar@<1ex>[r] \ar@<-1ex>[d]^{\Delta^*} & {{\tilde \bK}({\tilde \rmY}^{\times ^n}, \rmG)} \ar@<1ex>[d] ^{\Delta^*} \\
{\bK(\tilde \rmY, \tilde \rmY, \rmG)} \ar@<1ex>[r] & {\tilde \bK}(\tilde \rmY, \rmG)}
\end{equation} \ee
\vskip .2cm \noindent
commutes up to homotopy. The pairings forming the top vertical maps in the left and and right column
are provided by Lemma ~\ref{pairing.rel.KG}. The contravariant functoriality of the pairings there
show that the top square homotopy commutes. Since all maps are maps of module spectra over $\bK(Spec \, k, \rmG)$ (as observed in ~\eqref{key.ids}), the $\wedge^{\rm L}$ above is 
over $\bK(Spec \, k, \rmG)$. Since the top horizontal map is a weak-equivalence, and since $\bK(\tilde \rmY, \tilde \rmY, \rmG)$ is weakly contractible, it follows that the 
composition of the maps in the right column is null-homotopic.
\vskip .2cm
Next consider the homotopy commutative diagram
\vskip .2cm
\xymatrix {{I_{\rmG}} \ar@<1ex>[r] \ar@<-1ex>[d] & {\bK(Spec \, k, \rmG)} \ar@<1ex>[r] \ar@<1ex>[d] & {\bK(Spec \, k)}  \ar@<1ex>[d]^{id}\\
{{\tilde \bK}(\rmE\rmG^{gm,m}, \rmG)} \ar@<1ex>[r] & {{ \bK}(\rmE\rmG^{gm,m}, \rmG)} \ar@<1ex>[r] & {\bK(\rmG, \rmG) \simeq \bK(Spec \, k)}}
\vskip .2cm \noindent
Taking $\rmY=\rmB\rmG^{gm,m}$, $\tilde \rmY = \rmE \rmG^{gm,m}$  in ~\eqref{K.diagm}, we see that the map 
\[I_{\rmG} ^{L\wedge ^n}=  {\overset {n} {\overbrace { I_{\rmG} {\underset {\bK(Spec \, k, \rmG)} {\overset L {\wedge}}} I_{\rmG}  \cdots {\underset {\bK(Spec \, k, \rmG)} {\overset L {\wedge}}} \ I_{\rmG} }}}\ra {\tilde \bK}(\rmE\rmG^{gm, m}, \rmG ) \simeq {\tilde \bK}(B\rmG^{gm,m})\]
 is null-homotopic, 
since it factorizes as 
\[I_{\rmG}^{L\wedge ^n} \ra {\tilde \bK}(\rmE\rmG^{gm,m}, G)^{L\wedge^n} =  {\overset {n} {\overbrace { {\tilde \bK}(\rmE\rmG^{gm,m}, G) {\underset {\bK(Spec \, k, \rmG)} {\overset L {\wedge}}} {\tilde \bK}(\rmE\rmG^{gm,m}, G)  \cdots {\underset {\bK(Spec \, k, \rmG)} {\overset L {\wedge}}} {\tilde  \bK}(\rmE\rmG^{gm,m}, G) }}}\]
\[\ra {\tilde \bK}(\rmE\rmG^{gm, m}, \rmG).\]
Therefore,  the middle vertical map
$\bK(Spec \, k, \rmG) \ra \bK(\rmE\rmG^{gm, m}, \rmG)$ factors through $\bK(Spec \, k, \rmG)/I_{\rmG}^{L\wedge ^n}$
 which is the homotopy cofiber of the map $I_{\rmG}^{L\wedge ^n} \ra \bK(Spec \, k, \rmG)$. Finally one makes use of the pairing:
$\bK(Spec \, k, \rmG) {\overset L \wedge} \bG(\rm{\rm X}, \rmG) \ra \bG(\rm{\rm X}, \rmG)$ and the homotopy commutative square (where the vertical maps are the obvious ones)
\be \begin{equation}
     \label{null.homotopic}
\xymatrix{{{I_{\rmG}^{L\wedge ^n}}  {\overset L {\wedge}} \bG(\rm{\rm X}, \rmG)} \ar@<1ex>[r] \ar@<1ex>[d] & {\bK(Spec \, k, \rmG)  {\overset L {\wedge}} \bG(\rm{\rm X}, \rmG)} \ar@<1ex>[r] \ar@<1ex>[d] & {\bG(\rm{\rm X}, \rmG)} \ar@<1ex>[d]\\
{\bK(\rmE\rmG^{gm, m}, \rmG)  {\overset L {\wedge}} \bG(\rm{\rm X}, \rmG)} \ar@<1ex>[r] & {\bK(\rmE\rmG^{gm, m}, \rmG)  {\overset L {\wedge}} \bG(\rm{\rm X}, \rmG)} \ar@<1ex>[r] & \bG(\rmE\rmG^{gm,m} \times \rm{\rm X}, \rmG)}
\end{equation} \ee
\vskip .2cm \noindent
which shows the composite map from the top left-corner to the bottom-right corner is null-homotopic. This proves the first statement since we have let $n=m'\ge m+1$.  
\vskip .2cm
Next we consider the second statement as well as the functoriality of the factorization in (i). Recall (see \cite[Corollary 6.7]{C08}) that the partial derived 
completion $\bG(\rm{\rm X}, \rmG)\compl_{\rho_{\rmG}, m'} =$ 
the homotopy cofiber of the map
\[{\overset {m'+1} {\overbrace {\tilde I_{\rmG} {\underset {\bK(Spec \, k, \rmG)} \wedge} \tilde I_{\rmG}  \cdots {\underset {\bK(Spec \, k, \rmG)} \wedge} \tilde I_G}}} 
 {\underset {\bK(Spec \, k , \rmG)} \wedge} {\widetilde {\bG(\rm{\rm X}, \rmG)}} \ra 
 I_G^{\wedge^{m'+1}} {\underset {\bK(Spec \, k , \rmG)} \wedge} {\widetilde {\bG({\rm X}, \rmG)}} \]
\[ \ra \bK(Spec \, k, \rmG) {\underset {\bK(Spec \, k , \rmG)} \wedge} {\widetilde {\bG(\rm{\rm X}, \rmG)}} = {\widetilde {\bG(\rm{\rm X}, \rmG)}}\]
which maps into  $(\bK(Spec \, k, \rmG)/I_{\rmG}^{\wedge^{m'+1}} ){\underset {\bK(Spec \, k , \rmG)} \wedge} {\widetilde {\bG(\rm{\rm X}, \rmG)}}$.
Here $\tilde I_{\rmG} \ra I_{\rmG}$ (${\widetilde {\bG(\rm{\rm X}, \rmG)}} \ra \bG(\rm{\rm X}, \rmG)$) is a cofibrant replacement in the category of $\bK(Spec \, k, \rmG)$-module spectra. This proves 
the first statement in (ii).  The
functoriality of the factorization in (i) and (ii) follows readily by observing that 
both $\pi^*$ (when $\pi$ is flat) and $\pi_*$ (when $\pi$ is proper of finite cohomological dimension)
are module maps over $\bK(Spec \, k, \rmG)$. Therefore, these maps will induce maps between the diagrams in ~\eqref{null.homotopic} corresponding to
$\rmX$ and $\rmY$. 
\vskip .2cm
To prove the last statement, one first observes that $\bG(\rm{\rm X}, \rmG_1 \times \cdots \times \rmG_q)$ is a module
spectrum over $\bK(Spec \, k, \rmG_1 \times \cdots \times \rmG_q)$. Now the homotopy commutative square
 (where the vertical maps are the obvious ones)
\vskip .2cm
\xymatrix{{\bK(Spec \, k, \Pi_{i=1}^q\rmG_i ) \wedge \bG(\rm{\rm X}, \Pi_{i=1}^q \rmG_i)} \ar@<1ex>[r] \ar@<1ex>[d] & {\bG(\rm{\rm X}, \Pi_{i=1}^q \rmG_i )} \ar@<1ex>[d]\\
{\bK(\rmE\rmG_1^{gm, m} \times \cdots \rmE\rmG_n^{gm, m}, \Pi_{i=1}^q\rmG_i )  \wedge \bG(\rm{\rm X}, \Pi_{i=1}^q\rmG_i )} \ar@<1ex>[r] & \bG(\rmE\rmG_1^{gm,m} \times \cdots \rmE\rmG_n^{gm, m} \times \rm{\rm X}, \Pi_{i=1}^q\rmG_i)}
\vskip .2cm \noindent
and an argument exactly as in the case of a single group, proves the factorization in the last statement. The functoriality of this factorization in $\rmX$ may be proven as in the case
of a single group.
\end{proof}

\vskip .2cm 
\begin{definition}
 \label{funct.alpha} For each linear algebraic group $\rmG$ acting on a scheme $\rmX$ satisfying the basic hypotheses as in ~\ref{stand.hyp.1} and each
choice of admissible gadgets as in ~\ref{adm.gadg}, we will
define a function $\alpha:\N \ra \N$, by $\alpha(m) =m'$ where $m' \ge m+1$ is some choice of $m'$ as in the last Proposition.
\end{definition}
\vskip .2cm \noindent

The main result of this section is the following theorem:
\begin{theorem}
\label{main.thm.3}
Let $\rmG$ denote a finite product of general linear groups acting on a scheme $\rmX$ of finite type over 
a field $k$ satisfying the standing hypotheses ~\ref{stand.hyp.1}. 
 Then the map $\{\bG(\rm{\rm X}, \rmG)_{\rho_{\rmG}, \alpha(m)}|m\} \ra  \{\bG(E{\rmG}^{gm,m}{\underset {\rmG} \times}\rmX)|m\} $ of pro-spectra induces a weak-equivalence on
taking the homotopy inverse limit as $m \ra \infty$. The same conclusion holds for any algebraic space of $\rmX$ of finite type over $k$, provided the
group $\rmG$ is a split torus and $\{\rmE{\rmG}^{gm,m}|m\}$ is chosen as in ~\eqref{toric.case}.
\end{theorem}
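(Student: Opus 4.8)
The plan is to deduce the first assertion from the split-torus case --- which is precisely the second assertion and is established in Section~5 as Theorem~\ref{key.thm.1} --- by a Borel-subgroup argument in the spirit of \cite{AS69}, combined with the comparison of completions in Theorem~\ref{T.comp.vs.G.comp} and the model-independence of Borel-style equivariant $\rmG$-theory proved in Appendix~B. As a first reduction, observe that since $\alpha(m)\ge m+1$ and $\alpha(m)\ra\infty$, the homotopy inverse limit over $m$ of the pro-spectrum $\{\bG(\rmX,\rmG)\compl_{\rho_{\rmG},\alpha(m)}\,|\,m\}$ is the full derived completion $\bG(\rmX,\rmG)\compl_{\rho_{\rmG}}$, because $\holimD{\mathcal T}^{\bullet}_{\bK(\rmS,\rmG)}(\bG(\rmX,\rmG),\bK(\rmS))=\holimm\bG(\rmX,\rmG)\compl_{\rho_{\rmG},m}$ and a cofinal sub-tower has the same limit; by Definition~\ref{Borel.equiv.th} the homotopy inverse limit of the right-hand pro-spectrum is $\bG(\rmE\rmG^{gm}{\underset{\rmG}\times}\rmX)$. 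So it suffices to show that the induced map
\[
f\colon \bG(\rmX,\rmG)\compl_{\rho_{\rmG}}\ \ra\ \holimm\bG(\rmE\rmG^{gm,m}{\underset{\rmG}\times}\rmX)
\]
is a weak-equivalence, where, by Proposition~\ref{factoring.through.part.compl}, $f$ is the factorization of the canonical map $\bG(\rmX,\rmG)\ra\bG(\rmE\rmG^{gm,m}{\underset{\rmG}\times}\rmX)$ through the derived completion.

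Fix a maximal torus $\rmT$ inside a Borel subgroup $\rmB$ of $\rmG$; since $\rmG$ is a finite product of general linear groups, $\pi_1(\rmG)$ is torsion-free (Section~\ref{rep.ring}), so Theorem~\ref{T.comp.vs.G.comp} applies to $\rmT\subseteq\rmG$. The next step is to realize $f$ as a homotopy retract of the analogous map $g$ for the $\rmG$-scheme $\rmG{\underset{\rmB}\times}\rmX$ (which is again quasi-projective and satisfies~\ref{stand.hyp.1}, being a closed $\rmG$-subscheme of the regular $\rmG$-scheme $\rmG{\underset{\rmB}\times}\tilde\rmX$). The projection $\pi\colon\rmG{\underset{\rmB}\times}\rmX\ra\rmX$ is $\rmG$-equivariant, flat, proper, and of finite cohomological dimension; by property (vi) of~\ref{KG.props} the projection formula yields $R\pi_*\pi^*\simeq\id$, so $\pi^*$ is split by $\pi_*$ on $\bG(-,\rmG)$ and, naturally in $m$, on $\bG(\rmE\rmG^{gm,m}{\underset{\rmG}\times}(-))$. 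Since $\pi^*,\pi_*$ are maps of $\bK(\rmS,\rmG)$-module spectra, applying the derived completion at $\rho_{\rmG}$ on the source and $\holimm$ on the target, and invoking the compatibility of the factorization in Proposition~\ref{factoring.through.part.compl} with $\pi^*$ and $\pi_*$, one obtains a homotopy-commutative square with horizontal arrows $f$ and $g$ and with split-mono vertical arrows; since a retract (in the homotopy category) of a weak-equivalence is a weak-equivalence, it is enough to prove that $g$ is a weak-equivalence.

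It remains to identify $g$ with the split-torus map of Theorem~\ref{key.thm.1}. On the source, properties (ii) and (vii) of~\ref{KG.props} give $\bG(\rmG{\underset{\rmB}\times}\rmX,\rmG)\simeq\bG(\rmX,\rmB)\simeq\bG(\rmX,\rmT)$ compatibly with restriction of scalars along $\bK(\rmS,\rmG)\ra\bK(\rmS,\rmT)$, so Theorem~\ref{T.comp.vs.G.comp} gives $\bG(\rmG{\underset{\rmB}\times}\rmX,\rmG)\compl_{\rho_{\rmG}}\simeq\bG(\rmX,\rmT)\compl_{\rho_{\rmT}}$. On the target, freeness of the $\rmG$-action on $\rmE\rmG^{gm,m}$ gives $\rmE\rmG^{gm,m}{\underset{\rmG}\times}(\rmG{\underset{\rmB}\times}\rmX)=\rmE\rmG^{gm,m}{\underset{\rmB}\times}\rmX$, the $R_u(\rmB)$-homotopy-invariance used in property (vii) gives $\bG(\rmE\rmG^{gm,m}{\underset{\rmB}\times}\rmX)\simeq\bG(\rmE\rmG^{gm,m}{\underset{\rmT}\times}\rmX)$ naturally in $m$, and Remark~\ref{mainthm.comments} together with~\eqref{indep.geom.class.sp} --- extended to schemes satisfying only~\ref{stand.hyp.1} via the localization sequence of property (viii) of~\ref{KG.props} --- gives $\holimm\bG(\rmE\rmG^{gm,m}{\underset{\rmT}\times}\rmX)\simeq\holimm\bG(\rmE\rmT^{gm,m}{\underset{\rmT}\times}\rmX)$ for $\{\rmE\rmT^{gm,m}\}$ chosen as in~\eqref{toric.case}. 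Since the construction of the factorization in Proposition~\ref{factoring.through.part.compl} is natural under these identifications, $g$ becomes the split-torus map $\bG(\rmX,\rmT)\compl_{\rho_{\rmT}}\ra\holimm\bG(\rmE\rmT^{gm,m}{\underset{\rmT}\times}\rmX)$ of Theorem~\ref{key.thm.1}, which is a weak-equivalence; hence so is $g$, and therefore $f$, which proves the first assertion. The second assertion is Theorem~\ref{key.thm.1}.

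The hard part will be the comparison $\holimm\bG(\rmE\rmG^{gm,m}{\underset{\rmT}\times}\rmX)\simeq\holimm\bG(\rmE\rmT^{gm,m}{\underset{\rmT}\times}\rmX)$: the ind-schemes $\{\rmE\rmG^{gm,m}/\rmT\}$ and $\{\rmE\rmT^{gm,m}/\rmT\}$ are genuinely different and become interchangeable only after passing to the limit and only for ${\mathbb A}^1$-invariant theories, which is exactly why the motivic-slice arguments of Appendix~B are needed; this is also the reason the statement is restricted to $\rmG$-theory of schemes satisfying~\ref{stand.hyp.1} rather than being phrased for $\rmK$-theory. A secondary point requiring care is checking that the square relating $f$ and $g$, as well as the identification of $g$ with the torus map, are compatible with the contravariant functoriality in Proposition~\ref{factoring.through.part.compl} on the nose, so that the "retract of a weak-equivalence" conclusion is legitimate.
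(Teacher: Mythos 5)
Your proof follows essentially the same route as the paper: you use the Borel-subgroup projection $\pi\colon\rmG{\underset{\rmB}\times}\rmX\ra\rmX$ together with the projection formula of~\ref{KG.props}(v)--(vi) to exhibit the map for $\rmX$ as a retract of the map for $\rmG{\underset{\rmB}\times}\rmX$, reduce from $\rmB$ to $\rmT$ by homotopy invariance, convert the $\rho_{\rmG}$-completion to the $\rho_{\rmT}$-completion via Theorem~\ref{T.comp.vs.G.comp}, replace the model $\{\rmE\rmG^{gm,m}/\rmT\}$ by $\{\rmE\rmT^{gm,m}/\rmT\}$ using the Appendix~B slice argument, and conclude by Theorem~\ref{key.thm.1}. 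The only presentational difference is that the paper stays at the pro-spectrum level and runs the comparison through the explicit inverse system of diagrams~\eqref{K.diagm.0}--\eqref{K.diagm.1} (introducing the intermediate term $\bG(\rmE\rmG^{gm,n}\times\rmE\rmT^{gm,n}\times\rmX,\rmT)$ to mediate between the two classifying-space models), whereas you pass to the homotopy inverse limit at the outset and argue via naturality of the factorization in Proposition~\ref{factoring.through.part.compl}; the underlying ingredients and the order in which they are deployed coincide.
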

\vskip .2cm
The rest of this section will be devoted to a proof of the above theorem. The main idea of the proof is as follows.
Observe that the maps $\pi^*$ and $\pi_*$ associated to $\pi: \rmG{\underset {\rmB} \times}X \ra X$ induce maps that make the following
diagram homotopy commute, for each fixed $m$:
\be \begin{equation}
     \label{splittings}
\xymatrix{{\bG(\rm{\rm X}, \rmG)_{\rho_{\rmG}, \alpha(m)}} \ar@<1ex>[d]^{\pi^*} \ar@<1ex>[rr] && {\bG(\rmE{\rmG}^{gm,m}{\underset {\rmG} \times}\rmX)} \ar@<1ex>[d]^{\pi^*} \\
{\bG(\rm{\rm X}, \rmB)_{\rho_{\rmG}, \alpha(m)} \simeq \bG(\rmG{\underset {\rmB} \times}\rm{\rm X}, \rmG)_{\rho_{\rmG}, \alpha(m)}} \ar@<1ex>[d]^{\pi_*} \ar@<1ex>[rr] && {\bG(\rmE{\rmG}^{gm,m}{\underset {\rmB} \times}\rmX) \simeq 
\bG(\rmE\rmG^{gm, m}{\underset {\rmG} \times}(\rmG{\underset {\rmB} \times}\rmX))} \ar@<1ex>[d]^{\pi_*}\\
{\bG(\rm{\rm X}, \rmG)_{\rho_{\rmG}, \alpha(m)}}  \ar@<1ex>[rr] && {\bG(\rmE{\rmG}^{gm, m}{\underset {\rmG} \times}\rmX)}}.
\end{equation} \ee
\vskip .2cm \noindent
On varying $m$, we 
will view the above diagram as a level diagram of  pro-spectra. Observe that we may replace the Borel subgroup ${\rmB}$ everywhere by its maximal torus 
${\rmT}$ since ${\rmB}/{\rmT}$ is an affine space and hence ${\mathbb A}^1$-acylic.
\vskip .2cm
We will next show how to replace the middle-map up to weak-equivalence by a corresponding map
when the derived completions with respect to $\rho_{\rmG}$ are replaced by derived completions with respect to 
$\rho_T$. We will then show that the middle
map induces a weak-equivalence on taking the homotopy inverse limit as $m \ra \infty$. Since the top horizontal map (which is also the bottom horizontal map) 
is a retract of the middle horizontal map, it follows
that the top horizontal map also induces a weak-equivalence on taking the homotopy inverse limit as $m \ra \infty$. This will prove the theorem.
\vskip .2cm
Therefore, the rest of the proof will be to show that the middle horizontal map induces a weak-equivalence on 
taking the homotopy inverse limit as $m \ra \infty$. Next we invoke Lemma ~\ref{res.1} with $A= \bK(Spec \, k, \rmG)$,
$B= \bK(Spec \, k, \rmT)$ and $C = \bK(Spec \, k)$: in fact we will assume that we have already replaced $C$ with $\tilde C$
and $B$ with $\tilde B$ following the terminology of Lemma ~\ref{res.1}. Let $I_{\rmG}$ denote the homotopy fiber of 
the composite restriction $\bK(Spec \, k, \rmG) \ra \bK(Spec \, k, \rmT) \ra \bK(Spec \, k)$ and let $I_{\rmT}$ denote the
homotopy fiber of the restriction $\bK(Spec \, k, \rmT) \ra \bK(Spec \, k)$. For each integer $m \ge 1$, we will let
\be \begin{equation}
\label{IG.IT.powers}
 I_{\rmG} ^{L\wedge ^m}=  {\overset {m} {\overbrace { I_{\rmG} {\underset {\bK(Spec \, k, \rmG)} {\overset L {\wedge}}} I_{\rmG}  \cdots {\underset {\bK(Spec \, k, \rmG)} {\overset L {\wedge}}} \ I_{\rmG} }}} \mbox{ and }  I_{\rmT} ^{L\wedge ^m}=  {\overset {m} {\overbrace { I_{\rmT} {\underset {\bK(Spec \, k, \rmT)} {\overset L {\wedge}}} I_{\rmT}  \cdots {\underset {\bK(Spec \, k, \rmT)} {\overset L {\wedge}}} \ I_{\rmT} }}} 
    \end{equation}\ee
Now Lemma ~\ref{res.1} and ~\eqref{key.cof.seq} provides the following inverse system of commutative squares:
\be \begin{equation}
\label{K.diagm.0}
\xymatrix{ {\{\bG(\rm{\rm X}, \rmT)\compl_{\rho_{\rmG}, \alpha(m)}|m\}} \ar@<1ex>[d] & {\{\bG(\rm{\rm X}, \rmT)/{I_{\rmG}^{L\wedge^{\alpha(m)+1}}}|m\}} \ar@<-1ex>[l]^{\simeq} \ar@<1ex>[d] \\
{\{\bG(\rm{\rm X}, \rmT)\compl_{\rho_{\rmT}, \alpha(m)}|m\}}  & {\{\bG(\rm{\rm X}, \rmT)/{I_{\rmT}^{L\wedge^{\alpha(m)+1}}}|m\}}  \ar@<-1ex>[l]^{\simeq}  }
\end{equation} \ee
\vskip .2cm
Making use of Proposition ~\ref{factoring.through.part.compl}(i) and making a suitable choice of the function $\alpha$ 
 following the terminology in Definition ~\ref{funct.alpha}, so that the map
$\bG(\rm{\rm X}, \rmT) \ra \bG(E\rmG^{gm, n}   \times \rm{\rm X}, \rmT)$ factors through $\bG(\rm{\rm X}, \rmT)/{I_{\rmG}^{L\wedge^{\alpha(n)+1}}}   $ and also
the map $\bG(\rm{\rm X}, \rmT) \ra \bG(E\rmT^{gm, n}   \times \rm{\rm X}, \rmT)$ factors through $\bG(\rm{\rm X}, \rmT)/{I_{\rmT}^{L\wedge^{\alpha(n)+1}}}$,
the above
diagrams now provide the inverse system of diagrams:
\be \begin{equation}
\label{K.diagm.1}
\xymatrix{ {\{\bG(\rm{\rm X}, \rmT)\compl_{\rho_{\rmG}, \alpha(n)}|n\}} \ar@<1ex>[dd] & {\{\bG(\rm{\rm X}, \rmT)/{I_{\rmG}^{L\wedge^{\alpha(n)+1}}}|n\}} \ar@<1ex>[dd] \ar@<-1ex>[l]^{\simeq} \ar@<1ex>[r] & {\{\bG(\rmE\rmG^{gm, n}   \times \rm{\rm X}, \rmT)|n\}} \ar@<1ex>[d]\\
& & {\{\bG(\rmE\rmG^{gm, n}  \times E\rmT^{gm, n} \times \rm{\rm X}, \rmT)|n\}}  \\
{\{\bG(\rm{\rm X}, \rmT)\compl_{\rho_{\rmT}, \alpha(n)}|n\}}  & {\{\bG(\rm{\rm X}, \rmT)/{I_{\rmT}^{L\wedge^{\alpha(n)+1}}}|n\}} \ar@<1ex>[r] \ar@<-1ex>[l]^{\simeq}&{\{ \bG(\rmE\rmT^{gm, n}\times \rm{\rm X}, \rmT)|n\}} \ar@<-1ex>[u] }
\end{equation} \ee
\vskip .2cm \noindent
Here $\bG(\rm{\rm X}, \rmT)/I_{\rmG}^{L\wedge ^{\alpha(n)+1}}$ denotes the homotopy cofiber of the map 
$I_{\rmG}^{L\wedge ^{\alpha(n)+1}} {\overset L{\underset {\bK(Spec \, k, G)} \wedge}} \bG(\rm{\rm X}, \rmT) \ra \bG(\rm{\rm X}, \rmT)$ and
\newline \noindent 
$\bG(\rm{\rm X}, \rmT)/I_{\rmT}^{L\wedge ^{\alpha(n)+1}}$ denotes the homotopy cofiber of the map $I_{\rmT}^{L\wedge ^{\alpha(n)+1}} {\overset L{\underset {\bK(Spec \, k, T)} \wedge}} \bG(\rm{\rm X}, \rmT) \ra \bG(\rm{\rm X}, \rmT)$.
The map  
\xymatrix{{\{\bG(\rm{\rm X}, \rmT)/{I_{\rmG}^{L\wedge^{\alpha(n)+1}}}|n\}}  \ar@<1ex>[r] & {\{\bG(\rmE\rmG^{gm, n}   \times \rm{\rm X}, \rmT)|n\}}}
exists because of the identifications: $\bG(\rm{\rm X}, \rmT) = \bG(\rmG{\underset {\rmT} \times}{\rm X}, \rmG)$ and
$\bG(\rmE\rmG^{gm, n}   \times \rm{\rm X}, \rmT) = \bG(\rmE\rmG^{gm,n}{\underset {} \times}({\rmG}{\underset {\rmT} \times}X), \rmG)$.
\vskip .2cm
Recall from Proposition ~\ref{coveringlemma} that there
are two distinct models of the direct system of geometric classifying spaces for split maximal tori $\rmT$ in $\GL_n$.
The product of the universal $\rmT$-bundles of these two models is yet another model for the universal bundle
over a direct system of geometric classifying space for $\rmT$, which
 shows up  in ${\{\bG(\rmE\rmG^{gm, n}  \times \rmE\rmT^{gm, n} \times \rm{\rm X}, \rmT)|n\}}$. 
 The projections $\rmE\rmG^{gm, n}   \times \rmX \leftarrow  \rmE\rmG^{gm, n}  \times \rmE\rmT^{gm, n} \times \rmX \ra  \rmE\rmT^{gm,n} \times \rmX$
are flat which provide the right-most two vertical maps.
\vskip .2cm
The left most square is simply the commutative square in ~\eqref{K.diagm.0}, which therefore commutes. To see the commutativity of
the right square, one may simply observe that the two maps are two different factorizations of the map
corresponding to the functor that pulls-back a $\rmT$-equivariant coherent sheaf on $\rmX$ to 
a $\rmT$-equivariant coherent sheaf on $\rmE\rmG^{gm, n}  \times \rmE\rmT^{gm, n} \times \rmX$. 
The left-vertical map clearly 
is a weak-equivalence on taking the homotopy inverse limit $n \ra \infty$ by Theorem ~\ref{T.comp.vs.G.comp}. The bottom horizontal map in the right square 
 is provided by 
Propositions ~\ref{coveringlemma} and  ~\ref{factoring.through.part.compl} and it
induces a weak-equivalence on taking the homotopy inverse limit as $n \ra \infty$  as shown in Theorem ~\ref{key.thm.1}.
Therefore, modulo Theorem ~\ref{key.thm.1} and the following proposition, this completes the proof of Theorem ~\ref{main.thm.3}. \qed
\vskip .2cm
 
\begin{proposition}
 \label{derived.compl.pro.1} Assume $\rmX$ is a smooth scheme. Then, assuming Theorem  ~\ref{key.thm.1},
the map in the top row in the diagram ~\ref{K.diagm.1} induces a weak-equivalence on taking the homotopy 
inverse limit as $n \ra \infty$. It follows that the middle row in the diagram ~\eqref{splittings} induces a weak-equivalence
on taking the homotopy inverse limit as $n \ra \infty$. 
\end{proposition}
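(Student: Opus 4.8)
I will prove the first assertion by applying $\holim_n$ to the homotopy-commutative diagram~\eqref{K.diagm.1} and performing a short diagram chase; the second assertion will then be immediate. The chase rests on three facts: (1) the left vertical map of~\eqref{K.diagm.1} becomes a weak-equivalence after $\holim_n$, by Theorem~\ref{T.comp.vs.G.comp}; (2) the bottom-row map $\{\bG(\rmX,\rmT)/{I_{\rmT}^{L\wedge^{\alpha(n)+1}}}|n\}\ra\{\bG(\rmE\rmT^{gm,n}\times\rmX,\rmT)|n\}$ becomes a weak-equivalence after $\holim_n$, by Theorem~\ref{key.thm.1}; and (3) each of the two projections into $\{\bG(\rmE\rmG^{gm,n}\times\rmE\rmT^{gm,n}\times\rmX,\rmT)|n\}$ becomes a weak-equivalence after $\holim_n$ — this is the only place the hypothesis that $\rmX$ be smooth is used. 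Granting the first assertion, the second follows because, after replacing the Borel subgroup $\rmB$ by its maximal torus $\rmT$ (legitimate since $\rmB/\rmT\cong R_u(\rmB)$ is an affine space, as noted right after~\eqref{splittings}; cf.~\ref{KG.props}(vii)) and using $\bG(\rmE\rmG^{gm,m}{\underset {\rmB} \times}\rmX)\simeq\bG(\rmE\rmG^{gm,m}{\underset {\rmT} \times}\rmX)\simeq\bG(\rmE\rmG^{gm,m}\times\rmX,\rmT)$ together with freeness of the $\rmT$-action on $\rmE\rmG^{gm,m}$, the middle row of~\eqref{splittings} is identified, compatibly with the $\bK(\Spec k,\rmG)$-module structures and with partial derived completion along $\rho_{\rmG}$, with the top row of~\eqref{K.diagm.1}.

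\textbf{The diagram chase.} By~\eqref{key.cof.seq} the canonical maps $\bG(\rmX,\rmT)/{I_{\rmG}^{L\wedge^{\alpha(n)+1}}}\ra\bG(\rmX,\rmT)\compl_{\rho_{\rmG},\alpha(n)}$ and $\bG(\rmX,\rmT)/{I_{\rmT}^{L\wedge^{\alpha(n)+1}}}\ra\bG(\rmX,\rmT)\compl_{\rho_{\rmT},\alpha(n)}$ are levelwise weak-equivalences, so the two horizontal ``$\simeq$'' arrows of~\eqref{K.diagm.1} stay weak-equivalences after $\holim_n$. The left square of~\eqref{K.diagm.1} is the square~\eqref{K.diagm.0}, which commutes (Lemma~\ref{res.1}); three of its four maps become weak-equivalences after $\holim_n$ — the two horizontal ones just discussed, and the left vertical one by fact~(1) — so by two-out-of-three the fourth, the middle vertical map $\{\bG(\rmX,\rmT)/{I_{\rmG}^{L\wedge^{\alpha(n)+1}}}|n\}\ra\{\bG(\rmX,\rmT)/{I_{\rmT}^{L\wedge^{\alpha(n)+1}}}|n\}$, does too. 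Now compare the two routes from $\{\bG(\rmX,\rmT)/{I_{\rmG}^{L\wedge^{\alpha(n)+1}}}|n\}$ to $\{\bG(\rmE\rmG^{gm,n}\times\rmE\rmT^{gm,n}\times\rmX,\rmT)|n\}$ in~\eqref{K.diagm.1}: the upper route is the top-row map $\{\bG(\rmX,\rmT)/{I_{\rmG}^{L\wedge^{\alpha(n)+1}}}|n\}\ra\{\bG(\rmE\rmG^{gm,n}\times\rmX,\rmT)|n\}$ followed by the projection from $\{\bG(\rmE\rmG^{gm,n}\times\rmX,\rmT)|n\}$, while the lower route is the middle vertical map, then the bottom-row map, then the projection from $\{\bG(\rmE\rmT^{gm,n}\times\rmX,\rmT)|n\}$. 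After $\holim_n$ the lower route is a composite of weak-equivalences (the middle vertical by the previous step, the bottom-row map by fact~(2), the projection by fact~(3)); hence the upper route is too, and since its last factor, the projection from $\{\bG(\rmE\rmG^{gm,n}\times\rmX,\rmT)|n\}$, is a weak-equivalence after $\holim_n$ by fact~(3), two-out-of-three shows the top-row map becomes a weak-equivalence after $\holim_n$. Up to the levelwise weak-equivalence already noted, this is precisely the map in the top row of~\eqref{K.diagm.1}, proving the first assertion.

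\textbf{The inputs and the main difficulty.} The homotopy commutativity invoked above was recorded when~\eqref{K.diagm.1} was set up: the left square is~\eqref{K.diagm.0}, and the right-hand portion commutes because its two composites into $\{\bG(\rmE\rmG^{gm,n}\times\rmE\rmT^{gm,n}\times\rmX,\rmT)|n\}$ are the two evident factorizations of the functor pulling a $\rmT$-equivariant coherent sheaf on $\rmX$ back to $\rmE\rmG^{gm,n}\times\rmE\rmT^{gm,n}\times\rmX$, made functorial on complexes (e.g.\ on pseudo-coherent complexes of injectives). Fact~(3) is the delicate step. One first observes that $\rmT$ acts freely on each $\rmE\rmG^{gm,n}\times\rmE\rmT^{gm,n}$ with quasi-projective quotient, so that $\{\rmE\rmG^{gm,n}\times\rmE\rmT^{gm,n}|n\}$, $\{\rmE\rmG^{gm,n}|n\}$ and $\{\rmE\rmT^{gm,n}|n\}$ are all admissible models of the geometric classifying space of $\rmT$, and that the two projections are flat (each is a base change of $\rmE\rmG^{gm,n}\ra\Spec k$ or $\rmE\rmT^{gm,n}\ra\Spec k$), hence induce the claimed maps on $\bG$-theory. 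Since $\rmX$ is smooth, the Borel constructions $(\rmE\rmG^{gm,n}\times\rmE\rmT^{gm,n}){\underset {\rmT} \times}\rmX$, $\rmE\rmG^{gm,n}{\underset {\rmT} \times}\rmX$ and $\rmE\rmT^{gm,n}{\underset {\rmT} \times}\rmX$ are smooth schemes, so their $\bG$-theory agrees with their $\bK$-theory by~\eqref{PD}; the two projections then become, after $\holim_n$, instances of the model-independence equivalence~\eqref{indep.geom.class.sp} proved in Appendix~B via motivic slices, hence weak-equivalences. I expect fact~(3) — the model-independence input, which is exactly where smoothness of $\rmX$ is used — to be the main obstacle, with the homotopy-coherence of the pullback functors in the right-hand portion of~\eqref{K.diagm.1} a secondary point requiring care; with facts (1)--(3) in hand the chase above completes the proof.
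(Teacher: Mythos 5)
Your proof is correct and follows essentially the same route as the paper's: apply $\holim_n$ to diagram~\eqref{K.diagm.1}, observe that the left vertical arrow becomes a weak-equivalence by Theorem~\ref{T.comp.vs.G.comp}, the bottom row by Theorem~\ref{key.thm.1}, and both arrows into $\{\bG(\rmE\rmG^{gm,n}\times\rmE\rmT^{gm,n}\times\rmX,\rmT)|n\}$ by the model-independence result of Appendix~B, then close the chase by two-out-of-three. The only real difference is expository: you spell out the role of the smoothness hypothesis on $\rmX$ (passing from $\bG$- to $\bK$-theory on the smooth Borel constructions so that the motivic comparison~\eqref{indep.geom.class.sp} applies), which the paper leaves implicit behind its reference to Appendix~B.
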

\begin{proof} The vertical maps on the right in the diagram ~\ref{K.diagm.1} induce a weak-equivalence on taking the
homotopy inverse limit as $n \ra \infty$ by the properties of the geometric classifying spaces: 
see appendix B. In view of what we already observed, we now obtain the homotopy commutative diagram:
\be \begin{equation}
\xymatrix{{\bG(\rm{\rm X}, \rmT)\compl_{\rho_{\rmG}}  } \ar@<1ex>[dd]^{\simeq} \ar@<1ex>[r]& {\holimn\{\bG(\rmE\rmG^{gm, n}  \times \rm{\rm X}, \rmT)|n\}} \ar@<1ex>[d]^{\simeq}\\
& {\holimn\{\bG(\rmE\rmG^{gm, n}  \times \rmE\rmT^{gm,n}\times \rm{\rm X}, \rmT)|n\}} \\
{\bG(\rm{\rm X}, \rmT)\compl_{\rho_{\rmT}}} \ar@<1ex>[r]^{\simeq} & {\holimn \{ \bG(\rmE\rmT^{gm, n} \times \rm{\rm X}, \rmT)|n\}} \ar@<-1ex>[u]_{\simeq}}
\end{equation} \ee
\vskip .2cm
 Since all other maps in the above diagram are
now weak-equivalences except possibly for the map in the top row, it follows that is also a
weak-equivalence. This proves the first statement. The second statement is clear.
\end{proof}

\section{\bf Proof in the case of a split torus}
In view of the above reductions, it suffices to assume that the group scheme $\rmH=\rmT= {\mathbb G}_m^n$= a split torus. In case the group scheme
is a smooth diagonalizable group-scheme, we may imbed that as a closed subgroup scheme of a split torus. Then, making use of 
the remarks following Theorem ~\ref{comparison} and the remarks in ~\ref{remark.validity}, one may extend the results in this section to
actions of smooth diagonalizable group schemes also. However, we will not discuss this case explicitly.
Thus assuming $\rmH=\rmT= {\mathbb G}_m^n$= a split torus, a key step is provided
by the following result. 
\vskip .2cm
First assume that $\rmX$ and $\rmY$ are schemes or algebraic spaces of finite type and flat over the base-scheme $\rmS$ (note: the flatness hypothesis  is automatically satisfied when $\rmS = Spec \, k$, with $k$  a field).
Assume $\rmX$ and $\rmY$ are acted on by an affine group-scheme $\rmH$ over $S$.
In this case the external tensor product of coherent $\O$-modules induces a pairing $\bG(\rm{\rm X}, \rmH) \wedge \bG(\rmY, \rmH) \ra \bG(\rmX {\underset {\rmS} \times}\rmY, \rmH)$
where G-theory denotes the Quillen K-theory spectra of the exact category of coherent $\O$-modules. The action of $\rmH$ on $\rmX{\underset {\rmS} \times}\rmY$ 
is the diagonal action. This pairing is compatible with 
the structure of the above spectra as module-spectra over the ring spectrum $\bK(\rmS, \rmH)$ so that one obtains the induced pairing:
\vskip .2cm
$p_1^* \wedge p_2^*: \bG(\rm{\rm X}, \rmH) {\overset L {\underset {\bK(\rmS, \rmH)} \wedge}} \bG(\rmY, \rmH) \ra \bG(\rmX {\underset {\rmS} \times}\rmY, \rmH)$.
\vskip .2cm \noindent
One obtains a similar pairing where the Quillen-style G-theory and K-theory above are replaced by the corresponding Waldhausen-style
theories. Once again, the flatness hypothesis on $\rmX$ and $\rmY$ over $\rmS$ ensures that the external product preserves cofibrations and
weak-equivalences after fixing one of the arguments. In addition one may check that if $K' \ra K$ is a cofibration of $\O_{\rmX}$-modules
and $M' \ra M$ is a cofibration of $\O_{\rmY}$-modules, then the induced map 
$K{\underset {\O_{\rmS}} \otimes}M' {\underset {K' {\underset {\O_{\rmS}} \otimes}M'} \oplus} K' {\underset {\O_{\rmS}} \otimes}M \ra K {\underset {\O_{\rmS}} \otimes} M$
is a cofibration which is a quasi-isomorphism if either of the maps $K' \ra K$ or $M' \ra M$ are quasi-isomorphisms.
\begin{lemma}  (See \cite[Proposition 3.1]{JK}.)
\label{Kunneth.formula}
Let $\rm{\rmX}$ denote cellular scheme,i.e. a scheme stratified by locally closed smooth subschemes all of which are affine spaces over the base scheme 
$\rmS$ which will be assumed to be a field.
Assume that $\rmH$ is an affine group scheme acting on $\rmX$ so that the strata of $\rmX$ are $\rmH$-stable. Let $\rmY$ denote any scheme or algebraic 
space of finite type
 over $\rmS$ also 
provided with an $\rmH$-action.  Then the
 map (induced by the above external product)
\be \begin{equation}
\label{prdct.pairing.1}
p_1^* \wedge p_2^*:\bG(\rm{\rm X}, \rmH) {\overset L {\underset {\bK(\rmS, \rmH)} \wedge}} \bG(\rmY, \rmH) \ra \bG(\rmX{\underset {\rmS} \times} \rmY, \rmH)
\end{equation} \ee
\vskip .2cm \noindent
is a weak-equivalence of spectra. Here $p_1: \rmX{\underset {\rmS} \times} \rmY \ra \rmX$, $p_2:\rmX{\underset {\rmS} \times} \rmY \ra \rmY$ is the projection to the
$i$-th factor. A corresponding result holds when the G-theory spectrum is replaced everywhere by the  mod-$\ell$ G-theory spectrum defined as 
in ~\eqref{KGl} and where $\ell$ is a prime different from the residue characteristics.
When $\rmX$ and $\rmY$ are  regular, we may replace the G-theory spectrum by the K-theory spectrum everywhere.
\end{lemma}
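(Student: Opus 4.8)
The plan is to prove the Künneth-type weak-equivalence \eqref{prdct.pairing.1} by a dévissage along the cellular stratification of $\rmX$, reducing to the case where $\rmX$ is a single affine space over $\rmS$ with an $\rmH$-action. First I would set up the induction: suppose $\rmX$ admits a filtration $\emptyset = \rmX_0 \subset \rmX_1 \subset \cdots \subset \rmX_N = \rmX$ by $\rmH$-stable closed subschemes, with each open complement $\rmX_j \setminus \rmX_{j-1}$ a disjoint union of $\rmH$-stable affine spaces over $\rmS$. The localization sequence in equivariant $\bG$-theory gives a homotopy fibre sequence $\bG(\rmX_{j-1},\rmH) \to \bG(\rmX_j,\rmH) \to \bG(\rmX_j \setminus \rmX_{j-1},\rmH)$, and smashing with $\bG(\rmY,\rmH)$ over $\bK(\rmS,\rmH)$ preserves this fibre sequence (the derived smash product is exact in each variable). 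On the target side, the flatness of $\rmY$ over $\rmS$ ensures that $\rmX_{j-1}{\underset{\rmS}\times}\rmY \hookrightarrow \rmX_j{\underset{\rmS}\times}\rmY$ is a closed immersion with open complement $(\rmX_j\setminus\rmX_{j-1}){\underset{\rmS}\times}\rmY$, so we again get a localization fibre sequence. The maps $p_1^*\wedge p_2^*$ are compatible with these two fibre sequences (this is where the compatibility of the external product with the module structure over $\bK(\rmS,\rmH)$, stated in the paragraph preceding the lemma, is used). By the five lemma for homotopy fibre sequences of spectra, it then suffices to prove the weak-equivalence when $\rmX$ is replaced by a single stratum.

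Next I would handle the base case: $\rmX = \bbA^r_\rmS$ with a linear $\rmH$-action (after shrinking, an $\rmH$-stable affine space; by an elementary argument one reduces a general affine-space stratum with $\rmH$-action to a linear one, or one cites the cited reference \cite[Proposition 3.1]{JK} for this step). Here the homotopy-invariance of equivariant $\bG$-theory gives $\bG(\bbA^r_\rmS,\rmH)\simeq\bG(\rmS,\rmH)=\bK(\rmS,\rmH)$, the structure map being an isomorphism; and similarly, by flat base change along $\rmS \to \rmS$ together with homotopy invariance, $\bG(\bbA^r_\rmS{\underset{\rmS}\times}\rmY,\rmH) = \bG(\bbA^r_\rmY,\rmH) \simeq \bG(\rmY,\rmH)$. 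Under these identifications the pairing $p_1^*\wedge p_2^*$ becomes the canonical equivalence $\bK(\rmS,\rmH){\overset L{\underset{\bK(\rmS,\rmH)}\wedge}}\bG(\rmY,\rmH)\xrightarrow{\simeq}\bG(\rmY,\rmH)$ (the unit of the module structure), hence a weak-equivalence. One must check that the identification $\bG(\bbA^r_\rmS,\rmH)\simeq\bK(\rmS,\rmH)$ is one of $\bK(\rmS,\rmH)$-algebras and that $p_1^*$ corresponds to the unit map; this is a formal consequence of the projection-formula/multiplicativity of the external product.

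For the mod-$\ell$ variant, I would smash the entire argument with $\H(\Z/\ell)$ over the sphere $\Sigma$ the appropriate number of times; since $\H(\Z/\ell)$ is assumed cofibrant over $\Sigma$ (see ~\ref{mod-l-completion}) and the smash product is exact, the localization fibre sequences and the base-case equivalence are all preserved, so the same dévissage goes through verbatim. Finally, when $\rmX$ and $\rmY$ are regular, the weak-equivalence $\bK\simeq\bG$ of ~\eqref{PD} identifies the $\bK$-theory statement with the $\bG$-theory statement, and one checks the external products match under this identification (both are induced by $\otimes_{\O_\rmS}$ on perfect complexes), so the $\bK$-theory version follows.

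The main obstacle I anticipate is \emph{not} the dévissage itself but the base case's bookkeeping: verifying that after all the identifications by homotopy invariance and flat base change, the pairing $p_1^*\wedge p_2^*$ really becomes the unit isomorphism of the module structure, rather than merely \emph{some} equivalence. This requires keeping careful track of the $\bK(\rmS,\rmH)$-module (indeed algebra) structures throughout — in particular that the equivalence $\bG(\bbA^r_\rmS,\rmH)\simeq\bK(\rmS,\rmH)$ is multiplicative — and that the strata of a cellular scheme can be taken with \emph{linear} $\rmH$-action (or that this can be arranged up to the filtration), which is exactly the technical content borrowed from \cite[Proposition 3.1]{JK}. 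A secondary point requiring care is that the flatness of $\rmX$, $\rmY$ over $\rmS$ is genuinely needed to make the external product a map of spectra (preservation of cofibrations and weak-equivalences after fixing an argument), as noted in the discussion preceding the lemma; over a field this is automatic, which is why the hypothesis $\rmS = \Spec k$ appears.
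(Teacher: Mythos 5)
Your proposal is correct and follows essentially the same strategy as the paper's own proof: dévissage via equivariant localization sequences along the cellular stratification (compatibility of $p_1^*\wedge p_2^*$ with these sequences being the key point), reduction to an affine-space stratum where homotopy invariance of equivariant $\bG$-theory closes the base case, and then the mod-$\ell$ and regular ($\bK\simeq\bG$) variants handled formally. The paper is terser and defers much of the detail to \cite[Proposition 3.1]{JK}, but the substance of the argument is the same.
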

\begin{proof} This is essentially \cite[Proposition 3.1]{JK} where it is stated in terms of the K-theory and G-theory of quotient stacks. 
One considers the localization sequences in $\rmH$-equivariant G-theory defined using the given strata on $\rmX$ and the induced strata
 on $\rmX \times \rmY$. The map $p_1^* \wedge p_2^*$ is compatible with these localization sequences. Therefore, one reduces to the case where 
$\rmX$ is an affine space over $\rmS$, in which case the conclusion follows by the homotopy property of $\rmH$-equivariant G-theory. 
Clearly the homotopy property for equivariant G-theory extends to the homotopy property for the mod-$\ell$ equivariant G-theory 
defined as in ~\eqref{KGl}
proving the second statement. When $\rmX$ and $\rmY$ are regular, the equivariant G-theory spectrum is weakly equivalent to
 the corresponding equivariant K-theory spectrum as observed earlier, thereby proving the last statement.
\end{proof}
\begin{proposition}
\label{key.prop}
 Assume that $\rmH=T =\GG_m^n =$ a split torus and let $\rmY$ denote any scheme or algebraic space of finite type over $k$ provided with an 
$\rmH$-action. In this case the map (defined as in ~\eqref{prdct.pairing.1})
\vskip .2cm
$  \bK(\rmE\rmH^{gm,m}, \rmH) \wedgeKH \bG(\rmY, \rmH) \ra \bG(\rmE\rmH^{gm,m} \times \rmY, \rmH) \simeq \bG(\rmE\rmH^{gm,m}{\underset {\rmH}  \times}\rmY)$
\vskip .2cm \noindent
is a weak-equivalence for all positive integers $m$, with the product $E\rmH^{gm,m} \times \rmY$ denoting the fibered 
product over the base-scheme $\rmS$, which is once again assumed to be a field $k$. Therefore, the induced map
\vskip .2cm
$\holimm \{\bK(\rmE\rmH^{gm,m}, \rmH) \wedgeKH \bG(\rmY, \rmH)|m\} \ra \holimm \{\bG(E\rmH^{gm,m} \times \rmY, \rmH)|m\}
 \simeq \holimm \{\bG(\rmE\rmH^{gm,m}{\underset {\rmH}  \times}\rmY)|m\}$
\vskip .2cm \noindent
is also a weak-equivalence. The corresponding statements also hold for the G-theory and K-theory spectra
replaced everywhere by the corresponding mod-$\ell$ spectra defined as in ~\eqref{KGl}.
The corresponding assertions also  hold
with the G-theory spectrum replaced by the K-theory spectrum when $\rmY$ is regular.
\end{proposition}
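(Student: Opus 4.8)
\emph{Proof proposal.} The statement for homotopy inverse limits follows immediately from the level statement, since the homotopy inverse limit of a levelwise weak-equivalence of towers of spectra is again a weak-equivalence (all homotopy inverse limits being formed after fibrant replacement, as in \S\ref{basic.terminology}). So the plan is to fix $m$ and prove that the external product
\[
\Psi\colon \bK(\rmE\rmH^{gm,m},\rmH)\wedgeKH\bG(\rmY,\rmH) \ra \bG(\rmE\rmH^{gm,m}\times\rmY,\rmH) \simeq \bG(\rmE\rmH^{gm,m}{\underset{\rmH}\times}\rmY)
\]
is a weak-equivalence. Since $\rmE\rmH^{gm,m}$ is smooth, $\bK(\rmE\rmH^{gm,m},\rmH)\simeq\bG(\rmE\rmH^{gm,m},\rmH)$ by \eqref{PD}, so I would work with $\bG$ throughout, $\Psi$ being then the pairing \eqref{prdct.pairing.1} for the pair $(\rmE\rmH^{gm,m},\rmY)$. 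The argument will run parallel to the proof of Lemma~\ref{Kunneth.formula}; the point that makes it go through even though $\rmE\rmH^{gm,m}$ is \emph{not} a cellular $\rmH$-scheme with $\rmH$-stable affine-space strata is that, in the model \eqref{toric.case}, $\rmE\rmH^{gm,m}={\mathbb A}^N\setminus Z$ with $N=(m+1)n$, where $Z=\bigcup_{j=1}^{n}Z_j$ and $Z_j\subseteq{\mathbb A}^N$ is the coordinate linear subspace on which the $j$-th block of coordinates vanishes, so that ${\mathbb A}^N$, each $Z_j$, and each intersection $Z_S=\bigcap_{j\in S}Z_j$ is an $\rmH$-stable linear subspace on which $\rmH$ acts \emph{linearly}, fixing the origin.

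First I would record the consequence of linearity that powers the whole argument: for any $\rmH$-stable linear subspace $L\subseteq{\mathbb A}^N$ the projection $L\times\rmY\ra\rmY$ is an $\rmH$-equivariant vector bundle, so equivariant homotopy invariance of $\rmG$-theory gives $\bG(L\times\rmY,\rmH)\simeq\bG(\rmY,\rmH)$, compatibly (via restriction to the section $\{0\}\times\rmY$) with $\bG(L,\rmH)\simeq\bK(\rmS,\rmH)$. Because the external product commutes with restriction, $\Psi_L\colon\bG(L,\rmH)\wedgeKH\bG(\rmY,\rmH)\ra\bG(L\times\rmY,\rmH)$ is thereby identified with the canonical isomorphism $\bK(\rmS,\rmH)\wedgeKH\bG(\rmY,\rmH)\xrightarrow{\sim}\bG(\rmY,\rmH)$ obtained by smashing with the unit; hence $\Psi_L$ is a weak-equivalence. (This is exactly the base case of Lemma~\ref{Kunneth.formula}, and it is where the ambient linear action is essential: for the free translation action that $\rmH$ carries on the preimages in $\rmE\rmH^{gm,m}$ of the affine cells of $\rmB\rmH^{gm,m}$, the analogous identification would introduce spurious $\mathrm{Tor}$-terms and fail.)

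Next I would invoke the equivariant localization theorem in $\rmG$-theory for the $\rmH$-stable closed immersion $Z\hookrightarrow{\mathbb A}^N$, both over $\rmS$ and after $-\times\rmY$, and observe that, since $-\wedgeKH\bG(\rmY,\rmH)$ is exact, smashing the first localization sequence with $\bG(\rmY,\rmH)$ yields a stable cofibre sequence mapping via $\Psi$ to the second; the two squares commute by the compatibility of the pairing \eqref{prdct.pairing.1} with equivariant proper pushforward and with restriction to opens (projection formula; cf.\ the proof of Lemma~\ref{Kunneth.formula} and Lemma~\ref{pairing.rel.KG}). By the five lemma it then suffices to show that $\Psi_{{\mathbb A}^N}$ and $\Psi_Z$ are weak-equivalences. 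The first is the previous paragraph with $L={\mathbb A}^N$. For $\Psi_Z$, a Noetherian induction on the dimension of the ambient affine space — peeling the $Z_j$ off $Z$ one at a time by the equivariant localization sequence, and noting that each $Z_j\setminus(Z_1\cup\dots\cup Z_{j-1})$ is, inside the linear subspace $Z_j$, again the complement of a union of coordinate subspaces but of strictly smaller ambient dimension — reduces the claim to $\Psi_{Z_S}$ for the linear subspaces $Z_S$, which is again handled by the previous paragraph. This proves $\Psi$ is a weak-equivalence for each $m$, and the statement for $\holimm$ follows. For the mod-$\ell$ variant one smashes the whole diagram with ${\overset{n}{\overbrace{\H(\Z/\ell)\wedge\cdots\wedge\H(\Z/\ell)}}}$ over $\Sigma$ (cf.\ \eqref{KGl}); localization and equivariant homotopy invariance survive this, so the same argument applies, and when $\rmY$ is regular one may replace $\bG$ by $\bK$ everywhere by \eqref{PD}.

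The main obstacle, as already in the proof of Lemma~\ref{Kunneth.formula}, will be the careful verification — at the level of spectra, not merely of homotopy groups — that the external product $p_1^*\wedge p_2^*$ is compatible with the equivariant localization cofibre sequences and with the equivariant homotopy-invariance equivalences: concretely, that pushforward along an $\rmH$-equivariant closed immersion and pullback along an $\rmH$-equivariant vector-bundle projection commute with $-\wedgeKH\bG(\rmY,\rmH)$ and with the pairing, and that all of this (localization, homotopy invariance, the pairing) is available in the generality of a separated algebraic space $\rmY$ of finite type over $k$ equipped with a split-torus action. Once that is in place, the inductive bookkeeping over $Z=\bigcup_j Z_j$ is entirely routine.
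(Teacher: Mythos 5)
Your proposal is correct but takes a genuinely different route from the paper's proof. The paper argues by a single ascending induction on $u$, writing $({\mathbb A}^{m+1}-0)^u \times \{0\}^{n-u}$ as an $\rmH$-stable closed subscheme of $({\mathbb A}^{m+1}-0)^u \times {\mathbb A}^{m+1} \times \{0\}^{n-u-1}$ with open complement $({\mathbb A}^{m+1}-0)^{u+1} \times \{0\}^{n-u-1}$; at each step it feeds the inductive hypothesis and equivariant homotopy invariance into the localization cofibre sequence, with the base case $u=0$ being the unit isomorphism $\bK(\rmS,\rmH)\wedgeKH\bG(\rmY,\rmH)\simeq\bG(\rmY,\rmH)$. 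You instead do one big equivariant localization for $Z=\bigcup_j Z_j\hookrightarrow{\mathbb A}^N$ with open complement $\rmE\rmH^{gm,m}$, reduce to the full affine space (easy) and to $\Psi_Z$, and then dispose of the union of coordinate subspaces by a Noetherian/Mayer--Vietoris-style induction that peels off one $Z_j$ at a time and recurses into the successively smaller linear arrangements $Z_S$. Both proofs rest on exactly the same two ingredients --- equivariant localization and equivariant ${\mathbb A}^1$-homotopy invariance applied to linear $\rmH$-actions on affine space --- and both gloss over, to roughly the same extent, the spectrum-level compatibility of the external product with pushforward and restriction (the paper simply says ``both the source and the target are compatible with localization sequences''). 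The paper's arrangement is a bit more economical: one cleanly parametrised induction, with each stratum immediately a product of the form (linear space) $\times$ (previous step), versus your two-layer reduction through the singular, reducible closed set $Z$, whose stratification requires a recursive bookkeeping argument to bottom out. But the content is the same, and your observation that the key mechanism is the \emph{linear} (not free) $\rmH$-action on ${\mathbb A}^N$ and its coordinate subspaces is precisely the observation underlying the paper's inductive step.
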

\begin{proof} We will only consider the case of G-theory and K-theory spectra, since the extensions to the
mod-$\ell$  versions are obvious. A point to observe is that   $\rmEH^{gm,m}$ is not a cellular scheme, so that the last lemma does
not apply directly.
We first recall that if $\rmT=\GG_m^n$ is a split torus, then $\rmE\rmT^{gm,m}=({\mathbb A}^{m+1} -0)^n$ and $\rmB\rmT^{gm,m} = \rmE\rmT^{gm,m}/T =
(({\mathbb A}^{m+1} -0)/{\GG_m})^n = ({\mathbb P}^m)^n$. Therefore the first statement in the proposition says the obvious map
\vskip .2cm
$\bG(({\mathbb A}^{m+1} -0)^n, \GG_m^n) {\overset L {\underset {\bK(\rmS, \GG_m^n)} \wedge}} \bG(\rmY, \GG_m^n) \ra 
\bG(({\mathbb A}^{m+1}-0)^n \times \rmY, {\GG_m^n}) \simeq 
\bG(({\mathbb A}^{m+1}-0)^n{\underset {\GG_m^n} \times} \rmY)$
\vskip .2cm \noindent
is a weak-equivalence. The last weak-equivalence follows from the observation that ${\mathbb G}_m^n$ acts freely
on the product $({\mathbb A}^{m+1}-0)^n$, with each factor of ${\mathbb G}_m $ acting on the corresponding factor of 
${\mathbb A}^{m+1}-0$. Now it remains to prove the first map is a weak-equivalence.
\vskip .2cm
For the proof, it will be convenient to denote the rank of the split torus as $n$ and the
$n$ in $({\mathbb A}^{m+1}-0)^n$ as $u$. For $u <n$, the scheme $({\mathbb A}^{m+1}-0)^u$ will be identified
as the closed subscheme $({\mathbb A}^{m+1}-0)^u \times {0}^{n-u}$ in  $({\mathbb A}^{m+1}-0)^u \times ({\mathbb A}^{m+1})^{n-u}$ with the 
 last $n-u$ factors being the origin of 
${\mathbb A}^{m+1}$ and the
remaining factors being $({\mathbb A}^{m+1}-0)$. In this case, the 
${\mathbb G}_m$ forming the last $n-u$ factors  will clearly act trivially on the corresponding factors which are 
identified with the origin in ${\mathbb A}^{m+1}$. Therefore, making use of the localization sequences in 
$\rmG$-equivariant G-theory, one may prove this by ascending 
induction on $u$. Assume that the above map is a weak-equivalence for a fixed $u <n$, $0 \le u$; we will then show that it is also a weak-equivalence for $u$ replaced by $u+1$
as follows. Since both the source and the target of the above map are compatible with localization sequences, we obtain the following commutative diagram with
 each row a  stable cofiber sequence:
\vskip .2cm
\xymatrix{{\bG(({\mathbb A}^{m+1} -0)^u \times {0}^{n-u}, \GG_m^n) {\overset L {\underset {\bK(\rmS, \GG_m^n)} \wedge}} \bG(\rmY, \GG_m^n)} \ar@<1ex>[r] \ar@<-1ex>[d] & 
{\bG(({\mathbb A}^{m+1} -0)^u \times {\mathbb A}^{m+1} \times {0}^{n-u-1}, \GG_m^n) {\overset L {\underset {\bK(\rmS, \GG_m^n)} \wedge}} \bG(\rmY, \GG_m^n)}  \ar@<-1ex>[d]  \\
{\bG(({\mathbb A}^{m+1}-0)^u \times {0}^{n-u} \times Y, {\GG_m^n})} \ar@<1ex>[r] & 
{\bG(({\mathbb A}^{m+1}-0)^u \times {\mathbb A}^{m+1} \times {0}^{n-u-1} \times Y, {\GG_m^n})} }
\vskip .2cm
\xymatrix{ {}  \ar@<1ex>[r] & {\bG(({\mathbb A}^{m+1} -0)^{u+1} \times {0}^{n-u-1}, \GG_m^n) {\overset L {\underset {\bK(\rmS, \GG_m^n)} \wedge}} \bG(\rmY, \GG_m^n)} \ar@<-1ex>[d] \\
{} \ar@<1ex>[r]  & {\bG(({\mathbb A}^{m+1}-0)^{u+1}  \times {0}^{n-u-1} \times Y, {\GG_m^n})}}
\vskip .2cm \noindent
The first vertical map is a weak-equivalence by the inductive assumption and the second vertical map is a weak-equivalence making use of this inductive assumption and the homotopy property for
equivariant G-theory. Therefore, so is the last vertical maps which completes the induction.
This reduces everything to the case
where $u=0$, which is clear. When $\rmY$ is regular, the weak-equivalences $\bK(\rmY, \GG_m^n) \simeq \bG(\rmY, \GG_m^n)$,
$\bK(({\mathbb A}^{m+1} -0)^n, \GG_m^n) \simeq \bG({\mathbb A}^{m+1} -0)^n, \GG_m^n)$, 
$\bK(({\mathbb A}^{m+1}-0)^n{\underset {} \times} \rmY, \GG_m^n) \simeq \bG(({\mathbb A}^{m+1}-0)^n{\underset {} \times} \rmY, \GG_m^n)$ and
$\bK(({\mathbb A}^{m+1}-0)^n{\underset {\GG_m^n} \times} \rmY) \simeq \bG(({\mathbb A}^{m+1}-0)^n{\underset {\GG_m^n} \times} \rmY)$ complete the
proof. \end{proof}
\vskip .2cm
\subsubsection{Multiple derived completions}
\label{mult.der.compl}
Let $R$ denote a commutative ring spectrum and let $Alg(R)$ denote the category of commutative $R$-algebra spectra. We will provide $Alg(R)$ with the cofibrantly generated model category structure
discussed in \cite{Ship04}. Letting $n$ denote a fixed positive integer and let $B_i, C_i\eps Alg(R)$, $i=1, \cdots, n$ be 
provided with
 maps of commutative algebra spectra $f_i:B_i \ra C_i$ in $Alg(R)$. We will also assume that one is provided with augmentations
$C_i \ra R$ that are maps in $Alg(R)$. (In the examples that we consider below, each $C_i=R$, so this last condition is automatic.)
We will assume the $B_i, C_i$  $i=1, \cdots, n$ are
cofibrant in $Alg(R)$, so that each map $B_i \ra C_i$ is a cofibration in $Alg(R)$. 
We will let $B=B_1 \wedge_R B_2 \wedge_R \cdots \wedge _R B_n$ and 
$C=C_1 \wedge_R C_2 \wedge_R \cdots \wedge_R C_n$.  Since each $B_i \ra C_i$ is a cofibration, it follows that the induced map $B \ra C$ is 
a cofibration  in $Alg(R)$.
 Then one first observes that  these are commutative algebra spectra over $R$ and that $f=f_1\wedge_R f_2 \wedge_R \wedge_R \cdots \wedge_R f_n$
is a map of such algebra spectra.
We proceed to  consider derived completions of a module spectrum
$M \eps Mod(B)$ with respect to the map 
$f$  and explore how it relates to the derived completions with respect to the maps
$f_i$, $i=1, \cdots, n$.
\vskip .2cm
It needs to pointed out that there is an entirely parallel set-up where $R$ denotes a commutative ring and $B_i, C_i$, $i=1, \cdots, n$ are
commutative algebras over $R$ (or commutative dg-algebras over $R$) which will all be assumed to be flat over $R$. In this set-up 
$M$ will be a module (or dg-module) over $B$.
\vskip .2cm
The {\it main example} to keep in mind is what appears in Proposition ~\ref{key.prop}, namely $R= \bK(\rmS)$,
$B_i = \bK(\rmS, {\mathbb G}_m)$  with the ${\mathbb G}_m$ forming the $i$-th factor in $\rmT={\mathbb G}_m^n$ and
$C_i$ being suitable cofibrant replacements of $\bK(\rmS)$  for all $i=1, \cdots , n$. Then $\bK(\rmS, \rmT) $ identifies with $\bK(\rmS, {\mathbb G}_m)\wedge_{\bK(\rmS)} K(\rmS, {\mathbb G}_m) \cdots \wedge _{\bK(\rmS)} \bK(\rmS, {\mathbb G}_m)$ so that
we will let $B= \bK(\rmS, \rmT)$.
\vskip .2cm
 For each $i=1, \cdots, n$, let $T_{B_i}:Mod(B) \ra Mod(B)$ denote the triple defined by $M \mapsto M\wedge_{B_i}C_i$
which is $M \wedge _{B_i}C_i$ viewed as a $B$-module through the obvious $B$-bi-module  structure on $M$. Clearly $T_{B_i}(M)$ has a (right) $C_i$-module
structure. Now one obtains an induced $B_i$-module structure on $T_{B_i}(M)$ obtained by restriction of scalars along the map $B_i \ra C_i$. 
Observe that this $B_i$-module
structure is compatible with  the $B$-module structure on $T_{B_i}(M)$ considered earlier (and used in the definition of the functor $T_{B_i}$.)
We let ${\mathcal T}_{B_i}^{\bullet}$ denote the cosimplicial object obtained by iterating $T_{B_i}$ and denoted
${\mathcal T}_{B_i}^{\bullet}(M, C_i)$ elsewhere. Observe that  it is also possible
to compose $T_{B_i}$ and $T_{B_j}$ and thereby define a triple $T_{B_1, \cdots, B_n}:Mod(B) \ra Mod(B)$ by $M \mapsto
M\wedge_{B_1}C_1 \wedge _{B_2} C_2 \cdots \wedge _{B_n} C_n$, i.e.  
\[T_{B_1, \cdots, B_n} = T_{B_n} \circ \cdots \circ T_{B_1} \]
\vskip .2cm
Observe $T_{B_1, \cdots, B_n}(M)$  has the obvious structure of a (right) $C=C_1 \wedge_R \cdots \wedge _R C_n$-module and that the $B$-module structure
on it obtained by restriction of scalars along the map $B_1 \wedge _R \cdots \wedge _R B_n \ra C_1 \wedge _R \cdots \wedge _R C_n$ is compatible
with the $B$-module structure on $T_{B_1, \cdots, B_n}(M)$ used in its definition.
 i.e. If $m \eps M, b_i \eps B_i, c_i \eps C_i$ and $r_i \eps R$, $i=1, \cdots , n$,
then $m\wedge r_1b_1c_1 \wedge r_2b_2c_2 \wedge \cdots \wedge r_n b_nc_n $ is identified with
$mr_1\cdots r_nb_1\cdots b_n \wedge c_1 \wedge c_2 \wedge \cdots \wedge c_n$. Therefore, one obtains the identifications for any  $M \eps Mod(B)$:
\vskip .2cm
$T_{B_n}( T_{B_{n-1}} \cdots (T_{B_1}(M)))=T_{B_1, \cdots, B_n}(M) = T_{B_1 \wedge_R \cdots \wedge_R B_n}(M) = 
M {\underset {B_1 \wedge_R \cdots \wedge_R B_n} \wedge}C_1 \wedge_R \cdots \wedge_R C_n$,
\vskip .2cm \noindent
where the $B_i$-module structure on $T_{B_{i-1}} \cdots  T_{B_1}(M)$ is from the original $B$-module
structure on $M$. Moreover if $M= M_1 \wedge_R \cdots \wedge_R M_n$, 
then $T_{B_1, \cdots, B_n}(M) = T_{B_1}(M_1) \wedge_R T_{B_2}(M_2) \wedge _R \cdots \wedge_R T_{B_n}(M_n)$. 
\vskip .2cm
One may iterate the above constructions to obtain an $n$-fold multi-cosimplicial object:
${\mathcal T}_{B_1, \cdots, B_n}^{\bullet, \cdots, \bullet}(M)$ defined by
\vskip .2cm
${\mathcal T}_{B_1, \cdots, B_n}^{m_1, \cdots, m_n}(M) = {\mathcal T}_{B_n}^{m_n}( \cdots {\mathcal T}_{B_1}^{m_1}(M))$
\vskip .2cm \noindent
where an $M \eps Mod(B)$ is viewed as an object in $Mod(B_1)$ for forming ${\mathcal T}_{B_1}^{m_1}(M)$ and
each ${\mathcal T}_{B_{i-1}}^{m_{i-1}}(\cdots {\mathcal T}_{B_1}^{m_1}(M))$ is given the $B_i$-module structure induced from
 the original $B$-module
structure on $M$ before applying ${\mathcal T}_{B_{i}}^{m_i}$. For a sequence of non-negative integers $m_1, \cdots, m_n$, we define
$\sigma_{\le m_1, \cdots, m_n}{\mathcal T}_{B_1, \cdots, B_n}^{\bullet, \cdots, \bullet}(M) = \{
{\mathcal T}_{B_1, \cdots, B_n}^{i_1, \cdots, i_n}(M)| i_1 \le m_1, \cdots. i_n \le m_n\}$ which is a 
truncated multi-cosimplicial object. Then we let
\be \begin{equation}
 \label{M.f.1}
M \compl_{f, m} = \holimD (\sigma_{\le m}(
\Delta {\mathcal T}_{B_1, \cdots, B_n}^{\bullet, \cdots, \bullet}( M))) \mbox{ where } \Delta \mbox{ denotes the diagonal and }
\end{equation} \ee
\be \begin{equation}
     \label{M.f.2}
M \compl_{f, m_1, \cdots, m_n} = \hlimDone \cdots \hlimDn(\sigma_{\le m_1, \cdots, \le m_n}({\mathcal T}_{B_1, \cdots, B_n}^{\bullet, \cdots, \bullet}( M))). 
    \end{equation} \ee
\vskip .2cm \noindent
This denotes the partial derived completion up to degrees $m_1, \cdots, m_n$.
\begin{proposition}
 \label{multiple.derived.compl.thm}
Assume the above situation. Then the following hold for an $M \eps Mod(B)$.
\vskip .2cm
(i) For any sequence of positive integers $m_1, \cdots, m_n$,
\vskip .2cm \noindent
$M \compl_{f, m_1, \cdots, m_n} = \hlimDone \cdots \hlimDn
(\sigma_{\le m_1, \cdots, \le m_n}({\mathcal T}_{B_1, \cdots, B_n}^{\bullet, \cdots, \bullet}( M)))$
\vskip .2cm \noindent
$\simeq
\hlimDn \sigma_{\le m_n}{\mathcal T}^{\bullet}_{B_n}( \hlimD2 \sigma_{\le m_{n-1}}{\mathcal T}^{\bullet}_{B_{n-1}}( \cdots (
\hlimDone \sigma_{\le m_1} {\mathcal T}_{B_1}^{\bullet}(M))))$
\vskip .2cm \noindent
(ii) $M\compl_f = \holim M\compl_{f,m} \simeq {\underset {{\infty, \cdots , \infty} \leftarrow (m_1, \cdots, m_n)} \holim} M \compl_{f, m_1, \cdots, m_n}$ which
denotes the derived completion along $f$.
\vskip .2cm \noindent
(iii) In case $M = M_1 \wedge_R M_2 \wedge_R \cdots \wedge_R M_n$, with $M _i \eps Mod(B_i)$, then
$M \compl_{f, m_1, \cdots, m_n} \simeq \hlimDone \sigma_{\le m_1}({\mathcal T}^{\bullet}_{B_1}(M_1, C_1) \wedge_R \cdots \wedge_R
\hlimDn( \sigma_{\le m_n} {\mathcal T}_{B_n}(M, C_n))  \simeq M_{1} \compl_{f_1, m_1} \wedge_R M_{2} 
\compl_{ f_2, m_2} \wedge_R \cdots \wedge_R M_{n} \compl_{ f_n, m_n}$.
\end{proposition}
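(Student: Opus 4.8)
The plan is to deduce all three assertions from two essentially formal inputs. The first is that a partial derived completion is a \emph{finite} homotopy limit, and hence commutes with every exact functor between categories of module spectra over commutative ring spectra: this is exactly the stable cofiber sequence $({\overset L {\underset B \wedge}} _{i=1}^{m+1} I_B ){\overset L {\underset B \wedge}} N \to N \to N\compl_{f,m}$ of~\eqref{key.cof.seq} (i.e.\ \cite[Corollary~6.7]{C08}), which exhibits $N\compl_{f,m}$ as a homotopy pushout, and which applies verbatim with $B,f$ replaced by any $B_i,f_i$. Consequently, base change along a map of commutative ring spectra, and smashing over $R$ with a fixed $R$-module, being exact and hence preserving homotopy pushouts, commute with all the partial derived completions that occur below. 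The second input is Fubini for iterated homotopy limits together with cofinality of the diagonal.

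For part (i) I would first invoke the bookkeeping recorded in~\ref{mult.der.compl}: each intermediate term ${\mathcal T}^{i_{j-1}}_{B_{j-1}}(\cdots {\mathcal T}^{i_1}_{B_1}(M)\cdots)$ carries the $B_j$-module structure inherited from the original $B$-module structure on $M$, compatibly with the cosimplicial structure maps in all $n$ directions, so the nested expression on the right of (i) is well defined; then, taking homotopy limits one direction at a time reduces (i) to the claim that the order-$m_j$ partial completion in direction $j$ commutes with ${\mathcal T}^{\bullet}_{B_k}$ for $k>j$, which holds because ${\mathcal T}^{i_k}_{B_k}(-)$ is an iterate of the base-change functor $(-)\wedge_{B_k}C_k$ and therefore, by the first paragraph, commutes with that finite homotopy limit. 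For part (ii) I would use the identity $T_{B_n}\circ\cdots\circ T_{B_1}=T_{B_1\wedge_R\cdots\wedge_R B_n}=T_B$ from~\ref{mult.der.compl} — here the cofibrancy of the $B_i$ and $C_i$ in $Alg(R)$ is what turns this into an honest identification of cosimplicial diagrams rather than a zig-zag of weak equivalences — to see that the diagonal of ${\mathcal T}^{\bullet,\cdots,\bullet}_{B_1,\cdots,B_n}(M)$ is the cosimplicial object ${\mathcal T}^{\bullet}_B(M,C)$ of Definition~\ref{def.der.compl}; unwinding~\eqref{M.f.1} and Definition~\ref{def.der.compl} then gives $M\compl_f=\holimD{\mathcal T}^{\bullet}_B(M,C)\simeq\holimm M\compl_{f,m}$, and the remaining equivalence with ${\underset {{\infty, \cdots , \infty} \leftarrow (m_1, \cdots, m_n)} \holim}\,M\compl_{f,m_1,\cdots,m_n}$ follows from the cosimplicial Eilenberg--Zilber theorem (the total homotopy limit of a multicosimplicial object agrees with the totalization of its diagonal) together with cofinality of the diagonal of ${\mathbb N}^n$.

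For part (iii), the statement of principal interest, I would start from $M=M_1\wedge_R\cdots\wedge_R M_n$ with $M_i\in Mod(B_i)$ and note, using $T_{B_1,\cdots,B_n}(M_1\wedge_R\cdots\wedge_R M_n)=T_{B_1}(M_1)\wedge_R\cdots\wedge_R T_{B_n}(M_n)$ from~\ref{mult.der.compl} applied degreewise and independently in each of the $n$ directions, that there is a natural isomorphism of multicosimplicial objects ${\mathcal T}^{i_1,\cdots,i_n}_{B_1,\cdots,B_n}(M)\cong{\mathcal T}^{i_1}_{B_1}(M_1,C_1)\wedge_R\cdots\wedge_R{\mathcal T}^{i_n}_{B_n}(M_n,C_n)$. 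Applying part (i), I would compute the iterated homotopy limit one direction at a time: in direction $1$ only the first tensor factor varies, so $\hlimDone\sigma_{\le m_1}$ of the external smash product equals $(\hlimDone\sigma_{\le m_1}{\mathcal T}^{\bullet}_{B_1}(M_1,C_1))\wedge_R{\mathcal T}^{i_2}_{B_2}(M_2,C_2)\wedge_R\cdots$, by the first paragraph applied to $(-)\compl_{f_1,m_1}=\hlimDone\sigma_{\le m_1}{\mathcal T}^{\bullet}_{B_1}(-)$ and exactness of smashing over $R$ with the constant remaining factors; iterating through directions $2,\cdots,n$ produces first the external smash of the individual partial completions and hence $M\compl_{f,m_1,\cdots,m_n}\simeq M_1\compl_{f_1,m_1}\wedge_R\cdots\wedge_R M_n\compl_{f_n,m_n}$.

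I expect the main obstacle to be not any single step but the careful tracking of the module structures over the numerous intermediate smash products $B_1\wedge_R\cdots\wedge_R B_j\wedge_R C_{j+1}\wedge_R\cdots\wedge_R C_n$, and the verification that the functors in play are genuinely exact rather than merely exact after a cofibrant replacement. This should be manageable precisely because, once~\eqref{key.cof.seq} packages the truncated completions as finite homotopy pushouts, everything reduces to exactness of base change and of the external smash over $R$, which is formal; the one place where a hypothesis is genuinely used is the cofibrancy of the $B_i$ and $C_i$ in $Alg(R)$ in part (ii).
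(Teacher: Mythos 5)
Your proposal is correct and follows essentially the same route as the paper: the key identification that partial derived completions are finite homotopy (co)limits — packaged by \eqref{key.cof.seq}/\cite[Corollary~6.7]{C08} — and hence commute with derived smash products is precisely what the paper invokes via \cite[Proposition~2.9]{C08} for (i) and (iii), and your appeal to the diagonal/Eilenberg--Zilber reduction for (ii) matches the paper's citation of \cite[Lemma~5.33]{Th85}. You simply spell out the exactness mechanism more explicitly than the paper's terse proof does.
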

\begin{proof} The first equality in (i) follows  from the definition of the partial derived completion functor
in ~\eqref{M.f.2} and the second weak-equivalence in (i) then 
 follows from \cite[Proposition 2.9]{C08}. (The main point here is that the partial derived completions involve only finite
homotopy pull-backs.). The statement in (ii) now follows because the iterated homotopy inverse limit of a multi-cosimplicial object may be
replaced by the homotopy inverse limit of the diagonal cosimplicial object. (See, for example, \cite[Lemma 5.33]{Th85} for a proof.)
   The last statement then follows from the observation that
the triples $T_{B_i}$ commute with each other and by an application of \cite[Proposition 2.9]{C08}.
\end{proof}
 \begin{remark}
 For $i=1, \cdots, n$,  let $\rho_{\rmT_i}=f_i: \bK(\rmS, {\mathbb G}_m) \ra \bK(\rmS)$ denote the map induced by the rank-map where ${\mathbb G}_m$ is the 
$i$-th factor in the split torus $\rmT ={\mathbb G}_m^{\times n}$. Let $f= f_1 \wedge \cdots \wedge f_n$. Then using the terminology above, we proceed to establish the weak-equivalences
 (in the following theorem):
\be \begin{align}
     \label{der.compl.0}
\bK(\rmS,  \rmT)\compl_{\rho_T, m_1, \cdots, m_n} &\simeq \bK(\rmB\rmT^{gm,m_1, \cdots, m_n})  = \bK({\mathbb P}^{m_1} \times \cdots {\mathbb P}^{m_n}), \mbox{ and }\\
\bK(\rmS, \rmT)\compl_{\rho_T}  &\simeq \bK(\rmB\rmT^{gm}) = \holimm \bK({\mathbb P}^{m} \times \cdots \times {\mathbb P}^m) \notag
\end{align} \ee
\vskip .2cm \noindent
Corresponding statements hold for the completion with respect to the map $\rho_{\ell} \circ \rho_{T}:\bK(\rmS, T) \ra \bK(\rmS ) \ra  \bK(\rmS ){\underset {\Sigma} \wedge} \H(\Z/\ell)$ for a fixed prime $\ell$ different
from $char(k)=p$.
\end{remark}
The following results will play a major role in the proof of the main theorem. First observe that if $\rmT={\mathbb G}_m^n$ is an $n$-dimensional split torus, then
$\rmE\rmT^{gm, m} = ({\mathbb A}^{m+1}-0)^n$ and $\rmB\rmT^{gm,m} = ({\mathbb P}^m)^n$. The obvious map $\rmE\rmT^{gm,m} \ra S$ induces a 
compatible collection of maps $\{\bK(\rmS, \rmT) \ra \bK(\rmE\rmT^{gm,m}, \rmT) = \bK(\rmB\rmT^{gm,m})| m \ge 0\}$. 
\begin{theorem}
 \label{key.part.thm}
Assume the base scheme $\rmS$ is a field $k$ and that $\rmT= {\mathbb G}_m^n$ is a split torus over $k$. For each $i=1, \cdots, n$, let $\rmT_i$ denote
the ${\mathbb G}_m$ forming the $i$-th factor of $\rmT$. Let $\rho_{\rmT}: \bK(\rmS, \rmT) \ra 
\bK(\rmS )$  denote the restriction map as in ~\ref{der.compl.eqK}. 
Then the map $\bK(\rmS, \rmT) \ra \bK(\rmE\rmT^{gm,m}, \rmT) = \bK(\rmB\rmT^{gm,m})$ factors through the multiple partial derived completion
\[\bK(\rmS, \rmT) \compl_{\rho_T, m, \cdots, m} = \holimDm \sigma _{\le m} \cdots \holimDm \sigma_{\le m}{\mathcal T}^{\bullet, \cdots, \bullet}_{\bK(\rmS, \rmT_1), \cdots, \bK(\rmS, \rmT_n)}(\bK(\rmS, \rmT), \bK(\rmS))\]
 and induces a weak-equivalence:
\[\bK(\rmS, \rmT)\compl_{\rho_{\rmT, m, \cdots, m}} \simeq \bK(\rmB \rmT^{gm, m} \times \cdots \times \rmB\rmT^{gm,m}).\]
 Corresponding results also hold for the K-theory spectra replaced by the mod-$\ell$ $K$-theory
 spectra defined as in ~\eqref{KGl}.
\end{theorem}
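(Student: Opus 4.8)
The plan is to reduce both assertions, the factorization and the weak-equivalence, to the one-dimensional case $\rmT=\GG_m$, and then to settle that case directly via the stable cofiber sequence~\eqref{key.cof.seq}. \emph{Reduction to $\rmT=\GG_m$.} Write $\rmT_i\cong\GG_m$ for the $i$-th coordinate subtorus, so that, as recalled in~\ref{mult.der.compl}, $\bK(\rmS,\rmT)$ is identified with $\bK(\rmS,\rmT_1)\wedge_{\bK(\rmS)}\cdots\wedge_{\bK(\rmS)}\bK(\rmS,\rmT_n)$ and $\rho_{\rmT}=\rho_{\rmT_1}\wedge\cdots\wedge\rho_{\rmT_n}$. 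For the model $\rmE\rmT^{gm,m}=(\AA^{m+1}-0)^{\times n}$ one has $\rmB\rmT^{gm,m}=(\PP^m)^{\times n}$, which is cellular (stratified by affine spaces), so iterating the external product equivalence of Lemma~\ref{Kunneth.formula} gives $\bK(\rmB\rmT^{gm,m})\simeq\bK(\PP^m)\wedge_{\bK(\rmS)}\cdots\wedge_{\bK(\rmS)}\bK(\PP^m)$, under which the canonical map $\bK(\rmS,\rmT)\ra\bK(\rmE\rmT^{gm,m},\rmT)=\bK(\rmB\rmT^{gm,m})$ becomes the $n$-fold smash over $\bK(\rmS)$ of maps $\bK(\rmS,\GG_m)\ra\bK(\PP^m)$. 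Granting, in the one-dimensional case, that each such map factors through $\bK(\rmS,\GG_m)\compl_{\rho_{\GG_m},m}$ and induces a weak-equivalence $\bK(\rmS,\GG_m)\compl_{\rho_{\GG_m},m}\xrightarrow{\simeq}\bK(\PP^m)$, Proposition~\ref{multiple.derived.compl.thm}(iii) — applied with $B_i=\bK(\rmS,\GG_m)$, $C_i$ a cofibrant replacement of $\bK(\rmS)$, and $m_1=\cdots=m_n=m$ — identifies $\bK(\rmS,\rmT)\compl_{\rho_{\rmT},m,\dots,m}$ with the $n$-fold smash of the $\bK(\rmS,\GG_m)\compl_{\rho_{\GG_m},m}$; smashing the $n$ factorizations (resp. weak-equivalences) together then yields the theorem for $\rmT$, and the mod-$\ell$ statement follows by smashing everything with $\H(\ZZ/\ell)$ over the sphere spectrum as in~\ref{mod-l-completion}.

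\emph{The one-dimensional case.} Here $R(\GG_m)=\ZZ[t,t^{-1}]$; the augmentation $\rho_{\GG_m}\colon\bK(\rmS,\GG_m)\ra\bK(\rmS)$ is split by the trivial-action section, and, since $\GG_m$-equivariant sheaves on $\Spec k$ are $\ZZ$-graded $k$-vector spaces, $\pi_*\bK(\rmS,\GG_m)\cong K_*(\rmS)\otimes_\ZZ R(\GG_m)$ with $\rho_{\GG_m}$ inducing the augmentation $t\mapsto1$. I would then observe that $t-1\in\pi_0\bK(\rmS,\GG_m)$ is a nonzerodivisor on $K_*(\rmS)\otimes_\ZZ\ZZ[t,t^{-1}]$; hence multiplication by $t-1$ — a $\bK(\rmS,\GG_m)$-module self-map whose composite with $\rho_{\GG_m}$ is multiplication by $\rho_{\GG_m}(t-1)=0$ — lifts to a module map $\bK(\rmS,\GG_m)\ra I_{\GG_m}:=\mathrm{hofib}(\rho_{\GG_m})$, which, as $\pi_*I_{\GG_m}=(t-1)\cdot\pi_*\bK(\rmS,\GG_m)$, is an isomorphism on homotopy, so a weak-equivalence $\bK(\rmS,\GG_m)\xrightarrow{\simeq}I_{\GG_m}$ of $\bK(\rmS,\GG_m)$-module spectra. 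Iterating, $I_{\GG_m}^{\wedge(m+1)}\simeq\bK(\rmS,\GG_m)$ and the canonical map $I_{\GG_m}^{\wedge(m+1)}\ra\bK(\rmS,\GG_m)$ in~\eqref{key.cof.seq} becomes multiplication by $(t-1)^{m+1}$; so~\eqref{key.cof.seq} exhibits $\bK(\rmS,\GG_m)\compl_{\rho_{\GG_m},m}$ as the homotopy cofiber of $\bK(\rmS,\GG_m)\xrightarrow{\cdot(t-1)^{m+1}}\bK(\rmS,\GG_m)$, whose homotopy — again because $(t-1)^{m+1}$ is a nonzerodivisor — is $K_*(\rmS)[t,t^{-1}]/(t-1)^{m+1}$. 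Finally, the map $\bK(\rmS,\GG_m)\ra\bK(\PP^m)$ sends $t$ to the class of the tautological line bundle, hence by the projective bundle relation annihilates $(t-1)^{m+1}$ and factors through that homotopy cofiber, i.e. through $\bK(\rmS,\GG_m)\compl_{\rho_{\GG_m},m}$; and the induced map is the isomorphism $K_*(\rmS)[t,t^{-1}]/(t-1)^{m+1}\xrightarrow{\simeq}K_*(\PP^m)$ of the projective bundle formula on homotopy, hence a weak-equivalence.

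\emph{Expected main obstacle.} The delicate step is identifying the augmentation-ideal spectrum $I_{\GG_m}$ with $\bK(\rmS,\GG_m)$ itself — equivalently, computing the homotopy of the smash powers $I_{\GG_m}^{\wedge(m+1)}$ — at the spectrum level rather than merely on homotopy groups. This is possible precisely because $t-1$ is a nonzerodivisor on $\pi_*\bK(\rmS,\GG_m)$ over \emph{every} field (even when $K_*(k)$ carries torsion, so that $\pi_*\bK(\rmS,\GG_m)$ need not be flat over $R(\GG_m)$ and Lemma~\ref{flatness} does not apply verbatim), and it is exactly what makes the \emph{partial} — not only the full — derived completion computable in closed form. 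The other point requiring care is to keep the multi-degree bookkeeping in the reduction step factor by factor, matching the partial completion of each $\bK(\rmS,\GG_m)$ to degree $m$, rather than passing through a single affine cover of $(\PP^m)^{\times n}$ (which would only give a single-degree completion of unwieldy order).
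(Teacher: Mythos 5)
Your proof is correct, and it follows the paper's overall strategy — reduce to $\rmT=\GG_m$ via the decomposition $\bK(\rmS,\rmT)\simeq\bK(\rmS,\rmT_1)\wedge_{\bK(\rmS)}\cdots\wedge_{\bK(\rmS)}\bK(\rmS,\rmT_n)$ and Proposition~\ref{multiple.derived.compl.thm}(iii), then identify the $m$-th partial derived completion of $\bK(\rmS,\GG_m)$ as the cofiber of multiplication by $(\lambda-1)^{m+1}$ using~\eqref{key.cof.seq} — but it diverges from the paper on the crucial one-dimensional identification. The paper's Steps~2--3 are geometric: the Koszul--Thom class $\pi^*(\lambda-1)^{m+1}$ of the rank-$(m+1)$ $\GG_m$-representation over a point and the Thom isomorphism exhibit multiplication by $(\lambda-1)^{m+1}$ as the first map of the $\GG_m$-equivariant localization cofiber sequence for $\mathbb{A}^{m+1}-0\hookrightarrow\mathbb{A}^{m+1}$, so the cofiber is identified with $\bK(\rmS\times(\mathbb{A}^{m+1}-0),\GG_m)=\bK(\PP^m)$ intrinsically, and the $m=0$ case simultaneously identifies the fiber $I_{\GG_m}$ with the map $\cdot(\lambda-1)$. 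You argue algebraically instead: $\rho_{\GG_m}(\lambda-1)=0$, so multiplication by $\lambda-1$ lifts through $\mathrm{hofib}(\rho_{\GG_m})$ as a $\bK(\rmS,\GG_m)$-module map, and because $\lambda-1$ is a nonzerodivisor on $K_*(\rmS)[\lambda,\lambda^{-1}]$ (irrespective of torsion in $K_*(k)$) the lift is a $\pi_*$-isomorphism; you then compute $\pi_*$ of the cofiber and match it to $K_*(\PP^m)$ via the projective bundle relation $(\lambda-1)^{m+1}=0$. Both are sound. The paper's version yields both the factorization through the cofiber and the identification with $\bK(\PP^m)$ out of a single homotopy-commutative localization diagram, without any choice of null-homotopy; your version makes two such choices (for the lift into the fiber, and for the factorization through the cofiber), which is harmless here because $\rho_{\GG_m}$ is split and the claims are only $\pi_*$-statements, but is worth noting explicitly. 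In exchange, your argument bypasses the Thom isomorphism entirely and keeps the module-category bookkeeping in the foreground. Your closing observation that $\pi_*\bK(\rmS,\GG_m)$ need not be flat over $R(\GG_m)$ — so that Lemma~\ref{flatness} cannot be invoked and one must work directly with the nonzerodivisor $\lambda-1$ at the spectrum level — is exactly right and is precisely the subtlety the paper's Steps~2--4 are navigating.
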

\begin{proof} Since all the arguments in the proof carry over when the $K$-theory spectrum $\bK$ is replaced by the
mod-$\ell$ K-theory spectra, we will only consider the K-theory spectrum. First observe that 
\be \begin{align}
     \label{KST}
\pi_*(\bK(\rmS, \rmT)) &= \rmR(\rmT) {\underset {\Z} \otimes} \pi_*(\bK(\rmS)) 
\end{align} \ee
\vskip .2cm \noindent
{\it Step 1}. We will next restrict to the case where the torus $\rmT$ is one dimensional. 
Let $\lambda$ denote the $1$-dimensional representation of $\rmT$ corresponding to a generator of $\rmR(\rmT)$. For each $m \ge 0$,
$(\lambda -1)^{m+1}$ defines a class in $\pi_0(\bK(\rmS, \rmT)) = \rmR(\rmT) \otimes \pi_0(\bK(\rmS))$. Multiplication by this class
defines a map of spectra 
\[\bK(\rmS, \rmT) \ra \bK(\rmS, \rmT)\]
which will be still denoted $(\lambda -1)^{m+1}$. {\it The heart of the proof of this theorem involves showing that the homotopy cofiber of
the map $(\lambda -1)^{m+1}$ identifies with the partial derived completion to the order $m$, along the homotopy fiber of
the restriction map $\rho_{\rmT}: \bK(\rmS, \rmT) \ra \bK(\rmS)$.}
\vskip .2cm \noindent
{\it Step 2}. Next observe that the principal ideal $(\lambda -1)^{m+1}$ in $\rmR(\rmT)$ is a {\it flat} module over $\rmR(\rmT)$. Observe that
$\rmR(\rmT) = {\mathbb Z}[\lambda , \lambda ^{-1}]$. It is enough to show that the principal ideal $(\lambda -1)^{m+1}$ 
defines a flat ideal in each ${\mathbb Z}/p[\lambda, \lambda ^{-1}]$ for each prime number $p \eps {\mathbb Z}$ and also
a flat ideal in ${\mathbb Q}[\lambda, \lambda ^{-1}]$. This is clear since, each ${\mathbb Z}/p[\lambda]$ and ${\mathbb Q}[\lambda ]$
is a principal ideal domain and the ideal $(\lambda -1)$ is torsion free.  
\vskip .2cm
Let $\rmS \times {\mathbb A}^{m+1}$ denote the vector bundle of rank $m+1$ over $\rmS$ on which the torus $\rmT$ acts diagonally.
The {\it Koszul-Thom class} of this bundle is $\pi^*(\lambda -1)^{m+1}$, where $\pi: \rmS \times {\mathbb A}^{m+1} \ra \rmS$ is the
projection. This defines a class in $\pi_0({\bK(\rmS \times {\mathbb A}^{m+1}, \rmS \times ({\mathbb A}^{m+1}-0), T)})$. 
 Next consider the  diagram:
\be \begin{equation}
     \label{key.diagm}
\xymatrix{{\bK(\rmS, \rmT)} \ar@<1ex>[r]^{(\lambda-1)^{m+1}} \ar@<-1ex>[d]^{\simeq} & {\bK(\rmS, \rmT)} \ar@<1ex>[r] \ar@<-1ex>[d] ^{\simeq} & {cofiber ((\lambda-1)^{m+1})} \ar@<1ex>[d]\\
{\bK(\rmS \times {\mathbb A}^{m+1}, \rmS \times ({\mathbb A}^{m+1}-0), T)} \ar@<1ex>[r] & {\bK(\rmS \times {\mathbb A}^{m+1}, \rmT)} \ar@<1ex>[r] & {\bK(\rmS \times ({\mathbb A}^{m+1}-0), \rmT)}}
  \end{equation} \ee
\vskip .2cm \noindent
where the bottom row is the stable homotopy fiber sequence associated to the  homotopy cofiber sequence: $\rmS\times( {\mathbb A}^{m+1}-0) \ra \rmS \times {\mathbb A}^{m+1} \ra 
(\rmS \times {\mathbb A}^{m+1})/(\rmS \times ({\mathbb A}^{m+1}-0))$ in the ${\mathbb A}^1$-stable homotopy category. The left-most vertical map
is provided by {\it Thom-isomorphism}, i.e. by cup-product with the Koszul-Thom-class $\pi^*(\lambda -1)^{m+1}$. Therefore, this map is
a weak-equivalence. The second vertical map is a weak-equivalence by the homotopy property. It is also clear that the the left-most
square homotopy commutes. Therefore, the right square also homotopy commutes and the right most vertical map is also a weak-equivalence.
\vskip .2cm \noindent
{\it Step 3}. Next we take $m=0$. Then it follows from what we just showed that the map $cofiber(\lambda -1) \ra \bK(\rmS \times {\mathbb A}^1-0, \rmT) = \bK(\rmS)$
(forming the last vertical map in ~\eqref{key.diagm}) is a weak-equivalence. Now the map 
$\bK(\rmS \times {\mathbb A}^{m+1}, {\mathbb G}_m) \ra \bK(S \times {\mathbb G}_m, {\mathbb G}_m) = \bK(\rmS)$
forming the bottom row in the right-most  square of ~\eqref{key.diagm} identifies with restriction
map $\bK(\rmS, {\mathbb G}_m) \ra \bK(\rmS)$. Therefore, by the homotopy commutativity of the right-most square of 
~\eqref{key.diagm}, it follows that the homotopy fiber of the restriction map $\rho_{\rmT}:\bK(\rmS, \rmT) \ra \bK(\rmS)$ identities 
up to weak-equivalence with the homotopy fiber of the map $\bK(\rmS, \rmT) \ra cofiber(\lambda -1)$, i.e. 
with the map $(\lambda -1):\bK(\rmS, \rmT) \ra \bK(\rmS, \rmT)$. 
\vskip .2cm
{\it Step 4}. Next recall (using \cite[Corollary 6.7]{C08}) that the partial derived completion $\bK(\rmS, \rmT) \compl_{\rho_{\rmT}, m}$  
may be identified as follows.
Let $\tilde I_{\rmT} \ra I_{\rmT}$ denote a cofibrant replacement in the 
category of module spectra over $\bK(\rmS, \rmT)$, with $I_{\rmT}$ denoting the homotopy fiber of the restriction $\bK(\rmS, \rmT) \ra \bK(\rmS)$. Then
\[\bK(\rmS, \rmT) \compl_{\rho_{\rmT}, m}=  Cofib( {\overset {m+1} {\overbrace {{\tilde I_{\rmT}} {\underset {\bK(\rmS, \rmT)} \wedge} \tilde I_{\rmT} {\underset {\bK(\rmS, \rmT)} \wedge} \cdots {\underset {\bK(\rmS, \rmT)} \wedge} \tilde I_{\rmT}}}} \ra \bK(\rmS, \rmT)).\]
What we have just shown in Step 3 is  that the homotopy fiber $I_{\rmT}$ of the restriction  map $ \bK(\rmS, \rmT) \ra \bK(\rmS)$ identifies with the map $\bK(\rmS, \rmT) {\overset {(\lambda -1)} \ra} \bK(\rmS, \rmT)$ and hence is clearly cofibrant over $\bK(\rmS, \rmT)$. 
i.e. $\tilde I_{\rmT} = I_{\rmT}$ is simply $\bK(\rmS, \rmT)$ mapping into
$\bK(\rmS, \rmT)$ by the map $(\lambda -1)$. Therefore the homotopy cofiber
\[Cofib( {\overset {m+1} {\overbrace {{\tilde I_{\rmT}} {\underset {\bK(\rmS, \rmT)} \wedge} \tilde I_{\rmT} {\underset {\bK(\rmS, \rmT)} \wedge} \cdots {\underset {\bK(\rmS, \rmT)} \wedge} \tilde I_{\rmT}}}} \ra \bK(\rmS, \rmT)) = Cofiber (\bK(\rmS, \rmT) {\overset {(\lambda -1)^{m+1}} \ra} \bK(\rmS.\rmT)).\]
 Making use of the  the diagram ~\eqref{key.diagm}, these observations prove  that
\[\bK(\rmS, \rmT) \compl_{\rho_{\rmT}, m} \simeq cofiber (\lambda -1)^{m+1} \simeq \bK(\rmS \times ({\mathbb A}^{m+1}-0), \rmT) =\bK(\rmS \times {\mathbb P}^m) = \bK(BT^{gm,m}).\]
\vskip .2cm \noindent
 This completes the proof for the case of the $1$-dimensional 
torus.  (It may be worthwhile pointing out that classical argument due to Atiyah and Segal for the
usual completion for torus actions in equivariant topological K-theory is very similar: see \cite[section 3, Step 1]{AS69}.)
\vskip .2cm
{\it Step 5}. Next suppose that $\rmT = {\mathbb G}_m^n$. Letting $\rmT_{n-1}=$ the product of the first $(n-1)$ copies
of ${\mathbb G}_m$, one observes that $\bK(\rmS, \rmT) \simeq \bK(\rmS, \rmT_{n-1})\wedge_{\bK(\rmS)} \bK(\rmS, {\mathbb bG}_m)$. Therefore one may use ascending induction on $n$ and the above discussion on multiple
derived completions to complete the proof of the theorem. \end{proof}
\vskip .2cm
\begin{theorem}
\label{key.thm.1} Assume the base scheme $\rmS$ is a field $k$ and that $\rmX$ is either a scheme or algebraic space of finite type over $k$. Let $\rmH=\rmT={\mathbb G}_m^n$ denote the maximal torus in $\rmG$. Then, for each fixed positive integer $m$,
 the map 
\[\bG(\rm{\rm X}, \rmH) \simeq \bK(\rmS, \rmH) \wedgeKH \bG(\rm{\rm X}, \rmH) \ra \bK(E\rmH^{gm,m}, \rmH) \wedgeKH \bG(\rm{\rm X}, \rmH) \ra \bG(E\rmH^{gm,m}{\underset {\rmH}  \times}\rmX)\]
 factors through the partial derived completion $\bG(\rm{\rm X}, \rmH)\compl_{\rho_{\rmH},m}$ and the induced map
$\bG(\rm{\rm X}, \rmH)\compl_{\rho_{H}} \ra \holimm \bG(E\rmH^{gm,m}{\underset {\rmH}  \times}\rmX)$ is a weak-equivalence. Moreover, this weak-equivalence is natural in 
both $\rmX$ and $\rmH$.
 The corresponding assertions hold
with the $G$-theory spectrum replaced by the mod-$\ell$ $G$-theory spectrum (defined as in ~\eqref{KGl}) in general, and by the $K$-theory and mod-$\ell$ K-theory spectrum
 when $\rmX$ is  regular.
\end{theorem}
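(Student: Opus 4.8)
The plan is to reduce everything to the two torus‑specific results proved just above — Proposition~\ref{key.prop}, which supplies the equivariant Künneth splitting $\bK(\rmE\rmT^{gm,m},\rmT)\wedgeKH\bG(\rmX,\rmT)\simeq\bG(\rmE\rmT^{gm,m}{\underset {\rmT} \times}\rmX)$ for every $m$, and Theorem~\ref{key.part.thm}, which computes the multiple partial derived completion $\bK(\rmS,\rmT)\compl_{\rho_{\rmT},m,\dots,m}\simeq\bK(\rmB\rmT^{gm,m})=\bK(\rmE\rmT^{gm,m},\rmT)$ compatibly in $m$ — together with the bookkeeping of partial derived completions from Section~\ref{der.completion}. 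Throughout write $\rmT=\rmH={\mathbb G}_m^n$ and $f=\rho_{\rmT}\colon\bK(\rmS,\rmT)\to\bK(\rmS)$.

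First I would dispose of the factorization assertion. By Proposition~\ref{factoring.through.part.compl}(ii), applied with the torus $\rmT$ in the role of $\rmG$ and with a compatible choice of the function $\alpha$ of Definition~\ref{funct.alpha}, the map $\bG(\rmX,\rmT)\to\bG(\rmE\rmT^{gm,m}{\underset {\rmT} \times}\rmX)$ factors through the partial derived completion $\bG(\rmX,\rmT)\compl_{\rho_{\rmT},\alpha(m)}$, compatibly as $m$ varies along the transition maps of the tower $\{\bG(\rmE\rmT^{gm,m}{\underset {\rmT} \times}\rmX)\}_m$; since $\alpha(m)\ge m+1$ is cofinal, $\holimm\bG(\rmX,\rmT)\compl_{\rho_{\rmT},\alpha(m)}\simeq\bG(\rmX,\rmT)\compl_{\rho_{\rmT}}$. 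So the remaining content is that the induced map of pro‑spectra $\{\bG(\rmX,\rmT)\compl_{\rho_{\rmT},\alpha(m)}\}_m\to\{\bG(\rmE\rmT^{gm,m}{\underset {\rmT} \times}\rmX)\}_m$ becomes a weak equivalence after $\holimm$.

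The crux is a base‑change identity for partial derived completions: for any $\bK(\rmS,\rmT)$‑module spectrum $M$ one has $M\compl_{f,m,\dots,m}\simeq\bK(\rmS,\rmT)\compl_{f,m,\dots,m}\wedgeKH M$, and likewise for the single partial completion. Indeed, by \eqref{key.cof.seq} the partial derived completion $\bK(\rmS,\rmT)\compl_{f,m}$ is the \emph{homotopy cofiber} of a map $I_{\rmT}^{L\wedge^{m+1}}\to\bK(\rmS,\rmT)$ built from the homotopy fiber $I_{\rmT}$ of $f$; smashing that cofiber sequence with $M$ over $\bK(\rmS,\rmT)$ and comparing with the cofiber sequence \eqref{key.cof.seq} for $M$ itself gives the single‑index case, and the multiple‑index case follows by iterating this, using Proposition~\ref{multiple.derived.compl.thm}(i) and (iii). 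Applying this with $M=\bG(\rmX,\rmT)$, then substituting the identification $\bK(\rmS,\rmT)\compl_{\rho_{\rmT},m,\dots,m}\simeq\bK(\rmE\rmT^{gm,m},\rmT)$ of Theorem~\ref{key.part.thm}, and finally the Künneth splitting of Proposition~\ref{key.prop}, yields a chain of weak equivalences $\bG(\rmX,\rmT)\compl_{\rho_{\rmT},m,\dots,m}\simeq\bK(\rmS,\rmT)\compl_{\rho_{\rmT},m,\dots,m}\wedgeKH\bG(\rmX,\rmT)\simeq\bK(\rmE\rmT^{gm,m},\rmT)\wedgeKH\bG(\rmX,\rmT)\simeq\bG(\rmE\rmT^{gm,m}{\underset {\rmT} \times}\rmX)$, natural in $m$. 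Passing to $\holimm$ and using $\holimm\bG(\rmX,\rmT)\compl_{\rho_{\rmT},m,\dots,m}\simeq\bG(\rmX,\rmT)\compl_{\rho_{\rmT}}$ (Proposition~\ref{multiple.derived.compl.thm}(ii)) produces the asserted $\bG(\rmX,\rmT)\compl_{\rho_{\rmT}}\,{\overset {\simeq} \ra}\,\holimm\bG(\rmE\rmT^{gm,m}{\underset {\rmT} \times}\rmX)$. Naturality in $\rmX$ and in $\rmH=\rmT$ is inherited from the functoriality of the three cited results; the mod‑$\ell$ statement is obtained by repeating the argument with the mod‑$\ell$ versions of Proposition~\ref{key.prop} and Theorem~\ref{key.part.thm}; and for $\rmX$ regular the $\rmK$‑theory statement follows from the weak equivalence $\bG(\rmX,\rmT)\simeq\bK(\rmX,\rmT)$ of \eqref{PD}. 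Since algebraic spaces of finite type over $k$ are already allowed in Propositions~\ref{key.prop} and~\ref{factoring.through.part.compl}, no change is needed in that case.

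The step I expect to be the main obstacle is making the base‑change identity for the multiple partial derived completion precise and, relatedly, verifying the coherence of the bookkeeping: one must check that the factorization through $\bG(\rmX,\rmT)\compl_{\rho_{\rmT},\alpha(m)}$ coming from Proposition~\ref{factoring.through.part.compl} is compatible, after the identifications above, with the factorization through $\bG(\rmX,\rmT)\compl_{\rho_{\rmT},m,\dots,m}$ coming from Theorem~\ref{key.part.thm}, so that the two towers are genuinely identified rather than merely abstractly weakly equivalent level by level. Once the homotopy inverse limit of the tower is pinned down in this way, the remaining verifications are formal.
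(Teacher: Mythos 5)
Your route is essentially a hybrid of the paper's two proofs and is correct in outline, but two points deserve adjustment. First, the invocation of Proposition~\ref{factoring.through.part.compl}(ii) for the factorization is an unnecessary detour that creates the coherence worry you flag at the end: once you establish the level-$m$ identification $\bG(\rmX, \rmT)\compl_{\rho_{\rmT}, m} \simeq \bG(\rmE\rmT^{gm,m}{\underset{\rmT}\times}\rmX)$ coming from smashing the cofiber sequence \eqref{key.cof.seq} for $\bK(\rmS, \rmT)$ with $\bG(\rmX, \rmT)$ (this is exactly the paper's second proof for $n=1$), the factorization asserted in the theorem is built in, since the top arrow $\bG(\rmX, \rmT) \ra \bG(\rmE\rmT^{gm,m}{\underset{\rmT}\times}\rmX)$ is the natural map $\bG(\rmX, \rmT) \ra \bG(\rmX, \rmT)\compl_{\rho_{\rmT}, m}$ post-composed with this equivalence, and all these are maps of spectra in a cofiber sequence, so the tower compatibility is automatic and no separate tower-comparison argument is needed. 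Second, your appeal to Proposition~\ref{multiple.derived.compl.thm}(iii) for the multi-index base-change identity does not quite apply: that statement requires $M$ to decompose as $M_1 \wedge_R \cdots \wedge_R M_n$, which $\bG(\rmX, \rmT)$ need not. The paper's first proof handles the general $n$ directly and cleanly by observing the isomorphism \eqref{key.isom} of \emph{truncated} multi-cosimplicial objects at the level of the defining triples, and then using that the finite homotopy limits involved in the partial derived completion commute with the derived smash product (\cite[Proposition 2.9]{C08}); this is what plays the role of your ``base-change identity'' and avoids the iteration. Your iterated cofiber-sequence version can be made to work, and indeed the paper's second proof restricts to $n=1$ with exactly the remark that the general case follows ``with a little effort,'' but the iteration needs to be set up via Proposition~\ref{multiple.derived.compl.thm}(i) alone (applying the single-index base-change at each stage in the nested $\holim$, keeping track of the $B$-module structure) rather than via part (iii). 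With those corrections, the rest of your chain --- Theorem~\ref{key.part.thm} to identify $\bK(\rmS, \rmT)\compl_{\rho_{\rmT}, m, \dots, m} \simeq \bK(\rmE\rmT^{gm,m}, \rmT)$, Proposition~\ref{key.prop} for the K\"unneth step, passage to $\holimm$ via Proposition~\ref{multiple.derived.compl.thm}(ii), and the mod-$\ell$ and regular cases --- matches the paper.
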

\begin{proof} 
Since this is one of the main results of the paper, with several potential applications, we will provide two distinct proofs. First observe that $\rmE\rmH^{gm,m}$ identifies with the product ${\overset {n} {\overbrace {\rmE{\mathbb G}_m^{gm,m} \times \cdots \times \rmE{\mathbb G}_m^{gm,m}}}}$. We will first discuss the
shortest proof.
\vskip .2cm \noindent
{\it First proof}. For each $i=1, \cdots, n$, let $\rmT_i$ denote
the ${\mathbb G}_m$ forming the $i$-th factor of $\rmT$. We begin with the isomorphism of truncated multi-cosimplicial objects provided by multiple partial derived completion along the 
map $\bK(\rmS, \rmT_i) \ra \bK(\rmS)$ (which follows readily from the definition of the triple: see section 
~\ref{mult.der.compl}):
\be \begin{align}
     \label{key.isom}
\begin{split}
& \sigma _{\le m_1, \cdots, m_n} ({\mathcal T}^{\bullet, \cdots, \bullet}_{\bK(\rmS, \rmT_1), \cdots, \bK(\rmS, \rmT_n)}(\bK(\rmS, \rmH){\overset L {\underset {\bK(\rmS, \rmH)} \wedge}} \bG(\rm{\rm X}, \rmH), \bK(\rmS)) \\
& \cong (\sigma_{\le m_1, \cdots, \le m_n} ({\mathcal T}^{\bullet, \cdots, \bullet}_{\bK(\rmS, \rmT_1), \cdots \bK(\rmS, \rmT_n)}(\bK(\rmS, \rmH), \bK(\rmS)))) {\overset L {\underset {\bK(\rmS, \rmH)} \wedge}} \bG(\rm{\rm X}, \rmH) \notag
\end{split}
    \end{align} \ee
\vskip .2cm \noindent
Clearly the left-hand-side identifies with $\sigma _{\le m_1, \cdots, \le m_n} ({\mathcal T}^{\bullet, \cdots, \bullet}_{\bK(\rmS, \rmT_1), \cdots \bK(\rmS, \rmT_n)}(\bG(\rm{\rm X}, \rmH), \bK(\rmS)))$. Next one takes the homotopy inverse
limit over $\Delta ^{\le m}$ of the diagonal of the  truncated cosimplicial objects. Clearly the homotopy inverse limit of the left-hand-side is
\vskip .2cm
$\holimDm \sigma _{\le m, \cdots, \le m} ({\mathcal T}^{\bullet, \cdots \bullet}_{\bK(\rmS, \rmT_1), \cdots \bK(\rmS, \rmT_n)}(\bG(\rm{\rm X}, \rmH), \bK(\rmS))) = \bG(\rmY, \rmH) \compl_{\rho_{\rmT_1}, \cdots, \rho_{\rmT_n}, m, \cdots, m}$.
\vskip .2cm \noindent
By \cite[Proposition 2.9]{C08}, the homotopy inverse limit of the diagonal of the right-hand-side identifies with
\vskip .2cm
$(\holimDm (\sigma_{\le m, \cdots, \le m} ({\mathcal T}^{\bullet, \cdots, \bullet}_{\bK(\rmS, \rmT_1), \cdots, \bK(\rmS, \rmT_n)}(\bK(\rmS, \rmH), \bK(\rmS))))) {\overset L {\underset {\bK(\rmS, \rmH)} \wedge}} \bG(\rm{\rm X}, \rmH)$
\vskip .2cm \noindent
By Remark ~\ref{der.compl.0}, this identifies with $\bK(\rmE\rmH^{gm,m}, \rmH) {\overset L {\underset {\bK(\rmS, \rmH)} \wedge}} \bG(\rm{\rm X}, \rmH)$. (Since $\rmH = {\mathbb G}_m^n= \Pi_{i=1}^n \rmT_i$, $\rmE\rmH^{gm,m}= \Pi_{i=1}^n \rmE\rmT_i^{gm,m}$.) By the Kunneth formula
in the equivariant setting proven in Proposition ~\ref{key.prop}, this identifies with $\bG(\rmE\rmH^{gm,m}{\underset {\rmH}  \times}\rm{\rm X}, \rmH)$. Summarizing, we
obtain the weak-equivalences:
\be \begin{equation}
     \label{key.weak.eq.1}
\holimDm \sigma _{\le m, \cdots, \le m} ({\mathcal T}^{\bullet, \cdots, \bullet}_{\bK(\rmS, \rmT_1), \cdots, \bK(\rmS, \rmT_n)}(\bG({\rm X}, \rmH), \bK(\rmS))) \simeq \bG(E\rmH^{gm,m}{ \times}\rm{\rm X}, \rmH) \simeq 
\bG(\rmE\rmH^{gm,m}{\underset {\rmH}  \times}\rmX)
\end{equation} \ee
\vskip .2cm \noindent
which are compatible as $m \ra \infty$. Therefore, one takes the homotopy inverse limit of both sides as $m \ra \infty$ to obtain
the weak-equivalence claimed in the theorem for $G$-theory. Clearly the arguments extend to the $\rho_{\ell} \circ \rho_{H}$-completed 
$G$-theory. When $\rmX$ is regular, the property ~\ref{PD} shows that one may replace equivariant $G$-theory everywhere by 
the corresponding equivariant K-theory. The naturality of this weak-equivalence
follows from the functoriality of the derived completion: see \cite[Proposition 3.2, 1]{C08}.
\vskip .2cm \noindent
{\it Second proof}. For this proof we will  restrict to the case when $\rmH$ is a one dimensional torus, 
though the proof may be extended to the general case with a little effort. Let ${\mathcal I}_{\rmH}$ denote the homotopy fiber of the map 
$\bK(\rmS, \rmH) \ra \bK(\rmS)$ induced
by the restriction map. One may readily see that it is module spectrum over $\bK(\rmS, \rmH)$. Then, we will replace this by a cofibrant object in
$Mod(\bK(\rmS, \rmH))$ and denote the resulting object by the same symbol. Therefore, one may now form the derived smash-product
\vskip .2cm
${\mathcal I}_{\rmH}^{m+1}= {\overset {m+1} {\overbrace {{{\mathcal I}_{\rmH}} {\underset  {\bK(\rmS, \rmH)} \wedge}  
{{\mathcal I}_{\rmH}} \cdots {{\underset {\bK(\rmS, \rmH)} \wedge } {{\mathcal I}_{\rmH}}}}}}$.
\vskip .2cm \noindent
\cite[Corollary 6.7]{C08} then shows that 
\vskip .2cm
${\mathcal I}_{\rmH}^{m+1} \ra \bK(\rmS, \rmH) \ra \bK(\rmS, \rmH)\compl_{I_{\rmH}, m} \simeq \bK(\rmE\rmH^{gm,m}, \rmH)$
\vskip .2cm \noindent
is a stable fibration sequence of spectra, and therefore also a stable cofibration sequence of spectra. Let ${\widetilde {\bG({\rm X}, \rmH)}}$ denote
a cofibrant replacement of $\bG({\rm X}, \rmH)$ in the model category $Mod(\bK(\rmS, \rmH))$. Now taking smash-product over $\bK(\rmS, \rmH)$ with the spectrum ${\widetilde {\bG(\rm{\rm X}, \rmH)}}$
provides the stable cofibration (or equivalently stable fibration) sequence:
\vskip .2cm
${\mathcal I}_{\rmH}^{m+1} {\underset {\bK(\rmS, \rmH)} \wedge} {\widetilde {\bG({\rm X}, \rmH)}} \ra {\widetilde {\bG({\rm X}, \rmH)}} \ra
 \bK(\rmE\rmH^{gm,m}, \rmH) {\underset {\bK(\rmS, \rmH)} \wedge} {\widetilde {\bG({\rm X}, \rmH)}} \simeq \bG(\rmE\rmH^{gm,m}{\underset {\rmH}  \times}\rmX)$.
\vskip .2cm \noindent
The last weak-equivalence is provided by Proposition ~\ref{key.prop}.
One also obtains the stable cofibration (or equivalently stable fibration) sequence (by smashing the stable cofiber sequence
${\mathcal I}_{\rmH}^{m+1} \ra \bK(\rmS, \rmH) \ra \bK(\rmS, \rmH)/{\mathcal I}_H^{m+1}$ with ${\widetilde {\bG({\rm X}, \rmH)}}$ over $\bK(\rmS, \rmH)$):
\vskip .2cm
${\mathcal I}_{\rmH}^{m+1} {\underset {\bK(\rmS, \rmH)} \wedge} {\widetilde {\bG({\rm X}, \rmH)}} \ra  {\widetilde  {\bG({\rm X}, \rmH)}} \ra
\bK(\rmS, \rmH)/{\mathcal I}_{\rmH}^{m+1} {\underset {\bK(\rmS, \rmH)} \wedge} {\widetilde {\bG(\rm{\rm X}, \rmH)}} \simeq 
{\widetilde {\bG({\rm X}, \rmH)}}/({\mathcal I}_{\rmH}^{m+1}{\underset {\bK(\rmS, \rmH)} \wedge} {\widetilde {\bG({\rm X}, \rmH)}})$.
\vskip .2cm \noindent
By \cite[Corollary 6.7]{C08} again, one sees that last term above identifies with $\bG({\rm X}, \rmH)_{\rho_{\rmH}, m}$. Therefore, one obtains the
weak-equivalence
\be \begin{equation}
     \label{key.weak.eq.2}
\bG({\rm X}, \rmH)_{\rho_{\rmH}, m} \simeq \bG(\rmE\rmH^{gm,m}{\underset {\rmH}  \times}\rmX). 
\end{equation} \ee
\vskip .2cm \noindent
Taking the homotopy inverse limit as $m \ra \infty$, one obtains the conclusion of the theorem once more.
\end{proof}
We conclude this section with the proofs of Theorem ~\ref{main.thm.1} and  Theorem ~\ref{comparison}.
\vskip .2cm \noindent
{\bf Proof of Theorem ~\ref{main.thm.1}}. Observe that Theorem ~\ref{key.thm.1} proves Theorem ~\ref{main.thm.1} when the group-scheme is 
a split torus and Theorem ~\ref{main.thm.3} then proves Theorem ~\ref{main.thm.1} when the group-scheme is a finite product of $\GL_n$s.
Next let $\rmG$ denote a split reductive group over $k$ satisfying the Standing Hypotheses ~\ref{stand.hyp.1}. Then, we may imbed
$\rmG$ as a closed sub-group-scheme in a finite product $\GL_{n_1} \times \cdots \times \GL_{n_q}$ so that the restriction map
$\rmR(\GL_{n_1} \times \cdots \times \GL_{n_q}) \ra \rmR(\rmG)$ is surjective. Therefore we obtain the weak-equivalences:
\[\bG(\rm{\rm X}, \rmG)\compl_{\rho_{\rmG}} \simeq \bG(\rm{\rm X}, \rmG) \compl_{\rho_{\GL_{n_1} \times \cdots \times \GL_{n_q}}} \simeq \bG(\rmX{\underset {\rmG} \times} \GL_{n_1} \times \cdots \times \GL_{n_q}, \GL_{n_1} \times \cdots \times \GL_{n_q})\compl_{\rho_{\GL_{n_1} \times \cdots \times \GL_{n_q}}}\]
where the first weak-equivalence is by Theorem ~\ref{comparison} and the next weak-equivalence is by ~\ref{KG.props}(iv). 
Since $\bG(\rmE\rmG^{gm} \times _{\rmG}X) \simeq \bG(\rmE\GL_{n_1} \times \cdots \times \rmE\GL_{n_q} \times_{\GL_{n_1} \times \cdots \times \GL_{n_q}} (\GL_{n_1} \times \cdots \times \GL_{n_q}\times_{\rmG}X))$
this completes the proof of Theorem ~\ref{main.thm.1}. \qed
\vskip .2cm \noindent
{\bf Proof of Theorem ~\ref{comparison}}. One observes that the hypotheses of \cite[Corollary 7.11]{C08} are satisfied with
the ring spectrum $A= \bK(\rmS, \rmG)$, the ring spectrum $B= \bK(\rmS, \rmH)$ and the ring spectrum $C= \bK(\rmS)$.\qed

\section{\bf Examples and Applications}
\label{sec.egs}
\subsection{Comparison with Thomason's Theorem}
In this section we will first compare our results with prior results on Atiyah-Segal type completion theorems in equivariant G-theory, the earliest  of which
are those of Thomason. In \cite[Theorem 3.2]{Th86}, Thomason proves the following theorem:
\begin{theorem}\label{Thomason}
Let $\rmG$ denote a linear algebraic group scheme over a separably closed field $k$ so that it is the product of a group scheme smooth over $k$ and
an infinitesimal group scheme. Let $\rmX$ denote a separated algebraic space on which $\rmG$ acts. Let $\ell$ denote a prime
that is invertible in $k$, let $\beta$ denote the Bott element and let $I_{\rmG}$ denote the kernel of rank map $R(\rmG) \ra Z$.
Then the map
\[\pi_*(\bG/\ell^{\nu}(\rmX, \rmG)[\beta^{-1}]) \compl_{I_{\rmG}} {\overset {\simeq} \ra} \pi_*\bG/\ell^{\nu}(E\rmG^{gm}{\underset {\rmG} \times}\rmX)[\beta^{-1}]) \]
is an isomorphism, where the completion on the left denotes the completion of the homotopy groups
$\pi_*(\bG/\ell^{\nu}(\rmX, \rmG)[\beta^{-1}])$  at the ideal $I_{\rmG}$ in the usual sense. 
\end{theorem}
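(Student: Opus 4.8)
The plan is to reproduce Thomason's d\'evissage argument, in three stages: reduce the general linear algebraic group to a split torus, settle the torus case by a Koszul--Thom class computation, and then invoke the strong finiteness that becomes available after inverting the Bott element with finite coefficients over a separably closed base, in order to pass from a spectrum-level identification to the asserted isomorphism of $I_{\rmG}$-adically completed homotopy groups.

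\emph{Reduction to a torus.} First I would embed $\rmG$ into a general linear group $\GL_n$ and use the weak equivalences $\bG/\ell^{\nu}(\rmX,\rmG)[\beta^{-1}]\simeq\bG/\ell^{\nu}(\GL_n\times_{\rmG}\rmX,\GL_n)[\beta^{-1}]$, together with the parallel statement for the Borel constructions (as in~\ref{KG.props}(ii)), to replace $\rmG$ by $\GL_n$. For $\GL_n$ the flag bundle projection $\pi\colon\GL_n\times_{\rmB}\rmX\to\rmX$ is proper with $R^i\pi_*\mathcal{O}=0$ for $i>0$ and $\pi_*\pi^*=\mathrm{id}$ by the projection formula, so $\pi^*$ is a split monomorphism; the same holds after applying $E\GL_n^{gm}\times_{\GL_n}(-)$, and both sides of the theorem are natural for $\pi^*$ and $\pi_*$, so the case of $\GL_n$ follows from the case of its Borel subgroup $\rmB$. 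Since $\rmB/\rmT$ is an affine space, homotopy invariance of $\bG[\beta^{-1}]$ reduces everything to the maximal torus $\rmT=\bbG_m^n$; here one uses that $R(\rmT)$ is finite over $R(\GL_n)$, so the $I_{\GL_n}$- and $I_{\rmT}$-adic topologies agree on $R(\rmT)$-modules and hence on $\pi_*\bG/\ell^{\nu}(\rmX,\rmT)[\beta^{-1}]$. This parallels the reductions of Section~\ref{reduct.torus}, with $\bG$ replaced by its Bott-inverted mod-$\ell^{\nu}$ variant.

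\emph{The torus case.} For $\rmT=\bbG_m^n$ with $R(\rmT)=\bbZ[\lambda_1^{\pm1},\dots,\lambda_n^{\pm1}]$ and $I_{\rmT}=(\lambda_1-1,\dots,\lambda_n-1)$, I would argue exactly as in the proof of Theorem~\ref{key.part.thm}: the Koszul--Thom class of the representation $\rmS\times\bbA^{m+1}$, with the $i$-th $\bbG_m$ acting on the $i$-th factor, identifies the homotopy cofiber of multiplication by $\prod_i(\lambda_i-1)^{m+1}$ on $\bG/\ell^{\nu}(\rmX,\rmT)[\beta^{-1}]$ with $\bG/\ell^{\nu}((\bbA^{m+1}\setminus 0)^n\times_{\rmT}\rmX)[\beta^{-1}]=\bG/\ell^{\nu}(E\rmT^{gm,m}\times_{\rmT}\rmX)[\beta^{-1}]$. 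Since $R(\rmT)$ is Noetherian and $\pi_*\bG/\ell^{\nu}(\rmX,\rmT)[\beta^{-1}]$ is finitely generated over it (see the next step), passing to the limit over $m$ identifies the $I_{\rmT}$-adic completion of $\pi_*\bG/\ell^{\nu}(\rmX,\rmT)[\beta^{-1}]$ with $\pi_*\holimm\bG/\ell^{\nu}(E\rmT^{gm,m}\times_{\rmT}\rmX)[\beta^{-1}]=\pi_*\bG/\ell^{\nu}(E\rmT^{gm}\times_{\rmT}\rmX)[\beta^{-1}]$.

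\emph{Finiteness, and the main obstacle.} The crucial input is the strong finiteness of $\pi_*\bG/\ell^{\nu}(\rmX,\rmG)[\beta^{-1}]$ as a module over the Noetherian ring $R(\rmG)/\ell^{\nu}$. For this one would use Thomason's descent theorem, which says that $\bG/\ell^{\nu}(-)[\beta^{-1}]$ computes \'etale $G$-theory, together with finiteness of \'etale cohomology of separated finite-type algebraic spaces over the separably closed field $k$. Once this finiteness is in hand, Artin--Rees makes $I_{\rmG}$-adic completion exact on these modules and kills the relevant $\lim^1$ terms, so that $\pi_*$ commutes with the homotopy inverse limit over $m$ and with completion; combining this with the two previous steps and the identification $\holimm\bG/\ell^{\nu}(E\rmG^{gm,m}\times_{\rmG}\rmX)[\beta^{-1}]\simeq\bG/\ell^{\nu}(E\rmG^{gm}\times_{\rmG}\rmX)[\beta^{-1}]$ yields the theorem. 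I expect this finiteness step to be the main obstacle: it is exactly what forces the Bott inversion, the finite coefficients and the separably closed base into the hypotheses, and it is precisely this restriction that the derived completion used in the main theorems of the present paper is designed to circumvent.
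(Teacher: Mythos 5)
The paper does not prove this theorem at all: Theorem~\ref{Thomason} is a verbatim quotation of \cite[Theorem~3.2]{Th86}, offered purely for comparison with the derived-completion results, and the only commentary the paper offers on its proof is a one-sentence summary of Thomason's strategy, namely that a spectral sequence with $E_2$-terms the cohomology of the isovariant \'etale site of the $\rmG$-scheme $\rmX$ converges strongly to $\pi_*(\bG/\ell^{\nu}(\rmX,\rmG)[\beta^{-1}])$, and that this strong convergence is what forces the finiteness of these homotopy groups over $R(\rmG)$, which in turn is what makes the classical $I_{\rmG}$-adic completion behave. Your reconstruction is therefore not being measured against an actual proof in this paper, but it is a reasonable outline of a d\'evissage-style argument (flag-bundle reduction $\rmG\rightsquigarrow\GL_n\rightsquigarrow\rmB\rightsquigarrow\rmT$, Koszul--Thom class computation for the split torus, then Artin--Rees plus $\lim^1$-vanishing once finiteness is in hand), and you have correctly identified the finiteness supplied by Bott-inverted \'etale descent over a separably closed field as the load-bearing step and the reason the hypotheses are as stringent as they are. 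The one place your account diverges noticeably from the paper's summary of Thomason's method is in the mechanism: you organize the argument as a direct Atiyah--Segal d\'evissage of the kind this paper uses for its own Theorem~\ref{main.thm.1}, whereas the paper characterizes Thomason's route as going through the isovariant \'etale descent spectral sequence itself, and deriving both the finiteness and the completion statement from its strong convergence. Both routes lean on the same \'etale-cohomological finiteness, so your proposal captures the essential content; just be aware that you are supplying a plausible proof of Thomason's theorem rather than reproducing an argument contained in this paper, and that the paper's own interest in the statement is precisely that its hypotheses are what the derived completion lets one discard.
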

\vskip .2cm \noindent
 In the above theorem of Thomason, in fact the strategy is to show
that a spectral sequence whose $E_2$-terms are the cohomology groups computed on a site called the {\it isovariant \'etale site} associated to the
$\rmG$-scheme $\rmX$ with ${\mathbb Z}/\ell^{\nu}$-coefficients converges strongly to the above homotopy groups. (The above spectral sequence is an
equivariant analogue of the spectral sequence relating \'etale cohomology to mod$-\ell^{\nu}$- Algebraic K-theory with the Bott-element inverted as
in \cite{Th85}.) {\it All the stringent hypotheses stated there seem necessary 
to obtain strong convergence of this spectral sequence and thereby to conclude that  the homotopy groups $\pi_*(\bG/\ell^{\nu}(\rmX, \rmG)[\beta^{-1}])$
are finite modules over the representation ring $R(\rmG)$.} Nevertheless, the following simple counter-example shows that for lots varieties with
 group actions, the Bott-element inverted form of equivariant algebraic K-theory with finite coefficients is distinct from the
 corresponding equivariant algebraic K-theory with finite coefficients. Thus Thomason's theorem does {\it not provide}
 an Atiyah-Segal type completion theorem for equivariant Algebraic K-theory, but only for its topological variant, i.e. with
 the Bott element inverted.
\subsubsection{Equivariant Algebraic K-theory and the Equivariant Algebraic K-theory with the Bott element inverted are not 
  isomorphic in general} \label{counter.eg}
  \vskip .2cm 
  Let $k$ denote an algebraically closed field and let $\ell$ denote a prime different from $char(k)$. 
  Let $s\ge 1$ be an integer and let $\rmX = {\mathbb G}_m^s$. Then we make the following observation:
  \begin{proposition}
   \label{comp.Bott.in} The map $\pi_i(\bK({\mathbb G}_m^s)/\ell^{\nu}) \ra \pi_i(\bK({\mathbb G}_m^s)/\ell^{\nu}[\beta^{-1}])$
   is an isomorphism for all $i\ge 0$ if $s=1$ and for $s \ge 2$, it is {\it not}
   surjective for $i<s-1$. In particular,  for all $i < s-1$,  the above map is only an injection. 
  \end{proposition}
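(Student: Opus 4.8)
The plan is to compute both sides of the comparison map explicitly using the projective bundle / homotopy invariance structure of $\bK({\mathbb G}_m^s)$ and then exhibit the degrees where $[\beta^{-1}]$ changes the answer. First I would recall that $\bK({\mathbb G}_m) \simeq \bK(\Spec k) \vee \Sigma\bK(\Spec k)$ via the localization sequence $\bK(\Spec k) \to \bK({\mathbb A}^1) \to \bK({\mathbb G}_m)$ together with homotopy invariance, so $\pi_*(\bK({\mathbb G}_m^s))$ is, by the Künneth/product formula for the tori (which one can extract from Lemma~\ref{Kunneth.formula} in the non-equivariant case, or just iterate the splitting), an exterior algebra $\Lambda_{\pi_*(\bK(\Spec k))}(x_1,\dots,x_s)$ on classes $x_j$ in degree $1$. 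The mod-$\ell^\nu$ version replaces $\pi_*(\bK(\Spec k))$ by $\pi_*(\bK(\Spec k)/\ell^\nu)$, and Bott-inverting replaces it by $\pi_*(\bK(\Spec k)/\ell^\nu[\beta^{-1}])$, which by Thomason's/Suslin's identification is periodic \'etale K-theory $K^{et}_*(k;{\mathbb Z}/\ell^\nu)$. For $k$ algebraically closed, $K_*(k;{\mathbb Z}/\ell^\nu)$ is $\mu_{\ell^\nu}^{\otimes n}$ in degree $2n\ge 0$ and $0$ in odd degrees and in negative degrees, whereas $K^{et}_*(k;{\mathbb Z}/\ell^\nu)$ is the full $2$-periodic version, nonzero in all even degrees including negative ones. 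So the only difference between $\pi_*(\bK(\Spec k)/\ell^\nu)$ and its Bott-inverted version is the appearance of the negative even-degree classes.

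Next I would trace through what the exterior-algebra description says. For $s=1$: $\pi_i(\bK({\mathbb G}_m)/\ell^\nu) = K_i(k;{\mathbb Z}/\ell^\nu)\oplus K_{i-1}(k;{\mathbb Z}/\ell^\nu)$, and Bott-inverting adds only classes coming from negative-degree pieces of $K_*(k)$; but in total degree $i\ge 0$ the relevant pieces $K_i$ and $K_{i-1}$ of $K_*(k;{\mathbb Z}/\ell^\nu)$ already agree with their Bott-inverted counterparts (no negative inputs are needed to land in nonnegative total degree once $s=1$), which gives the isomorphism for all $i\ge 0$. For general $s$, a basis monomial $x_{j_1}\cdots x_{j_t}$ (with $t\le s$) multiplied by a coefficient class contributes to total degree $i$; on the Bott-inverted side, for $i<s-1$ one can use $t=s$ (all the $x_j$'s, contributing $s$ to the degree) times a coefficient class of degree $i-s<-1$, i.e. a negative even class of $K^{et}_*(k;{\mathbb Z}/\ell^\nu)$ that is nonzero but whose non-inverted analogue vanishes. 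Concretely, take the coefficient class $\beta^{m}$ for appropriate $m<0$ times $x_1\cdots x_s$: this is a nonzero element of $\pi_i(\bK({\mathbb G}_m^s)/\ell^\nu[\beta^{-1}])$ not in the image of the comparison map, because the only monomials of degree $i$ with nonnegative coefficient degree have $t>i\ge s-1$ strictly impossible constraints — so the map fails to be surjective. (One must be a little careful to check that no nonnegative-coefficient monomial of the same total degree exists, which is exactly the numerical bound $i<s-1$.) Injectivity for $i<s-1$ then follows since the comparison map is induced by the ring map $K_*(k;{\mathbb Z}/\ell^\nu)\to K^{et}_*(k;{\mathbb Z}/\ell^\nu)$ which is injective (it's the localization $R\to R[\beta^{-1}]$ on a domain-like graded ring, or more safely: it is a split injection of $K_*(k)$-modules in each degree because the obstruction is only the addition of negative-degree summands), and exterior-algebra functoriality preserves injectivity degreewise.

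The main obstacle I anticipate is pinning down the precise module/ring structure of $\pi_*(\bK({\mathbb G}_m^s)/\ell^\nu)$ rigorously enough to make the "no competing monomial" counting argument airtight — in particular, making sure the splitting $\bK({\mathbb G}_m)\simeq \bK(\Spec k)\vee \Sigma\bK(\Spec k)$ is compatible with products across the $s$ factors and with mod-$\ell^\nu$ reduction and with Bott-inversion, so that the comparison map really is the exterior algebra on the coefficient-level map $K_*(k;{\mathbb Z}/\ell^\nu)\to K^{et}_*(k;{\mathbb Z}/\ell^\nu)$. Once that structural statement is in hand, the degree bookkeeping is elementary: a class in $\pi_i$ Bott-inverted side is a sum of terms $c\cdot x_{j_1}\cdots x_{j_t}$ with $\deg c + t = i$, $t\le s$; it lies in the image of the non-inverted side iff it can be written with all $\deg c\ge 0$; and the obstruction classes $c$ of negative even degree that are forced when $i - s < 0$, i.e. $i<s$, are non-torsion-trivially nonzero, with the sharper bound $i<s-1$ coming from also needing $\deg c$ even and the parity of the available monomials. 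I would double-check the exact inequality $i<s-1$ against the small cases $s=2,3$ by hand before finalizing, since an off-by-one in the parity count is the likeliest error.
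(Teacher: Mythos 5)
Your proposal is correct and follows essentially the same route as the paper: Quillen's computation of $\bK({\mathbb G}_m)$ as a split sum of $\bK(\Spec k)$ and its suspension, a Künneth/degenerating-spectral-sequence decomposition of $\pi_*(\bK({\mathbb G}_m^s)/\ell^\nu)$ over the coefficients, and then a counting argument exhibiting classes in negative coefficient degree on the Bott-inverted side. Your exterior-algebra bookkeeping with monomials $c\cdot x_{j_1}\cdots x_{j_t}$ is a more explicit unwinding of the induction the paper performs, and your anticipated obstacle (verifying that the product decomposition is compatible with mod-$\ell^\nu$ reduction and Bott-inversion so the comparison map is truly the degreewise extension of scalars) is precisely the degeneration statement the paper cites from the derived Künneth theorem.
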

\begin{proof} For $s=1$, this follows readily from Quillen's calculation of the $K$-groups of 
${\mathbb G}_m$: i.e. $\pi_i(\bK({\mathbb G}_m)) = \pi_i(\bK(Spec\, k)) \oplus \pi_{i-1}(\bK(Spec \, k))$,
for $i \ge 0$. (See \cite[Corollary to Theorem 8]{Qu}.) Therefore, let $s \ge 2$. Then, observing that ${\mathbb G}_m$ is a linear scheme, one invokes the derived Kunneth formula
in \cite[Theorem 4.2]{J01}. Then one observes that the corresponding spectral sequences degenerate (in view of the
computation of the $K$-groups of ${\mathbb G}_m$ above), thereby providing
the isomorphisms
\begin{align}
\pi_*(\bK({\mathbb G}_m^s)/\ell^{\nu}) &\cong \pi_*(\bK({\mathbb G}_m^{s-1})/\ell^{\nu}) {\underset {\pi_*(\bK(Spec \, k)/\ell^n)} \otimes} \pi_*(\bK({\mathbb G}_m)/\ell^{\nu}),\\
\pi_*(\bK({\mathbb G}_m^s)/\ell^{\nu}[\beta^{-1}]) &\cong \pi_*(\bK({\mathbb G}_m^{s-1})/\ell^{\nu}[\beta^{-1}]) {\underset {\pi_*(\bK(Spec \, k)/\ell^{\nu}[\beta^{-1}])} \otimes} \pi_*(\bK({\mathbb G}_m)/\ell^{\nu}[\beta^{-1}]) \notag\\
&\cong \pi_*(\bK({\mathbb G}_m^{s-1})/\ell^{\nu}[\beta^{-1}) {\underset {\pi_*(\bK(Spec \, k)/\ell^{\nu})} \otimes} \pi_*(\bK({\mathbb G}_m)/\ell^{\nu}). \notag\end{align}
Now an induction on $s$, will show
that the induced map of the right-hand-sides is an injection on homotopy groups for all $i\ge 0$,
and that it fails to be a surjection for $i<s-1$, since the Bott-element inverted K-theory of ${\mathbb G}_m^{s-1}$ and $Spec \, k$
will be non-trivial in infinitely many negative degrees.
 \end{proof}
 \vskip .2cm
Now let $s \ge 3$, $\rmG= {\mathbb G}_m$ acting by translation on the first factor in $\rmX ={\mathbb G}_m^s$
 and trivially on the other factors.  Observing that the classifying space for ${\mathbb G}_m$ is ${\mathbb P}^{\infty}$, 
 one can readily see that $\rmE\rmG{\underset {\rmG} \times}\rmX \simeq \rmE\rmG \times {\mathbb G}_m^{s-1} \simeq {\mathbb G}_m^{s-1}$. Therefore, it follows 
 that the homotopy groups of the  mod-$\ell^{\nu}$ algebraic K-theory spectrum of $\rmE\rmG{\underset {\rmG} \times}\rmX $
  and the homotopy groups of the corresponding K-theory spectrum with the Bott-element inverted
 are not isomorphic in degrees $<s-2$.  The case where ${\mathbb G}_m$ acts diagonally on ${\mathbb G}_m^s$ may be reduced
 to this case by viewing ${\mathbb G}_m^s\cong \Delta({\mathbb G}_m) \times ({\mathbb G}_m^{s}/ \Delta({\mathbb G}_m))$. Moreover, the above counter-example shows that
  for varieties provided with an action by a torus ${\mathbb G}_m$, and provided with strata that are products of affine spaces and
   split tori (as is the case with all linear varieties), the ${\mathbb G}_m$-equivariant Algebraic K-theory is likely to be distinct
   from the corresponding Bott-element inverted variant.
 \vskip .2cm
{\it Our main result, Theorem ~\ref{main.thm.1}, is a huge improvement over the above theorem of Thomason, since we do not need to work modulo a prime nor with 
the Bott element inverted, nor with any other serious restriction such as the base field being separably closed. This is made 
possible by the use of derived completions.}
\vskip .2cm
\subsection{Comparison with the completion theorem in \cite{K}}
Next we proceed to compare our results in detail with the most recent attempt in \cite{K}, at proving Atiyah-Segal completion theorems, 
for equivariant algebraic K-theory integrally and without inverting the Bott element, making use of the classical completion of the homotopy groups of the equivariant 
K-theory spectra at the augmentation ideal of the representation ring.
\vskip .2cm
\subsubsection{The range of allowed group actions}
\vskip .2cm
We may assume the base scheme $\rmS$ is a regular Noetherian affine scheme of finite type over a field, in general, though,
we may restrict to the case where $\rmS$ is the spectrum of a field to keep the discussion simple enough.
We allow any smooth linear algebraic group scheme acting on any quasi-projective variety (scheme) that admits a $\rmG$-equivariant closed
immersion into a regular quasi-projective variety (scheme) provided with a $\rmG$-action. This puts very little restriction on
the groups that are allowed: {\it clearly all finite groups may be imbedded as closed subgroups of a suitable ${\rm GL}_n$
and therefore are allowed in Theorem ~\ref{main.thm.1}.  In contrast, \cite[Theorem 1.2]{K} has to restrict to connected groups, thereby disallowing even 
actions by finite abelian groups.}
\vskip .1cm
In addition, as shown in \cite[Theorem 1.5 and section 10]{K}, there are  counter-examples that show that \cite[Theorem 1.2]{K}, {\it fails even for smooth varieties such as homogeneous spaces for a linear algebraic
	group, which are not projective}. The source of the counter-example is that the first map in \cite[(10.3)]{K} is 
not surjective. On using derived completions, the corresponding map from the partial derived completion to the
corresponding term of the Borel-style equivariant G-theory spectrum is a weak-equivalence as shown in ~\eqref{key.weak.eq.2}. Therefore,
this counter-example does not arise on using derived completions.
\vskip .2cm
\begin{remark}The kind of dimension shifting that occurs in \cite[Theorem 1.5]{K} is quite natural from our point of view.  
	In our case, the derived completion when applied to rings and modules, creates higher dimensional homotopy.  
	For example, the derived completion at the ring homomorphism ${\mathbb Z} \rightarrow {\mathbb F}_{\ell}$ of 
	the module ${\mathbb Q}/{\mathbb Z}$ has $\pi _0 \cong 0$ and $\pi_1 \cong {\mathbb Z}_{\ell}$.  
	In \cite[Theorem 1.5]{K}, this is accounted for by the fact that the derived completion of 
	the module ${\mathbb Q}/{\mathbb Z} \otimes {\mathbb Z}[{\mathbb Z}/2{\mathbb Z}]$ at the augmentation 
	homomorphism ${\mathbb Z}[{\mathbb Z}/2{\mathbb Z}] \rightarrow {\mathbb Z}$ has $\pi _0 \cong {\mathbb Z}$ and $\pi _1 \cong {\mathbb Z}_2$.  
\end{remark}
{\it The rest of this section will focus on various applications and good features of the derived completion developed in the earlier sections.}
\vskip .1cm
\subsection{Equivariant G-Theory and K-theory of $\rmG$-quasi-projective varieties and schemes: Toric varieties, Spherical varieties and Linear varieties}
\label{G.quasiproj}
We will henceforth restrict to the case the base field is a field.
 For a given linear algebraic group $\rmG$, a convenient class of schemes we consider
 are what are called {\it $\rmG$-quasi-projective schemes}: a quasi-projective scheme $\rmX$ with a given $\rmG$-action is
 $\rmG$-quasi-projective, if it admits a $\rmG$-equivariant locally closed immersion into a projective space,  ${\rm Proj(V)}$,
 for a representation ${\rm V}$ of $\rmG$. Then a well-known theorem of Sumihiro, (see \cite[Theorem 2.5]{Sum}) shows that any normal quasi-projective variety provided
 with the action of a connected linear algebraic group $\rmG$ is $\rmG$-quasi-projective. This may be extended to the 
 case where $\rmG$ is not necessarily connected by invoking the above result for the action of $\rmG^o$ and then
 considering the closed $\rmG$-equivariant closed immersion of the given $\rmG$-scheme into 
 $\Pi_{g \eps \rmG/\rmG^o} {\rm Proj(V)} $ followed by a $\rmG$-equivariant closed immersion of the latter into
 ${\rm Proj}(\otimes_{g \eps \rmG/\rmG^o}{\rm V})$. (See for example. \cite[p. 629]{Th88}.)
 \vskip .2cm
 Now the important observation is that {\it quasi-projective toric varieties} are defined to be {\it normal} varieties on
 which a split torus $\rmT$ acts with finitely many orbits. Therefore, such toric varieties are $\rmT$-quasi-projective.
 This already includes many familiar varieties: see \cite{Oda}, for example.
 \vskip .2cm
 Given a connected reductive group $\rmG$, a {\it quasi-projective $\rmG$-spherical variety} is defined to be a normal $\rmG$-variety 
 on which $\rmG$, as well as a Borel subgroup of $\rmG$, have finitely many orbits. (See \cite{Tim}.) This class of varieties clearly includes all 
 toric varieties (when the group is a split torus), but also includes many other varieties. The discussion in
 the first paragraph above, now shows these are also $\rmG$-quasi-projective varieties.
 \vskip .2cm
 Finally we recall that linear varieties are those varieties that have a stratification where each stratum is a product of an
  affine space and a split torus. We will restrict to quasi-projective linear varieties with an action by the
  linear algebraic group $\rmG$, where each stratum is also $\rmG$-stable. It is clear the $\rmG$-equivariant $\bG$-theory
  of such varieties can be readily understood in terms of the corresponding $\rmG$-equivariant $\bG$-theory of 
  the strata. Examples of such varieties are normal quasi-projective varieties on which a connected split solvable group
  acts with finitely many orbits. (See \cite[p. 119]{Rosen}.)
  \vskip .2cm
  {\it It is important to observe that all the main theorems in this paper, Theorems ~\ref{main.thm.1} and ~\ref{comparison}
  apply to {\it all} the above classes of schemes, without any restrictions, i.e. to all toric varieties, all spherical varieties and
  all $\rmG$-quasi-projective varieties, irrespective of whether they are smooth or projective. See also ~\ref{comput.homotopy.groups} below. }
  \vskip .2cm
  The counter-example in ~\ref{counter.eg} shows that for large classes of toric and spherical varieties,
  the equivariant algebraic K-theory with finite coefficients away from the characteristic is not isomorphic to
  the corresponding equivariant K-theory with the Bott element inverted, so that Thomason's theorem does not provide
  a completion theorem for equivariant K-theory unless the Bott element is inverted. Neither does \cite[Theorem 1.2]{K} unless these varieties are stratified by affine spaces as shown in the following paragraph.
  \vskip .2cm
  If one imposes the restriction that the schemes have to be also projective and smooth, then that cuts down the 
  class of varieties above drastically: since all toric and spherical varieties come equipped with the action of a (maximal) 
  split torus which has only finitely many fixed points, it would mean that the resulting varieties are those
  that have a stratification by affine spaces alone. {\it Thus the only toric or spherical varieties for which the 
  completion theorem of \cite[Theorem 1.2]{K} apply are the varieties stratified by affine spaces, which is clearly an 
  extremely narrow class of varieties.}
  \vskip .2cm
  Moreover, for projective smooth toric and spherical varieties over separably closed fields, in all non-negative degrees, the
  left-hand-side (right-hand-side) terms in Thomason's theorem stated above, are isomorphic to the 
  homotopy groups of the corresponding equivariant algebraic K-theory spectrum without inverting the Bott-element. (To see this for the 
  classifying space, one may first reduce to the case the group is a split torus and then observe that the  classifying space is a finite product of infinite projective 
  spaces.)
  {\it This means that for all toric and spherical varieties over separably closed fields to which \cite[Theorem 1.2]{K} 
  (with finite coefficients prime to the characteristic) applies, the conclusion of
  \cite[Theorem 1.2]{K} is already implied by Thomason's Theorem.}
 
\vskip .2cm

  \subsection{Computation of the homotopy groups of the derived completion}
  \label{comput.homotopy.groups}
  Assume that $\rmX$ is a $\rmG$-scheme as in ~\ref{G.quasiproj}, with $\rmG= \rmT$ a split torus. Then the approximation $\rmE\rmG^{gm,m}{\underset {\rmG} \times}\rmX$ is always a scheme,  and therefore, for a fixed value of $-s-t$, the homotopy groups of its $\bG$-theory may be
  computed using the spectral sequence with $m$ sufficiently large:
  \[E_2^{s,t} = \H_{BM,M}^{s-t}(\rmE\rmG^{gm,m}{\underset {\rmG} \times}\rmX, {\mathbb Z}(-t)) \Ra \pi_{-s-t}(\bG(\rmE\rmG^{gm,m}{\underset {\rmG} \times}\rmX)) \cong \pi_{-s-t}(\bG(\rmX, \rmG)\compl_{\rho_{\tilde \rmG, m}}) \]
The last isomorphism is from ~\eqref{key.weak.eq.2} and $\H_{BM,M}$ denotes Borel-Moore motivic cohomology, which identifies with the higher equivariant Chow groups. (This identification shows that the $E_2^{s,t}=0$ for all but finitely many values of $s$ and $t$, once $-s-t$ is fixed.  Therefore, 
 these $E_2$-terms, and hence the abutment, will be independent of $m$, if $m>>0$.) In addition, rationally, the homotopy groups 
$\pi_{-s-t}(\bG(\rmX, \rmG)\compl_{\rho_{\tilde \rmG, m}})$ identify with the rational $\rmG$-equivariant higher Chow  groups. For example, if $\rmX$ has only finitely many $\rmG$-orbits, 
 then the homotopy groups of the derived completion may be computed from the higher Chow-groups of the classifying spaces of
  the stabilizer groups. Such computations will hold if $\rmX$ is {\it any} toric variety and $\rmG$ denotes the corresponding torus. 
  For more general groups other than a split torus, there is still
  a similar spectral sequence, but only for the full $\rmE\rmG^{gm}{\underset {\rmG} \times}\rmX$. This spectral sequence converges only conditionally, and computes the homotopy groups of the full derived completion.
  \subsection{Actions on Algebraic spaces} Recall that Theorem ~\ref{main.thm.1}(iii) applies to actions by split tori
on algebraic spaces. Here we will mention some well-known cases where one is forced to consider algebraic spaces.
Consider the action of a finite group on a scheme. If each orbit is not contained in an affine subscheme, then the quotient
for the group action may not exist as a scheme, but only as an algebraic space. A well-known example is the non-projective
proper three-fold due to Hironaka (see \cite{Hir} or \cite[p. 443]{Hart}) provided with the action of the cyclic group of order two. In this case,
the quotient exists only as an algebraic space. 
\vskip .2cm
Thus taking geometric quotients of schemes by actions of finite and reductive groups on schemes often produce algebraic spaces
that are not schemes. {\it Therefore, derived completion theorems for algebraic spaces provided with actions by split tori, such as in Theorem ~\ref{main.thm.1}(iii),
greatly increases the breadth and utility  of our theorems.}
\vskip .2cm
\subsection{Higher Equivariant Riemann-Roch theorems}
An immediate consequence of our main theorems is a strong form of Riemann-Roch theorem relating higher equivariant
$\bG$-theory with other forms of Borel-style equivariant homology theories for actions of linear algebraic groups on
schemes (and also algebraic spaces, if the group actions are by split tori.) We consider this briefly as follows. Let $f:\rmX \ra \rmY$ denote
a proper $\rmG$-equivariant map of finite cohomological dimension between two $\rmG$-quasi-projective schemes. Let $\tilde \rmG$ denote
either a ${\rm GL}_n$ or a finite product of groups of the form ${\rm GL}_n$ containing $\rmG$ as a closed subgroup-scheme. Then 
the squares
\[ \xymatrix{ {{\bf G}(\rmX, \rmG)} \ar@<1ex>[r] \ar@<1ex>[d]^{Rf_*} & {{\bf G}(\rmX, \rmG)\compl_{\rho_{{\tilde \rmG}, \alpha(m)}}} \ar@<1ex>[r] \ar@<1ex>[d]^{Rf_*} & {{\bf G}({\rm E}{\tilde \rmG}^{gm,m}\times \rmX, \rmG)} \ar@<1ex>[d]^{Rf_*} \ar@<1ex>[r]^{\simeq} & {{\bf G}({\rm E}{\tilde \rmG}^{gm,m}{\underset {\rmG} \times}\rmX)} \ar@<1ex>[d]^{Rf_*}\\
              {{\bf G}(\rmY, \rmG)} \ar@<1ex>[r]  & {{\bf G}(\rmY, \rmG)\compl_{\rho_{{\tilde \rmG}, \alpha(m)}}} \ar@<1ex>[r]  &  {{\bf G}({\rm E}{\tilde \rmG}^{gm,m}\times \rmY, \rmG)} \ar@<1ex>[r]^{\simeq} & {{\bf G}({\rm E}{\tilde \rmG}^{gm,m}{\underset {\rmG} \times}\rmY)}}
\]
homotopy commute, for any fixed integer $m \ge 0$, with $\alpha(m)$ as in Definition ~\ref{funct.alpha}. (The commutativity of the left-square follows from 
Proposition ~\ref{factoring.through.part.compl}(ii). The composition of the left two horizontal maps is given by sending a complex of $\rmG$-equivariant
coherent sheaves on $\rmX$ ($\rmY$) to its pull-back on ${\rm E}{\tilde \rmG}^{gm,m}{\underset {\rmG} \times}\rmX$ (${\rm E}{\tilde \rmG}^{gm,m}{\underset {\rmG} \times}\rmY)$), \res. Therefore, the homotopy commutativity of the of the two left-most squares follows from Proposition ~\ref{factoring.through.part.compl}.)
One may compose with the usual
Riemann-Roch transformation into any equivariant Borel-Moore homology theory such as equivariant higher Chow groups, to obtain
higher equivariant Riemann-Roch theorems that hold in general. When the group $\rmG$ is a split torus, one may also assume 
in the above theorem that $\rmX$ and $\rmY$ are separated algebraic spaces of finite type (over the given base field). Such equivariant Riemann-Roch theorems will be pursued separately in forthcoming work.
\vskip .2cm
It may be important to point out that, in view of the difficulties with 
usual Atiyah-Segal type completion, (for example as in \cite{K}), such Riemann-Roch theorems are currently known only for
Grothendieck groups, unless one assumes that both $\rmX$ and $\rmY$ are projective smooth schemes. 
\subsection{Actions on projective smooth schemes over a field}
Finally we proceed to show  that for projective smooth schemes over field provided with an action by a connected split reductive group, the homotopy groups of the 
derived
completions in fact reduce to the usual completions of the homotopy groups of equivariant K-theory at the augmentation ideal and thereby complete the
proof of Theorem ~\ref{comp.2}. This shows that the results of \cite[Theorem 1.2]{K} all follow as a corollary to our more general derived completion theorems.
\begin{proposition}
\label{triv.action}
 Let $\rmX$ denote a scheme with a trivial action by a split torus $\rmT$, all defined over a field $k$. Then 
$\pi_*(\bK(\rm{\rm X}, \rmT))\compl_{I_{\rmT}}  \cong \pi_*(\bK(\rm{\rm X}, \rmT) \compl_{\rho_{\rmT}})$  where the completion on the left denotes the
completion at the augmentation ideal $I_{\rmT}$ in the usual sense and the completion on the right denotes the derived completion along
$\rho_{\rmT}$.
\end{proposition}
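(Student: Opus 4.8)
The plan is to reduce the statement, via the Postnikov‑tower reduction of \S\ref{der.compl.props}, to the computation of the derived completion of the Eilenberg--MacLane spectrum of a single $R(\rmT)$-module, and then to exploit the fact that the trivial action forces that module to be extended from $\Z$, so that Proposition~\ref{derived.comp.for.rings} collapses onto the ordinary $I_{\rmT}$-adic completion.

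First I would record the effect of the trivial action. Since $\rmT$ is diagonalizable, every $\rmT$-equivariant $\O_{\rmX}$-module (with $\rmT$ acting trivially on $\rmX$) splits canonically and functorially into its weight components indexed by the character lattice $\Lambda=\hom(\rmT,\GG_m)$, and for a perfect complex only finitely many components are nonzero. This gives an equivalence $\Perf(\rmX,\rmT)\simeq\bigoplus_{\chi\in\Lambda}\Perf(\rmX)$ compatible with the Waldhausen structures, hence a weak equivalence $\bK(\rmX,\rmT)\simeq\bigvee_{\chi\in\Lambda}\bK(\rmX)$ under which $R(\rmT)=\Z[\Lambda]$ acts by translating the wedge summands. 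In particular $\bK(\rmX,\rmT)$ is $(-1)$-connected and, for every integer $t$,
\[
\pi_t(\bK(\rmX,\rmT))\cong\pi_t(\bK(\rmX))\otimes_{\Z}R(\rmT)
\]
as $R(\rmT)$-modules, $R(\rmT)$ acting on the second factor; this extends \eqref{KST}, which is the case $\rmX=\Spec\,k$.

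Next I would invoke the reduction of \S\ref{der.compl.props}, namely \cite[Corollary~7.8]{C08}: since $\bK(\rmX,\rmT)$ is $(-1)$-connected, $\bK(\rmX,\rmT)\compl_{\rho_{\rmT}}$ is the homotopy inverse limit of the tower of fibrations $\{(\bK(\rmX,\rmT)\langle t\rangle)\compl_{\rho_{\rmT}}\}_t$ whose $t$-th layer has fibre $\H(\pi_t(\bK(\rmX,\rmT)))\compl_{\H(\rho_{\rmT})}[t]$, where $\rho_{\rmT}\colon R(\rmT)\to\Z$ now denotes the rank map, $\ker(\rho_{\rmT})=I_{\rmT}$ and $\Z=R(\rmT)/I_{\rmT}$. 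So the point is to compute $\H(N)\compl_{\H(\rho_{\rmT})}$ for a module of the shape $N=A\otimes_{\Z}R(\rmT)$ with $A$ an abelian group. Proposition~\ref{derived.comp.for.rings}, applied with $R=R(\rmT)$ and $I=I_{\rmT}$, yields
\[
\H(N)\compl_{\H(\rho_{\rmT})}\simeq\holimk\{\H(N{\overset L{\underset{R(\rmT)}\otimes}}R(\rmT)/I_{\rmT}^{k})|k\}.
\]
Here is the crucial observation: after the substitution $\lambda_i=1+u_i$ one has $R(\rmT)/I_{\rmT}^{k}\cong\Z[u_1,\dots,u_n]/(u_1,\dots,u_n)^{k}$, a \emph{finite free} $\Z$-module (the Laurent denominators become units modulo $I_{\rmT}^{k}$). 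Hence $N{\overset L{\underset{R(\rmT)}\otimes}}R(\rmT)/I_{\rmT}^{k}\simeq A{\overset L{\underset{\Z}\otimes}}R(\rmT)/I_{\rmT}^{k}\simeq A\otimes_{\Z}R(\rmT)/I_{\rmT}^{k}=N/I_{\rmT}^{k}N$ is concentrated in degree $0$; the transition maps of this tower are surjective, so there is no $\lim^{1}$, and consequently $\H(N)\compl_{\H(\rho_{\rmT})}\simeq\H(N\compl_{I_{\rmT}})$, an Eilenberg--MacLane spectrum in degree $0$. (One cannot invoke Lemma~\ref{flatness} directly here, since $I_{\rmT}$ is not flat over $R(\rmT)$ once $\dim\rmT\ge 2$; what rescues the argument is not flatness of $I_{\rmT}$ but $\Z$-flatness of the finite quotients $R(\rmT)/I_{\rmT}^{k}$ together with the extended-scalars shape of $N$.)

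Finally I would assemble. Taking $N=\pi_t(\bK(\rmX,\rmT))=\pi_t(\bK(\rmX))\otimes_{\Z}R(\rmT)$ in the previous step, the fibre $\H(\pi_t(\bK(\rmX,\rmT)))\compl_{\H(\rho_{\rmT})}[t]$ is an Eilenberg--MacLane spectrum concentrated in degree $t$, with $\pi_t$ equal to $(\pi_t(\bK(\rmX,\rmT)))\compl_{I_{\rmT}}$. Induction up the tower, starting from $\bK(\rmX,\rmT)\langle -1\rangle\simeq\ast$, then shows $\pi_j((\bK(\rmX,\rmT)\langle t\rangle)\compl_{\rho_{\rmT}})\cong(\pi_j(\bK(\rmX,\rmT)))\compl_{I_{\rmT}}$ for $j\le t$ and $=0$ for $j>t$; since the tower is eventually constant in each fixed degree, passing to the homotopy inverse limit yields $\pi_j(\bK(\rmX,\rmT)\compl_{\rho_{\rmT}})\cong(\pi_j(\bK(\rmX,\rmT)))\compl_{I_{\rmT}}$ for all $j$, which, reading the right-hand side of the assertion as the graded (degreewise) $I_{\rmT}$-adic completion of $\pi_*(\bK(\rmX,\rmT))$, is exactly the claim. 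The step I expect to need the most care is this interplay between the trivial-action identity $\pi_*(\bK(\rmX,\rmT))=\pi_*(\bK(\rmX))\otimes_{\Z}R(\rmT)$ and the $\Z$-flatness of $R(\rmT)/I_{\rmT}^{k}$: it is precisely the mechanism by which the derived completion degenerates to the naive one, and by which the obstructions exhibited in \S\ref{counter.eg} are avoided; the rest is a direct application of the completion formalism of \S\ref{der.completion}.
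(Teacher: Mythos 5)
Your proposal is correct and follows essentially the same route as the paper: identify $\pi_*(\bK(\rmX,\rmT))\cong\pi_*(\bK(\rmX))\otimes_{\Z}R(\rmT)$, reduce to Eilenberg--MacLane spectra via the reductions of \S\ref{der.compl.props}, apply Proposition~\ref{derived.comp.for.rings}, and use $\Z$-flatness of $R(\rmT)/I_{\rmT}^{k}$ together with surjectivity of the transition maps (no $\lim^{1}$) to collapse the derived completion onto the naive $I_{\rmT}$-adic one. Your explicit remark that Lemma~\ref{flatness} does not apply directly when $\dim\rmT\ge 2$ (since $I_{\rmT}$ fails to be $R(\rmT)$-flat) is a correct and useful clarification of why the argument instead goes through $\Z$-flatness of the quotients, but this is a refinement of, not a departure from, the paper's argument.
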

\begin{proof} Observe that $\pi_*(\bK({\rm X}, \rmT)) \cong \pi_*(\bK(\rmX)) {\underset {\mathbb Z} \otimes} \rmR(\rmT)$. Observe also 
 that $\rmR(\rmT)$ and $\rmR(\rmT)/\rmI_{\rmT}^n$ are all flat as modules over ${\mathbb Z}$. Therefore, we obtain:
\[\pi_*(\bK(\rmX)) {\underset {\mathbb Z} \otimes} \rmR(\rmT) {\overset L{\underset {\rm {R(T)}} \otimes }} \rmR(\rmT)/\rmI_{\rmT}^n \simeq \pi_*(\bK(\rmX)) {\underset {\mathbb Z} \otimes} \rmR(\rmT)/\rmI_{\rmT}^n.\]
At this point, Proposition ~\ref{derived.comp.for.rings} 
shows that the derived completion of the $\rmR(\rmT)$-module, $\pi_*(\bK(\rmX)) {\underset {\mathbb Z} \otimes} \rmR(\rmT)$ at the ideal $\rmI_{\rmT}$
identifies with the usual completion, namely $\pi_*\bK(\rmX) \otimes \rmR(\rmT)\compl_{\rmI_{\rmT}}$. (This is possible because,
for each fixed integer $n$, the $n$-th homotopy group of $\holimr \{\H(\pi_n(\bK(\rmX){\underset {\mathbb Z} \otimes} \rmR(\rmT)/\rmI_{\rmT}^r)|r\}$ identifies
 with $\limr\{\pi_n(\bK(\rmX){\underset {\mathbb Z} \otimes} \rmR(\rmT)/\rmI_{\rmT}^r)|r\}$ and the other homotopy groups of $\holimr\{\H(\pi_n(\bK(\rmX){\underset {\mathbb Z} \otimes} \rmR(\rmT)/\rmI_{\rmT}^r)|r\}$ are trivial.)
Finally, the first spectral sequence in ~\eqref{der.compl.props}
degenerates identifying the homotopy groups $\pi_*(\bK(\rm{\rm X}, \rmT)\compl_{\rho_{\rmT}})$ with the derived completion of $\pi_*\bK(\rmX) \otimes \rmR(\rmT)$ at $\rmI_{\rmT}$, which by the above arguments identify with 
$\pi_*\bK(\rmX) \otimes \rmR(\rmT)\compl_{\rmI_{\rmT}}  \cong \pi_*(\bK(\rm{\rm X}, \rmT))\compl_{\rmI_{\rmT}}$.
\end{proof}
\vskip .2cm
An immediate  consequence of these results is Theorem ~\ref{comp.2} stated in the introduction.
\vskip .2cm \noindent
{\bf Proof of Theorem ~\ref{comp.2}}. 
 First, Theorem ~\ref{T.comp.vs.G.comp} and a standard argument as in the proof of Theorem ~\ref{main.thm.3}, making use of the proper smooth $\rmG$-equivariant map
$\rmG{\underset {\rmB} \times}\rmX \ra {\rmX}$, reduce  this to actions by maximal tori. 
 Next, making use of Bialynicki-Birula decomposition, for the  action of a maximal torus $\rmT$ on the scheme $\rmX$, one  shows
that the theorem holds for the action of $\rmT$ on $\rmX$ by reducing it to the trivial action of $\rmT$ on the fixed point scheme $\rmX^{\rmT}$, at which point 
Proposition ~\ref{triv.action} applies. (A key observation here is the following: there is an ordering of the connected components of the fixed point 
scheme $\rmX^{\rmT}= \sqcup_{i=0}^n \rmZ_i$ so that there exists a corresponding filtration 
$\{\phi =\rmX_{-1} \subsetneq \rmX_0 \subsetneq \rmX_1 \subsetneq \cdots \subsetneq \rmX_n = \rmX\}$ so that each $\rmX_i-\rmX_{i-1} \ra \rmZ_i$ is a $\rmT$-equivariant
affine space bundle. Therefore one may show $\bK(\rm{\rm X}, \rmT) \simeq \vee_i \bK(\rmZ_i, \rmT)$.)  This proves (i). 
Theorem ~\ref{main.thm.1}(i) with $\rmG= {\rm GL}_n$ and $\rmH = \rmG$ proves that $\pi_*(\bK(\rmX, \rmG)\compl_{\rho_{{\rm GL}_n}}) \cong 
\pi_*(\bK(\rmE{ \rm GL}_n{\underset {\rmG} \times}\rmX)) \cong \pi_*(\bK(\rmE{ \rm G}{\underset {\rmG} \times}\rmX))$, since ${\rm E}{\rm GL}_n$ also satisfies the properties
required of ${\rm E}{\rm G}$: see ~\eqref{uniqueness.pro.1}. But by \cite[Theorem 1.2]{K}, this identifies with $\pi_*(\bK(\rmX, \rmG))\compl_{\rmI _{\rmG}}$. This proves (ii).
\qed
\subsection{Various Extensions of our current results}
The key reason we have restricted to equivariant $\bG$-theory in this paper is because of the readily available localization sequences.
However, already by
the late 1980s (see \cite{ThTr}), Thomason had established localization sequences for the K-theory of perfect complexes and recent work, for example \cite{Sch}, considers
an abstract framework in which localization sequences for K-theory holds: this framework applies to the equivariant situation by \cite[section 5]{Sch}. 
 On considering the associated homotopy invariant version, one also recovers
the homotopy property.
\vskip .2cm
One of our immediate goals therefore is to extend the derived completion theorems proven here to {\it equivariant (homotopy invariant) Algebraic K-Theory of perfect complexes} and
also to {\it actions by families of linear algebraic groups}. We expect these to have important consequences, one of these being to
the vanishing of equivariant K-theory in negative degrees (i.e. the equivariant analogue of Weibel's conjecture: see \cite{CHSW}.)
\section{Appendix A: Chain complexes vs. abelian group spectra}
In this section we recall certain well-known relations between the above two categories, the basic references
being  \cite{Ship07} and \cite{RS}. Since this correspondence is discussed in detail
in the above references, we will simply summarize the key points for the convenience of the reader.
\vskip .2cm
A chain complex will mean one with
differentials of degree $-1$. Let $R$ denote a commutative  ring with $1$. Then $\H(R)$ will denote
the Eilenberg-Maclane spectrum defined as follows. The space in degree $n$, $\H(R)_n$ is the 
underlying simplicial set of  $R\otimes S^n$ which in  degree $k$ is the free $R$-module on 
the non-base point $k$-simplices of $S^n$ (and
with the base point identified with $0$).
 The symmetric group $\Sigma_n$ acts on $S^n \cong (S^1)^{\wedge n}$ and therefore on $\H(R)_n$.
Clearly there is a map
from the simplicial suspension $S^1 \wedge \H(R)_n \ra \H(R)_{n+1}$ (compatible with the action of $\Sigma_{n+1}$), so that this defines a symmetric 
spectrum. This is the {\it Eilenberg-Maclane spectrum associated to $R$}. 
One may observe this is an $\Omega$-spectrum and that it is also a symmetric ring spectrum.
\vskip .2cm
Next one may define a functor $\H: Mod(R) \ra Mod(\H(R))$, where $Mod(R)$ ($Mod(\H(R)))$) 
denotes the category of modules over $R$ (module spectra over $\H(R)$, 
\res) by $\H(M) = M \otimes _R \H(R)$: the $n$-th space $\H(M)_n$ is now $M \otimes _R (R \otimes S^n)$.
\vskip .2cm
Though the normalizing functor $N$ sending a simplicial abelian group to its associated normalized chain complex
is symmetric monoidal, its familiar inverse $\Gamma$ (defined by the classical Dold-Kan equivalence) is
{\it not} symmetric monoidal. This is the main difficulty in extending the functor $\H$ to a monoidal
functor defined on all unbounded chain complexes of $R$-modules with the usual tensor product as the
monoidal structure. Nevertheless, solutions to this problem have been worked out in detail
in \cite{Ship07} and \cite{RS}. The main result then is the following. 
\vskip .2cm
Let $C(Mod(R))$ denote the category of
unbounded chain complexes (i.e. with differentials of degree $-1$) of $R$-modules, with the tensor product of
chain complexes as the monoidal product and provided with the projective model structure where the
fibrations are degree-wise surjections, weak-equivalences are maps that induce isomorphism on homology
and the cofibrations are defined by the lifting property.  Let $Mod (\H(R))$ denotes the category of 
symmetric module spectra
over $\H(R)$, provided with $\quad \wedge_{\H(R)} \quad $ as the monoidal product and with the model structure
defined on it as in \cite[section 4]{SS}. Then these two model categories are related by  functors
\be \begin{equation}
     \label{H.functor}
\H: C(Mod(R)) \ra Mod(\H(R)), \Theta: Mod(\H(R)) \ra C(Mod(R))
\end{equation} \ee
so that they are part of a Quillen equivalence between the corresponding derived categories. (See \cite[p. 6]{Ship07}.)
Since the two functors $\H$ and $\Theta$ are weak-monoidal functors, one also obtains a weak-equivalence 
\be \begin{equation}
     \label{monoidal.prop.H}
\H(M){\overset L { \underset {\H(R)} \wedge}} \H(N) \simeq \H(M {\overset L{\underset R \otimes}} N), \ M, N \eps C(Mod(R)).
    \end{equation} \ee
that is natural in the arguments $M$ and $N$. Here the left-derived functor of ${\underset {\H(R)} \wedge}$ 
(${\underset R \otimes}$) is defined by replacing one of the arguments
by a cofibrant replacement.
The above weak-equivalence is not induced by a map from the left-hand-side to the right-hand-side: rather,
the left-hand-side and the right-hand-side are related by maps which are weak-equivalences into intermediate spectra.
Nevertheless, this suffices for us to reduce computations of the left-hand-side to computing $M{\overset L {\underset R \otimes}}N$.
\vskip .2cm
One may make use of the discussion on model structures for $C(Mod(R))$ given below to find suitable cofibrant 
replacements of a given
$K \eps C(Mod(R))$. However, if $K=S$ is actually a commutative algebra over $R$, one may make use of the following device to produce
a particularly nice cofibrant replacement of $S$ in the category of commutative dg-algebras over $R$. This is discussed in \cite[proof of Lemma 5.2]{C08}.
\vskip .2cm
\subsubsection{\bf Resolutions via the free commutative algebra functor}
\label{res.free.comm.alg}
Let $R$ denote a commutative ring with $1$ and let $S$ denote a commutative algebra over $R$. Given a set $\rmX$, one first forms the free $R$-module on $\rmX$:
this will be denoted $F_R(X)$. Now one takes the $R$-symmetric algebra on $F_R(X)$: this will be denoted $\tilde F_R(X)$. Observe that $\tilde F_R(X)$
is a graded $R$-module which is free in each degree. Let $(R-algebras)$ denote the category of all commutative $R$-algebras and let 
(sets) denote the category of all (small) sets. Let $\tilde U: (R-algebras) \ra (sets)$  be the forgetful functor. Then the functor
$\tilde F_R$ defined above will be left-adjoint to $\tilde U$. Together, these two functors define a triple and we may apply them in the usual
manner to $S$ to produce a simplicial object in the category of $R$-algebras, $\tilde S_{\bullet}$ together with a quasi-isomorphism to $S$.
Since the normalizing functor sending a simplicial abelian group to the  associated  
normalized chain complex is strictly monoidal,
one observes that $\tilde S = N(\tilde S_{\bullet})$  is a  dg-algebra, trivial in negative degrees, together with a quasi-isomorphism
to $S$. (One may also observe that if each  $\tilde S_n$ is a free $R$-module, then so is 
$N(\tilde S_{\bullet})_n$.) In view of the explicit construction above, one may verify the following isomorphism:
\be \begin{equation}
     \label{tens.struct}
\H(\tilde S){\underset {\H(R)} \wedge} \H(M) \cong \H(\tilde S {\underset R \otimes} M), \, M \eps C(Mod(R))
    \end{equation} \ee
\vskip .1cm \noindent
where we let $\H(\tilde S) = \H(\tilde S_{\bullet}) =\hocolimD \{\H(\tilde S_n)|n\}$ and 
$\H(\tilde S{\underset R \otimes}M) = \H(\tilde S_{\bullet}{\underset R \otimes}M) =\hocolimD 
\{\H(\tilde S_{n}{\underset R \otimes}M)|n\}$. 
\vskip .2cm
Finally it is important to observe that  $\H(\tilde S_n)$ for each $n \ge 0$ and therefore 
$\hocolimD \{\H(\tilde S_n)|n\}$ is commutative algebra spectrum over $\H(R)$. Moreover, 
each $\H(\tilde S_n)$ is a cofibrant object in $Mod(\H(R))$ and hence so is $\hocolimD \{\H(\tilde S_n)|n\}$. Therefore
we will make use of this cofibrant replacement, along with Remark ~\ref{alternate.form}. 
\subsubsection{Model structure for the category of (unbounded) chain complexes}
Let $R$ denote a commutative ring and let $\C (R)$ denote the category of all (i.e. possibly unbounded) 
chain-complexes of 
$R$-modules  with a differential of degree $-1$. This category may be given the cofibrantly generated
 model category structure described in \cite[Definition 2.3.3]{Hov00} and also \cite{Fausk}. Recall the 
generating cofibrations and generating trivial fibrations are defined as follows. For each
integer $n$, one lets $S^n$ to be the complex concentrated in degree $n$ where it is $R$ and
trivial elsewhere. $D^n$ will be the complex concentrated in degrees $n$ and $n-1$ where they
are both $R$ with the differential $D^n_n \ra D^n_{n-1}$ being the identity. Then it is shown
in \cite[Proposition 4.2.13]{Hov00} that this model category is a cofibrantly generated symmetric
monoidal model category with the generating cofibrations $I$, being maps of the form
$\{S^{n-1} \ra D^n|n\}$ and with the generating trivial cofibrations $J$, being maps of
the form $\{0 \ra D^n|n \}$. The monoidal structure is given by the tensor product of chain 
complexes and weak-equivalences are quasi-isomorphisms of chain complexes.
It is straight-forward to verify that this model category satisfies the monoidal axiom: : see \cite[Lemma 4.6]{Fausk} for more details.
\subsubsection{Homotopy inverse limits of cosimplicial objects of chain complexes}
\label{holimD.ab}
 Let $R$ denote a commutative ring and let $\C(R)$ denote the category of chain complexes of possibly
 unbounded complexes of 
$R$-modules with differentials of degree $-1$. We provide $\C(R)$ with the model structure discussed above. 
In order to define homotopy inverse limits of
diagrams in $\C(R)$, it suffices to first observe that the model category $\C(R)$ is {\it
quasi-simplicial} (see \cite[Definition 5.1]{Fausk}, though not simplicial. Then the usual definition of the homotopy inverse
limit as an end carries over and therefore the homotopy inverse limit will have the usual properties.

\section{Appendix B: Motivic slices and independence on the choice of the geometric classifying space}
The work of \cite{VV}, \cite{LK} and \cite{Pel} show that now one can define a sequence of functors $f_n: \Spt_{S^1}(k) \ra \Spt_{S^1}(k)$
(where $\Spt_{S^1}(k)$ denotes the ${\mathbb A}^1$-localized category of $S^1$-spectra on the Nisnevich site of $Spec \, k$) 
so that the following properties are true for any  spectrum $E \eps \Spt_{S^1}(k)$ which we will assume is $-1$-connected and 
${\mathbb A}^1$-homotopy invariant:
\subsubsection{}
\begin{enumerate}[\rm(i)]
\label{props.mot.Post}
 \item One obtains a map $f_n E \ra E$ that is natural in the spectrum $E$ and universal for maps $F \ra E$, with 
$F \eps \Sigma_{{\P^1}}^n\Spt_{S^1}(k)$. 
\item One also obtains a tower of maps $\cdots \ra f_{n+1}E \ra f_n E \ra \cdots  \ra f_0E = E$. Let $s_pE$ denote the
canonical homotopy cofiber of the map $f_{p+1}E \ra f_pE$ and let $s_{\le q-1}E$ be defined as the canonical homotopy cofiber of the map $f_qE \ra E$. 
Then one may show readily (see, for example, \cite[Proposition 3.1.19]{Pel}) that the canonical homotopy fiber of 
the induced map $s_{\le q}E \ra s_{\le q-1}E$ identifies also with $s_q(E)$. One then also obtains a tower: $\cdots \ra s_{\le q}E \ra s_{\le q-1}E \ra \cdots$. 
\item Let $\rmY $ be a smooth scheme of finite type over $k$ and let $\rmW \subseteq \rmY$ denote a closed not necessarily smooth subscheme so that
$codim_{\rmY}(\rmW) \ge q$ for some $q \ge 0$. Then the map $f_qE \ra E$ induces a weak-equivalence (see \cite[Lemma 7.3.2]{Lev} and also 
\cite[Lemma 2.3.2]{LK}) where $\Map$ denotes the mapping  spectrum:
\[\Map(\Sigma_{S^1}(\rmY/\rmY-\rmW)_+, f_qE) \ra \Map(\Sigma_{S^1}(\rmY/\rmY- \rmW)_+, E), E \eps \Spt_{S^1}(k).\]
\item It follows that, then, 
\[\Map(\Sigma_{S^1}(\rmY/\rmY- \rmW)_+, s_{\le q-1}E) \simeq *\]
\end{enumerate}
\vskip .5cm \noindent
Next assume that $\rmG$ is a linear algebraic group and $(\rmW, \rmU)$,  $(\bar \rmW,\bar \rmU)$ are both good pairs  for $\rmG$. Let $\{(\rmW_m, \rmU_m)|m \ge 1\}$
 and $\{(\bar \rmW_m, \bar \rmU_m)|m \ge 1\}$ denote the associated admissible gadgets. Then, since $\rmG$ acts freely on both $\rmU_m$ and $\bar \rmU_m$, it 
is easy to see  that $(\rmW \times \bar \rmW, \rmU \times \bar \rmW \cup \rmW \times \bar \rmU)$ is also a good pair for $\rmG$ with
respect to the diagonal action on $\rmW \times \bar \rmW$. Moreover, under the same hypotheses,  one may readily check shows that $\{\rmW_m \times \bar \rmW_m, \rmU_m \times \bar \rmW_m \cup \rmW_m \times \bar \rmU_m)|m \ge 1\}$
 is also an admissible gadget for $\rmG$ for the diagonal action on $\rmW \times \bar \rmW$. Let $\rmX$ denote a smooth scheme of finite type over $k$
on which $\rmG$ acts.
\vskip .2cm
Since $\rmG$ acts freely on both $\rmU_m$ and $\bar \rmU_m$, it follows that $\rmG$ has a free action on $\rmU_m \times \bar \rmW_m$ and also on
$\rmW_m \times \bar \rmU_m$. We will let $\widetilde \rmU_m = \rmU_m \times \bar \rmW_m \cup \rmW_m \times \bar \rmU_m$ for the following discussion.
One may now compute the codimensions 
\be \begin{align}
\label{codims}
codim_{\widetilde \rmU_m{\underset {\rmG} \times}\rmX}({\widetilde \rmU}_m {\underset {\rmG} \times}\rmX- \rmU_m \times \bar \rmW_m{\underset {\rmG} \times}\rmX) &= codim_{\rmW_m}(\rmW_m -\rmU_m), \mbox{ and }  \\
codim_{\widetilde \rmU_m{\underset {\rmG} \times}\rmX}({\widetilde \rmU}_m{\underset {\rmG} \times}\rmX - \rmW_m \times \bar \rmU_m{\underset {\rmG} \times}\rmX) &= codim_{\bar \rmW_m}(\bar \rmW_m -\bar \rmU_m) 
\end{align} \ee
\vskip .2cm \noindent
In view of ~\ref{props.mot.Post}(iv), it follows that the induced maps
\be \begin{multline}
     \begin{split}
\label{uniqueness.pro.1}
\Map(\Sigma_{S^1}\rmU_m \times _{\rmG}\rmX_+, s_{\le q-1}E) \simeq \Map(\Sigma_{S^1}(\rmU_m \times \bar \rmW_m) \times_{\rmG}\rmX_+, s_{\le q-1}E)\\
 \leftarrow \Map(\Sigma_{S^1}{\widetilde \rmU}_m\times _{\rmG} \rmX_+, s_{\le q-1}E) \mbox{ and }\\
 \Map(\Sigma_{S^1}\bar \rmU_m \times _{\rmG}\rmX_+, s_{\le q-1}E) \simeq \Map(\Sigma_{S^1}( \rmW_m \times \bar \rmU_m ) \times_{\rmG}\rmX_+, s_{\le q-1}E) \\
\leftarrow \Map(\Sigma_{S^1}{\widetilde \rmU}_m\times _{\rmG} \rmX_+, s_{\le q-1}E) 
\end{split}  \end{multline} \ee
are both weak-equivalences if $codim_{\rmW_m}(\rmW_m -\rmU_m)$ and $codim_{\bar \rmW_m}(\bar \rmW_m -\bar \rmU_m) $ are both 
greater than or equal to $q$. The first weak-equivalences in ~\eqref{uniqueness.pro.1} are provided by the homotopy property for the 
spectrum $E$, which is inherited by the slices.  
Therefore, both maps in 
~\eqref{uniqueness.pro.1} induce a  weak-equivalence on taking the homotopy inverse limit as $m, q \ra \infty$. Finally, one may make 
use of \cite[Proposition 2.1.3]{Lev} 
to conclude that, since  $E$ is also assumed to be ${\mathbb A}^1$-homotopy invariant, 
$\holimq \Map(\Sigma_{S^1} \rmU_m \times _{\rmG}\rmX_+, s_{\le q-1}E) \simeq \Map(\Sigma_{S^1}\rmU_m \times _{\rmG}\rmX_+, E) $
and $\holimq \Map(\Sigma_{S^1}\bar \rmU_m \times _{\rmG}\rmX_+, s_{\le q-1}E) \cong \Map(\Sigma_{S^1}\bar \rmU_m \times _{\rmG}\rmX_+, E) $.

\section{Appendix C:  Additional results on Derived completion}
In this section we collect together a few technical results that have been used in the body of the paper.
A  notion that we have found useful is that of {\it ideals} in ring spectra, which we use in Lemma ~\ref{res.1}, 
Proposition ~\ref{factoring.through.part.compl} and also appears 
in \cite[Corollary 6.7]{C08}. (See also \cite{Hov14} for related results.)
\subsection{Ideals in ring spectra}
\label{ideals}
\begin{definition}
Let $E$ be a commutative ring spectrum, which we will assume is cofibrant in the category of 
algebra spectra over the sphere spectrum. An {\it ideal} in $E$ is an $E$-module spectrum, $I$, which is {\it cofibrant
as an $E$-module spectrum}, together
 with a specified map $i:I \ra E$ of $E$-module-spectra so that 
the canonical homotopy cofiber of $i$ is weakly-equivalent to  a commutative ring spectrum {\it under} $E$ (i.e. with a map of commutative ring spectra from $E$). Moreover, we require that
these can be chosen functorially in $i$.
\end{definition}
\begin{lemma}
\label{ideals}
Let $f: E' \ra E$ denote a map of commutative ring-spectra. 
\vskip .2cm
(i) If $i:I \ra E$ is an ideal in 
$E$, then the canonical homotopy pull-back of $i$ by $f$ is an ideal in $E'$. 
\vskip .2cm
(ii) If $j:J \ra E'$ is an ideal in
$E'$, and $E$ is cofibrant as an $E'$-module spectrum, then $i=j{\underset {id_{E'}} \wedge} id_E: J{\underset {E'} \wedge} E \ra E$ is an ideal in $E$.
\end{lemma}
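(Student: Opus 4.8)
The plan is to reduce both assertions to two elementary facts about the stable category of module spectra: first, that in a stable model category the canonical homotopy cofiber of a map is the suspension of its canonical homotopy fiber, so that homotopy cartesian squares transport homotopy fibers (hence, up to the shift, homotopy cofibers) and the base-change functors transport homotopy cofiber sequences; and second, that the ``commutative ring spectrum under $E$'' structure witnessing the ideal property is transported by composition in case (i) and by pushout of commutative ring spectra in case (ii). Throughout one replaces objects by cofibrant ones in the appropriate module (resp.\ commutative algebra) model structure of \cite{SS}, \cite{Ship04}, and uses Remark~\ref{alternate.form} to pass freely between algebra-cofibrant and module-cofibrant models.

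For (i): let $I' = I\times^h_E E'$ denote the canonical homotopy fiber product formed in $E'$-module spectra (here $I$ and $E$ are regarded as $E'$-modules via $f$), with $i' : I' \to E'$ the projection. The defining square is homotopy cartesian, so the induced map $\mathrm{hofib}(i') \to \mathrm{hofib}(i)$ is a weak equivalence; since we are in a stable setting, $\mathrm{hocofib}(i') \simeq \Sigma\,\mathrm{hofib}(i') \simeq \Sigma\,\mathrm{hofib}(i) \simeq \mathrm{hocofib}(i)$, and the map of cofiber sequences $(I' \to E' \to \mathrm{hocofib}(i'))\to(I\to E\to\mathrm{hocofib}(i))$ shows this weak equivalence is one of spectra lying over $f\colon E'\to E$. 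Write $C := \mathrm{hocofib}(i)$, which by hypothesis is weakly equivalent to a commutative ring spectrum under $E$ via some structure map $E\to C$; then the composite $E'\xrightarrow{f} E\to C$ exhibits $\mathrm{hocofib}(i')$ as weakly equivalent to a commutative ring spectrum under $E'$. After a cofibrant replacement of $I'$ in $E'$-modules this makes $i'$ an ideal in $E'$, and the required functoriality in $i'$ is inherited from the functoriality of the homotopy fiber product together with the assumed functoriality of the ideal datum on $i$.

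For (ii): the base-change functor $(-)\wedge_{E'} E : Mod(E')\to Mod(E)$ is left Quillen, hence preserves cofibrant objects and homotopy cofiber sequences. Thus $J\wedge_{E'} E$ is cofibrant as an $E$-module, and with $i = j\wedge_{E'}\mathrm{id}_E$ one gets $\mathrm{hocofib}(i)\simeq \mathrm{hocofib}(j)\wedge_{E'} E$. Let $\tilde C'$ be a cofibrant commutative $E'$-algebra model of $\mathrm{hocofib}(j)$ (possible since by hypothesis $\mathrm{hocofib}(j)$ is weakly equivalent to a commutative ring spectrum under $E'$); by Remark~\ref{alternate.form} it is also $E'$-module-cofibrant, so $\tilde C'\wedge_{E'} E$ computes the same object. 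Since $\tilde C'$ is cofibrant as a commutative $E'$-algebra and $E$ is a commutative $E'$-algebra via $f$, the spectrum $\tilde C'\wedge_{E'} E$ is the homotopy pushout of $E\leftarrow E'\to \tilde C'$ in commutative ring spectra, hence is itself a commutative ring spectrum, and it receives the canonical map from $E = E'\wedge_{E'}E$. Matching this with the cofiber sequence $J\wedge_{E'}E\to E\to\mathrm{hocofib}(i)$ exhibits $\mathrm{hocofib}(i)$ as a commutative ring spectrum under $E$, and functoriality is again automatic. The step I expect to be the main obstacle is precisely this bookkeeping in (ii): one needs the point-set smash $\tilde C'\wedge_{E'}E$ to simultaneously compute the derived base change of $E'$-modules \emph{and} the homotopy pushout in commutative ring spectra, so that the cofiber inherits both its $E$-module structure and a compatible commutative-ring structure; this is what forces the passage through an algebra-cofibrant replacement of $\mathrm{hocofib}(j)$ and the invocation of Remark~\ref{alternate.form}, and it is also why the hypothesis that $E$ be cofibrant as an $E'$-module cannot be dropped.
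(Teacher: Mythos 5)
Your proof is correct and follows essentially the same route as the paper's: part (i) is the same homotopy-pullback argument (replace $i$ by a fibration, pull back along $f$, identify the fibers and hence the cofibers via the stable shift, and exhibit the ring structure on the cofiber by composing the structure map with $f$), and part (ii) is the paper's base-change argument, with the pushout of commutative ring spectra spelled out via a cofibrant $Alg(E')$-model of $\mathrm{hocofib}(j)$ where the paper simply asserts that $E'/J\wedge_{E'}E$ is ``clearly'' a commutative ring spectrum under $E$. Your extra care in (ii)—invoking Remark~\ref{alternate.form} so the one smash simultaneously computes the derived $E'$-module base change and the homotopy pushout in commutative algebra spectra—is a small sharpening of the same step, and given the standing hypothesis that $E$ is $E'$-module-cofibrant both versions land on the same object.
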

\begin{proof}  Observe first that, $E'$, $E$ and $I$ are all module-spectra over $E'$ and that the given maps
 between them are also maps of $E'$-module-spectra. 
\vskip .2cm
Next we consider (i). One may functorially replace $i$ by a map which
is a fibration in the category of $E$-module spectra. Let $j:J \ra E'$ denote the pull-back of $I$ over $E'$ by $f$. Therefore, it  follows that $J$ is an $E'$-module spectrum. The homotopy fiber of the induced map $J \ra E'$, which
will be denoted $hofib(j)$ maps  to the homotopy fiber of 
the map $i:I \ra E$ (denoted $hofib(i)$) and the above map is a weak-equivalence. 
By our hypotheses, $hocofib(i)$ (i.e. the homotopy cofiber of $i$, which is weakly equivalent to the suspension of $hofib(i)$, i.e. $\Sigma hofib(i)$) 
maps  by a weak-equivalence to a commutative ring-spectrum $E/I$ under $E$. 
Since $f$ is map of ring-spectra, $E/I$ is a commutative
a ring-spectrum under $E'$ as well. Moreover the choice of $E/I$ is functorial in $i$ and hence in $i'$.
\vskip .2cm
(ii) Clearly 
$J{\underset {E'} \wedge} E$ is an $E$-module. The map $i$ corresponds to the map 
$J{\underset {E'} \wedge} E \ra E'{\underset {E'} \wedge} E =E$. Since $E$ is assumed to be cofibrant
as an $E'$-module, 
\[hofib(j) {\underset {E'} \wedge} E \ra J{\underset {E'} \wedge} E \ra E'{\underset {E'} \wedge} E  =E \]
 is a fiber sequence. Let $\Sigma hofibf(j) \ra hocofib(j) \simeq E'/J$ denote a weak-equivalence to  a commutative ring spectrum under $E'$ given by the ideal-structure of $j$.
 Since $E$ is a commutative ring spectrum under $E'$ (and cofibrant as an $E'$-module), it follows that,
$hocofib(j) {\underset {E'} \wedge} E \simeq E'/J {\underset {E'} \wedge} E$. The latter is  clearly a commutative ring spectrum under $E$.
This proves (ii). 
\end{proof}

\begin{proposition}
 \label{key.compare.compl}
(i) Let $R$ denote a commutative Noetherian ring with unit and let $I$, $J$ denote two ideals in $R$ with $J \subseteq I$ and where there exists a positive integer
$N$ so that $I^N \subseteq J$. If $M$ is any chain complex (i.e. with differentials of degree $-1$) 
of $R$-modules trivial in sufficiently small degrees, then $M\compl_I \simeq M\compl_J$, where the completion denotes the derived completion.
\vskip .2cm
(ii) Let $E$ denote a $-1$-connected commutative ring spectrum so that $\pi_0(E)$ is a commutative Noetherian ring. Let $I, J$ denote two ideals in $E$ so that 
$\pi_0(J) \subseteq \pi_0(I)$ and there exists a positive integer $N$ so that $\pi_0(I)^N \subseteq \pi_0(J)$. If $M$ is any $t$-connected
$E$-module spectrum (for some integer $t$), then one obtains a weak-equivalence $M\compl_{I} \simeq M \compl_J$.
\end{proposition}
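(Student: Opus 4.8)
The strategy is to prove (i) directly, and then deduce (ii) from it by the Postnikov-tower reduction of section~\ref{der.compl.props}. For (i), the point is that since $R$ is Noetherian, the argument of Proposition~\ref{derived.comp.for.rings} applies to any complex $M$ of $R$-modules that is trivial in sufficiently small degrees (after replacing $M$ by a bounded-below complex of flat $R$-modules) and identifies $M\compl_I$ with $\holimk\{\H(M{\overset L {\underset R \otimes}}R/I^k)\}$, and likewise $M\compl_J$ with $\holimk\{\H(M{\overset L {\underset R \otimes}}R/J^k)\}$. The only additional ingredient needed is that the two towers of $R$-modules $\{R/I^k\}_k$ and $\{R/J^k\}_k$ are pro-isomorphic: the inclusions $J^k\subseteq I^k$ give a map of towers $\{R/J^k\}\to\{R/I^k\}$, while $I^{Nk}\subseteq J^k$ (a consequence of $I^N\subseteq J$) supplies cofinal maps $R/I^{Nk}\to R/J^k$ in the other direction, so the two towers are mutually cofinal. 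Applying the exact functor $M{\overset L {\underset R \otimes}}(-)$ and then $\H$ preserves this pro-isomorphism, and homotopy inverse limits of pro-isomorphic towers of spectra agree; this yields the natural weak-equivalence $M\compl_J{\overset {\simeq} \ra}M\compl_I$.

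For (ii), write $f_I\colon E\to E/I$ and $f_J\colon E\to E/J$ for the structure maps of the homotopy cofibers, so that $M\compl_I=M\compl_{f_I}$ and $M\compl_J=M\compl_{f_J}$ by definition of the derived completion along an ideal. Since $E$ is $(-1)$-connected, $\pi_0(E/I)\cong\pi_0(E)/\bar I$ with $\bar I=\mathrm{im}(\pi_0(I)\to\pi_0(E))$ an ideal of the Noetherian ring $\pi_0(E)$ (and similarly $\bar J$), so the hypothesis reads $\bar J\subseteq\bar I$ and $\bar I^N\subseteq\bar J$. One cannot argue directly from the spectral sequence \eqref{alg.geom.ss}, since its convergence need not be strong; instead I would run the reduction of section~\ref{der.compl.props}. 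Since $M$ is $t$-connected, $M\compl_{f_I}\simeq\holims(M\langle s\rangle)\compl_{f_I}$ is a homotopy limit of a tower of fibrations whose $s$-th fiber is $\H(\pi_s M)\compl_{\H(\pi_0 f_I)}[s]$, that is, the derived completion in the category of $\H(\pi_0(E))$-module spectra of $\H(\pi_s M)$ along $\H(\pi_0(E))\to\H(\pi_0(E)/\bar I)$. By part (i), applied to the $\pi_0(E)$-module $\pi_s(M)$ viewed as a complex concentrated in degree $0$ with the ideals $\bar I$ and $\bar J$, these fibers are naturally weakly-equivalent for $f_I$ and for $f_J$. Induction up the tower (finite between degrees $t$ and any fixed $s$) via the five lemma for fibration sequences, followed by $\holims$, then gives $M\compl_I\simeq M\compl_J$; equivalently this is \cite[Corollary 7.8]{C08} and \cite[Corollary 7.11]{C08} applied with $A=E$, $B=E/J$, $C=E/I$ once the relevant surjectivity on $\pi_0$ is in hand. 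The description of the partial completion $M\compl_{f,m}$ as the homotopy cofiber of an $(m{+}1)$-fold derived smash power of the ideal smashed into $M$, as in \eqref{key.cof.seq}, furnishes an alternative entry point.

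The step I expect to be the main obstacle is the assembly in (ii). Unlike in (i), there is in general no map of ring spectra $E/J\to E/I$ under $E$: the composite $J\to E\to E/I$ is null on $\pi_0$ but not necessarily on higher homotopy groups, so one cannot invoke functoriality of the derived completion in the target ring to produce a comparison map $M\compl_J\to M\compl_I$ at the level of spectra, and the argument must be pushed down to the module-theoretic statement (i), where the pro-isomorphism of ideal-power quotients is available. One must also verify that the weak-equivalences of Postnikov fibers supplied by (i) can be chosen compatibly with the $k$-invariants of the two towers; this is where one relies on the naturality of the derived completion \cite[Proposition 3.2]{C08} and on the formalism of ideals in ring spectra developed in Appendix C.
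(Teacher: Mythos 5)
Your argument takes the same route as the paper's (very terse) proof: part (i) via Proposition~\ref{derived.comp.for.rings} combined with the pro-isomorphism of the towers $\{R/I^k\}_k$ and $\{R/J^k\}_k$ (exactly the argument of~\eqref{pro.isom}), and part (ii) by the Postnikov-tower reduction of \S\ref{der.compl.props}. The paper gives only a pointer to these ingredients; you have correctly filled in the details, and your observation that there is in general no ring map $E/J\to E/I$ under $E$ --- so that the comparison must be pushed down to the module level via the Postnikov tower rather than obtained from functoriality in the target ring --- is a legitimate subtlety that the paper leaves implicit.
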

\begin{proof} We may reduce (ii) to (i) by making use of the 
arguments in ~\ref{der.compl.props}. Now (i) follows
 by invoking Proposition ~\ref{derived.comp.for.rings}. (See for example, the proof of ~\eqref{pro.isom}.)
\end{proof}
\vskip .2cm
Next we will prove that indeed the derived completion of modules over a commutative Noetherian ring at an ideal depends only the 
radical of the ideal.
\vskip .2cm
First we show how to obtain functorial flat resolutions  that are compatible with pairings. Let $R$ denote a commutative Noetherian ring
with $1$. Let $Mod(R)$ denote the category of all $R$-modules and let $(sets)$ denote the category of all small sets.
Then we obtain the following functors: $S:Mod(R) \ra (sets)$ and $F: (sets) \ra Mod(R)$ by letting $S(M) = Hom_{R}(R, M)$ and
$F(T) =$ the free $R$-module on the set $T$. Here $Hom_R$ denotes $Hom$ in the category $Mod(R)$. Then $F$ is left-adjoint to $S$ and they provide a triple:
\vskip .2cm
$F\circ S: Mod(R) \ra Mod(R)$.
\vskip .2cm \noindent
By iterating this triple, we obtain a simplicial resolution of any $R$-module $M$, which consists of flat-modules in each degree.
One may then apply the normalization functor that sends a simplicial abelian group to a chain complex (i.e. where the differentials
are of degree $-1$) and then to the associated co-chain complex that is trivial in positive degrees and with differentials of degree $+1$.
We will denote this functorial flat  resolution of an $R$-module $M$ by $\tilde M$ (or $\tilde M^{\bullet}$). This construction has the
following properties.
\vskip .2cm
(i) It is functorial in $M$. i.e if $f: M \ra N$ and $g: N \ra P$ are maps in $Mod(R)$, then one obtains induced maps
$\tilde f: \tilde M \ra \tilde N$ and $\tilde g: \tilde N \ra \tilde P$ so that $\widetilde {(g \circ f)} = \tilde g \circ \tilde f$.
\vskip .2cm
(ii) It is compatible with pairings. i.e. if $M \otimes_R N \ra P$ is a map of $R$-modules, one obtains an induced map
$\tilde M \otimes _R \tilde N \ra \widetilde {(M \otimes _R N)} \ra \tilde P$.
\vskip .2cm
\begin{theorem} Let $R$ denote a commutative Noetherian ring with $1$ and let $J \subseteq I$ denote two ideals in $R$ so that
 $\sqrt{I} = \sqrt{J}$. Then if $M$ is an $R$-module, not necessarily finitely generated, 
$M \compl_{I} \simeq M \compl_{J}$ where $M \compl_I$ ($M \compl_J$) denotes the derived completion of $M$ with respect to
 $I$ ($J$, \res).
\end{theorem}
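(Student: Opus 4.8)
The plan is to reduce the statement about two ideals with the same radical to a statement comparing a single ideal with a \emph{power} of itself, and then invoke Proposition~\ref{key.compare.compl}(i), which already handles ideals $J \subseteq I$ with $I^N \subseteq J$. Since $R$ is Noetherian, $\sqrt{I} = \sqrt{J}$ implies that there exist positive integers $a, b$ with $I^a \subseteq \sqrt{J}^{\,c} \subseteq J$ and $J^b \subseteq I$ for suitable exponents; more directly, $\sqrt{I} = \sqrt{J}$ together with the finite generation of these ideals gives integers $N_1, N_2$ with $I^{N_1} \subseteq J$ and $J^{N_2} \subseteq I$. In particular $J \subseteq I$ need not hold on the nose, so I would first handle the case $J \subseteq I$ (which is literally Proposition~\ref{key.compare.compl}(i) applied with $N = N_1$, since then $I^{N_1} \subseteq J \subseteq I$), and then deduce the general case by a zig-zag: introduce the auxiliary ideal $K = I \cap J$ (or $K = IJ$), observe $K \subseteq I$ and $K \subseteq J$, and check that $\sqrt{K} = \sqrt{I} = \sqrt{J}$, so that $I^{m} \subseteq K$ and $J^{m} \subseteq K$ for $m$ large. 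Applying the $J \subseteq I$ case twice yields $M\compl_I \simeq M\compl_K \simeq M\compl_J$.

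\textbf{Key steps, in order.} First, I would record the elementary commutative-algebra fact: for ideals $I, J$ in a Noetherian ring with $\sqrt I = \sqrt J$, there is a positive integer $N$ with $I^N \subseteq J$ and $J^N \subseteq I$ (choose generators and use that each generator of one ideal lies in the radical of the other). Second, I would set $K = IJ$, note $K \subseteq I$, $K \subseteq J$, and $\sqrt K = \sqrt I$, hence there is $N'$ with $I^{N'} \subseteq K$ and $J^{N'} \subseteq K$. Third — this is the actual content — I would apply Proposition~\ref{key.compare.compl}(i) to the nested pairs $(K \subseteq I)$ and $(K \subseteq J)$, each of which satisfies the hypotheses of that proposition (namely $K \subseteq I$, $I^{N'} \subseteq K$; and similarly for $J$), to obtain weak-equivalences $M\compl_I \simeq M\compl_K$ and $M\compl_J \simeq M\compl_K$. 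Composing (one of them reversed), $M\compl_I \simeq M\compl_J$. Throughout, the functorial flat resolution $\tilde M^{\bullet}$ constructed just before the statement, compatible with pairings, is what makes these comparison maps into honest maps of spectra rather than abstract equivalences; I would use it (together with Remark~\ref{alternate.form} and Proposition~\ref{derived.comp.for.rings}) exactly as in the proof of Proposition~\ref{key.compare.compl}.

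\textbf{Where the difficulty lies.} The genuinely new ingredient is \emph{not} the reduction, which is formal once Proposition~\ref{key.compare.compl} is in hand, but ensuring that the comparison of derived completions at $I$, at $J$, and at $K$ is realized by a coherent zig-zag of \emph{maps} of Eilenberg--MacLane spectra (or spectra), so that transitivity of weak-equivalence is legitimate. This is precisely why the theorem is stated immediately after setting up functorial, pairing-compatible flat resolutions: the maps $R/I^k \to R/K^k \leftarrow R/J^k$ induced by the inclusions of powers must be lifted to compatible maps of the cosimplicial objects $\mathcal T^{\bullet}_{\H(R)}(\H(M), \H(R/I))$, etc., and then one checks, as in Proposition~\ref{derived.comp.for.rings} and the proof of \eqref{pro.isom}, that the induced pro-systems $\{R/I^k M\}$, $\{R/K^k M\}$, $\{R/J^k M\}$ are pro-isomorphic — which follows from $I^{N'} \subseteq K \subseteq I$ and the analogous inclusions for $J$. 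Once the pro-isomorphism of these towers is established, taking $\holimk$ (with no $\lim^1$ obstruction, by surjectivity of transition maps after replacing $M$ by a flat resolution) finishes the argument. So the main obstacle is bookkeeping the maps; the mathematical input is entirely contained in Proposition~\ref{key.compare.compl} and the Noetherian radical fact.
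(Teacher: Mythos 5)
Your proposal is correct and rests on the same reduction as the paper: the Noetherian radical fact produces a power containment, and then one invokes Proposition~\ref{key.compare.compl}(i). However, you appear to have misread the hypothesis: the theorem already assumes $J \subseteq I$, so the zig-zag through an auxiliary ideal $K = IJ$ or $I \cap J$ is unnecessary. Since $\sqrt{J} = \sqrt{I} \supseteq I$ and $I$ is finitely generated (Noetherian), one has $I^{n_0} \subseteq J$ for some $n_0$, hence the chain $J^{n_0} \subseteq I^{n_0} \subseteq J \subseteq I$, and Proposition~\ref{key.compare.compl}(i) applies directly to the nested pair $J \subseteq I$ --- this single observation is the paper's entire proof, and it is also exactly your ``first case.'' Your extra zig-zag would indeed be the right device if one wanted to drop the assumption $J \subseteq I$ and prove the stronger statement that two ideals with equal radicals give the same derived completion regardless of containment, so it is mathematically sound, just more than the stated theorem requires.
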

\begin{proof} First observe that since $\sqrt{J} = \sqrt{I} \supseteq I$ and $I$ is finitely generated, it follows that $I^{n_0} \subseteq J$
for some $n_0 >>0$, i.e., now we obtain the chain of containments: $J^{n_0} \subseteq I^{n_0} \subseteq J \subseteq I$. Therefore, 
the conclusion follows from Proposition ~\ref{key.compare.compl}.
\end{proof}


\vfill \eject
\end{document}